\newcommand{\overbar}[1]{\mkern 1.5mu\overline%
{\mkern-1.5mu#1\mkern-1.5mu}\mkern 1.5mu}
\newcommand{\E}{\ensuremath{\mathbb{E}}}
\newcommand{\N}{\ensuremath{\mathbb{N}}}
\newcommand{\R}{\ensuremath{\mathbb{R}}}
\newcommand{\Z}{\ensuremath{\mathbb{Z}}}
\newcommand{\BBM}{{\mathsf{BBM}}}
\newcommand{\ei}{\ensuremath{\mathsf{ei}}}
\newcommand{\es}{\ensuremath{\mathsf{es}}}
\newcommand{\floor}[1]{{\lfloor #1 \rfloor}}
\newcommand{\ind}{\boldsymbol{1}}
\newcommand{\ttE}{\ensuremath{\mathtt{E}}}
\newcommand{\ttP}{\ensuremath{\mathtt{P}}}
\renewcommand{\P}{\ensuremath{\mathbb{P}}}
\renewcommand{\d}{\ensuremath{\mathrm{d}}}
\DeclareMathOperator{\Cov}{Cov}
\DeclareMathOperator{\Var}{Var}
\DeclareMathOperator*{\essinf}{ess\,inf}
\DeclareMathOperator*{\esssup}{ess\,sup}
\DeclareMathOperator{\dist}{dist}
\newtheorem{theorem}{Theorem}[section]
\newtheorem{claim}[theorem]{Claim}
\newtheorem{corollary}[theorem]{Corollary}
\newtheorem{lemma}[theorem]{Lemma}
\newtheorem{proposition}[theorem]{Proposition}
\theoremstyle{remark}
\newtheorem{remark}[theorem]{Remark}
\theoremstyle{definition}
\newcommand{\ignore}[1]{{}}
\numberwithin{equation}{section}
\begin{document} 

\begin{frontmatter}

\title{Quenched invariance principles for the maximal particle in
  branching random walk in random environment and the parabolic
  Anderson model}
\runtitle{Quenched invariance principles for BRWRE}

\begin{aug}
  \author{\fnms{Jiří}
    \snm{Černý}\corref{}\ead[label=e1]{jiri.cerny@unibas.ch}}
  \and
  \author{\fnms{Alexander}
    \snm{Drewitz}\ead[label=e2]{drewitz@math.uni-koeln.de}}

  \runauthor{J.~Černý and A.~Drewitz}

  \affiliation{Universität Basel and Universität zu Köln}

  \address{Department of mathematics and computer science\\
    University of Basel\\
    Spiegelgasse 1\\
    4051 Basel, Switzerland\\
           \printead{e1}}

  \address{Mathematisches Institut\\
  Universität zu Köln\\
    Weyertal 86--90\\
    50931 Köln, Germany\\
          \printead{e2}}

\end{aug}

\begin{abstract}
  We consider branching random walk in spatial random branching
  environment (\hbox{BRWRE}) in dimension one, as well as related
  differential equations: the Fisher-KPP equation with random branching
  and its linearized version, the parabolic Anderson model (PAM). When
  the random environment is bounded, we show that after recentering and
  scaling, the position of the maximal particle of the BRWRE, the front
  of the solution of the PAM, as well as the front of the solution of the
  randomized Fisher-KPP equation fulfill quenched invariance principles.
  In addition, we prove that at time $t$ the distance between the median
  of the maximal particle of the BRWRE and the front of the solution of
  the PAM is in $O(\ln t)$. This partially transfers classical results of
  Bramson \cite{Br-78} to the setting of BRWRE.
\end{abstract}

\begin{keyword}[class=MSC]
  \kwd[Primary ]{60J80, 60G70, 82B44}
\end{keyword}

\begin{keyword}
  \kwd{Branching random walk}
  \kwd{random environment}
  \kwd{parabolic Anderson model}
   \kwd{invariance principles}
\end{keyword}

\end{frontmatter}

\date{\today}
\maketitle

\tableofcontents

\section{Introduction} 
\label{sec:intro}

Branching random walk as well as branching Brownian motion, and in particular the
position of their maximally displaced particles, have been the subject of
highly intensive research during the last couple of decades, see the
monographs \cite{Sh-15, Bo-16} as well as the references in these sources.

Indeed, in \cite{Ha-74}, \cite{Ki-75}, and  \cite{Bi-76} it has
successively been shown that under suitable assumptions  the position of
the maximal or rightmost particle $M(n)$ of the branching random walk at
time $n$ satisfies a law of large numbers; i.e., almost surely
\begin{equation}
  \label{eq:maxLLN}
  \lim_{n \to \infty} n^{-1}{M(n)} = \tilde v_0
\end{equation}
for some non-random $\tilde v_0 \in \R$. Subsequently, concentration
results for $M(n)$ around its median $m(n)$,
cf.~\cite{McD-95,Re-03,Dr-03,ChDr-06}, as well as corresponding results
on the  distributional convergence have been obtained, see
\cite{Ba-00,BrZe-09, BrZe-10,Ai-13}. In particular, in
\cite{AdRe-09,HuSh-09} the law of large numbers of \eqref{eq:maxLLN} has
been improved in that for a wide class of branching random walks the
position of the maximal particle $M(n)$ at time $n$ satisfies
\begin{equation}
  \label{eq:brwMin}
  M(n) = \tilde v_0 n - \frac{3}{2} c \ln n + O(1),
\end{equation}
where $c > 0$ is a parameter depending on the specifics of the branching
and displacement mechanisms.

In the continuum setting of branching Brownian motion (BBM) with binary
branching, replacing $n$ by $t$ in a suggestive way for the respective
quantities, even more precise asymptotics, namely
\begin{equation} \label{eq:bramson}
  m(t)= \sqrt 2 t - \frac{3}{2\sqrt 2} \ln t + o(1),
\end{equation}
has been proved much earlier in seminal works by Bramson
\cite{Br-78,Br-83} already.
Bramson made  use of the fact that the function
$w^\BBM(t,x):=\P(M(t) \ge x)$, $t\ge 0$, $x\in \mathbb R$, solves the
Fisher-KPP equation
\begin{equation}
  \label{eq:detKPP}
    \frac{\partial w^\BBM}{\partial t}
    =\frac 12 \Delta w^\BBM + w^\BBM (1-w^\BBM),
\end{equation}
with the initial condition $w^\BBM(0, \cdot) = \ind_{(-\infty,0]}$.
He then investigated the solution to this equation through an
impressively refined analysis of its Feynman-Kac representation.

While the above results for branching random walk have been derived in
the context of homogeneous branching mechanisms, there has recently been
an increased activity in the investigation of branching random walk with
non-homogeneous branching rates that depend on either time or space in
special \emph{deterministic} ways, see \cite{LaSe-88,LaSe-89, FaZe-12,
  FaZe-12b, BeBrHaHaRo-12, NoRoRy-13, MaZe-13, Ma-13, BoHa-14,BoHa-15}.
Among other things, as an interesting consequence of the inhomogeneous
branching rates, in these sources second order terms that differ from the
logarithmic correction of \cite{Br-78} and \eqref{eq:brwMin} have been
obtained.

While the above sources focus on the case of deterministic branching
environments, there are very compelling reasons for trying to achieve a
better understanding of the case of spatially random branching
environments. On the one hand, this is already interesting from a purely
mathematical point of view. On the other hand, when it comes to modeling
real world applications, though branching environments are not  random,
they oftentimes are locally irregular but exhibit certain spatial
averaging properties. One natural approach is then to model the
environment as random and try to understand the evolution of the process
either conditionally on a realization of the branching environment or
averaged over all such environments. In this setting, notable research
has been conducted over the past decades on a variety of aspects such as
survival and growth properties, transience vs.~recurrence, diffusivity,
as well as localization properties  (see
  e.g.~\cite{GrHo-92,MaPo-00,CoPo-07, CoPo-07a, HuYo-09, GaMuPoVa-10,
    HeNaYo-11, Na-11, OrRo-16} for a non-exhaustive list).

To the best of our knowledge, the only source that  in some sense focuses
on the maximal particle is Comets and Popov \cite{CoPo-07a}. They prove a
shape theorem for a BRWRE on $\mathbb Z^d$, $d\ge 1$, from which, as a
corollary, one can infer that the maximal particle has an asymptotic
velocity, that is \eqref{eq:maxLLN} holds.

Finally, branching random walk in an environment that is changing randomly
in \emph{time} was studied in \cite{HuLi-14,MaMi-15a} recently.
Among other results, Huang and Liu \cite{HuLi-14} proved a law of large
numbers for the maximal particle. Mallein and Miłoś \cite{MaMi-15a}
considered the backlog of the maximal particle behind what can be
interpreted as the breakpoint in their setting (cf.~\eqref{eq:origBP}
  below) and proved that it is strictly larger than in the setting of
constant branching rates. As a corollary, their results yield a central
limit theorem for the position of the maximally displaced particle. It
should be noted here that the time-dependent random environment seems to
be easier to handle since certain techniques of the theory of multi-type
branching processes apply in this case. We were not able to use them for
the model considered in this paper.

\section{Definition of the model and main results} 

Let us now introduce the model of branching random walk in random
(branching) environment considered in this paper. The random environment
is given by a family $\xi = (\xi(x))_{x \in \Z}$ of random variables
defined on some probability space $(\Omega, \mathcal F, \P)$. We assume
that the environment is i.i.d.~and bounded:
\begin{align}
  \label{eq:xiAssumptions} \tag{POT}
  \begin{split}
    &(\xi(x))_{x \in \Z} \text{ are i.i.d.~under $\P$},\\
    &0 < \ei := \essinf \xi(0) < \esssup \xi(0) =: \es < \infty,\\
  \end{split}
\end{align}

We presume that the i.i.d.~property can be relaxed with some additional
technical effort, but we prefer to work in this context for the sake of
simplicity. The same holds true for the condition $\ei >0$ which could be
relaxed to $\ei \ge c\in \mathbb R$, with negative branching rates being
interpreted as killing rates. On the other hand, some form of boundedness
of $\xi (x)$ from above is essential for our investigations.

We furthermore assume, again for reasons of simplicity, that the initial
configuration $u_0: \Z \to \N_0$ is such that
\begin{align}
  \label{eq:inCond} \tag{INI}
  \begin{split}
    C \ind_{-\N_0} \ge u_0 \ge
    \ind_{\{0\}} \quad \text{for some $C\in [1,\infty)$.}
  \end{split}
\end{align}

In particular, $u_0 = \ind_{\{0\}}$ and $u_0=\ind_{-\N_0}$ fulfill
\eqref{eq:inCond}. Later, as a consequence of Lemmas~\ref{lem:inCond}
and~\ref{lem:upperTail} below, we show that any  initial configuration
satisfying \eqref{eq:inCond} is comparable for our purposes to
$u_0 = \ind_{\{0\}}$ in the results that follow. Hence, the reader may
assume $u_0 = \ind_{\{0\}}$ from now onwards without loss of generality.

Let us now describe the dynamics of the BRWRE in detail. Given a realization
of $\xi $ and an initial condition $u_0 : \Z \to \N_0$, at
each $x \in \Z$ we place $u_0(x)$ particles at time $0$. As time evolves,
all particles move independently according to continuous time simple
random walk with jump rate $1$. In addition, and independently of
everything else, while at a site $x$, a particle splits into two at rate
$\xi(x)$, and if it does so, the two new particles evolve independently
according to the same diffusion mechanism as the remaining particles.
This defines \emph{branching random walk in the branching environment}
$\xi$ with binary branching, where again the latter is for simplicity but
not essential. Given a realization of $\xi $, we write
$\ttP_{u_0}^\xi$ for the quenched law of the process conditional on
starting with a particle configuration $u_0$ at time $0$, and
$\ttE^\xi_{u_0}$ for the corresponding expectation. We use
$\mathbb P\times \ttP^\xi_{u_0}$ to denote the averaged law of the
process. To simplify notation, we abbreviate
$\ttP_{x}^\xi = \ttP_{\ind_{\{x\}}}^\xi$.

We use $N(t)$ to denote the set of particles alive at time $t$ in this
BRWRE. For any particle $Y \in N(t)$, we denote by $(Y_s)_{s \in [0,t]}$
the trajectory of itself and its ancestors up to time $t$. We will also
call $(Y_s)_{s \in [0,t]}$ the \emph{genealogy} of $Y$. For $t\ge 0$ and
$x\in \Z$, we define
\begin{equation}
  \label{eq:expPart}
  \begin{split}
    &N(t,x)
    := \big\vert \{ Y \in N(t) \, : Y_t = x\} \big\vert
 \quad \text{ and }\\
    &N^\ge(t,x) :=\big\vert \{ Y \in N(t) \, : Y_t \ge x\} \big\vert =
    \sum_{y \ge x} N(t,y)
  \end{split}
\end{equation}
as the number of particles in the process at time $t$ which are located
at or to the right of $x$.

To state our last assumption, we recall that it is well known from the
studies on the parabolic Anderson model (cf.~Section~\ref{ssec:PAM}
  below) that there is a deterministic function
$\lambda :\mathbb R\to \mathbb R$, the \emph{Lyapunov exponent}, such
that for a.e.~realization of $\xi $ the quenched expectation of $N(t,x)$
satisfies
\begin{equation}
  \label{eq:lyapunov}
  \lambda(v) =
  \lim_{t \to \infty} \frac1t \ln \ttE^\xi_{0} \big[  N(t,\lfloor
      tv \rfloor)  \big], \qquad v\in \mathbb R.
\end{equation}
Under \eqref{eq:xiAssumptions}, one can show that  $\lambda $ is even,
concave everywhere and strictly concave exactly on $(v_c,\infty)$ for
some non-trivial critical velocity $v_c\in (0,\infty)$,
see~Figure~\ref{fig:lambda} for the illustration and
Proposition~\ref{prop:lyapExp} in the Appendix for the proof. Furthermore, the
asymptotic velocity of the maximally displaced particle
(cf.~\eqref{eq:maxLLN}) is given by the unique $v_0\in (0,\infty)$ such that
\begin{equation}
  \label{eq:vzero}
  \lambda({v_0}) = 0.
\end{equation}
Throughout the paper we will assume that the maximally displaced particle is
faster than $v_c$, that is
\begin{equation}
  \label{eq:vAssumptions} \tag{VEL}
  v_0> v_c.
\end{equation}
\begin{figure}
  \includegraphics{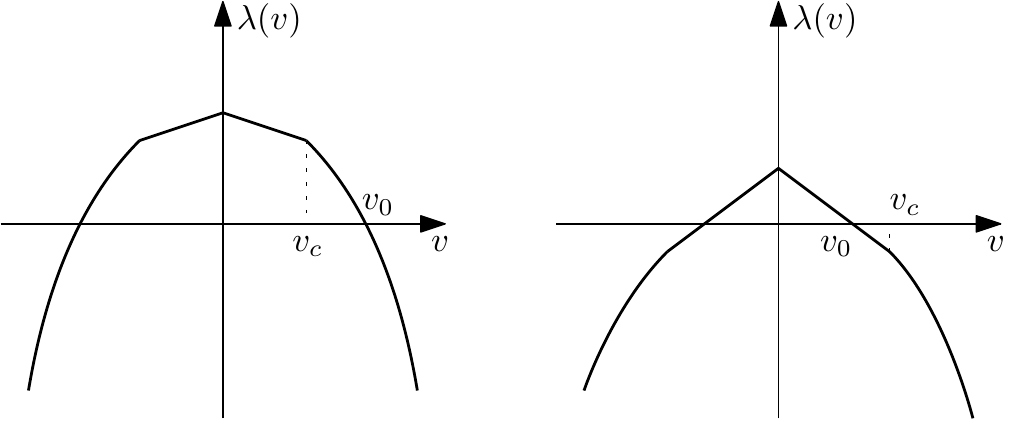}
  \caption{\label{fig:lambda}
    Qualitative illustration of the behavior of the Lyapunov exponent
    $\lambda (v)$ with \eqref{eq:vAssumptions} satisfied (left) or not
    (right). In particular, the Lyapunov exponent is a linear function on
    two non-degenerate symmetric intervals adjacent to the origin, and
    strictly convex otherwise; see Proposition~\ref{prop:lyapExp} for details.}
\end{figure}%
This assumption will ensure that there is certain tilted Gibbs measure
related to BRWRE (cf.~\eqref{eq:tiltedVarTime} and below) under which the
particles have the speed $v_0;$ the existence of such a measure is
crucial for the techniques employed in this paper. While condition
\eqref{eq:vAssumptions} is not easy to check in general, in
Lemma~\ref{lem:qualLyap} of the Appendix we show that it is satisfied for
a rich family of random environments. Moreover, so far we have found no
examples where \eqref{eq:vAssumptions} fails to hold, but a proof that
\eqref{eq:vAssumptions} is always fulfilled eludes us so far. Figure
\ref{fig:lambda} covers possible shapes of the Lyapunov exponent in terms
of convexity and the locations of  $v_0$ and~$v_c$.

\subsection{Behavior of the maximally displaced particle} 

From a probabilistic point of view, in this article we are mainly
interested in the behavior of the position of the maximally displaced
particle at time $t$,
\begin{equation*}
  M(t) := \max \{ Y_t  :  Y \in N(t) \},
\end{equation*}
for which we prove the following functional central limit theorem.

\begin{theorem}
  \label{thm:maxFCLT}
  Assume \eqref{eq:xiAssumptions}, \eqref{eq:inCond} and
  \eqref{eq:vAssumptions}. Then there is
  $\overbar \sigma_{v_0}\in (0,\infty)$ given explicitly in
  \eqref{eq:barsigmav} below, such that the
  sequence of processes
  \begin{equation*}
    [0,\infty) \ni t \mapsto
    \frac{ M(n t) - v_0 n t}{\overbar\sigma_{v_0} \sqrt n},
    \quad n \in \N,
  \end{equation*}
  converges  as $n\to\infty$ in $\mathbb P\times \ttP^\xi_{u_0}$-distribution
  to standard Brownian motion.
\end{theorem}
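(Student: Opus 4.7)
The plan is to reduce the statement to a functional invariance principle for the front of the PAM, and then to derive that invariance principle through an exponential tilt which is available precisely because of assumption \eqref{eq:vAssumptions}. Let $u(t,x) := \ttE^\xi_{u_0}[N(t,x)]$ denote the quenched mean particle density, which by the many-to-one lemma solves the parabolic Anderson equation $\partial_t u = \Delta u + \xi u$, and write $r(t) := \inf\{x \in \Z : u(t,x) < 1\}$ for the PAM front. Using concentration estimates for $M(t)$ around its median $m(t)$ in the spirit of \cite{McD-95,Re-03,Dr-03,ChDr-06}, together with the companion result announced in the abstract that $|m(t)-r(t)| = O(\ln t)$, the difference $M(nt) - r(nt)$ is $o(\sqrt n)$ after rescaling, so it suffices to prove the FCLT for $n^{-1/2}(r(nt) - v_0 n t)$.

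For the front I would use the Feynman-Kac representation
\begin{equation*}
  u(t,x) = E_x^{\mathrm{RW}}\Big[\exp\Big(\int_0^t \xi(S_s)\,\d s\Big)\,u_0(S_t)\Big],
\end{equation*}
where $(S_s)_{s\ge 0}$ is continuous-time simple random walk of jump rate $1$, and rewrite $u(t,r(t)) \asymp 1$ as an implicit equation for $r(t)$. Assumption \eqref{eq:vAssumptions} makes $\lambda$ strictly concave and differentiable at $v_0$, so there is a unique Legendre-dual parameter $\theta_0 > 0$ and a tilted measure $\widehat P$ under which the walk has effective drift $v_0$. After this tilt, the logarithm of the Feynman-Kac weight decomposes into a deterministic piece $\theta_0 v_0 t$, a martingale-type sum of (roughly) stationary mixing increments built from $\xi - \lambda(v_0)$ sampled along the trajectory, and bounded corrections. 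Inverting the front equation then yields
\begin{equation*}
  r(t) = v_0 t + \theta_0^{-1} F(t) + O(\ln t),
\end{equation*}
where $F(t)$ is, up to negligible error, a sum of centered, stationary, exponentially mixing blocks indexed by regeneration times of the tilted walk in $\xi$. Donsker's invariance principle then gives the Brownian limit with
\begin{equation*}
  \overbar\sigma_{v_0}^2 = \theta_0^{-2}\sigma_F^2,
\end{equation*}
matching the explicit formula \eqref{eq:barsigmav}.

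The main obstacle, and the most delicate step, is to make the tilt argument quantitative at the process level under the averaged law $\P \times \ttP^\xi_{u_0}$. One has to construct genuine regeneration blocks for the tilted walk in the i.i.d.~environment with enough moments to run a functional CLT, and to upgrade single-time concentration of $M$ around $m$ and of $m$ around $r$ into tightness of the rescaled process on compact time intervals. The interplay between environmental randomness (which is integrated via the walk rather than averaged) and the walk's own randomness in the Feynman-Kac expression is exactly what produces nontrivial Gaussian fluctuations at the $\sqrt n$-scale: the environment contributes via the random amount of time the tilted walk spends at each site, while the walk contributes through the Gaussian displacement of its path at the critical tilt $\theta_0$. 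Keeping these two contributions properly decoupled, and controlling the $O(\ln t)$ errors uniformly in $t \in [0, nT]$ so that they are negligible after division by $\sqrt n$, will be the heart of the technical work.
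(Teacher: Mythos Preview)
Your overall architecture---reduce to the PAM front via $|M(t)-m(t)|=O(\ln t)$ and $|m(t)-\overbar m(t)|=O(\ln t)$, then prove an FCLT for the front---is exactly the paper's decomposition into Proposition~\ref{prop:Mclosetom}, Theorem~\ref{thm:logCorrection}, and Theorem~\ref{thm:bpFCLT}.

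There is, however, a genuine gap in the reduction step. The concentration of $M(t)$ around $m(t)$ does \emph{not} follow from the cited homogeneous references \cite{McD-95,Re-03,Dr-03,ChDr-06}: those arguments rely on spatial homogeneity (e.g., Dekking--Host type product inequalities or translation-invariant couplings) which break down when the branching rate depends on space. In the paper, Proposition~\ref{prop:Mclosetom} is proved jointly with Theorem~\ref{thm:logCorrection} via the leading-particle second-moment machinery of Section~\ref{sec:maximumproofs}. So if you are willing to invoke Theorem~\ref{thm:logCorrection} as a black box, you should also take Proposition~\ref{prop:Mclosetom} from the same source rather than from the homogeneous literature; the upper tail \eqref{eq:Mtupperbound} is easy, but the lower tail genuinely requires the inhomogeneous ballot-type estimate.

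Your route to the front FCLT differs more substantially from the paper's. The paper does not tilt the \emph{walk} to have drift $v_0$; it keeps $X$ as simple random walk and instead tilts the \emph{exponential functional} by a parameter $\eta<0$ in $\exp\{\int_0^{H_n}(\zeta+\eta)\,\d s\}$, then decomposes at successive hitting times $H_1,\dots,H_n$ (Section~\ref{sec:tiltedmeasures}). This turns $\ln u(t,vt)$ into the sum $\sum_{i=1}^{vt}L_i^\zeta(\overbar\eta(v))$ of stationary, exponentially mixing (not independent) functionals of $\xi$, to which an FCLT for associated/mixing sequences applies (Proposition~\ref{prop:IEst}, Lemma~\ref{lem:covBd}). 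A key subtlety is that the tilt parameter must be taken \emph{random}, $\overbar\eta_n^\zeta(v)$, to make the local CLT step (Lemma~\ref{lem:IIEst}) go through; the paper remarks explicitly that a deterministic tilt fails at that point. Your proposed deterministic spatial tilt $\theta_0$ plus ``regeneration times of the tilted walk in $\xi$'' is heuristically in the same spirit---indeed your $\theta_0$ coincides with $-\lambda'(v_0)=|L(\overbar\eta(v_0))|$, so the variance formula matches---but the blocks you would obtain between ladder times of a drifted SRW are not independent in $\xi$ (the walk backtracks, so successive blocks sample overlapping stretches of environment), and you would end up needing the same mixing analysis the paper carries out for the $L_i^\zeta$. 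The hitting-time decomposition is what makes this tractable; the regeneration language obscures rather than resolves the dependence.
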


\begin{remark}
  Without further mentioning, in the functional central limit theorems we
  prove, we consider the space of càdlàg functions endowed with the
  Skorokhod topology as the underlying space.
\end{remark}

Theorem~\ref{thm:maxFCLT} will directly follow from three intermediate
results (Proposition~\ref{prop:Mclosetom} and
  Theorems~\ref{thm:logCorrection}, \ref{thm:bpFCLT}  below) which are of
independent interest. To state these results, we define $m(t)$ as the
quenched median of the distribution of $M(t)$,
\begin{equation}
  \label{eq:median}
  m(t):=\sup \big\{x \in \Z  :  \ttP_{u_0}^\xi (M(t)\ge x)\ge 1/2 \big\}.
\end{equation}
Note here that $m(t)$ is a random variable on $(\Omega, \mathcal F, \P)$.

We further introduce a quantity $\overbar m(t)$ which is sometimes
referred to as the  \emph{breakpoint} in the case of homogeneous
branching rates; incidentally, we already remark at this point that in
our setting it is also instructive to interpret it as the front of the
solution to the parabolic
Anderson model, cf.~Section~\ref{ssec:PAM} below. It is defined~as
\begin{equation}
  \label{eq:origBP}
  \overbar m(t)
  := \sup \big \{ x \in \Z \, : \,
    \ttE^\xi_{u_0} \big[  N^\ge(t,x) \big]  \ge 1/2
    \big\}.
\end{equation}

As the first ingredient of the proof of Theorem~\ref{thm:maxFCLT}, we show
that $M(t)$ is sufficiently close to its median so that, for the sake of the
functional central limit theorem, $M(t)$ can effectively be replaced by
$m(t)$.
\begin{proposition}
  \label{prop:Mclosetom}
  Under assumptions \eqref{eq:xiAssumptions}, \eqref{eq:inCond} and
  \eqref{eq:vAssumptions}, there is a constant $C \in (0,\infty)$ such
  that $\mathbb P\times \mathtt P^\xi_{u_0}$-a.s.,
  \begin{equation*}
    \limsup_{t\to\infty} \frac{|M(t)-m(t)|}{\ln t} \le C.
  \end{equation*}
\end{proposition}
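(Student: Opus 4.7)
The plan is to establish a uniform exponential tail bound
\[
\ttP^\xi_{u_0}\bigl(|M(t) - m(t)| \ge y\bigr) \le C_1 e^{-c_1 y},
\qquad y \ge 0,\ t \ge 1,
\]
for $\P$-a.e.\ $\xi$, and then to pass to the almost sure statement via Borel-Cantelli along integer times. Taking $y = C \ln n$ with $C$ large makes the averaged probabilities summable, and fluctuations of $M$ and $m$ within a unit interval can be absorbed by a crude bound on the one-step displacement of the frontmost particle.

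For the upper tail, I would use the first-moment bound
\[
\ttP^\xi_{u_0}\bigl(M(t) \ge m(t) + y\bigr)
\le \ttE^\xi_{u_0}\bigl[N^\ge(t, m(t) + y)\bigr].
\]
The expected particle count beyond a threshold is essentially the front of the parabolic Anderson model, and its decay in the displacement at rate $|\lambda'(v_0)|$ follows from the strict concavity of $\lambda$ on $(v_c, \infty)$ guaranteed by~\eqref{eq:vAssumptions}; this is precisely what Lemma~\ref{lem:upperTail} referenced in the paper captures. Combined with the median property, which pins $\ttE^\xi_{u_0}[N^\ge(t, m(t))]$ to be of order one, the exponential upper tail follows.

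For the lower tail, I would use the branching Markov property. Fix $s$ of order $y$ (say $s = 2y/v_0$) and condition on $\mathcal F_{t-s}$, so that
\[
\ttP^\xi_{u_0}\bigl(M(t) < m(t) - y \,\big|\, \mathcal F_{t-s}\bigr)
= \prod_{Y \in N(t-s)} \ttP^\xi_{Y_{t-s}}\bigl(M(s) < m(t) - y\bigr).
\]
On the event $\{M(t-s) \ge m(t-s)\}$, which has quenched probability at least $1/2$, the target level $m(t) - y$ sits below the typical rightmost position reached by a sub-BRWRE started from the front by a macroscopic amount of order $y$, so each factor corresponding to a frontmost particle is bounded away from $1$. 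To make the resulting bound doubly exponentially small in $y$, a short sub-argument using the lower branching rate $\ei > 0$ shows that there are exponentially many (in $y$) particles near the front at time $t-s$, each producing an independent sub-BRWRE.

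The main obstacle is the lower tail. In the homogeneous case, the analogous estimate is classical and goes back to Bramson-style second moment and spine computations; in the random environment, the lack of translation invariance forces all intermediate quenched constants to be uniform in $\xi$. The boundedness assumption~\eqref{eq:xiAssumptions}, specifically $\ei > 0$ and $\es < \infty$, is what guarantees that local branching and displacement probabilities remain bounded away from $0$ and $1$ independently of the environment, making the quantitative comparisons robust to the randomness of $\xi$.
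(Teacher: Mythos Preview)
Your plan has a genuine gap in the upper tail. You write that ``the median property pins $\ttE^\xi_{u_0}[N^\ge(t, m(t))]$ to be of order one.'' It does not. From the definition of the median one only gets the \emph{lower} bound $\ttE^\xi_{u_0}[N^\ge(t, m(t))]\ge \ttP^\xi_{u_0}(M(t)\ge m(t))\ge 1/2$; nothing prevents this first moment from being $t^{C}$. An upper bound of the form $\ttE^\xi_{u_0}[N^\ge(t, m(t))]\le t^{C}$ is \emph{equivalent} (via Lemma~\ref{lem:upperTail}(b)) to the statement $\overbar m(t)-m(t)=O(\ln t)$, which is exactly Theorem~\ref{thm:logCorrection}. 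So your upper-tail argument is circular: it presupposes the logarithmic lag of the median behind the breakpoint, which is the hard content of Section~\ref{sec:maximumproofs}.

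The paper avoids this circularity by never comparing $M(t)$ to $m(t)$ directly. All tail bounds are proved relative to the breakpoint $\overbar m(t)$, whose defining property \emph{does} pin the first moment: the upper tail \eqref{eq:Mtupperbound} is immediate from Markov's inequality and Lemma~\ref{lem:upperTail}(b), while the lower tail (Proposition~\ref{prop:MaxPartLag}) needs the full leading-particle/Paley--Zygmund machinery of Section~\ref{ssec:leadingparticles}, including the quenched ballot-type estimate of Lemma~\ref{lem:BMoverBM}. Only once both $|M(t)-\overbar m(t)|=O(\ln t)$ a.s.\ and $|\overbar m(t)-m(t)|=O(\ln t)$ are established does Proposition~\ref{prop:Mclosetom} follow by the triangle inequality.

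Your lower-tail sketch is closer in spirit to the paper's amplification (Lemma~\ref{lem:expgrowthzero} plus Proposition~\ref{prop:leadingparticles}), but replacing $\overbar m$ by $m$ there introduces the same circularity: to argue that a sub-BRWRE launched from $m(t-s)$ typically overshoots $m(t)-y$, you need a priori control on $m(t)-m(t-s)$, and in the random environment the median of the shifted sub-process $m^{\theta_x\xi}(s)$ is a different random variable from $m(t)-m(t-s)$. The paper sidesteps this by anchoring everything to the deterministically-behaved first moment via $\overbar m$ and the sequence $T_n$.
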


The second substantial step to show Theorem~\ref{thm:maxFCLT} is the
following approximation result. It is one of the main results of this
article and it is interesting in its own right.
\begin{theorem}
  \label{thm:logCorrection}
  Assume \eqref{eq:xiAssumptions}, \eqref{eq:inCond} and
  \eqref{eq:vAssumptions} to hold. Then $m(t)\le \overbar m(t)$,
  and there exists a constant $C \in (0,\infty)$ such that for
  $\P$-a.e.~realization of $\xi$,
  \begin{equation} \label{eq:logCorrection}
    \limsup_{t\to\infty} \frac{\overbar m(t)- m(t) }{\ln t}  \le C.
  \end{equation}
\end{theorem}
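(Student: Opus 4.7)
The inequality $m(t)\le \overbar m(t)$ is an immediate consequence of Markov's inequality: since $\{M(t)\ge x\}=\{N^\ge(t,x)\ge 1\}$, we have $\ttP^\xi_{u_0}(M(t)\ge x)\le \ttE^\xi_{u_0}[N^\ge(t,x)]$, so the set of $x$ with $\ttP^\xi_{u_0}(M(t)\ge x)\ge 1/2$ is contained in the set with $\ttE^\xi_{u_0}[N^\ge(t,x)]\ge 1/2$. For \eqref{eq:logCorrection} it therefore suffices to exhibit a deterministic $K>0$ such that
\begin{equation*}
  \ttP^\xi_{u_0}\bigl(M(t)\ge \overbar m(t)-K\ln t\bigr)\ge \tfrac12
\end{equation*}
for $\P$-a.e.~$\xi$ and all $t$ large enough, since this forces $m(t)\ge \overbar m(t)-K\ln t$.

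My plan is a truncated Paley--Zygmund argument. Set $y_t:=\overbar m(t)-\lceil K\ln t\rceil$, fix an envelope $\varphi:[0,t]\to\R$ (essentially a line of slope $v_0$ with an $O((s\wedge(t-s))^{1/2})$ safety margin) and let $\widetilde N(t,y_t)$ count the $Y\in N(t)$ with $Y_t\ge y_t$ and $Y_s\le \varphi(s)$ for all $s\in[0,t]$. The envelope is the standard device that prevents pairs of particles with a late, high common ancestor from dominating the second moment. For the first moment, the many-to-one lemma gives $\ttE^\xi_{u_0}[N(t,x)]$ as a Feynman--Kac integral for a single random walk weighted by $\exp(\int_0^t\xi(X_r)\,\d r)$. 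By \eqref{eq:lyapunov} and strict concavity of $\lambda$ at $v_0>v_c$ (Proposition~\ref{prop:lyapExp} combined with \eqref{eq:vAssumptions}), $\theta:=-\lambda'(v_0)>0$, and shifting the threshold down by $K\ln t$ multiplies the Feynman--Kac integral by $\ge t^{K\theta}$ up to subpolynomial factors, uniformly in $\xi$ thanks to \eqref{eq:xiAssumptions}. A ballot-type estimate under the tilted measure that makes the single-particle walk have speed $v_0$ (cf.~the remark following \eqref{eq:vAssumptions}) shows that the envelope constraint costs only a subpolynomial factor, so $\ttE^\xi_{u_0}[\widetilde N(t,y_t)]\gtrsim t^{K\theta-C'}$.

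The crux, and main obstacle, is the second moment. A many-to-two decomposition gives
\begin{equation*}
  \ttE^\xi_{u_0}\bigl[\widetilde N(t,y_t)^2\bigr]
  = \ttE^\xi_{u_0}[\widetilde N(t,y_t)]
  + 2\int_0^t \ttE^\xi_{u_0}\Bigl[\xi(Z_s)\,g^\xi(s,Z_s;t,y_t)^2\,\ind\{Z_r\le\varphi(r)\ \forall r\le s\}\Bigr]\,\d s,
\end{equation*}
where $Z$ is the spine walk and $g^\xi(s,z;t,y_t)$ is the envelope-restricted quenched mean of descendants starting at $(s,z)$. The envelope pins $Z_s$ near $\varphi(s)$; under the tilted measure this produces a factor of order $(s(t-s)/t)^{-1/2}$ in the integrand and makes the integral comparable to $(\ttE^\xi_{u_0}[\widetilde N])^2$, uniformly in $\xi$. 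Paley--Zygmund then yields $\ttP^\xi_{u_0}(\widetilde N(t,y_t)\ge 1)\ge c_0>0$ with $c_0$ independent of $t$ and $\xi$. Finally, a boosting argument over the last $T$ time units (with $T$ large but fixed)---based on the fact that the many descendants present at time $t-T$ have conditionally independent quenched chances $\ge c_0$ to reach $y_t$ within time $T$---upgrades $c_0$ to $1/2$; enlarging $K$ absorbs the constants introduced by this step. The essential difficulty compared to the classical constant-rate case handled by Bramson is keeping each estimate uniform in the random environment, which is exactly what the tilted measure associated to $v_0$ via \eqref{eq:vAssumptions} is meant to provide.
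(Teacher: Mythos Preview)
Your reduction of $m(t)\le\overbar m(t)$ to Markov's inequality and the overall Paley--Zygmund strategy match the paper. The gap is in the choice of envelope: a deterministic barrier $\varphi(s)\approx v_0 s+O((s\wedge(t-s))^{1/2})$ does not control the quenched second moment.

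The point is that in the random environment the quenched breakpoint $\overbar m(s)$ has Gaussian fluctuations of order $\sqrt{s}$ around $v_0 s$ (Theorem~\ref{thm:bpFCLT}). Fix a time $s$ at which $\overbar m(s)=v_0 s-a\sqrt{s}$ for some $a>0$; such times occur for $s$ of order $t$ with nonvanishing probability. A spine sitting at $k=\varphi(s)\approx v_0 s$ is then \emph{above} the current breakpoint. By the space perturbation estimate (Lemma~\ref{lem:upperTail}(b)) one has
\[
\ttE^\xi_0[N(s,k)]\asymp e^{-c(k-\overbar m(s))}\asymp e^{-c a\sqrt{s}},\qquad
\ttE^\xi_k[N^\ge(t-s,\overbar m(t))]\asymp e^{+c a\sqrt{s}},
\]
since the product of these two quantities is at most $\ttE^\xi_0[N^\ge(t,\overbar m(t))]\le C$. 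Hence the many-to-two integrand at that $(s,k)$ satisfies
\[
\ttE^\xi_0[N(s,k)]\cdot\ttE^\xi_k[N^\ge(t-s,\overbar m(t))]^2
\asymp e^{+c a\sqrt{s}},
\]
which is not polynomially bounded. Your claim that ``the integral is comparable to $(\ttE^\xi_{u_0}[\widetilde N])^2$ uniformly in $\xi$'' therefore fails precisely on the realizations of $\xi$ where the breakpoint dips below the linear envelope. The tilted measure gives the spine the right \emph{mean} speed, but it cannot neutralize these $\sqrt{t}$-scale environment fluctuations.

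The paper's remedy is to make the envelope itself random and environment-dependent: it uses $\varphi^\xi(s)\approx\overbar m(s+O(\ln s))$, defined through the breakpoint inverse $T_k$ (see \eqref{eq:leading}, \eqref{eq:varphiDef}). With this choice the spine is never allowed more than $O(\ln t)$ above the current breakpoint, so the integrand above stays polynomial and the second moment is $\le t^{\gamma}$ (Lemma~\ref{lem:upperleading}). The price is that the first-moment ballot estimate is no longer ``random walk above a constant'' but ``random walk above another random (quenched) walk'' (Remark~\ref{rem:MM}), which costs $t^{-\gamma'}$ with some $\gamma'$ that is in general strictly larger than $1/2$; this is the content of Lemmas~\ref{lem:lowerleading} and~\ref{lem:BMoverBM}. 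The resulting $\ttP^\xi_0(N^{\mathcal L}_t\ge 1)\ge t^{-\gamma}$ is then amplified to $1-t^{-q}$ not by a fixed time window at the end, but by spending $r\asymp\ln t$ time at the start to produce $t^q$ independent particles at the origin (Lemma~\ref{lem:expgrowthzero}, Proposition~\ref{prop:MaxPartLag}).
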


\begin{remark}
  This result should be compared to the classical results of Bramson
  \cite{Br-78, Br-83} for homogeneous BBM (and to corresponding results
    for branching random walk \cite{AdRe-09,HuSh-09}). In the case of BBM
  the breakpoint satisfies
  $\overbar m(t) = \sqrt 2 t- \frac{1}{2\sqrt 2} \ln t + o(1)$ which can
  be proved easily using Gaussian tail estimates. Together with
  \eqref{eq:bramson},  this yields that for BBM,
  \begin{equation*}
    \lim_{t\to\infty} \frac{\overbar m(t) - m(t) }{\ln t}
    = \frac 1 {\sqrt 2}.
  \end{equation*}
  Our result thus shows that in the case of random branching rates we
  can recover an upper bound whose order matches that of the
  homogeneous branching setting.  The question of whether
  there is a limit in \eqref{eq:logCorrection} remains open.
\end{remark}

The third and last ingredient of the proof of Theorem~\ref{thm:maxFCLT}
is the functional central limit theorem for a suitably rescaled and
centered version of the process $\overbar m(t)$. In fact, we prove a
slightly more general statement: As a generalization to
\eqref{eq:origBP}, we define
\begin{equation}
  \label{eq:mvt}
  \overbar m_v(t) := \sup \Big \{ x \in \N \, : \,
    \ttE^\xi_{u_0} \big[  N^\ge(t,x) \big]
      \ge \frac 12
    e^{t\lambda(v)}  \Big \},\qquad v> 0,\ t>0,
\end{equation}
where $\lambda $ is the Lyapunov exponent defined in \eqref{eq:lyapunov}.
Note that due to the definition \eqref{eq:vzero} of $v_0$ we have
$\overbar m(t)=\overbar m_{v_0}(t)$.

\begin{theorem}
  \label{thm:bpFCLT}
  Under assumptions \eqref{eq:xiAssumptions} and \eqref{eq:inCond}, for
  every $v>v_c$, the sequence of processes
  \begin{equation*}
    [0,\infty) \ni t \mapsto
    \frac{\overbar m_v(n t) -  v n t}{\overbar\sigma_v \sqrt n},
    \quad n \in \N,
  \end{equation*}
  converges  as $n\to\infty$ in $\mathbb P$-distribution to standard
  Brownian motion. The value of $\overbar\sigma_v \in (0,\infty)$ is
  given in \eqref{eq:barsigmav} below.
\end{theorem}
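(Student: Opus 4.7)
The plan is to reduce the statement to a functional CLT for the logarithm of the Feynman--Kac representation of the PAM solution along the ballistic ray $\{x\approx vt\}$, and then to establish such a CLT by exploiting a regenerative decomposition of the tilted polymer measure associated with the speed $v$. Throughout, I identify $\ttE^\xi_{u_0}[N(t,x)]$ with the solution of the discrete PAM $\partial_t u=\Delta u+\xi u$; by \eqref{eq:inCond} the choice of $u_0$ changes everything only by a factor in $[1,C]$, invisible under diffusive scaling, so I may assume $u_0=\ind_{\{0\}}$.

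First I would use the Feynman--Kac representation
\begin{equation*}
U(t,x):=\ttE^\xi_{u_0}\bigl[N^\ge(t,x)\bigr]=E_0\bigl[\ind_{X_t\ge x}\exp\bigl(\textstyle\int_0^t\xi(X_s)\,ds\bigr)\bigr],
\end{equation*}
with $X$ a continuous-time simple random walk on $\Z$ under $P_0$. The defining equation $U(t,\overbar m_v(t))\asymp e^{t\lambda(v)}$, together with the essentially linear behavior of $x\mapsto\tfrac{1}{t}\ln U(t,x)$ near $x/t=v$ with slope close to $\lambda'(v)\ne 0$ (this is where strict concavity of $\lambda$ on $(v_c,\infty)$, guaranteed by $v>v_c$, enters), will via a standard implicit-function argument translate fluctuations of $\ln U(t,\lfloor vt\rfloor)$ of order $\sqrt t$ into fluctuations of $\overbar m_v(t)-vt$ of the same order divided by $|\lambda'(v)|$.

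Second, I would perform a Legendre tilt. Let $\theta=\theta(v)>0$ be the unique Legendre conjugate and introduce the tilted polymer measure
\begin{equation*}
\frac{d\widetilde P^\xi_{\theta,t}}{dP_0}=\frac{1}{Z^\xi_{\theta,t}}\exp\bigl(\theta X_t+\textstyle\int_0^t\xi(X_s)\,ds\bigr),\quad L(\theta):=\lim_{t\to\infty}\tfrac{1}{t}\ln Z^\xi_{\theta,t},
\end{equation*}
the last limit existing and being deterministic by subadditive ergodic theory, with $L$ and $\lambda$ Legendre dual and $L'(\theta)=v$. The identity
\begin{equation*}
U(t,\lfloor vt\rfloor)=e^{-\theta\lfloor vt\rfloor}Z^\xi_{\theta,t}\cdot\widetilde E^\xi_{\theta,t}\bigl[e^{-\theta(X_t-\lfloor vt\rfloor)}\ind_{X_t\ge\lfloor vt\rfloor}\bigr],
\end{equation*}
combined with a quenched local CLT for $X_t$ under $\widetilde P^\xi_{\theta,t}$ showing that the final factor is bounded away from $0$ and $\infty$, reduces the task to proving an FCLT for $t\mapsto\ln Z^\xi_{\theta,t}-tL(\theta)$ under $\P$.

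The technical heart of the proof, and the main obstacle, is this last FCLT. For this I would build a regeneration structure for the tilted walk in the spirit of Sznitman--Zerner: since the tilted walk is uniformly ballistic with strictly positive speed $v$ and the environment $\xi$ is bounded by~\eqref{eq:xiAssumptions}, one can define stopping times $0<\tau_1<\tau_2<\cdots$ at which $X$ attains a new running maximum never to be revisited. Under the annealed tilted law the blocks $(\tau_{k+1}-\tau_k,\,X_{\tau_{k+1}}-X_{\tau_k},\,\xi|_{[X_{\tau_k},X_{\tau_{k+1}})})$ become i.i.d.\ with exponential tails (using boundedness of $\xi$ together with standard large-deviation estimates for the ballistic tilted walk). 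The log-partition function then decomposes, up to a negligible boundary term from the last incomplete block, into a renewal-reward sum with i.i.d.\ increments
\begin{equation*}
R_k:=\theta(X_{\tau_{k+1}}-X_{\tau_k})+\int_{\tau_k}^{\tau_{k+1}}\xi(X_s)\,ds-(\tau_{k+1}-\tau_k)L(\theta),
\end{equation*}
each centered by the very definition of $L(\theta)$. Donsker's invariance principle for renewal-reward processes yields an FCLT for $\ln Z^\xi_{\theta,t}-tL(\theta)$ with an explicit variance $\sigma_*^2$. Combined with the linearization $\overbar m_v(t)-vt\approx -(\ln Z^\xi_{\theta,t}-tL(\theta))/\lambda'(v)+O(1)$ from the first step, this produces the claimed FCLT for $\overbar m_v$ with $\overbar\sigma_v^2=\sigma_*^2/\lambda'(v)^2$, matching the formula announced in \eqref{eq:barsigmav}. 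I expect the construction and moment control of the regeneration times (and the verification of the quenched local CLT under the tilt) to be the main technical hurdle; once that is in place, the remaining steps are routine applications of Donsker's theorem and the implicit-function relation between $\overbar m_v$ and $\ln U(t,\cdot)$.
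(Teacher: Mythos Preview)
Your two-step reduction---first an FCLT for $\ln U(t,\lfloor vt\rfloor)-t\lambda(v)$, then a linearization $\overbar m_v(t)-vt\approx(\ln U(t,\lfloor vt\rfloor)-t\lambda(v))/\lambda'(v)$ to invert for the front---is exactly the route the paper takes (Sections~\ref{ssec:PAMFCLT} and~\ref{ssec:bpFCLT}), and your identification $\lambda'(v)=L(\overbar\eta(v))$ is correct. The gap is in how you propose to obtain the FCLT for the log-partition function.

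The decomposition $\ln Z^\xi_{\theta,t}\approx\sum_k R_k$ cannot hold: the left-hand side is $\sigma(\xi)$-measurable (the walk has been integrated out), while each $R_k=\theta(X_{\tau_{k+1}}-X_{\tau_k})+\int_{\tau_k}^{\tau_{k+1}}\xi(X_s)\,ds-(\tau_{k+1}-\tau_k)L(\theta)$ depends on the trajectory $X$. Sznitman--Zerner regeneration produces i.i.d.\ blocks under the \emph{annealed} tilted law $\P\otimes\widetilde P^\xi_{\theta}$, which is the right tool for quenched CLTs for the walk $X_t$, but it does not furnish a decomposition of the quenched free energy into $\xi$-measurable pieces. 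The paper circumvents this by tilting in time (adding a constant $\eta$ to the potential) and working with the partition function \emph{stopped at the hitting time} $H_n$ rather than at a fixed time: the strong Markov property then yields an exact factorization $\ln Z^{\zeta,\eta}_{(n)}=\sum_{i=1}^n L^\zeta_i(\eta)$ with $L^\zeta_i(\eta)=\ln E_{i-1}\bigl[\exp\int_0^{H_i}(\zeta(X_s)+\eta)\,ds\bigr]$ a function of $\xi$ alone. These summands are not i.i.d.---$L^\zeta_i$ depends on $\xi(x)$ for all $x\le i$---but they form a stationary sequence under $\P$ with exponentially decaying conditional means (Lemma~\ref{lem:covBd}), and the paper invokes an FCLT for associated sequences (Lemma~\ref{lem:secCLT}). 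A separate comparison lemma (Lemma~\ref{lem:fixedVsVariableTime}, relying on a local CLT for $H_n$ under the tilted measure, Proposition~\ref{prop:Donsker}) then transfers the result from the hitting-time partition function to the fixed-time quantity $U(t,\lfloor vt\rfloor)$; this is the analogue of your ``quenched local CLT'' step, but for $H_n$ rather than $X_t$. For the inversion step the paper also needs a quantitative space-perturbation estimate (Lemma~\ref{lem:upperTail}) rather than a bare implicit-function argument, since the slope of $x\mapsto\ln U(t,x)$ has to be controlled uniformly over a window of width $o(t)$.
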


Theorem~\ref{thm:maxFCLT} follows directly from
Proposition~\ref{prop:Mclosetom} and Theorems~\ref{thm:logCorrection},
\ref{thm:bpFCLT}. Combining these results we  also immediately obtain a
functional limit theorem for the median~$m(t)$:
\begin{corollary}
  Assuming \eqref{eq:xiAssumptions}, \eqref{eq:inCond} and
  \eqref{eq:vAssumptions}, with
  $\overbar\sigma_{v_0}$ as in Theorem~\ref{thm:bpFCLT}, the
  sequence of processes
  \begin{equation*}
    [0,\infty) \ni t
    \mapsto  \frac{ m(n t) - v_0 n t}{\overbar\sigma_{v_0} \sqrt n},
    \quad n \in \N,
  \end{equation*}
  converges  as $n\to\infty$ in $\mathbb P$-distribution to standard
  Brownian motion.
\end{corollary}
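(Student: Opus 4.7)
The corollary follows immediately by combining Theorems~\ref{thm:logCorrection} and~\ref{thm:bpFCLT}. The plan is to decompose
\[
\frac{m(nt) - v_0 nt}{\overbar\sigma_{v_0}\sqrt n}
= \frac{\overbar m(nt) - v_0 nt}{\overbar\sigma_{v_0}\sqrt n}
- \frac{\overbar m(nt) - m(nt)}{\overbar\sigma_{v_0}\sqrt n},
\]
and handle the two summands separately. The first summand is exactly the centered and rescaled version of $\overbar m_{v_0}(nt)$ and hence converges in $\mathbb P$-distribution in the Skorokhod topology to a standard Brownian motion by Theorem~\ref{thm:bpFCLT} applied with $v = v_0$ (which satisfies $v_0 > v_c$ by \eqref{eq:vAssumptions}, using that $\overbar m = \overbar m_{v_0}$). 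It thus suffices to show the second summand vanishes uniformly on compact time intervals, $\mathbb P$-a.s.

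For the remainder, Theorem~\ref{thm:logCorrection} yields $m(s) \le \overbar m(s)$ for all $s \ge 0$ and, for $\mathbb P$-a.e.\ $\xi$, a finite $\xi$-dependent constant $C'(\xi)$ such that $\overbar m(s) - m(s) \le C'(\xi)(1 + \ln^+\! s)$ for every $s \ge 0$. The asymptotic bound of Theorem~\ref{thm:logCorrection} takes care of large $s$, and since both $\overbar m(s)$ and $m(s)$ are individually locally bounded functions of $s$ for each fixed $\xi$, the bounded regime near $s=0$ can be absorbed into the additive constant. Consequently, for every $T > 0$,
\[
\sup_{t \in [0,T]} \frac{|\overbar m(nt) - m(nt)|}{\overbar\sigma_{v_0}\sqrt n}
\le \frac{C'(\xi)\,(1 + \ln^+(nT))}{\overbar\sigma_{v_0}\sqrt n}
\xrightarrow[n \to \infty]{} 0,
\quad \mathbb P\text{-a.s.}
\]

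A standard Slutsky-type argument for the Skorokhod topology then closes the proof: since the limit process (standard Brownian motion) is continuous, adding a càdlàg perturbation that converges almost surely to $0$ uniformly on compact intervals does not alter the weak limit, so the sum converges in $\mathbb P$-distribution to the same Brownian motion. There is no serious obstacle here beyond invoking the two input theorems; the only small point worth noticing is the need to control the remainder uniformly on the whole compact interval $[0,T]$, including a neighbourhood of $t = 0$ where $\ln(nt)$ need not be of order $\ln n$, which is precisely why the bound on $\overbar m - m$ above is stated with an additive constant in addition to the logarithmic term.
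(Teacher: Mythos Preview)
Your proof is correct and follows exactly the approach the paper indicates: the paper does not give a separate proof for this corollary but states that it follows immediately by combining Theorems~\ref{thm:logCorrection} and~\ref{thm:bpFCLT}, which is precisely the decomposition and Slutsky-type argument you carry out.
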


\subsection{Implications for the PAM and randomized Fisher-KPP equation}
\label{ssec:PAM}

As we have touched upon previously in the introduction, there is a close
connection between certain partial differential equations and branching
processes: In the case of BBM, it is easy to see that the
density $u^\BBM(t,x)$ of the expected number of particles satisfies
\begin{equation}
  \label{eq:ubbm}
  \frac{\partial}{\partial t} u^\BBM
  =\frac 12\Delta u^\BBM+u^\BBM.
\end{equation}
As this equation is essentially the heat equation (write
  $u^\BBM=e^t \tilde u$), this allows to estimate the corresponding
breakpoint $\sup\{x>0:u(t,x)\ge 1/2\}$ with high accuracy using Gaussian
tail estimates. Moreover, as we already mentioned,
$w^\BBM(t,x)=\mathbb P(M(t)\ge x)$ satisfies the Fisher-KPP
equation~\eqref{eq:detKPP}. In particular, the front of the solution to
\eqref{eq:detKPP}, defined as $\sup\{ x \in \R  :  w^\BBM(t,x) \ge 1/2\}$,
coincides with the median $m(t)$ of the distribution of the maximal
particle of the BBM. Therefore, Bramson's result \eqref{eq:bramson}
immediately gives equally precise information on the position of the
front of the solution to \eqref{eq:detKPP} as well.

In our setting of inhomogeneous branching rates the situation is both
more complicated but also more interesting.  The breakpoint in the case
of heterogeneous branching rates corresponds to the front of the solution
to the \emph{parabolic Anderson model} (PAM), a discrete randomized
version of~\eqref{eq:ubbm},
\begin{equation}
  \begin{split}
    \label{eq:PAM}
    \frac{\partial u }{\partial t} (t,x)
    &= \Delta_{\mathrm d} u  (t,x) + \xi (x ) u(t,x),
    \qquad t\ge 0, x\in \mathbb Z, \\
    u (0,x)&=u_0(x), \qquad x\in \mathbb Z,
  \end{split}
\end{equation}
Here, $\Delta_{\mathrm d}  f(x)=  \frac12 (f(x+1)+f(x-1) -2f(x))$ stands for the
discrete Laplace operator.

It is well-known that  conditionally on $\xi$, the expected number of
particles at time $t$ and position $x$
\begin{equation}
  \label{eq:uxiN}
  u (t,x) := \ttE_{u_0}^\xi [N(t,x)]
\end{equation}
solves \eqref{eq:PAM} (cf.~the original source \cite{GaMo-90} as well as
  \cite{GaKo-05} and \cite{Ko-16} for more recent surveys).
Hence, due to \eqref{eq:origBP} and \eqref{eq:uxiN}, the process
$\overbar m(t)$ can be viewed as the \emph{front} of the solution to the
PAM, which, according to Theorem \ref{thm:bpFCLT},
fulfills a corresponding functional central limit theorem.

This functional central limit theorem can be supplied with another one,
for the logarithm of the function $u(t,x)$ itself: Since statement
\eqref{eq:lyapunov} can  be read as a law of large numbers for
$t^{-1}\ln u (t,\floor{tv})$, it is natural to inquire about the
fluctuations. Our investigations  lead to a corresponding invariance
principle  which is of independent interest.
\begin{theorem}
  \label{thm:PAMFCLT}
  Under assumptions \eqref{eq:xiAssumptions} and \eqref{eq:inCond}, for
  every $v>v_c$ there exists $\sigma_v\in (0,\infty)$ given explicitly in
  \eqref{eq:limitSigma} below, such that the sequence of processes
  \begin{equation*}
    [0,\infty) \ni t \mapsto \frac{1}{\sigma_v\sqrt {vn}}
    \big(\ln u (nt,\floor{vnt}) - nt \lambda (v)\big),
    \qquad n\in \N,
  \end{equation*}
  converges as $n\to\infty$ in $\P$-distribution to standard Brownian
  motion.
\end{theorem}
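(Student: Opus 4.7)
By the (INI) assumption and the reduction lemmas alluded to after \eqref{eq:inCond}, it suffices to treat $u_0=\ind_{\{0\}}$, so by the many-to-one formula
\[
u(t,x) = E_{0}^{\mathrm{RW}}\!\bigl[\mathrm{e}^{\int_0^{t}\xi(X_s)\,\d s}\,\ind\{X_t = x\}\bigr],
\]
where $X$ is continuous-time simple random walk on $\Z$ with jump rate $1$. The task is to describe the $\P$-fluctuations, at scale $\sqrt{vn}$, of $\ln u(nt,\lfloor vnt\rfloor)-nt\lambda(v)$. The strategy is to pass through a spatially-tilted partition function, to identify $\ln u(t,\lfloor vt\rfloor)$ with this tilted partition function up to a small correction, and finally to write the latter as a sum of stationary, weakly dependent quantities indexed by the visited sites.

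\emph{Step 1 (Legendre duality and tilt).} Introduce
\[
L_t(\theta) := E_{0}^{\mathrm{RW}}\!\bigl[\mathrm{e}^{\theta X_t + \int_0^t \xi(X_s)\,\d s}\bigr],\qquad \theta\in\R,
\]
and the quenched free energy $\tilde\lambda(\theta)=\lim_{t\to\infty}t^{-1}\ln L_t(\theta)$, whose existence and non-randomness (together with convexity) follow from the same sub/superadditive arguments that give Proposition~\ref{prop:lyapExp}. Legendre duality yields $\lambda(v)=\inf_{\theta}\bigl(\tilde\lambda(\theta)-\theta v\bigr)$, and the hypothesis $v>v_c$ together with the strict concavity of $\lambda$ on $(v_c,\infty)$ produces a unique minimizer $\theta_v$ with $\tilde\lambda'(\theta_v)=v$ and $\tilde\lambda''(\theta_v)>0$.

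\emph{Step 2 (Laplace/local-CLT inversion).} Using the exponentially tilted random walk measure $\d\widehat P_\theta^{\xi}\propto \mathrm{e}^{\theta X_t+\int_0^t\xi(X_s)\,\d s}\,\d P_0^{\mathrm{RW}}$, which (for $\theta=\theta_v$) has mean displacement $vt$ and non-degenerate Gaussian fluctuations (because $\tilde\lambda''(\theta_v)>0$), show by a quenched local-CLT-type saddle-point argument that
\[
\ln u(t,\lfloor vt\rfloor) \;=\; \ln L_t(\theta_v)\;-\;\theta_v\lfloor vt\rfloor\;+\;O(\ln t),
\]
uniformly on the time scales needed for tightness. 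Since $\ln t\ll\sqrt{n}$, this inversion error does not affect the functional CLT.

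\emph{Step 3 (Spatial decomposition and Donsker).} Because the tilted walk moves at positive speed $v>0$, the sequence of first hitting times $T_k:=\inf\{s:X_s=k\}$ is well defined on the time scales of interest. Writing
\[
\int_0^t \xi(X_s)\,\d s \;=\; \sum_{k=0}^{\lfloor vt\rfloor -1}\int_{T_k\wedge t}^{T_{k+1}\wedge t}\xi(X_s)\,\d s \;+\; (\text{boundary})
\]
and combining with the one-dimensional transfer-matrix structure of the random walk, one obtains a representation
\[
\ln L_t(\theta_v)-t\tilde\lambda(\theta_v) \;=\; \sum_{k=0}^{\lfloor vt\rfloor} F_k(\xi) \;+\; o(\sqrt{t}),
\]
where the $F_k$ are stationary (by translation invariance of $\P$), bounded, and sufficiently weakly dependent (m-dependent after a controlled truncation, using the i.i.d.\ structure of $\xi$ and the boundedness from \eqref{eq:xiAssumptions}). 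Donsker's invariance principle for stationary mixing sequences then identifies the limiting Brownian motion with variance per site $\sigma_v^2$, whose formula is the content of \eqref{eq:limitSigma}. Since the index $k$ runs up to $\lfloor vnt\rfloor$, the natural normalization is $\sigma_v\sqrt{vn}$, giving the stated scaling.

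\emph{Main obstacle.} The delicate points are Step 2 and the mixing control in Step 3. For Step 2 one must prove a \emph{quenched} local CLT for the $\theta_v$-tilted walk, uniformly enough in $\xi$ to keep the remainder $O(\ln t)$; this requires $\tilde\lambda''(\theta_v)>0$, which in turn requires \eqref{eq:vAssumptions}—the point where the hypothesis $v>v_c$ is essential (outside this regime the Lyapunov exponent is affine and the saddle point degenerates). For Step 3 the challenge is to set up the additive decomposition so that each summand depends on $\xi$ only through finitely many coordinates near $k$ (with uniform moment/tail bounds), since the tilt itself depends globally on $\xi$; this is where boundedness of $\xi$ and one-dimensionality of the state space are decisively used.
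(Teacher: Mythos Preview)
Your overall strategy---reduce to $u_0=\ind_{\{0\}}$, tilt to make the target velocity typical, invoke a local CLT, then decompose spatially via hitting times and apply an FCLT for stationary weakly dependent sequences---is exactly the paper's route. The parametrisation, however, is dual to the paper's and this creates real friction. You tilt \emph{spatially} by $e^{\theta X_t}$ and try to decompose the free partition function $L_t(\theta_v)$; the paper instead passes first to the hitting-time-stopped quantity $Y_v(n)=E_0\big[e^{\int_0^{H_n}\zeta(X_s)\,\d s};\,H_n\le n/v\big]$ (via Lemma~\ref{lem:fixedVsVariableTime}, which gives $u(t,\floor{vt})\asymp e^{\es t}Y^\approx_v(vt)$) and tilts \emph{temporally} by adding $\eta$ to the potential. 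The payoff is that the normalising constant factorises exactly, $Z^{\zeta,\eta}_{(n)}=\prod_{i=1}^n e^{L_i^\zeta(\eta)}$, so the spatial sum $\sum_i L_i^\zeta(\eta)$ appears \emph{for free} rather than through the heuristic ``transfer-matrix'' step you sketch in Step~3. Your proposed decomposition of $\ln L_t(\theta_v)$ as $\sum_k F_k(\xi)$ is not obvious: $L_t(\theta_v)$ integrates over all endpoints, not just $\floor{vt}$, and the hitting-time splitting you write down applies to individual trajectories, not to the partition function.

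The more delicate issue is your use of a \emph{deterministic} tilt $\theta_v$ in Step~2. The paper explicitly flags this (see the remark after Lemma~\ref{lem:IIEst}): under the deterministic tilt $\overbar\eta(v)$, the quenched mean of $H_n$ misses $n/v$ by an amount of order $\sqrt{n\ln n}$ (this is the content of Proposition~\ref{prop:etaExist}), so the event one needs for the local CLT sits $O(\sqrt{\ln n})$ standard deviations from the mean, and the Berry--Esseen estimate \eqref{eq:BEstein} no longer gives sharp control. The paper resolves this by tilting with the $\xi$-dependent $\overbar\eta_n^\zeta(v)$, which centres the local CLT exactly, and then showing separately (Lemma~\ref{lem:concentration}, Corollary~\ref{cl:explicitW}) that replacing $\overbar\eta_n^\zeta(v)$ by $\overbar\eta(v)$ in the resulting sum costs only $O(\ln n)$. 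Your Step~2 asserts the $O(\ln t)$ error without this mechanism; it may be salvageable via a moderate-deviation argument, but it is not the routine saddle-point step you present it as. Finally, a minor correction: the assumption \eqref{eq:vAssumptions} (i.e.\ $v_0>v_c$) is not needed for Theorem~\ref{thm:PAMFCLT}; what you use is the hypothesis $v>v_c$, which is part of the statement.
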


While this result for the front of the solution to the PAM is interesting
in its own right, the question naturally arises of what one can say about
the front of the solution to its non-linear version, the randomized
discrete Fisher-KPP equation
\begin{equation} \label{eq:dKPP}
  \frac{\partial w}{\partial t}(t,x)
  = \Delta_{\mathrm d} w(t,x) + \xi (x)w(t,x) (1-w(t,x)), \qquad
  t\ge 0,\, x\in \Z,
\end{equation}

Previous results (in continuum space) on the front of the solution to
\eqref{eq:dKPP} have been obtained in \cite{GaFr-79} (see also
  \cite{Fr-85}), \cite{No-11}, and \cite{HaNoRoRy-12}. First, under
suitable regularity and mixing assumptions, and a Heaviside type initial
condition, $w(0,\cdot)=\ind_{-\N_0}$, as in \eqref{eq:detKPP}, the
existence  of the speed of the front
\begin{equation} \label{eq:frontFKPP}
  \widehat m(t) := \sup \{ x \in \R : w(t,x) = 1/2\}
\end{equation}
of the solution to the randomized Fisher-KPP equation \eqref{eq:dKPP} is
known \cite[Theorem 7.6.1]{Fr-85}: For  $\P$-a.e.~realization
of $\xi$,
\begin{equation} \label{eq:LLNcondMed}
  \lim_{t \to \infty} t^{-1}{  \widehat m(t)} = \hat v_0,
\end{equation}
where $\hat v_0$ is non-random and corresponds to the speed of the front of
the linearized equation, which is a ``continuum space PAM''. Here, as in
Bramson's work \cite{Br-83}, a precise analysis of the Feynman-Kac
formula plays an important role in the proofs.

In the case of $\xi$ periodic instead of random, in \cite{HaNoRoRy-12}
it has been shown that there is a logarithmic correction term between
$m(t)$ and $\widehat m(t)$, and the authors were able to characterize the
constant in front of the logarithmic correction as a certain minimizer.

To the best of our knowledge, nothing is known about the fluctuations of
$\widehat m(t)$  for the Heaviside-type initial conditions in the case of
random branching rates. For a different and due technical reasons
restricted set of initial conditions, Nolen \cite{No-11} has derived a
central limit theorem for the position of the front of the solution to
\eqref{eq:dKPP} by analytic means. To put our results into context, let
us describe the assumptions of \cite{No-11} more precisely: The initial
condition $w_0(x,\xi)$ of \cite{No-11} is required to depend on the
randomness of the environment. It should satisfy
$\lim_{x \to -\infty} w_0(x,\xi) = 1$ (which roughly corresponds to our
  assumption~\eqref{eq:inCond}), and, more importantly,
\begin{equation}
  \label{eq:nolic}
  c(\xi ) \widetilde w(x,\xi , \gamma) \le w_0(x, \xi)
  \le C(\xi) \widetilde w(x,\xi, \gamma)
  \quad \text{for all } x > 0.
\end{equation}
Here $\widetilde w = \widetilde w(x,\xi, \gamma)$, $t\ge 0$, $x\ge 0$, is
a non-negative solution to the ordinary differential equation
$\frac 12\Delta \widetilde w = (\xi -\gamma )\widetilde w$ satisfying
$\widetilde w(0,\xi ,\gamma )=1$ and which decays to $0$ as $x\to\infty$.
It was known previously that $\widetilde w$ exists whenever $\gamma$ is
larger than a certain $\overbar \gamma $. In addition, there is another
$\gamma^* > \overbar \gamma $ such that whenever $\gamma \ge \gamma^*$
and the initial condition satisfies \eqref{eq:nolic}, then the law of
large numbers for the velocity of the traveling wave, that is
\eqref{eq:LLNcondMed}, holds with the same speed $\hat v_0$. In order to
prove his central limit theorem, Nolen needs to assume that
$\gamma \in (\overbar \gamma , \gamma^*)$, which leads to traveling waves
with a larger velocity $v(\gamma )> \hat v_0$. The initial conditions
corresponding to such $\gamma $ decay to $0$ exponentially as
$x\to\infty$, but the rate of decay is slow.

It is worthwhile to remark that such a distinction between the waves with
the minimal (or `critical') velocity, and the waves with strictly
larger velocity is present already in the paper of Bramson~\cite{Br-83}.
Already there it turns out that the `supercritical' is easier to
handle.

One of our main motivations for writing this paper was to understand the
behavior of the front of the traveling wave solution to randomized
Fisher-KPP equation in the `critical' case, in particular for initial
conditions of the form $w_0 = \ind_{-\mathbb N_0}$, that are, from the
point of view of the BRWRE as well as of the PAM, more natural.

\begin{theorem}
  \label{thm:FKPPCLT}
  Let $\widehat m(t)$ be the front of the solution to discrete randomized
  Fisher-KPP equation~\eqref{eq:dKPP} with initial condition
  $w_0=\ind_{-\N_0}$ defined similarly as in \eqref{eq:frontFKPP} by
  \begin{equation} \label{eq:dKPPfront}
    \widehat m(t) := \sup \{ x \in \Z : w(t,x) \ge 1/2\}.
  \end{equation}
  Then, assuming that \eqref{eq:xiAssumptions} and \eqref{eq:vAssumptions} hold true,
    ${(\widehat m(t)-v_0 t)}/{(\overbar \sigma_{v_0}\sqrt t)}$
  converges as $t\to\infty$ in $\mathbb P$-distribution to a standard
  normal random variable.
\end{theorem}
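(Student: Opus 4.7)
The plan is to bound $|\widehat m(t) - \overbar m(t)|$ by $O(\log t)$, $\P$-almost surely. Since Theorem~\ref{thm:bpFCLT} provides the CLT for $\overbar m(t)$ with centering $v_0 t$ and scaling $\overbar\sigma_{v_0}\sqrt t$, and $\log t = o(\sqrt t)$, such a sandwich immediately transfers the CLT to $\widehat m(t)$ via Slutsky's lemma.

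The upper bound $\widehat m(t) \leq \overbar m(t)$ is a direct consequence of comparison arguments. The maximum principle applied to \eqref{eq:dKPP}, combined with $0 \leq w_0 \leq 1$ and the fact that $w \equiv 0$ and $w \equiv 1$ are stationary solutions, yields $0 \leq w \leq 1$. Consequently $\xi(y)\,w(1-w) \leq \xi(y)\,w$, so $w$ is a subsolution of the PAM \eqref{eq:PAM} with the same initial datum $u_0 = w_0$. A standard comparison argument gives $w(t,y) \leq u(t,y) \leq \ttE^\xi_{u_0}[N^\geq(t,y)]$, and for $y > \overbar m(t)$ the last quantity is strictly less than $1/2$ by \eqref{eq:origBP}, hence $w(t,y) < 1/2$ and $\widehat m(t) \leq \overbar m(t)$.

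The matching lower bound $\widehat m(t) \geq \overbar m(t) - C \log t$ is the main obstacle. The approach I would take is via the Feynman-Kac representation
\begin{equation*}
w(t, y) = \E_y\Bigl[w_0(X_t)\, \exp\Bigl(\int_0^t \xi(X_s)\bigl(1 - w(t-s, X_s)\bigr)\, \d s\Bigr)\Bigr],
\end{equation*}
where $(X_s)_{s \ge 0}$ is a continuous-time simple random walk on $\Z$ with jump rate $1$ and $\E_y$ denotes expectation under $X_0 = y$. Compared with the analogous formula for $u(t,y)$ (which lacks the factor $1-w$ in the exponent), the only source of deficit is the nonlinear correction; in the homogeneous BBM setting Bramson~\cite{Br-78,Br-83} shows that this deficit accumulates to at most a constant along the trajectories dominating the expectation, thereby shifting the front by only $O(\log t)$. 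I would adapt this to the BRWRE by working under the tilted Gibbs measure associated to velocity $v_0$ (cf.~the discussion following \eqref{eq:vAssumptions}), under which trajectories concentrate on straight-line paths joining $y$ to $(-\infty,0]$; along such a path, $X_s$ typically stays near the front $\widehat m(t-s)$, so the integrand $\xi(X_s)\,w(t-s, X_s)$ can be controlled using the upper bound established above applied at earlier times, together with the super-exponential decay of $u(t-s,\cdot)$ ahead of $\overbar m(t-s)$. This should yield $w(t,y) \geq c\,u(t,y)$ for some constant $c > 0$ and $y = \overbar m(t) - C\log t$, hence $w(t,y) \geq 1/2$ once $C$ is chosen large enough. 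The essential difficulty is the bootstrap nature of this argument: control of $w$ near the front at time $t$ presupposes uniform control of $w$ near the front at all earlier times, which has to be built up inductively and combined with the tilted-measure analysis already developed in the proofs of Theorems~\ref{thm:logCorrection} and \ref{thm:bpFCLT}.
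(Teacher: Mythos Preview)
Your approach differs fundamentally from the paper's, and the lower-bound part is not a proof but a research programme with substantial unproven steps.

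The paper does \emph{not} compare $\widehat m(t)$ with $\overbar m(t)$ directly via PDE arguments. Instead it exploits a McKean-type probabilistic representation (Proposition~\ref{prop:KPPBRW}): the function $w(t,x)=\ttP_x^\xi(M(t)\ge 0)$ solves~\eqref{eq:dKPP} with initial condition $\ind_{\N_0}$. Reflecting $x\mapsto -x$ and writing $\tilde\xi(x)=\xi(-x)$, one obtains
\[
\widehat m(t)=\sup\bigl\{x\in\Z:\ttP_{-x}^{\tilde\xi}(M(t)\ge 0)\ge \tfrac12\bigr\}.
\]
Since the law of $\xi$ under $\P$ is invariant under translation and reflection, for each fixed $x$ one has
$\P\bigl(\ttP_{-x}^{\tilde\xi}(M(t)\ge 0)\ge\tfrac12\bigr)=\P\bigl(\ttP_0^{\xi}(M(t)\ge x)\ge\tfrac12\bigr)$,
so $\widehat m(t)$ has the same one-dimensional $\P$-distribution as the median $m(t)$. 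The CLT for $\widehat m(t)$ is then immediate from Corollary~2.5. The entire proof is half a page.

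Your route is much harder and, crucially, the second half is only a sketch. The upper bound $\widehat m(t)\le\overbar m(t)$ is fine. But the proposed lower bound requires showing $w(t,y)\ge c\,u(t,y)$ near the front by controlling the nonlinear damping $\int_0^t\xi(X_s)w(t-s,X_s)\,\d s$ along tilted trajectories. You yourself flag the bootstrap issue: control at time $t$ presupposes uniform control at all earlier times. In Bramson's homogeneous setting this is already delicate; here the environment is random, the front fluctuates at scale $\sqrt t$, and the tilted-measure machinery of Sections~\ref{sec:tiltedmeasures}--\ref{ssec:comparisonlemma} would need substantial extension. None of this is carried out, so the lower bound remains conjectural.

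Note also what each approach would buy. The paper's distributional argument is cheap but only delivers a \emph{non-functional} CLT---the reflection argument identifies marginals, not joint laws, because the underlying environment shifts with~$x$. Your approach, if completed, would give a $\P$-a.s.\ sandwich $|\widehat m(t)-\overbar m(t)|\le C\ln t$ and hence a \emph{functional} CLT, which the paper explicitly leaves open. So your strategy aims higher, but as written it is not a proof.
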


The previous theorem is a \emph{non-functional} central limit theorem
only, which might look surprising in view of our previous results. The
reason for this is the fact that the connection between the BRWRE and the
corresponding randomized Fisher-KPP equation is slightly more complicated
than in the homogeneous case, due to the fact that the BRWRE is not
translation and reflection invariant (given $\xi $): We will prove in
Proposition~\ref{prop:KPPBRW} that
\begin{equation*}
  w(t,x) = \mathtt P_x^\xi(M(t)\ge 0)
\end{equation*}
solves the randomized Fisher-KPP equation \eqref{eq:dKPP} with initial
condition $w(0,\cdot)=\ind_{\mathbb N_0}$. This should be contrasted with
the definition of $w^{\BBM}(t,x)=\mathbb P({M(t)\ge x})$ used in
\eqref{eq:detKPP}.

\paragraph{Acknowledgment} The authors would like to thank the Columbia
University Mathematics Department as well as the Mathematics Department
of the University of Vienna for their hospitality and financial support.
Part of this work has been completed during a stay at the \textit{Random
  Geometry} semester programme of the Isaac Newton Institute for
Mathematical Sciences in Cambridge, UK, in 2015. AD has been supported by
the  UoC  Forum \textit{Classical and quantum dynamics of interacting
  particle systems} of the Institutional Strategy of the University
  of Cologne within the German Excellence Initiative. The authors would like
to thank an anonymous referee as well as Lars Schmitz for carefully
reading the manuscript and for valuable comments.

\section{Strategy of the proof}
\label{sec:strategy}

We now roughly explain the strategy of the proof of our main results, and
describe the organization of the paper. As it is common in the branching
random walk literature, a first moment method is used to provide an upper
bound on the maximum of the BRWRE;  a complementary truncated second
moment computation gives a lower bound.

Luckily, similarly to the homogeneous case, the moments of the number
of particles in the BRWRE (possibly satisfying certain additional
  restrictions) have an explicit representation.  This representation, in
terms of expectations of certain functionals of simple random walk, is
called Feynman-Kac formula, `many-to-one lemma' or `many-to-few lemma',
depending on the source and context. To introduce it, for $x \in \Z$, let
$P_x$ denote the law of the continuous-time simple random walk
$(X_t)_{t \ge 0}$ on $\mathbb Z$ with jump rate $1$ and denote by $E_x$
the corresponding expectation. The following proposition, which is an
adaptation  of Section 4.2 of \cite{HaRo-17} or Theorem~2.1 of
\cite{GuKoSe-13}, gives the representation for first and second moments.
Its proof is an easy modification of the proofs of these results, and it
is therefore omitted.

\begin{proposition}[Feynman-Kac formula]
  \label{prop:FK}
  Let $\varphi_1, \varphi_2$ be càdlàg functions from $[0,t]$ to
  $ [-\infty, \infty]$ satisfying  $\varphi_1 \le \varphi_2$. Then the first and second
  moments of the number of particles in $N(t)$ whose genealogy stays
  between $\varphi_1$ and $\varphi_2$ are given by
  \begin{equation}
    \label{eq:firMomFormula}
    \begin{split}
      &\ttE_0^\xi \big [ \big| \big\{ Y \in N(t)  :
          \varphi_1(r) \le Y_r \le \varphi_2 (r) \ \forall r \in [0,t]
          \big\} \big|  \big] \\
      &\qquad =
      E_0 \bigg[\exp \Big\{ \int_0^t \xi(X_r) \, \d r \Big\};
        \varphi_1 (r)\le X_r \le \varphi_2 (r) \ \forall r \in [0,t]
        \bigg]
    \end{split}
  \end{equation}
  \begin{equation}
    \label{eq:secMomFormula}
    \begin{split}
     & \ttE_0^\xi \big [\big| \big\{ Y \in N(t) :
            \varphi_1(r) \le Y_r \le \varphi_2(r) \ \forall r \in [0,t]
            \big\} \big|^2 \big] \\
        & =
      E_0 \bigg[\exp \Big\{ \int_0^t \xi(X_r) \, \d r \Big\};
         \varphi_1(r) \le X_r \le \varphi_2(r)  \forall r \in [0,t]
        \bigg] \\
      &
      \begin{aligned}
        & + 2 \int_0^t   E_0 \bigg[
          \exp \Big\{ \int_0^s \xi(X_r) \, \d r \Big\} \xi(X_s)
          \ind_{ \varphi_1(r) \le X_r \le \varphi_2(r)
            \forall r \in [0,s] }\\
          & \times \Big( E_{X_s} \Big[
              \exp \Big\{ \int_0^{t-s} \xi(X_r) \, \d r \Big\}
              \ind_{\varphi_1(r+s) \le X_r \le \varphi_2(r+s)
                \forall r \in [0,t-s] } \Big] \Big)^2
          \bigg]\d s.
      \end{aligned}
    \end{split}
  \end{equation}
  In particular, \eqref{eq:firMomFormula} implies that
  \begin{equation}
    \label{eq:FKforN}
    \begin{split}
      \ttE_{u_0}^\xi [N^\ge(t,n)]
      &= \sum_{i\in \mathbb Z}u_0(i)
      E_i\Big[ \exp \Big\{ \int_0^t \xi(X_r) \, \d r \Big\}; X_t\ge n\Big].
    \end{split}
  \end{equation}
\end{proposition}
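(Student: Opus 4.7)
The plan is to deduce both moment formulas from standard many-to-one / many-to-two identities for BRWRE, specialized to the path functional $F(w) := \ind_{\{\varphi_1(r) \le w(r) \le \varphi_2(r)\ \forall r \in [0,t]\}}$ encoding the corridor constraint.

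For the first moment, I would first establish the general many-to-one lemma: for every bounded measurable $F$ on càdlàg paths $[0,t] \to \R$,
\[
\ttE^\xi_0\Big[\sum_{Y \in N(t)} F(Y_{[0,t]})\Big]
= E_0\Big[\exp\Big\{\int_0^t \xi(X_r)\,\d r\Big\} F(X_{[0,t]})\Big].
\]
A clean derivation proceeds by conditioning on the first event of the initial particle (a jump at rate $1$ or a binary branching at rate $\xi(x)$), which yields a linear renewal equation for $m_F(t,x) := \ttE^\xi_x[\sum_Y F(Y_{[0,t]})]$; a Gronwall/uniqueness argument identifies its unique bounded solution with the Feynman–Kac expectation on the right. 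Specializing $F$ to the corridor indicator gives \eqref{eq:firMomFormula}, and taking $F(X) = \ind_{X_t \ge n}$ together with the starting configuration $u_0$ yields \eqref{eq:FKforN}.

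For the second moment, write $A(t)$ for the set of particles whose genealogy stays in the corridor and decompose $|A(t)|^2 = |A(t)| + |\{(Y,Y') \in A(t)^2 : Y \ne Y'\}|$. The diagonal term gives the first summand on the right of \eqref{eq:secMomFormula} via \eqref{eq:firMomFormula}. For the ordered-pair term, I would condition on the most recent common ancestor of each pair $(Y,Y')$: its trajectory $(X_r)_{r \in [0,s]}$ starts at $0$, stays in the corridor, and branches at some time $s \in (0,t)$ at position $X_s$, after which the two offspring initiate independent BRWRE from $X_s$ over a time horizon $t-s$, each constrained by the shifted corridor $\varphi_i(\cdot + s)$. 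The many-to-one lemma applied to the ancestral trajectory, together with the infinitesimal branching intensity $\xi(X_s)\,\d s$, produces the outer expectation in \eqref{eq:secMomFormula}, while the first-moment formula applied independently to each subtree produces the inner Feynman–Kac expectation squared; the factor $2$ accounts for the ordering of the pair. Fubini then delivers the integral.

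The main technical point is the bookkeeping of the corridor constraint across the branching event: the joint event $\{Y,Y' \in A(t)\}$ must decompose cleanly into the ancestor's trajectory lying in the corridor on $[0,s]$ and each of the two subtrees' genealogies lying in the shifted corridor on $[0,t-s]$. Once this decomposition is verified, the branching property of the BRWRE at time $s$ makes the two subtrees conditionally independent given the ancestor, which is precisely what produces the squared inner Feynman–Kac expectation in \eqref{eq:secMomFormula}. No additional difficulty arises from the time-dependence or the possibly infinite values of $\varphi_1, \varphi_2$, since everything is expressed in terms of expectations of nonnegative functionals and the corridor indicator is a bounded measurable path functional.
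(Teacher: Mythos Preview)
The paper does not give a proof of this proposition at all: it simply states that the result is an adaptation of Section~4.2 of \cite{HaRo-17} or Theorem~2.1 of \cite{GuKoSe-13}, and that the proof is an easy modification of the proofs in those references and is therefore omitted. Your sketch via the many-to-one lemma for the first moment and the decomposition over the most recent common ancestor for the second moment is precisely the standard argument underlying those references, so your approach is correct and aligns with the literature the paper cites.
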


In the first principal step of the proof we analyze the first moment
formula~\eqref{eq:FKforN} for $n=vt$ with $v>0$. To understand this
analysis, it is useful to recall the corresponding representation from
the homogeneous case (cf.~\cite{AdRe-09}). In that setting it is almost
trivial that $\mathtt E_0^{\xi \equiv 1}[ N^{\ge}(t,vt)] = e^t P_0(X_t\ge vt)$.
The probability on the right-hand side can then be analyzed
using exact large deviation results (see e.g.~\cite{DeZe-98},
  Theorem~3.7.4) to obtain a precise asymptotic formula.

While \eqref{eq:FKforN} has a different structure, its asymptotics can
be understood, at least at a heuristic level, by the same ingredients
that are usually used in the proof of exact large deviation theorems: a
tilting and a local central limit theorem. Slightly more in detail, by
introducing a tilting parameter $\eta $, using~\eqref{eq:FKforN}, we can write
\begin{equation}
  \label{eq:heuraa}
  \mathtt E^\xi_0 [N(t,vt)]
  = e^{-t\eta }
  E_0\Big[ \exp \Big\{ \int_0^t (\xi(X_r)+\eta ) \, \d r \Big\}; X_t= vt\Big].
\end{equation}
This suggests to introduce new ``Gibbs measures'' on the space of
random walk trajectories, whose density with respect to simple random
walk is the exponential factor in \eqref{eq:heuraa}
(cf.~Section~\ref{sec:tiltedmeasures}). We then adjust $\eta$ so that the
event $X_t=vt$ is typical under such a Gibbs measure. Next, using a
suitable local central limit theorem, the right-hand side of
\eqref{eq:heuraa} can be approximated by
(cf.~Proposition~\ref{prop:Donsker})
\begin{equation*}
  \sim \frac{c}{\sqrt t}e^{-t\eta }
  E_0\Big[ \exp \Big\{ \int_0^t (\xi(X_r)+\eta ) \, \d r \Big\};H_{vt}\le
    t \Big],
\end{equation*}
where $H_x$ stands for the hitting time of $x$ by the simple random walk
$X$, see \eqref{eq:Hi} below. This can further be rewritten as
\begin{equation*}
  \sim \frac{c}{\sqrt t}e^{-t\eta }
  E_0\bigg[\prod_{x=1}^{vt}
    \exp \Big\{ \int_{H_{x-1}}^{H_x} (\xi(X_r)+\eta ) \, \d r \Big\}
    \times
    \exp \Big\{ \int_{H_{vt}}^{t} (\xi(X_r)+\eta ) \, \d r \Big\}\bigg].
\end{equation*}
If one ignores the last factor in the expectation, which can be justified
using the concentration of the hitting times of the random walk under the
Gibbs measure (see Section \ref{ssec:comparisonlemma}), then by the Markov
property
\begin{equation*}
  \begin{split}
    &\sim  \frac{c}{\sqrt t}e^{-t\eta }
    \prod_{x=1}^{vt} E_{x-1}\bigg[ \exp \Big\{ \int_{0}^{H_x}
        (\xi(X_r)+\eta ) \, \d r \Big\}\bigg]
    \\&=
    \frac{c}{\sqrt t}e^{-t\eta }
    \exp\bigg\{\sum_{x=1}^{vt} \ln E_{x-1}\bigg[ \exp \Big\{ \int_{0}^{H_x}
          (\xi(X_r)+\eta ) \, \d r \Big\}\bigg]\bigg\}.
  \end{split}
\end{equation*}
The application of a suitable  central limit theorem  to the above sum
then suggests the central limit theorem behavior of the PAM,
Theorem~\ref{thm:PAMFCLT}.

Making the above heuristics rigorous requires a non-negligible effort. In
particular, it turns out that the tilting parameter $\eta $ making the
event $X_t=vt$ typical under the Gibbs measure is \emph{random} (i.e.,
  $\xi $-dependent). This disallows a straightforward application of a
central limit theorem in the last formula above.
Section~\ref{sec:hittingFCLT} deals with this problem, building on a
preparatory Section~\ref{ssec:FCLT_LT}. Other approximations appearing in
the previous heuristic computation are treated in
Section~\ref{ssec:comparisonlemma}; Section~\ref{ssec:initialcondition}
controls the influence of the initial conditions. The functional central
limit theorem for the PAM, Theorem~\ref{thm:PAMFCLT}, then follows
easily, cf.~Section~\ref{ssec:PAMFCLT}.

In order to show the functional central limit theorem for the breakpoint,
Theorem~\ref{thm:bpFCLT}, we essentially need to find the largest root of
the function $x\mapsto \ln {\tt E}^\xi_0[N^{\ge}(t,x)]$, which requires, in a
certain sense, to invert the functional central limit theorem for the
PAM, cf.~Section~\ref{ssec:bpFCLT}. In oder to perform this inversion, we
study how sensitive $ {\tt E}^\xi_0[N^{\ge}(t,x)]$ is to perturbations in
the space and time direction, cf.~Section~\ref{ssec:perturbationestimates}.

Let us now comment on the second moment computation required to prove the
remaining main results of this paper. Similarly to the homogeneous case,
the second moment of $N^{\ge}(t,vt)$ explodes too quickly to yield any
useful estimates. This explosion is, essentially, due to particles that
are much faster than the breakpoint at  times in the bulk of the interval
$[0,t]$. In the case of homogeneous branching rates this is solved by a
truncation which involves considering only so-called \emph{leading}
particles, that is the particles that are slower than the breakpoint,
$X_s\le v_0 s$ for all $s\in [0,t]$ (here, $v_0 t$ is a first order
  breakpoint asymptotics). The principal ingredient for the computation
of the moments for the leading particles is then a `ballot theorem' for
the random walk bridge, which gives the probability that a random walk
bridge from $(0,0)$ to $(t,0)$ stays positive for all intermediate times.

Following the above strategy in the case of BRWRE suggests to call a
particle $Y\in N(t)$ \emph{leading at time $t$} if (a) $Y_t$ is close to
the breakpoint $\overbar m(t)$, and (b) $Y$ is slower than breakpoint at
intermediate times, $Y_s\le \overbar m(s)$ for all $s\in [0,t]$%
\footnote{The actual definition of leading particles in
  Section~\ref{sec:maximumproofs} is slightly different for technical
  reasons.}. Since $\overbar m(t)$ satisfies a functional central limit
theorem itself, it naturally leads to a ballot estimate of the following
form: Let $B,W$ be two independent
Brownian motions (or centered random walks, possibly not identically
  distributed). What is the behavior of
\begin{equation*}
  \mathbb P\big ( B(t)\ge W(t), B(s)\le 1+W(s)\,
    \forall s\in [0,t] \, \big| \, \sigma (W)\big)?
\end{equation*}
Observe that the process $W$ is `quenched' in this computation as
we condition on the $\sigma $-field $\sigma (W)$ generated by $W$. This
modified ballot problem was recently studied by Mallein and Miłoś
\cite{MaMi-15}. We were however not able to use their results directly
due to the lack of the independence that we encounter in our model.

The first and second moment of the number of leading particles is
computed in Section~\ref{ssec:leadingparticles}. In particular, a lengthy
proof of a (relatively weak version of) a ballot estimate can be found in
Section~\ref{ssec:leadingparticlesfirst}. Theorem~\ref{thm:logCorrection}
and thus Theorem~\ref{thm:maxFCLT} are then shown in
Section~\ref{ssec:maxproofs}. Section~\ref{sec:FKPPproofs} proves the
functional central limit theorem for the Fisher-KPP equation,
Theorem~\ref{thm:FKPPCLT}. Finally, Section~\ref{sec:openquestions}
discusses some open problems.

\paragraph{Notational conventions.} 

For two functions $f, g :[0,\infty) \to (0,\infty)$, we write $f \sim g$
when $\lim_{t \to \infty} f(t)/g(t) = 1$, and $f \asymp g$ when
$0 < \inf_{t \in [0,\infty) } f(t)/g(t) \le
\sup_{ t\in [0,\infty)} f(t)/g(t) < \infty$.
We use $c$ and $C$ to denote positive finite constants whose value may
change during computations, and sometimes write $c(\xi)$ etc.~in order
to emphasize their dependence on realizations of the branching rates.
Indexed constants
such as $c_1$ keep their value from their first time of occurrence. We
use $E[f;A]$ as an abbreviation for $E[f \ind_A]$.

For $x\in \mathbb R\setminus \mathbb Z$ we define $P_x$ by linear
interpolation. More precisely, for $x=\floor{x} + \lambda $ we define
$P_x:= (1-\lambda )P_{\floor{x}} +  \lambda  P_{\floor{x}+1}$. Similarly,
other quantities which are only defined for integers a priori are to be
interpreted as the linear interpolation of the evaluations at their
integer neighbors, which will usually be clear from the context.

While we have stated the precise assumptions needed in the main results
given above,
\begin{equation*}
  \text{{we will from now on assume  \eqref{eq:xiAssumptions},
      \eqref{eq:inCond} and \eqref{eq:vAssumptions} to be fulfilled}}
\end{equation*}
as standing assumptions without further mentioning. This helps in
keeping notation lighter compared to mentioning a suitable subset of
these assumptions at each of the numerous subsequent auxiliary results.

\section{Expected number of particles of given velocity}
\label{sec:firstmoment}

In this section we study the asymptotic behavior of
$\ttE^\xi_{u_0}[N(t,vt)]$ and related quantities, following the strategy
described in Section~\ref{sec:strategy}.

\subsection{Tilted random walk measures} 
\label{sec:tiltedmeasures}

We introduce the tilted distributions of random walk in random
potential, and show that one can tilt the random walk in a suitable way
to make the extremal behavior typical.

Recall that $(X_t)_{t\ge  0}$ denotes continuous-time simple random
walk on $\mathbb Z$ with jump rate $1$. For $i \in \Z$ we define the
hitting time of $i$ as
\begin{equation}
  \label{eq:Hi}
  H_i := \inf\{s \in [0,\infty)  : X_s = i\},
\end{equation}
and set $ \tau_i :=  H_{i} - H_{i-1}$. Recalling \eqref{eq:xiAssumptions}
and writing
\begin{equation}
  \label{eq:zetaDef}
  \zeta(x) := \xi(x) - \es, \qquad x\in \Z,
\end{equation}
we infer
\begin{equation}
  \label{eq:zetaBd}
  -\infty < \essinf \zeta < \esssup\zeta  = 0.
\end{equation}

For $n\ge 1$, $A \in \sigma ( X_{s \wedge H_n}, \, s \in [0,\infty))$
and $\eta \in \mathbb R$, we define
\begin{equation}
  \label{eq:tiltedVarTime}
  P_{(n)}^{\zeta,\eta }(A) := (Z_{(n)}^{\zeta,\eta })^{-1}
  E_0 \Big[ \exp \Big \{
      \int_0^{H_n} (\zeta(X_s)+\eta ) \, \d s \Big\};A \Big],
\end{equation}
where
\begin{equation*}
  Z_{(n)}^{\zeta,\eta} := E_0 \Big[ \exp \Big \{ \int_0^{H_n}
      (\zeta(X_s)+\eta ) \,  \d s
      \Big\}\Big].
\end{equation*}
We will see below, cf.~Lemma \ref{lem:finite}, that these quantities are
finite if and only if $\eta \le 0$.

It can be seen easily, using the strong Markov property, that
$P^{\zeta ,\eta }_{(n)}(A)=P^{\zeta ,\eta }_{(m)}(A)$ for every $m>n$ and
$A\in \sigma ( X_{s \wedge H_n}, \, s \in [0,\infty))$. We may thus use
Kolmogorov's extension theorem to extend $P_{(n)}^{\zeta,\eta }$ to a
measure $P^{\zeta,\eta }$ on $\sigma (X_s,s\ge 0)$. We write $P^\zeta $
for $P^{\zeta ,0}$.

It will be suitable to introduce the following logarithmic moment
generating functions
\begin{align}
  \label{eq:empL}
  L_i^\zeta(\eta)
  &:= \ln E_{i-1} \Big[ \exp \Big \{ \int_0^{H_{i}}
      (\zeta(X_s) +   \eta ) \, \d s \Big\} \Big],\\
        \label{eq:empLav}
  \overbar L_n^{\zeta}(\eta)
  &:= \frac{1}{n} \sum_{i=1}^n L_i^\zeta(\eta),\\
  \label{eq:Ldef}
  L(\eta) &:= \E \big[ L_1^\zeta(\eta) \big].
\end{align}
By the strong Markov property again,
\begin{equation}
  \label{eq:Zzetaeta}
  Z_{(n)}^{\zeta,\eta} = \exp\Big\{\sum_{i=1}^n L_i^\zeta (\eta )\Big\}
  =\exp\{n \overbar L_n^\zeta (\eta )\}.
\end{equation}
We now discuss the finiteness of the above objects.
\begin{lemma}
  \label{lem:finite}
  Under \eqref{eq:xiAssumptions}
  the quantities defined in
  \eqref{eq:empL}--\eqref{eq:Ldef} are finite if and only if $\eta \le 0$.
\end{lemma}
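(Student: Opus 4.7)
The plan is to split the proof on the sign of $\eta$. For the easy direction, suppose $\eta \le 0$. By \eqref{eq:zetaBd} one has $\zeta \le 0$ $\P$-a.s., so the integrand in \eqref{eq:empL} is non-positive and $L_i^\zeta(\eta) \le 0$ deterministically. For a matching lower bound I would set $K := \es - \ei < \infty$, so that $\zeta \ge -K$, and use translation invariance of simple random walk to write
\[
E_{i-1}\Bigl[\exp\Bigl\{\int_0^{H_i}(\zeta(X_s) + \eta)\,\d s\Bigr\}\Bigr]
\ge E_0\bigl[e^{(\eta - K)H_1}\bigr] =: c_0(\eta) > 0,
\]
a deterministic positive constant since $H_1 < \infty$ $P_0$-a.s.\ and $\eta - K \le 0$. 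Hence $L_i^\zeta(\eta) \in [\ln c_0(\eta), 0]$ uniformly in $i$ and $\xi$, which immediately gives finiteness of $\overbar L_n^\zeta(\eta)$ and $L(\eta)$.

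For $\eta > 0$ the strategy is to exhibit excursions of the random walk into long intervals on which $\zeta$ is uniformly close to $0$, and to argue that each such excursion contributes unboundedly to the Feynman--Kac integrand. Set $\epsilon := \eta/2$. Since $\esssup \zeta = 0$, one has $\P(\zeta(0) > -\epsilon) > 0$, so by the i.i.d.\ assumption together with Borel--Cantelli, $\P$-a.s., for every $n \in \N$ there exists an interval $I = [a,b] \subset (-\infty, i-1]$ of length $n$ on which $\zeta(x) > -\epsilon$ at every site. The quantitative input is the survival estimate
\[
P_{x_0}\bigl(X_s \in I \ \forall\, s \in [0,T]\bigr) \ge c\, e^{-\mu_n T}, \qquad x_0 \in I,
\]
where $\mu_n = 1 - \cos(\pi/(n+1)) = O(n^{-2})$ is the principal Dirichlet eigenvalue of $-\Delta_{\mathrm d}$ on $I$, made explicit by the eigenfunctions $\phi_k(x) = \sin(k\pi(x-a+1)/(n+1))$.

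Concretely, I would consider paths that (a) move from $i-1$ into $I$ in some bounded time without hitting $i$, (b) remain in $I$ for additional time at least $T$, and (c) then exit $I$ and reach $i$ in bounded time. On such paths the integral $\int_0^{H_i}(\zeta + \eta)\,\d s$ is bounded below by $T\eta/2 - C$ (the contribution from the stay in $I$ is at least $T(\eta - \epsilon) = T\eta/2$, and the contribution from the bounded-duration segments outside $I$ is controlled using $\zeta \ge -K$), while the $P_{i-1}$-probability of the event is at least $c'e^{-\mu_n T}$. Choosing $n$ large enough that $\mu_n < \eta/4$ and letting $T \to \infty$ then forces the Feynman--Kac expectation to diverge, so $L_i^\zeta(\eta) = +\infty$ $\P$-a.s., whence $\overbar L_n^\zeta(\eta) = L(\eta) = +\infty$. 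The main obstacle is precisely this quantitative survival estimate: the trivial no-jump bound $P_{x_0}(\tau_I > T) \ge e^{-T}$ would only yield divergence for $\eta > 1$, so the diffusive $O(n^{-2})$ rate obtained from a long ``good'' interval is what is needed to cover all $\eta > 0$.
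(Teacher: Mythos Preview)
Your argument is correct and follows exactly the strategy the paper sketches (and whose details it defers to \cite{Dr-08}): locate arbitrarily long islands to the left of $i$ on which $\zeta > -\eta/2$, then balance the exponential gain $e^{T\eta/2}$ from staying in the island against the survival cost $e^{-\mu_n T}$ governed by the principal Dirichlet eigenvalue $\mu_n = 1 - \cos(\pi/(n+1)) = O(n^{-2})$. The only imprecision is that the ``bounded time'' and the constant $C$ in your steps (a) and (c) depend on $\xi$ through the (random) location of $I$, but this is harmless since $I$ is fixed before sending $T \to \infty$.
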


\begin{proof}
  Since $\esssup \zeta(x)\le 0$, the `if' part of the lemma is trivial.

  The `only if' part can be proved via the following strategy: For
  $i \in \Z$ and $\eta >0,$ using the independence assumption of the
  potential in \eqref{eq:xiAssumptions}, the random walk starting in $i$
  can find arbitrarily large islands to the left of $i$, where the
  potential $\zeta + \eta$ takes values larger than $\eta/2$. Once such
  an island is large enough so that the cost of the random walk to stay
  inside this island is offset by the exponential gain of a potential
  value larger than $\eta/2$ in the Feynman-Kac formula, one infers that
  $L_i^\zeta(\eta)$ is infinite, and then the same applies to the
  remaining quantities in question.

  Since in the case of random walk with a drift, the `only if' statement
  is a direct consequence of Proposition~3.1 in \cite{Dr-08}, we omit
  making the above proof rigorous. The lengthy proof of \cite{Dr-08},
  however, can directly be transferred to the case of simple random walk
  without drift.
\end{proof}

Recalling that $\tau_i=H_i-H_{i-1}$, as an easy corollary of
Lemma~\ref{lem:finite} we obtain
$\ln E^{\zeta,\eta} \big[e^{ \lambda \tau_i} \big]
= L_i^\zeta(\eta+\lambda )-L_i^\zeta (\eta )$
for every $ \eta \le 0$ and $\lambda \in \R$, as well as
\begin{equation}
  \label{eq:tauexpmoments}
  E^{\zeta ,\eta }[e^{\lambda \tau_i}]<\infty\quad \text{for every
    $\eta\le 0$ and $\lambda \le |\eta |$.}
\end{equation}
Finally, Birkhoff's ergodic theorem implies that
\begin{equation}
  \label{eq:Birkhoff}
  L(\eta)
  \overset{\P\text{-a.s.}}=
  \lim_{n \to \infty} \overbar L^\zeta_n(\eta).
\end{equation}
Other simple properties of functions $L^\zeta $ and $L$ are given in
the Appendix.

We will primarily be interested in those values
$\eta = \overbar{\eta}_n^{\zeta}(v)$ which make certain large deviations
events typical, more precisely for which
\begin{equation}
  \label{eq:baretazeta}
  {E}^{\zeta, \overbar{\eta}_n^{\zeta}(v)} [H_n]  = \frac{n}{v},
  \qquad v > 0.
\end{equation}
In order to discuss the existence of such $\eta $, which is random, we
introduce, in the next lemma, its ``typical value'' $\overbar \eta (v)$.
We recall the
\emph{critical velocity} $v_c$ introduced  below \eqref{eq:lyapunov}. It
will be shown in Proposition~\ref{prop:lyapExp}, partially using the results of
this paper, that the identity
\begin{equation}
  \label{eq:critVel}
  (v_c)^{-1} = {L'(0)}
\end{equation}
holds true, where the derivative is taken from the left only. Throughout
the paper we use \eqref{eq:critVel} as the primary definition of $v_c$.
\begin{lemma}
  \label{lem:baretaprops}
  For every $v>v_c$ there exists a unique
  $\overbar \eta(v) \in (-\infty, 0)$ such that
  \begin{equation}
    \label{eq:etaBarDef}
    L^*(1/v)
    = \sup_{\eta \in \mathbb R} (\eta /v - L (\eta) )
    = \overbar \eta(v) /v - L (\overbar \eta(v)).
  \end{equation}
  Furthermore, $\overbar \eta(v)$ is characterized by
  \begin{equation}
    \label{eq:etabar}
    L'(\overbar \eta (v))= v^{-1}.
  \end{equation}
  Moreover, $(v_c,\infty) \ni v \mapsto \overbar \eta (v)$ is a smooth
  strictly decreasing function.
\end{lemma}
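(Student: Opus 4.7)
The plan is convex-analytic. By Lemma~\ref{lem:finite}, $L$ has effective domain $(-\infty,0]$. I would first show that $L$ is $C^\infty$ and strictly convex on $(-\infty,0)$; the lemma then follows from the intermediate value theorem applied to $L'$ together with the implicit function theorem. For each realization of $\zeta$, $\eta\mapsto L_1^\zeta(\eta)$ is the logarithmic moment generating function of $H_1=\tau_1$ under the (unnormalized) quenched tilt by $\exp\bigl(\int_0^{H_1}\zeta(X_s)\,\d s\bigr)$, and \eqref{eq:tauexpmoments} provides local uniform integrability of all exponential moments of $H_1$ on compact subsets of $(-\infty,0)$. Dominated convergence then justifies differentiating twice through $\E$, giving for $\eta\in(-\infty,0)$
\[
L'(\eta)=\E\bigl[E^{\zeta,\eta}[H_1]\bigr],\qquad
L''(\eta)=\E\bigl[\Var_{P^{\zeta,\eta}}(H_1)\bigr]>0,
\]
the strict positivity following because the quenched law of $H_1$ under $P^{\zeta,\eta}$ is $\P$-a.s.~non-degenerate. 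Hence $L'$ is smooth and strictly increasing on $(-\infty,0)$.

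The range of $L'$ is determined by its two boundary limits. By the standing definition \eqref{eq:critVel}, $L'(0^-)=1/v_c$. For the other end, I would exploit $\essinf\zeta\le\zeta\le 0$ to sandwich $E^{\zeta,\eta}[H_1]$ between explicit quotients of annealed moments of the form $E_0\bigl[H_1^a e^{(\essinf\zeta+\eta)H_1}\bigr]$ and $E_0\bigl[H_1^a e^{\eta H_1}\bigr]$ (with $a\in\{0,1\}$); as $\eta\to -\infty$ the tilted mass concentrates on short straight-line paths from $0$ to $1$, and both bounds vanish. Consequently $L'$ is a smooth strictly increasing bijection from $(-\infty,0)$ onto $(0,1/v_c)$, and for every $v>v_c$ (equivalently $1/v\in(0,1/v_c)$) there is a unique $\overbar\eta(v)\in(-\infty,0)$ satisfying \eqref{eq:etabar}.

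Finally, the map $\eta\mapsto \eta/v-L(\eta)$ is strictly concave on $(-\infty,0]$; its left-derivative at $\eta=0$ equals $1/v-1/v_c<0$, so the function is decreasing near the right endpoint, while the bound $L(\eta)\ge\varepsilon\eta+C$ valid for any $\varepsilon\in(0,1/v)$ and $\eta$ sufficiently negative (obtained by integrating $L'(\eta)<\varepsilon$) shows that it tends to $-\infty$ at the left endpoint. Therefore its unique interior critical point $\overbar\eta(v)$ is the global maximizer on $(-\infty,0]$, which is \eqref{eq:etaBarDef}. Applying the implicit function theorem to the smooth relation $L'(\overbar\eta(v))=1/v$ gives smoothness of $v\mapsto\overbar\eta(v)$, and differentiating yields $\overbar\eta'(v)=-1/\bigl(v^2 L''(\overbar\eta(v))\bigr)<0$, proving strict monotonicity. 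The only step requiring genuine care is the limit $L'(\eta)\to 0$ as $\eta\to -\infty$, but this reduces to elementary exponential moment estimates for the hitting time of a neighboring site by the simple random walk.
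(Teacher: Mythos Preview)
Your proposal is correct and follows essentially the same route as the paper's proof: establish that $L$ is smooth and strictly convex on $(-\infty,0)$ with $L'(0^-)=1/v_c$ and $\lim_{\eta\to-\infty}L'(\eta)=0$, then invoke the intermediate value theorem, standard Legendre-transform facts, and the implicit function theorem. The only difference is presentational: the paper defers the smoothness/convexity properties to Lemma~\ref{lem:lambdaProps} in the appendix and cites \cite[Lemma~3.5]{Dr-08} for the limit $L'(\eta)\to 0$, whereas you derive these inline, giving a more self-contained argument (your sandwiching sketch for the limit at $-\infty$ is valid and matches what the external reference does).
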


\begin{proof}
  Due to Lemma~\ref{lem:lambdaProps}, $L$ is smooth, strictly increasing
  and strictly convex on $(-\infty,0)$, finite on $(-\infty,0]$,  and
  infinite on $(0,\infty)$, cf.~Lemma \ref{lem:finite}. In addition, it
  can be seen easily that $\lim_{\eta \to -\infty} L' (\eta )=0$ (see
    \cite[Lemma~3.5]{Dr-08} for the corresponding statement in the case
    of a random walk with drift; the proof for simple random walk
    proceeds in the same way and is omitted here). Therefore, recalling
  also \eqref{eq:critVel}, we see that the solution to
  \eqref{eq:etaBarDef} exists for every $v>v_c$. Furthermore, due to
  usual properties of the Legendre transform, it is characterized by
  \eqref{eq:etabar}. The last statement follows directly from the
  previously mentioned properties.
\end{proof}

We now show that $\overbar \eta^\zeta_n(v)$ fulfilling
\eqref{eq:baretazeta} exists $\mathbb P$-a.s.~for $v>v_c$ and $n$
large enough and, in fact, concentrates around $\overbar \eta (v)$.

\begin{proposition}
  \label{prop:etaExist}
  For each $v > v_c$ there exists a $\P$-a.s.~finite random variable
  $\mathcal N = \mathcal N(v)$ such that for all $n \ge \mathcal N$ there
  exists $\overbar{\eta}_n^\zeta (v) \in (-\infty,0)$ satisfying
  \eqref{eq:baretazeta}. Moreover, for every  $q \in \N$ and
  $V\subset (v_c,\infty)$ compact there exists a constant
  $C=C(q,V)<\infty$ such that for all $n\in \mathbb N$,
  \begin{equation}
    \label{eq:etaEmpEta}
    \P \bigg (  \sup_{v\in V }
     \vert \overbar \eta (v)
      - \overbar \eta_n^\zeta (v) \vert \ge
      C \sqrt{\frac{\ln n}{n}}
      \bigg)
    \le C n^{-q}
  \end{equation}
  (defining, arbitrarily, $\overbar \eta_n^\zeta (v)=0$ if the
    solution to \eqref{eq:baretazeta} does not exist).
\end{proposition}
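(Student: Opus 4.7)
The strategy is to recast \eqref{eq:baretazeta} as a scalar equation for the derivative of $\overbar L_n^\zeta$, establish uniform quantitative concentration of this derivative around $L'$, and then invert via the strict convexity of $L$. Differentiating \eqref{eq:empL} under the expectation (legitimate by \eqref{eq:tauexpmoments}) and recognising the resulting ratio as an expectation under $P^{\zeta,\eta}$, one obtains $(L_i^\zeta)'(\eta)=E^{\zeta,\eta}[\tau_i]$ and hence $E^{\zeta,\eta}[H_n]=n(\overbar L_n^\zeta)'(\eta)$, so that \eqref{eq:baretazeta} is equivalent to $(\overbar L_n^\zeta)'(\overbar\eta_n^\zeta(v))=1/v$. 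Each $L_i^\zeta$ is $C^\infty$ and strictly convex on $(-\infty,0]$ as a log-Laplace transform, hence so is $\overbar L_n^\zeta$, and its derivative is continuous and strictly increasing on $(-\infty,0)$. A direct Feynman--Kac calculation (as in \cite[Lemma~3.5]{Dr-08}) gives $\lim_{\eta\to-\infty}(\overbar L_n^\zeta)'(\eta)=0$ deterministically, while the ergodic theorem yields $(\overbar L_n^\zeta)'(0-)\to L'(0-)=1/v_c>1/v$. Hence there exists a $\P$-a.s.~finite $\mathcal N(v)$ such that for all $n\ge\mathcal N(v)$ the value $1/v$ lies in the range of $(\overbar L_n^\zeta)'$, which together with strict monotonicity produces a unique $\overbar\eta_n^\zeta(v)\in(-\infty,0)$.

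The core of the proof is the quantitative concentration estimate. Let $\mathcal E\subset(-\infty,0)$ be a compact neighborhood of $\{\overbar\eta(v):v\in V\}$. We aim for
\begin{equation*}
  \P\!\Big(\sup_{\eta\in\mathcal E}\big|(\overbar L_n^\zeta)'(\eta)-L'(\eta)\big|\ge C\sqrt{\ln n/n}\Big)\le Cn^{-q}.
\end{equation*}
By stationarity of $\zeta$ and the exchange of derivative and expectation, $\E[(L_i^\zeta)'(\eta)]=L'(\eta)$, so one must concentrate a stationary but dependent sum: $(L_i^\zeta)'(\eta)$ is a functional of $(\zeta(j))_{j\le i}$, and the terms are not independent. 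However, for $\eta\in\mathcal E$ the tilted walk from $i-1$ to $i$ is uniformly ballistic, so the probability that its path prior to $H_i$ visits a site $j\le i-1$ is bounded by $e^{-c(i-j)}$. A perturbation of the Radon--Nikodym derivative then yields the bounded-difference estimate
\begin{equation*}
  \sup_{\eta\in\mathcal E}\big|(L_i^\zeta)'(\eta)-(L_i^{\zeta'})'(\eta)\big|\le Ce^{-c(i-j)}
\end{equation*}
whenever $\zeta,\zeta'$ agree off $\{j\}$. Summing over $i\ge j$ shows that the McDiarmid coefficient of $(\overbar L_n^\zeta)'(\eta)$ with respect to $\zeta(j)$ is $O(1/n)$, and McDiarmid's inequality then gives pointwise concentration at rate $\sqrt{\ln n/n}$ with probability $1-n^{-q}$. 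Uniformity in $\eta\in\mathcal E$ (and hence in $v\in V$) is obtained by a polynomial-net argument, combined with an a priori bound on $(\overbar L_n^\zeta)''$ on $\mathcal E$ derived from \eqref{eq:tauexpmoments}.

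Given the concentration, the inversion is routine: on the favourable event,
\begin{equation*}
  \big|L'(\overbar\eta_n^\zeta(v))-L'(\overbar\eta(v))\big|=\big|L'(\overbar\eta_n^\zeta(v))-(\overbar L_n^\zeta)'(\overbar\eta_n^\zeta(v))\big|\le C\sqrt{\ln n/n},
\end{equation*}
and since $L''\ge c>0$ on a neighborhood of $\mathcal E$ (Lemma~\ref{lem:lambdaProps}), one concludes $|\overbar\eta_n^\zeta(v)-\overbar\eta(v)|\le C'\sqrt{\ln n/n}$ uniformly in $v\in V$. The main obstacle is the exponential bounded-difference estimate: it requires uniform (in $\zeta$ and $\eta\in\mathcal E$) ballisticity of the tilted walk, which must be extracted from the exponential moment bound \eqref{eq:tauexpmoments} through a Chebyshev-type argument under $P^{\zeta,\eta}$, together with care that the constants $c,C$ do not depend on the realisation of $\zeta$.
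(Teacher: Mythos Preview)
Your proof follows the same three-step scheme as the paper's---recast \eqref{eq:baretazeta} as $(\overbar L_n^\zeta)'(\overbar\eta_n^\zeta(v))=1/v$, concentrate $(\overbar L_n^\zeta)'$ around $L'$ uniformly on a compact $\mathcal E$, then invert via $L''\ge c>0$---and is correct. The substantive difference is in the concentration step: you use McDiarmid's bounded-difference inequality, exploiting that perturbing a single $\zeta(j)$ changes $(L_i^\zeta)'(\eta)$ by at most $Ce^{-c(i-j)}$; the paper instead applies a Hoeffding-type inequality for mixing sequences (Lemma~\ref{lem:hoef}, from \cite{Ri-13}), using the conditional-expectation decay $\big|\E[(L_i^\zeta)'(\eta)\,|\,\mathcal F_k]-L'(\eta)\big|\le Ce^{-c(i-k)}$ of Lemma~\ref{lem:covBd}. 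Both routes encode the same exponential locality of the tilted walk and yield the same sub-Gaussian tail; your McDiarmid argument is arguably more elementary, while the paper's mixing inequality ties into the filtration $(\mathcal F_k)$ reused later in the paper.

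Two minor points. First, the paper deduces existence of $\overbar\eta_n^\zeta(v)$ \emph{from} the concentration estimate via Borel--Cantelli, whereas your direct ergodic-theorem argument for $(\overbar L_n^\zeta)'(0-)\to 1/v_c$ implicitly requires the integrability of $(L_1^\zeta)'(0-)=E^{\zeta,0}[H_1]$, which is a nontrivial fact (proved as \eqref{eq:expUBbd} in the appendix). Second, in your inversion step you evaluate the uniform concentration at the random point $\overbar\eta_n^\zeta(v)$; this needs the a priori containment $\overbar\eta_n^\zeta(v)\in\mathcal E$ on the good event, which the paper secures by taking $\mathcal E$ strictly larger than $\overbar\eta(V)$ and arguing that, once $\sup_{\mathcal E}|(\overbar L_n^\zeta)'-L'|$ is small relative to $\inf_{\mathcal E}L''\cdot\dist(\overbar\eta(V),\mathcal E^c)$, the solution to $(\overbar L_n^\zeta)'=1/v$ must lie inside $\mathcal E$.
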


\begin{proof}
  By Lemma~\ref{lem:lambdaProps}, for $\eta <0$,
  $E^{\zeta ,\eta }[H_n] = n (\overbar L_n^\zeta)' (\eta )$. Hence, in
  combination with \eqref{eq:baretazeta}, we may define
  $\overbar \eta_n^\zeta (v)$ as the solution to
  \begin{equation}
    \label{eq:qqqqa}
    \big(\overbar L_n^\zeta\big)' (\overbar \eta_n^\zeta(v)) = 1/v,
  \end{equation}
  if this solution exists, and by $\overbar \eta_n^\zeta (v)=0$
  otherwise. If we show that this $\overbar\eta_n^\zeta (v)$ satisfies
  \eqref{eq:etaEmpEta}, then the fact
  $\overbar \eta_n^\zeta (v)\in (-\infty,0)$ for all $n\ge \mathcal N$
  follows by a Borel-Cantelli argument, using also that
  $\sup_{v \in V} \overbar \eta(v) < 0$ by Lemma~\ref{lem:baretaprops},
  as well as the compactness of $V$.

  Comparing \eqref{eq:etabar} and \eqref{eq:qqqqa}, we see that we
  need to understand the concentration properties of
  $(\overbar L^\zeta_n)'$ first. We claim the following.

  \begin{claim}
    \label{cl:Lprimeconc}
    For every $q \in \N$ and $\Delta\subset(-\infty,0)$ compact, there exists
    $C=C(q,\Delta)<\infty$ such that for all $n\in \N$,
    \begin{equation}
      \label{eq:LprimeIneq}
      \P \bigg(\sup_{\eta \in \Delta}
        \Big\vert (\overbar L_n^\zeta)'(\eta) - L'(\eta) \Big\vert
        \ge C\sqrt{\frac{ \ln n}{n}} \bigg)
      \le  C n^{-q}.
    \end{equation}
  \end{claim}

  \begin{proof}
    We apply a Hoeffding type bound for mixing sequences which we recall in
    Lemma~\ref{lem:hoef}. Define the $\sigma$-algebras
    $\mathcal F_k:= \sigma(\xi(i)  :  i \le k )$, $k\in \mathbb Z$. By
    Lemma \ref{lem:lambdaProps},
    $\big((L_i^\zeta)'(\eta )-L'(\eta ) \big)_{i\in \mathbb Z}$ is a
    stationary sequence of bounded  random variables. By
    Lemma~\ref{lem:covBd}, there is $c<\infty$ such that
    $\big |\mathbb E\big[(L_i^\zeta )'(\eta ) \, |\,
      \mathcal F_k\big]- L'(\eta )\big|
    \le c e^{-(i-k)/c}$
    for all $i\ge k$ and $\eta \in \Delta $.
    Hence, the assumptions of Lemma~\ref{lem:hoef} are satisfied with
    $m_i=c$, and thus uniformly over $\eta \in \Delta $, for $C$ large
    enough,
   \begin{equation*}
     \P \bigg( \Big\vert (\overbar L_n^\zeta)'(\eta) - L'(\eta) \Big\vert
       \ge C\sqrt{\frac{\ln n}{n}} \bigg)
     \le C e^{ -C  \ln n} \le C n^{-q-1}.
   \end{equation*}
   Hence, by a union bound,
   \begin{equation}
     \label{eq:firstProvea}
     \P \bigg(\sup_{\eta \in \frac 1n \mathbb Z \cap \Delta}
       \Big\vert (\overbar L_n^\zeta)'(\eta) - L'(\eta) \Big\vert
       \ge C\sqrt{\frac{ \ln n}{n}} \bigg)
     \le  C n^{-q}.
   \end{equation}
   Moreover, by Lemma~\ref{lem:lambdaProps}, $L'$ and
   $(\overbar L_n^\zeta)'$ are both increasing on $(-\infty,0)$ with
   continuous and positive derivatives.  Hence, for any
   $\Delta\subset (-\infty,0)$ compact, there is $c<\infty$ such that
   \begin{equation}
     \label{eq:Lsecondbound}
     c^{-1}<\inf_\Delta L''\le \sup_\Delta L'' <c.
   \end{equation}
   Combining this with \eqref{eq:firstProvea} and the fact that $L'$ and
   $(\overbar L_n^\zeta)'$ are increasing again, this implies the claim.
  \end{proof}

  To prove \eqref{eq:etaEmpEta}, fix a compact $\Delta\subset (-\infty,0)$
  such that  $\overbar \eta (V)$ is contained in the interior of $\Delta$, which
  is possible by Lemma~\ref{lem:baretaprops}, and set
  $\delta = \dist(\overbar\eta (V),\Delta^c)>0$. By \eqref{eq:etabar} and
  \eqref{eq:qqqqa}, $\overbar \eta (v)$ and $\overbar \eta_n^\zeta(v)$
  are the respective solutions to $L'(\overbar \eta (v))=v^{-1}$ and
  $(\overbar L_n^\zeta )'(\overbar \eta_n^\zeta (v))=v^{-1}$ (if the
    solution to the second equation exists). Moreover, by
  \eqref{eq:Lsecondbound}, the slope of $L'$ on $\Delta $ is at least
  $c^{-1}$. Therefore, on the complement of the event in the probability
  on the left-hand side of \eqref{eq:LprimeIneq}, for $n$ large enough so
  that $C\sqrt{\ln(n)/n}< c^{-1} \delta $, we know that for all $v\in V$
  the equation \eqref{eq:qqqqa}
  has a solution $\overbar \eta_n^\zeta (v)$ which satisfies
  $|\overbar \eta_n^\zeta(v)-\overbar \eta (v)|\le c C\sqrt{\ln(n)/n}<\delta$.
  Hence, \eqref{eq:etaEmpEta} follows from \eqref{eq:LprimeIneq} by
  adjusting constants.
\end{proof}

For future reference we recall that whenever $\overbar \eta_n^\zeta (v)$
exists, then it is characterized, due to the usual properties of the
  Legendre transform, by
\begin{equation}
  \label{eq:etanxiDef}
  (\overbar{L}_n^\zeta)^* (1/v)
  := \sup_{\eta \in \R}
  \Big( \frac{\eta}{ v} - \overbar{L}_n^\zeta (\eta) \Big)
  = \frac{\overbar{\eta}_n^\zeta(v)}{ v}
  - \overbar{L}_n^\zeta (\overbar{\eta}_n^\zeta(v)).
\end{equation}

\paragraph{Technical assumption}
In order to keep the constants in the paper independent of the
velocity~$v$,  for the rest of this paper we assume
that
\begin{align} \label{eq:Vinterval}
  \parbox{0.85\textwidth}{the velocities $v$ that we are considering
    are contained in a fixed compact interval $V\subset(v_c,\infty)$
    which has $v_0$ in its interior.}
\end{align}
Such $V$ exists due to \eqref{eq:vAssumptions}. The constants appearing
in the results below may depend on $V$. Using
Proposition~\ref{prop:etaExist} and the monotonicity of $\overbar \eta $
and $\overbar \eta^\zeta_n$ in $v$ and $\zeta$,  it is then possible to
fix a compact interval $\Delta \subset (-\infty,0)$ such that there is a
$\mathbb P$-a.s.~finite random variable $\mathcal N_1$ such that the event
\begin{equation}
  \label{eq:Hn}
  \mathcal H_n := \mathcal H_n(V)
  := \{\overbar\eta_n^\zeta (v)\in \Delta  \text{ for all }
    v\in V\}\quad
  \text{occurs  for all $n\ge \mathcal N_1$}.
\end{equation}
We also recall that we arbitrarily
set $\overbar \eta_n^\zeta (v)=0 $ in the case when
\eqref{eq:baretazeta} does not have any solution. This occurs on
$\mathcal H_n^c$  only.

\medskip

For future use we state the following easy estimate.
\begin{lemma}
  \label{lem:etandep}
  For each $\delta \in (0,1)$ there exists a constant
  $C=C(\delta )$ such that $\mathbb P$-a.s.~for all $n$ large enough,
  uniformly for $v \in V$ and $h\le n^{1-\delta}$,
  \begin{equation*}
    \big \vert \overbar \eta^\zeta_{n} (v)
    - \overbar \eta^\zeta_{n+h}  (v) \big \vert \le
    \frac{Ch}{n}.
  \end{equation*}
\end{lemma}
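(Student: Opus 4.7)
The plan is to compare the implicit equations $(\overbar L_n^\zeta)'(\overbar\eta_n^\zeta(v))=1/v=(\overbar L_{n+h}^\zeta)'(\overbar\eta_{n+h}^\zeta(v))$ via a first-order linearization. Write $\eta_1:=\overbar\eta_n^\zeta(v)$ and $\eta_2:=\overbar\eta_{n+h}^\zeta(v)$; invoking \eqref{eq:Hn} for $n$ beyond the $\mathbb P$-a.s.~finite threshold $\mathcal N_1$ (so that $n+h\ge n\ge \mathcal N_1$ forces both $\eta_1,\eta_2\in\Delta$), the two equations combine to
\[
(\overbar L_{n+h}^\zeta)'(\eta_2)-(\overbar L_{n+h}^\zeta)'(\eta_1)
= (\overbar L_n^\zeta)'(\eta_1)-(\overbar L_{n+h}^\zeta)'(\eta_1).
\]
The task thus splits into bounding the right-hand side by $O(h/n)$ and controlling the increment of $(\overbar L_{n+h}^\zeta)'$ on the left in terms of $|\eta_2-\eta_1|$.

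For the right-hand side I would use the telescopic identity $\overbar L_{n+h}^\zeta=\tfrac{n}{n+h}\overbar L_n^\zeta+\tfrac{1}{n+h}\sum_{i=n+1}^{n+h}L_i^\zeta$; upon differentiation, the right-hand side becomes $\tfrac{1}{n+h}\sum_{i=n+1}^{n+h}\bigl((\overbar L_n^\zeta)'(\eta_1)-(L_i^\zeta)'(\eta_1)\bigr)$. Lemma~\ref{lem:lambdaProps} gives $(L_i^\zeta)'(\eta)=E^{\zeta,\eta}[\tau_i]$, and the exponential moment bound~\eqref{eq:tauexpmoments} applied with some $\lambda>0$ strictly less than $|\sup\Delta|$ yields a deterministic upper bound on $E^{\zeta,\eta}[\tau_i]$ uniform in $i\in\mathbb N$ and $\eta\in\Delta$. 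One concludes that the right-hand side is at most $2Kh/(n+h)\le 2Kh/n$ in absolute value.

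The left-hand side equals $(\overbar L_{n+h}^\zeta)''(\eta^*)(\eta_2-\eta_1)$ for some $\eta^*\in\Delta$ by the mean value theorem, so the crux of the proof is a $\mathbb P$-a.s.~uniform positive lower bound $\inf_{\eta\in\Delta}(\overbar L_m^\zeta)''(\eta)\ge c>0$ valid for all $m$ large enough. The main obstacle is establishing this uniform lower bound; I would do so by repeating the Hoeffding-type mixing argument of Claim~\ref{cl:Lprimeconc} almost verbatim, but applied to the stationary, bounded, exponentially mixing sequence $\bigl((L_i^\zeta)''(\eta)-L''(\eta)\bigr)_{i\in\mathbb Z}$ (boundedness coming again from exponential moment estimates on $\tau_i$, exponential mixing from Lemma~\ref{lem:covBd}), obtaining $\sup_{\eta\in\Delta}|(\overbar L_m^\zeta)''(\eta)-L''(\eta)|=O(\sqrt{\ln m/m})$ $\mathbb P$-a.s.~for all $m$ large enough. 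Combining with the deterministic positive lower bound on $L''$ from \eqref{eq:Lsecondbound} via a Borel--Cantelli argument delivers the required estimate. Applied with $m=n+h$ and combined with the bound from the previous paragraph, this yields $|\eta_2-\eta_1|\le Ch/n$. The hypothesis $h\le n^{1-\delta}$ plays no direct role in the argument itself, but ensures the resulting bound is nontrivial by forcing $Ch/n\le Cn^{-\delta}\to 0$.
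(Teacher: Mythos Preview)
Your proof is correct and follows essentially the same route as the paper: both arguments (i) bound $|(\overbar L_{n+h}^\zeta)'(\eta)-(\overbar L_n^\zeta)'(\eta)|$ by $Ch/n$ via the telescopic decomposition and the uniform bound on $(L_i^\zeta)'$ from Lemma~\ref{lem:lambdaProps}, and (ii) convert this into $|\eta_1-\eta_2|\le Ch/n$ through a slope argument for $(\overbar L_m^\zeta)'$.

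The only noteworthy difference is how you justify the lower bound on $(\overbar L_{n+h}^\zeta)''$. You run a full Hoeffding-type concentration argument for the sequence $(L_i^\zeta)''(\eta)-L''(\eta)$ and then combine with \eqref{eq:Lsecondbound}. This works, but is more than required (and, strictly speaking, Lemma~\ref{lem:covBd} as stated covers $L_j^\zeta$ and $(L_j^\zeta)'$ only, so you would need to remark that its proof extends verbatim to the second derivative). A much shorter route---which is what the paper's one-line reference to \eqref{eq:Lsecondbound} and the end of the proof of Proposition~\ref{prop:etaExist} is implicitly invoking---is to observe that $(L_i^\zeta)''(\eta)=\Var_{P^{\zeta,\eta}}(\tau_i)$ admits a \emph{deterministic} positive lower bound uniformly over $\eta\in\Delta$ and $\P$-a.e.\ $\zeta$, simply from \eqref{eq:xiAssumptions} and the compactness of $\Delta$ (the waiting time at $i-1$ alone already has variance bounded away from zero). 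Averaging then gives $(\overbar L_m^\zeta)''(\eta)\ge c>0$ for every $m\ge1$, with no Borel--Cantelli step needed. Your observation that the hypothesis $h\le n^{1-\delta}$ plays no role in the derivation (the paper's estimate \eqref{eq:secondProve} is in fact stated for all $h\le n$) is correct.
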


\begin{proof}
  Let $\Delta $ be as in \eqref{eq:Hn}.
  We claim that there exists a constant
  $C <\infty $ such that for all $n\ge 1$,  $h\le n$ and
  $\eta \in \Delta$,
  \begin{equation}
    \label{eq:secondProve}
    \big \vert (\overbar L_{n+h}^\zeta)'( \eta) -
    (\overbar L_{n}^\zeta)'(\eta) \big \vert \le  \frac{C h}{n}.
  \end{equation}
  Indeed, plugging in the definitions we obtain
  \begin{align*}
    (\overbar L_{n+h}^\zeta)'(\eta) -
    (\overbar L_{n}^\zeta)'( \eta)
    = - \frac{h}{n(n + h)} \sum_{i=1}^{n} (L_i^\zeta)'( \eta)
    + \Big( \frac{1}{n + h} \Big) \sum_{i= n +1}^{n + h} (L_i^\zeta)'( \eta),
  \end{align*}
  from which we can then deduce \eqref{eq:secondProve} by observing that
  $(L^\zeta_i)'(\eta )$ can be bounded uniformly over $\P$-a.a.~realizations
  of $\zeta $ and $\eta \in \Delta$, by Lemma~\ref{lem:lambdaProps}. The
  claim of the lemma then follows from \eqref{eq:etabar},
  \eqref{eq:qqqqa}, \eqref{eq:Lsecondbound} and \eqref{eq:secondProve} by
  the same arguments as at the end of the proof of
  Proposition~\ref{prop:etaExist}.
\end{proof}

\subsection{An invariance principle for the empirical Legendre transforms}
\label{ssec:FCLT_LT}

In this section we show an invariance principle for the suitably centered
and rescaled Legendre transforms of the functions $\overbar L_{n}^\zeta$
defined in \eqref{eq:empLav}.
In order to state them we introduce
\begin{align}
  \label{eq:Vdef}
  & V^{\zeta,v}_i(\eta) :=  \eta / v - L_i^\zeta(\eta),\\
  \label{eq:limitSigma}
  &\sigma_v^2 :=  \Var_\P\big(V^{\zeta,v}_1(\overbar \eta (v))\big)
 + 2\sum_{j \ge 2} \Cov_\P\big(V^{\zeta,v}_1(\overbar \eta (v)),
    V^{\zeta,v}_j(\overbar \eta (v))\big).
\end{align}
Using the non-degeneracy part of assumption \eqref{eq:xiAssumptions}, and
the exponential decay of correlations of the $L^\zeta_i$ proved in
Lemma~\ref{lem:covBd}, we see that $\sigma_v^2 \in (0,\infty)$.

\begin{proposition}
  \label{prop:IEst}
  For each $v\in V$, the sequence of processes
  \begin{equation}
    \label{eq:defWnt}
    t\mapsto W_n(t) := \frac{1}{\sigma_v}
    t \sqrt {n} \big( (\overbar{L}_{nt}^\zeta)^*(1/v)
        - {L}^* (1/v) \big), \qquad n\in \N,
  \end{equation}
  converges as $n\to\infty$, in $\mathbb P$-distribution to standard
  Brownian motion.
\end{proposition}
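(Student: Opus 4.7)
The starting point is the identity \eqref{eq:etanxiDef} for $(\overbar L_n^\zeta)^*(1/v)$ combined with the analogous identity $L^*(1/v) = \overbar \eta(v)/v - L(\overbar \eta(v))$ from Lemma~\ref{lem:baretaprops}. The key envelope-type observation is that, since $\overbar \eta_n^\zeta(v)$ maximises the smooth concave function $\eta \mapsto \eta/v - \overbar L_n^\zeta(\eta)$, evaluating this function at the deterministic point $\overbar \eta(v)$ introduces only a \emph{second-order} error: a Taylor expansion around the maximiser combined with the first-order condition $(\overbar L_n^\zeta)'(\overbar \eta_n^\zeta(v)) = 1/v$ gives, on the event $\mathcal H_n$ from \eqref{eq:Hn},
\begin{equation*}
  (\overbar L_n^\zeta)^*(1/v)
  = \frac{\overbar \eta(v)}{v} - \overbar L_n^\zeta(\overbar \eta(v))
  + \tfrac12 (\overbar L_n^\zeta)''(\eta_*)
  \bigl(\overbar \eta(v) - \overbar \eta_n^\zeta(v)\bigr)^2
\end{equation*}
for some intermediate $\eta_* \in \Delta$. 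Proposition~\ref{prop:etaExist} shows that $|\overbar \eta(v) - \overbar \eta_n^\zeta(v)| = O(\sqrt{\ln n / n})$ uniformly for $v \in V$ on a set of $\P$-probability $1 - O(n^{-q})$, and Lemma~\ref{lem:lambdaProps} provides a uniform bound on $(\overbar L_n^\zeta)''$ on the compact $\Delta$. Hence the quadratic remainder is $O(\ln n/n)$.

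Subtracting $L^*(1/v)$ and recalling the definition \eqref{eq:Vdef}, the main contribution simplifies to
\begin{equation*}
  L(\overbar \eta(v)) - \overbar L_n^\zeta(\overbar \eta(v))
  = \frac{1}{n}\sum_{i=1}^n \Bigl( V_i^{\zeta,v}(\overbar \eta(v))
    - \E\bigl[V_i^{\zeta,v}(\overbar \eta(v))\bigr] \Bigr).
\end{equation*}
Multiplying by the scaling factor $t\sqrt n$ from \eqref{eq:defWnt}, replacing $n$ by $\lfloor nt \rfloor$, and absorbing the $t\sqrt n \cdot O(\ln(nt)/(nt)) = O(\ln n / \sqrt n)$ remainder into a term that is $o(1)$ uniformly on compacts of $t$, I arrive at the key approximation
\begin{equation*}
  W_n(t) = \frac{1}{\sigma_v \sqrt n}
  \sum_{i=1}^{\lfloor nt \rfloor} \Bigl( V_i^{\zeta,v}(\overbar \eta(v))
    - \E\bigl[V_i^{\zeta,v}(\overbar \eta(v))\bigr] \Bigr) + o(1).
\end{equation*}

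What remains is to apply a functional invariance principle to this partial-sum process. The summands form a bounded (by \eqref{eq:xiAssumptions} together with Lemma~\ref{lem:lambdaProps}) stationary sequence on $(\Omega, \mathcal F, \P)$ with exponentially decaying covariances by Lemma~\ref{lem:covBd}, and their long-run variance is exactly $\sigma_v^2$ by the definition \eqref{eq:limitSigma}, already known to be positive. A standard invariance principle for bounded stationary sequences with exponentially decaying covariances (e.g.\ via the $\alpha$-mixing FCLT of Ibragimov--Linnik, or a martingale approximation) then yields convergence of the partial-sum process to $\sigma_v B(\cdot)$ in the Skorokhod topology, giving $W_n \Rightarrow B$. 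The main technical obstacle is keeping the Taylor expansion, the quantitative estimate $\overbar \eta_n^\zeta(v) \to \overbar \eta(v)$, and the partial-sum asymptotics valid simultaneously on a common event of overwhelming $\P$-probability; this is precisely what the choice of $\mathcal H_n$ together with the polynomial bound \eqref{eq:etaEmpEta} and a Borel--Cantelli argument provide.
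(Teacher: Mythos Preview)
Your proposal is correct and follows essentially the same route as the paper: both reduce the problem, via a Taylor expansion of $\eta\mapsto \eta/v-\overbar L_n^\zeta(\eta)$ around its maximizer $\overbar\eta_n^\zeta(v)$, to an FCLT for the partial sums of the centered $V_i^{\zeta,v}(\overbar\eta(v))$, with the quadratic remainder controlled by Proposition~\ref{prop:etaExist} and Lemma~\ref{lem:lambdaProps}. The only real difference is in how the partial-sum FCLT is justified. The paper notices that the $V_i^{\zeta,v}(\overbar\eta(v))$ are coordinatewise decreasing in the i.i.d.\ field $\zeta$, invokes the FKG inequality to conclude that the sequence is associated, and then applies the Newman--Wright FCLT for associated sequences. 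You instead appeal to mixing or martingale approximation; be aware that the sequence is \emph{not} $\alpha$-mixing in the classical sense (each $L_i^\zeta$ depends on all $\zeta(j)$ with $j\le i$), so the Ibragimov--Linnik theorem does not apply directly. However, the exponential decay of conditional expectations given by Lemma~\ref{lem:covBd} is precisely what is needed for a Gordin-type martingale approximation, so that alternative is sound---and is in fact the device the paper uses for a closely related partial sum in Lemma~\ref{lem:RtoBM}.
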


Heuristically, the proof of this proposition is based on the fact that
the fluctuations of the  Legendre transforms $(\overbar{L}_{n}^\zeta)^*$
are essentially given by the fluctuations of the functions
$\overbar L^\zeta_n$, whereas the influence of the fluctuations of the
maximizing argument at which the supremum is attained in the definition
\eqref{eq:etanxiDef} of the Legendre transform is negligible.

\begin{proof}[Proof of Proposition \ref{prop:IEst}]
  Recall that, due to \eqref{eq:etanxiDef}, on $\mathcal H_n$,
  \begin{equation}
    \label{eq:barLS}
    (\overbar{L}_{n}^\zeta)^* (1/v)
    = \frac{  \overbar{\eta}_n^\zeta (v)}{ v}
    -\overbar{L}_n^\zeta (\overbar{\eta}_n^\zeta (v))
    = \frac 1n \sum_{i=1}^n V_i^{\zeta,v} (\overbar \eta^\zeta_n(v))
    = \frac 1n S^{\zeta,v}_n(\overbar \eta^\zeta_n(v)),
  \end{equation}
  where we set
  \begin{equation} \label{eq:SnDef}
  S_n^{\zeta,v} (\eta) := \sum_{i=1}^n V^{\zeta,v}_i(\eta)
  \end{equation}
  as a shorthand.
  Using this notation, we expand the quantity of interest as
  \begin{equation}
    \begin{split}
      \label{eq:summands}
      tn  \big((\overbar{L}_{tn}^\zeta)^* (1/v) -  {L}^* (1/v)\big)
      = {}&
      \big( tn(\overbar{L}_{tn}^\zeta)^* (1/v)
        -S_{t n}^{\zeta,v} (\overbar \eta (v))\big)
      \\&+  \big( S_{tn}^{\zeta,v} (\overbar \eta (v)) - \E [ S_{t
            n}^{\zeta,v} (\overbar \eta (v))] \big)
      \\&+  \big( \E [ S_{t n}^{\zeta,v} (\overbar \eta (v)) ]
        - tnL^*(1/v) \big).
    \end{split}
  \end{equation}
  We will show that the first and the third summand on the right-hand
  side are negligible in a suitable sense, and that the second summand
  converges in distribution after rescaling by $\sigma_v \sqrt{n}$ to
  standard Brownian motion under~$\P$.

  The third summand in \eqref{eq:summands} is the easiest since it
  vanishes. Indeed, by \eqref{eq:Ldef} and \eqref{eq:etaBarDef},
  \begin{equation*}
    tn L^*(1/v)
    =t n \Big(\frac{\overbar \eta (v)}{ v} - L(\overbar \eta(v) ) \Big)
    = t n\E\Big[ \frac{\overbar \eta (v)}{ v}
      - \overbar L_{t n}^\zeta(\overbar \eta(v))\Big]
    = \E \big[ S_{tn}^{\zeta,v} (\overbar \eta (v))\big].
  \end{equation*}

  The next lemma deals with the second summand in \eqref{eq:summands}.
  \begin{lemma} \label{lem:secCLT}
    The sequence of processes
    \begin{equation*}
     [0,\infty) \ni t\mapsto \widetilde W_n(t):=\frac 1{\sigma_v \sqrt{n}}
      \big( S_{t  n}^{\zeta,v} (\overbar \eta (v))
        - \E [ S_{t n}^{\zeta,v} (\overbar \eta (v)) ] \big),
      \qquad n \in \N,
    \end{equation*}
    converges as $n\to\infty$ in  $\mathbb P$-distribution to standard
    Brownian motion.
  \end{lemma}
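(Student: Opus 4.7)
The plan is to apply an invariance principle for stationary, weakly dependent sequences to $(V_i^{\zeta,v}(\overbar\eta(v)))_{i\in\Z}$. Here the tilting parameter $\overbar\eta(v)$ is the \emph{deterministic} value from Lemma~\ref{lem:baretaprops}, so the sequence is a fixed measurable functional of the i.i.d.\ field $\xi$ and inherits stationarity and ergodicity under $\P$. By Lemma~\ref{lem:lambdaProps} combined with \eqref{eq:xiAssumptions}, the random variables $L_i^\zeta(\overbar\eta(v))$ are $\P$-a.s.\ uniformly bounded, so the centered variables $\widetilde V_i := V_i^{\zeta,v}(\overbar\eta(v)) - \E[V_1^{\zeta,v}(\overbar\eta(v))]$ are bounded too, in particular in $L^2(\P)$.

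The key ingredient is geometric decay of dependence. Each $L_i^\zeta(\overbar\eta(v))$ is $\mathcal F_i$-measurable, and by essentially the same coupling/regeneration argument invoked for $(L_i^\zeta)'$ in Lemma~\ref{lem:covBd}, one obtains
\begin{equation*}
  \big|\E[L_i^\zeta(\overbar\eta(v))\mid \mathcal F_k] - L(\overbar\eta(v))\big| \le c e^{-(i-k)/c}, \qquad i \ge k,
\end{equation*}
with $c<\infty$ uniform over $v\in V$. This immediately yields $|\Cov_\P(\widetilde V_1,\widetilde V_j)|\le c e^{-j/c}$, so the series defining $\sigma_v^2$ in \eqref{eq:limitSigma} converges absolutely; positivity $\sigma_v>0$ follows from the essential non-degeneracy of $\xi$ in \eqref{eq:xiAssumptions}.

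With these tools I would carry out a Gordin-type martingale approximation. Set $g_i := \sum_{k\ge 1}\E[\widetilde V_{i+k}\mid \mathcal F_i]$; convergence in $L^2$ is guaranteed by the geometric bound, and stationarity transfers from $\widetilde V$ to $g$. Then $D_i := \widetilde V_i + g_i - g_{i-1}$ is a stationary ergodic sequence of $\mathcal F_i$-martingale differences, and summation telescopes to
\begin{equation*}
  \sum_{i=1}^{n}\widetilde V_i = \sum_{i=1}^{n} D_i + g_0 - g_n.
\end{equation*}
The classical functional CLT for stationary ergodic martingale differences applied to the first sum on the right-hand side yields the invariance principle for $\widetilde W_n$, with the limiting variance $\E[D_1^2]$ agreeing with \eqref{eq:limitSigma} via the standard Green-Kubo identity. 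The coboundary term contributes $n^{-1/2}\sup_{t\le T}|g_{\lfloor nt\rfloor} - g_0|$, which converges to $0$ in $\P$-probability by stationarity and $L^2$-boundedness of $g_i$, giving negligibility uniformly on compact time intervals.

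The main obstacle I expect is establishing the uniform exponential decay estimate at the level of $L_i^\zeta(\eta)$ itself rather than its derivative, since the statement quoted from Lemma~\ref{lem:covBd} in Claim~\ref{cl:Lprimeconc} concerns $(L_i^\zeta)'$. This should, however, follow from the same underlying random walk regeneration argument: the Feynman-Kac weight on $[0,H_i]$ in \eqref{eq:empL} depends on $\{\xi(j):j\ll i\}$ only through the event that $X$ travels far to the left of $i-1$ before hitting $i$, which is exponentially suppressed under the tilted measure $P^{\zeta,\overbar\eta(v)}$ uniformly in $v\in V$ thanks to \eqref{eq:Vinterval} and \eqref{eq:tauexpmoments}.
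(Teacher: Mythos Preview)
Your argument is correct, but the paper proceeds differently. The paper observes that each $V_i^{\zeta,v}(\overbar\eta(v))=\overbar\eta(v)/v-L_i^\zeta(\overbar\eta(v))$ is coordinatewise decreasing in the $\zeta$'s, so by the FKG inequality the sequence is \emph{associated}; it then invokes the functional CLT for associated sequences of Newman--Wright \cite[Theorem~3]{NeWr-81}, which requires only stationarity, finite variance, and summability of covariances (the latter supplied by Lemma~\ref{lem:covBd}). This is shorter because it exploits the monotone structure and avoids building the martingale approximation by hand. Your Gordin route is more robust---it would work even without monotonicity---and is a perfectly standard way to get FCLTs for stationary sequences with geometrically decaying conditional expectations.

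Two minor remarks on your write-up. First, the ``main obstacle'' you anticipate is not one: Lemma~\ref{lem:covBd} already contains, as inequality~\eqref{eq:LcondBd}, the bound $|\E[L_j^\zeta(\eta)\mid\mathcal F_i]-L(\eta)|\le C_\Delta e^{-(j-i)/C_\Delta}$ for $L_j^\zeta$ itself (not just its derivative), so no further regeneration argument is needed. Second, your coboundary negligibility is in fact trivial here: since the $\widetilde V_i$ are uniformly bounded and the conditional expectations decay geometrically, $g_i$ is uniformly bounded, hence $n^{-1/2}\sup_{t\le T}|g_{\lfloor nt\rfloor}-g_0|\le 2\|g_0\|_\infty/\sqrt n\to 0$ deterministically; invoking only $L^2$-boundedness and stationarity would not by itself give the uniform statement.
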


  \begin{proof}
    By the definition of $S_n^{\zeta,v}$,
    \begin{equation*}
      \frac 1{\sigma_v\sqrt n}\big(S_{tn}^{\zeta,v} (\overbar \eta (v))
        - \E [ S_{t  n}^{\zeta,v} (\overbar \eta (v)) ]\big)
      = \frac{1}{\sigma_v\sqrt n}
      \sum_{i=1}^{t n} V_i^{\zeta ,v}(\overbar \eta (v))
      - \E[V_i^{\zeta ,v}(\overbar \eta (v))].
    \end{equation*}
    The $V^{\zeta,v}_i(\overbar \eta (v))$  form a non-degenerate
    stationary sequence of random variables, which are coordinatewise
    decreasing in the $\zeta$'s. Therefore, by the FKG-inequality, they
    also form an associated sequence in the sense that any two
    coordinatewise decreasing functions of the
    $V^{\zeta,v}_i(\overbar \eta (v))$'s of finite variance are
    non-negatively correlated. Hence, the functional central limit
    theorem for associated random variables proved in
    \cite[Theorem~3]{NeWr-81} supplies us with convergence in $C([0,M])$
    for each $M \in (0,\infty)$, and the result is then extended to
    $C([0,\infty))$ in the standard fashion.
  \end{proof}
  Finally, for the first summand in \eqref{eq:summands}, we have the
  following estimate.
  \begin{lemma}
    \label{lem:concentration}
    There is $C<\infty$ such that $\mathbb P$-a.s.~for every
    $M\in (1,\infty)$ and $v\in V$,
    \begin{equation*}
      \limsup_{n\to\infty} \frac 1 {\ln n}
      \sup_{t \in [0,M]} \big|
      tn(\overbar{L}_{tn}^\zeta)^* (1/v)
      - S_{t  n}^{\zeta,v} (\overbar \eta (v)) \big |  \le C.
    \end{equation*}
  \end{lemma}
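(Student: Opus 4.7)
The plan is to exploit the fact that $\overbar{\eta}_k^\zeta(v)$ is by construction the maximizer in the Legendre transform defining $(\overbar{L}_k^\zeta)^*(1/v)$. By \eqref{eq:barLS}, on the event $\mathcal H_k$ the quantity of interest at integer $k = tn$ equals
\[
k (\overbar{L}_k^\zeta)^*(1/v) - S_k^{\zeta,v}(\overbar{\eta}(v))
= S_k^{\zeta,v}(\overbar{\eta}_k^\zeta(v)) - S_k^{\zeta,v}(\overbar{\eta}(v)).
\]
Since $(S_k^{\zeta,v})'(\overbar{\eta}_k^\zeta(v)) = 0$, by \eqref{eq:qqqqa}, a second-order Taylor expansion of $S_k^{\zeta,v}$ around $\overbar{\eta}_k^\zeta(v)$ gives
\[
S_k^{\zeta,v}(\overbar{\eta}_k^\zeta(v)) - S_k^{\zeta,v}(\overbar{\eta}(v))
= -\tfrac{1}{2} (S_k^{\zeta,v})''(\tilde{\eta}_k)\,(\overbar{\eta}(v) - \overbar{\eta}_k^\zeta(v))^2
\]
for some $\tilde{\eta}_k$ lying between $\overbar{\eta}(v)$ and $\overbar{\eta}_k^\zeta(v)$, both of which are in the compact interval $\Delta$ introduced in \eqref{eq:Hn} once $k \ge \mathcal N_1$. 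Since $(S_k^{\zeta,v})''(\eta) = -\sum_{i=1}^k (L_i^\zeta)''(\eta)$ and Lemma~\ref{lem:lambdaProps} provides a $\zeta$-uniform bound on $(L_i^\zeta)''$ over $\Delta$, we obtain $|(S_k^{\zeta,v})''(\tilde{\eta}_k)| \le Ck$.

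Next, to control the squared deviation of the maximizer, I apply Proposition~\ref{prop:etaExist} with $q \ge 2$ and a Borel--Cantelli argument using summability of $k^{-q}$: there exists a $\P$-a.s.~finite random $\mathcal K$ such that
\[
\sup_{v \in V} |\overbar{\eta}_k^\zeta(v) - \overbar{\eta}(v)|
\le C \sqrt{\frac{\ln k}{k}}
\qquad \text{for all } k \ge \mathcal K.
\]
Combining with the second-derivative bound yields
\[
|S_k^{\zeta,v}(\overbar{\eta}_k^\zeta(v)) - S_k^{\zeta,v}(\overbar{\eta}(v))| \le C \ln k
\]
for every integer $k \ge \mathcal K \vee \mathcal N_1$ and every $v \in V$, with $C$ independent of $k$ and $v$.

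To promote this estimate from integer indices to a supremum over $t \in [0,M]$, I invoke the linear-interpolation convention: both $k \mapsto k(\overbar{L}_k^\zeta)^*(1/v)$ and $k \mapsto S_k^{\zeta,v}(\overbar{\eta}(v))$ are continuous in $k$ with piecewise-affine structure, so the difference on a unit interval $[j,j+1]$ differs from its endpoint values by at most a universal constant. Consequently,
\[
\sup_{t \in [0,M]} |tn (\overbar{L}_{tn}^\zeta)^*(1/v) - S_{tn}^{\zeta,v}(\overbar{\eta}(v))|
\le C + \max_{0 \le k \le \lceil Mn \rceil}
|k (\overbar{L}_k^\zeta)^*(1/v) - S_k^{\zeta,v}(\overbar{\eta}(v))|
\le C \ln(Mn) + C
\]
for all sufficiently large $n$. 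Dividing by $\ln n$ and passing to the $\limsup$ gives the bound $C$ for any fixed $M$ and $v$; a countable union over $M \in \mathbb{N}$ produces a single exceptional $\P$-null set, and uniformity over $v \in V$ is already built into Proposition~\ref{prop:etaExist}.

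The main technical point is that the estimate of Proposition~\ref{prop:etaExist} must hold simultaneously for all scales $k \le Mn$, which requires choosing the polynomial decay rate $q$ large enough to make the tail bounds summable in $k$. The quadratic-in-$(\overbar{\eta}_k^\zeta - \overbar{\eta})$ nature of the Taylor remainder is essential: only because the first-order term vanishes at the maximizer does the $O(\sqrt{\ln k /k})$ deviation upgrade to an $O(\ln k / k)$ error after squaring, which exactly cancels the $k$ coming from the second derivative to yield the sharp $O(\ln n)$ scale.
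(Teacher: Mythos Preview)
Your proof is correct and follows essentially the same approach as the paper: the identity $k(\overbar L_k^\zeta)^*(1/v)=S_k^{\zeta,v}(\overbar\eta_k^\zeta(v))$ on $\mathcal H_k$, a second-order Taylor expansion around the maximizer $\overbar\eta_k^\zeta(v)$ (so the first-order term vanishes), the uniform bound $|(S_k^{\zeta,v})''|\le Ck$ on $\Delta$ via Lemma~\ref{lem:lambdaProps}, and then Proposition~\ref{prop:etaExist} with Borel--Cantelli to get $|\overbar\eta_k^\zeta(v)-\overbar\eta(v)|\le C\sqrt{\ln k/k}$ for all large $k$. Your handling of the finitely many small $k<\mathcal K\vee\mathcal N_1$ (absorbed into a $\xi$-dependent constant that disappears after dividing by $\ln n$) and of the linear-interpolation convention for non-integer $tn$ is slightly more explicit than the paper's, but the argument is the same.
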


  \begin{proof}
    By Proposition~\ref{prop:etaExist} and \eqref{eq:Hn},  the
    representation \eqref{eq:barLS} holds for all $n\ge \mathcal N_1$,
    with $\mathcal N_1$ a $\P$-a.s.~finite random variable.  As a
    consequence, it is sufficient to show that $\P$-a.s.,
    \begin{equation}
      \label{eq:contoshow}
      \limsup_{n\to\infty}
      \frac 1 {\ln n}\max_{\mathcal N_1 \le k \le Mn}
        \Big \vert S_{k}^{\zeta,v}(\overbar \eta_{k}^\zeta(v))
        - S_{k}^{\zeta,v} (\overbar \eta(v)) \Big \vert
        \le C.
    \end{equation}
    Assuming $k \ge \mathcal N_1$ in what follows, using a Taylor
    expansion of the smooth function $S^{\zeta ,v}_k$ around
    $ \overbar \eta_{k}^\zeta (v)$ we get
    \begin{equation}
      \label{eq:Texpand}
      \begin{split}
        S_{k}^{\zeta,v} (\overbar \eta (v))
        - S_{k}^{\zeta,v} (\overbar \eta_{k}^\zeta (v))
        &= (S_{k}^{\zeta,v})' (\overbar \eta_{k}^\zeta (v))
        (\overbar \eta (v) - \overbar \eta_{k}^\zeta (v))
        \\&+ (S_{k}^{\zeta,v})'' (\widetilde \eta_{k}^\zeta )
        \frac{(\overbar \eta (v) - \overbar \eta_{k}^\zeta (v))^2}{2},
      \end{split}
    \end{equation}
    for some $\widetilde \eta_{k}^\zeta \in \Delta$ with
    $\vert \widetilde \eta_{k}^\zeta - \overbar \eta_{k}^\zeta  (v)\vert
    \le \vert \overbar \eta (v) - \overbar \eta_{k}^\zeta (v) \vert $.

    By \eqref{eq:etanxiDef}, $S_k^{\zeta,v} (\eta )$ is maximized for
    $\eta = \overbar \eta^\zeta_k(v)$, so
    $(S_{k}^{\zeta,v})' (\overbar \eta_{k}^\zeta (v)) = 0$ and the first
    term on the right-hand side of \eqref{eq:Texpand} vanishes.

    To bound the second term, observe that
    $(S_{k}^{\zeta,v})''(\widetilde \eta_{k}^\zeta)
    = -k (\overbar L_k^\zeta)''(\widetilde \eta_k^\zeta)$.
    By Lemma \ref{lem:lambdaProps}, $\P$-a.s.,
    $(L_1^\zeta)''(\eta )$ is bounded from above, uniformly over
    $\eta \in \Delta $ (cf.~\eqref{eq:Hn}). Hence, $\P$-a.s.,
    \begin{equation}
      \label{eq:Ssecondbound}
      (S_k^{\zeta ,v})''(\widetilde \eta_k^\zeta )
      \in[- Ck,0]\qquad
      \text{for all $k\ge \mathcal N_1$, $v\in V$}.
    \end{equation}
    Going back to
    \eqref{eq:Texpand}, $\P$-a.s.~for all $k\ge \mathcal N_1$,
    \begin{equation*}
      \big\vert  S_k^{\zeta,v} (\overbar \eta (v))
      - S_k^{\zeta,v} (\overbar \eta_k^\zeta (v)) \big\vert
      \le c k
      \big\vert \overbar \eta (v) - \overbar \eta_k^\zeta (v) \big\vert^2.
    \end{equation*}
    Using the concentration estimates for $\overbar\eta_k^\zeta (v)$
    from Proposition~\ref{prop:etaExist}, it is
    possible to fix a constant $C<\infty$ and a $\P$-a.s.~finite
    random variable $\mathcal N_2\ge\mathcal N_1$ such that for all
    $k\ge \mathcal N_2$,
    $\big |\overbar \eta_k^\zeta(v) - \overbar \eta(v)\big|
    \le C \sqrt {{\ln k}/k}$.
    Putting all together, this implies that $\P$-a.s.~the left-hand side
    in \eqref{eq:contoshow} is bounded by
    \begin{equation*}
      \begin{split}
        \limsup_{n\to\infty}
        \frac 1{\ln n}\Big\{
        \max_{\mathcal N_1 \le k \le \mathcal N_2}
          \vert S_{k}^{\zeta,v}(\overbar \eta_{k}^\zeta(v))
          - S_{k}^{\zeta,v} (\overbar \eta(v)) \vert
          +\max_{\mathcal N_2 \le k \le Mn}
          C\ln k\Big\}\le C.
      \end{split}
    \end{equation*}
    This completes the proof.
  \end{proof}
  Proposition~\ref{prop:IEst} now follows from \eqref{eq:summands} and
  Lemmas \ref{lem:secCLT} and \ref{lem:concentration}.
\end{proof}

The proof of Proposition~\ref{prop:IEst} has the following  corollary
which provides a useful explicit approximation to $W_n(t)$.
\begin{corollary}
  \label{cl:explicitW}
  There is a constant $C<\infty$ such that $\P$-a.s.~for every
  $M\in (0,\infty)$ and $v\in V$,
  \begin{equation*}
    \limsup_{n\to\infty}\frac 1{\ln n}\sup_{t \in [0, M]} \Big|
      \sigma_v\sqrt n W_n(t) -  \sum_{i=1}^{nt}
      \big(L(\overbar \eta (v))-L_i^\zeta (\overbar \eta
          (v))\big)\Big|\le C.
  \end{equation*}
\end{corollary}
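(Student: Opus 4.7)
The corollary is essentially a re-reading of the decomposition \eqref{eq:summands} that was already used in the proof of Proposition~\ref{prop:IEst}, so my plan is to extract the quantitative information from that argument rather than start anything new.

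First I would recall that $\sigma_v\sqrt{n}\,W_n(t)=tn\big((\overbar L_{tn}^\zeta)^*(1/v)-L^*(1/v)\big)$ by the very definition \eqref{eq:defWnt} of $W_n$, and then apply the three-term split from \eqref{eq:summands}. The third summand there vanishes identically, since by Fubini and \eqref{eq:Ldef}, \eqref{eq:etaBarDef},
\begin{equation*}
  \mathbb E\big[S_{tn}^{\zeta,v}(\overbar\eta(v))\big]
  = tn\Big(\tfrac{\overbar\eta(v)}{v}-L(\overbar\eta(v))\Big)
  = tn\,L^*(1/v).
\end{equation*}
The second summand is, via the definition \eqref{eq:Vdef} of $V_i^{\zeta,v}$ together with \eqref{eq:SnDef},
\begin{equation*}
  S_{tn}^{\zeta,v}(\overbar\eta(v))-\mathbb E\big[S_{tn}^{\zeta,v}(\overbar\eta(v))\big]
  = \sum_{i=1}^{tn}\big(L(\overbar\eta(v))-L_i^\zeta(\overbar\eta(v))\big),
\end{equation*}
which is precisely the sum appearing in the statement. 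Hence the quantity inside the absolute value in the corollary equals exactly the first summand of \eqref{eq:summands}, namely $tn(\overbar L_{tn}^\zeta)^*(1/v)-S_{tn}^{\zeta,v}(\overbar\eta(v))$.

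Therefore the bound we want is nothing else than the uniform logarithmic bound already proved in Lemma~\ref{lem:concentration}: $\P$-a.s.,
\begin{equation*}
  \limsup_{n\to\infty}\frac{1}{\ln n}\sup_{t\in[0,M]}\big|tn(\overbar L_{tn}^\zeta)^*(1/v)-S_{tn}^{\zeta,v}(\overbar\eta(v))\big|\le C,
\end{equation*}
with the same constant $C$. I would take this $C$ as the constant in the corollary, thus obtaining the claim.

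Since the assembly is purely algebraic, there is no genuine obstacle; the one subtlety to mention is that Lemma~\ref{lem:concentration} is stated for $k\in[\mathcal N_1,Mn]$ with $\mathcal N_1$ the $\P$-a.s.\ finite random variable from \eqref{eq:Hn}, so strictly speaking one should argue that the finitely many indices $k<\mathcal N_1$ do not affect the $\limsup$. Because $\mathcal N_1$ does not depend on $n$, the contribution from those indices is a finite random constant and is absorbed into the $\limsup$, so the bound carries over without change.
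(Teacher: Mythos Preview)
Your proof is correct and follows essentially the same approach as the paper: unwind the definition \eqref{eq:defWnt} of $W_n$, identify the sum $\sum_{i=1}^{nt}\big(L(\overbar\eta(v))-L_i^\zeta(\overbar\eta(v))\big)$ with $S_{tn}^{\zeta,v}(\overbar\eta(v))-tnL^*(1/v)$ via $L^*(1/v)=\overbar\eta(v)/v-L(\overbar\eta(v))$, and then invoke Lemma~\ref{lem:concentration}. Your explicit routing through the decomposition \eqref{eq:summands} and your remark on the finitely many indices $k<\mathcal N_1$ are merely more detailed renderings of what the paper's terse proof leaves implicit.
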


\begin{proof}
  It suffices to use the definition \eqref{eq:defWnt} of $W_n(t)$ together
  with Lemma~\ref{lem:concentration}. The claim then follows after a
  straightforward computation  by inserting
  the definition of $S_{tn}^{\zeta,v}(\overbar \eta (v))$ and using that
  $L^*(1/v)=\overbar \eta (v)/v - L(\overbar \eta (v))$.
\end{proof}

\subsection{An auxiliary invariance principle}
\label{sec:hittingFCLT}

We now prove an invariance principle for the logarithm of
the auxiliary process
\begin{equation*}
  Y_{v}(n):=  E_{0} \Big [ \exp \Big
    \{ \int_0^{H_n} \zeta(X_s) \, \d s \Big\}; H_n \le \frac{n}{v}
    \Big], \qquad n\in\N,v\in V,
\end{equation*}
which we will relate to quantities considered in the Feynman-Kac
representation~\eqref{eq:firMomFormula} later on. Observe that this
invariance principle can be seen as a first step to exact large
deviation estimates, as explained in Section~\ref{sec:strategy} above.

For convenience we split the process $Y_v$ into the two summands
\begin{align}
  \label{eq:Ysummands}
  \begin{split}
    Y^\approx_{v}(n) &:= E_{0} \Big [ \exp \Big
      \{ \int_0^{H_n} \zeta(X_s) \, \d s \Big\}; H_n\in\Big[\frac
        {n}{v}-K,\frac{n}{v}\Big] \Big] \quad \text{ and }
    \\ Y^<_{v}(n) &:=  E_{0} \Big [ \exp \Big
      \{ \int_0^{H_n} \zeta(X_s) \, \d s \Big\}; H_n<\frac {n}{v}-K \Big],
  \end{split}
\end{align}
where $K>0$ is a large constant which will be fixed later on.

For $n\in \mathbb N$ and $v\in V$ we define random variables
$\sigma_n^\zeta(v)$
\begin{equation}
  \label{eq:sigmaDef}
  \sigma_{n}^\zeta(v) :=
  \begin{cases}
    |\overbar \eta_{n}^\zeta(v)|
    \sqrt{ \vphantom{\sum}\Var_{P^{\zeta, \overbar \eta_{n}^\zeta(v)}}
      [H_n] }, \qquad&
    \text{on } \mathcal H_n,\\
    \max \Delta
    \sqrt{ \vphantom{\sum}\Var_{P^{\zeta, \max \Delta  }}
      [H_n] }, \qquad&
    \text{on } \mathcal H_n^c.
  \end{cases}
\end{equation}
Under every $P^{\zeta,\eta}$ we can write $H_n=\sum_{i=1}^{n}{\tau_i}$ as
a sum of independent random variables (see \eqref{eq:Hi} and below).
Moreover, by Lemma~\ref{lem:lambdaProps}, there is a constant $c<\infty$
such that $c^{-1}\le \Var_{P^{\zeta,\eta }}[\tau_i]\le c$ for all
$n \in \mathbb N$, $\eta\in \Delta$ and $\P$-a.e.~$\zeta$, and thus
\begin{equation}
  \label{eq:sigmabound}
  c^{-1}\sqrt n\le \sigma_n^\zeta (v) \le c \sqrt n \qquad
  \text{for all $n\in \mathbb N$, $v\in V$  and $\P$-a.e.~$\zeta$}.
\end{equation}

\begin{proposition}
  \label{prop:Donsker}
  Let $V$ be as in \eqref{eq:Vinterval},  and let $K$ from
  \eqref{eq:Ysummands} be a large enough fixed constant. Then there
  exists a constant $C < \infty$ such that
  \begin{equation}
    \label{eq:expAsymptotics}
      Y^\approx_{v}(n) \sigma_{n}^\zeta(v) \exp \big\{ n {L}^* (1/v) +
        \sigma_v \sqrt n W_n(1)   \big\}
      \in [C^{-1}, C]
  \end{equation}
  for all $v \in V,  n \in \N $ on $\mathcal H_n$,
  where $W_n$ is given in \eqref{eq:defWnt} of Proposition
  \ref{prop:IEst} and  $\sigma_v \in (0,\infty)$ is as in
  \eqref{eq:limitSigma}. In addition, for some $\widetilde C<\infty$,
  \begin{equation}
    \label{eq:comparability}
    \frac{Y_{v}^\approx(n)}{Y_{v}^<(n)}
    \in [\widetilde C^{-1}, \widetilde C]
    \quad \text{for all } v \in V, n\in \mathbb N,
    \text{ on } \mathcal H_n.
  \end{equation}
  In particular, each of the three sequences of processes
  \begin{align} \label{eq:3seq}
    \begin{split}
      t \mapsto  \frac{1}{\sigma_v \sqrt{n}} \big( \ln Y_v^{\approx}(tn)
        + t  n L^* (1/v)  \big), \quad n \in \N,\\
      t \mapsto  \frac{1}{\sigma_v \sqrt{n}} \big( \ln Y_v^{<}(tn)
        + t  n L^* (1/v)  \big), \quad n\in \N,\\
      t \mapsto  \frac{1}{\sigma_v \sqrt{n}} \big( \ln Y_v(tn)
      + t  n L^* (1/v)  \big), \quad n\in \N,
    \end{split}
  \end{align}
  converges as $n\to\infty$ in $\mathbb P$-distribution to standard
  Brownian motion.
\end{proposition}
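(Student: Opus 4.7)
The plan is to prove \eqref{eq:expAsymptotics} and \eqref{eq:comparability} by performing the exponential change of measure induced by the tilted law $P^{\zeta,\eta}$ at $\eta = \overbar\eta_n^\zeta(v)$, and then applying a quenched local central limit theorem (LCLT) to $H_n=\sum_{i=1}^n\tau_i$ under $P^{\zeta,\eta}$. Once these two sharp comparisons are established, the three functional invariance principles in \eqref{eq:3seq} will reduce, upon taking logarithms, to Proposition~\ref{prop:IEst} for the process $W_n$.

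\medskip

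For the first identity, fix $\eta = \overbar\eta_n^\zeta(v)\in\Delta$ on $\mathcal H_n$. The change of measure \eqref{eq:tiltedVarTime} rewrites
\begin{equation*}
Y_v^\approx(n) = Z_{(n)}^{\zeta,\eta}\, E^{\zeta,\eta}\bigl[e^{-\eta H_n};\, H_n\in[n/v-K,n/v]\bigr].
\end{equation*}
On this event $H_n - n/v\in[-K,0]$, and since $\eta$ ranges in the compact set $\Delta$, the factor $e^{-\eta(H_n-n/v)}$ is bounded above and below by constants depending only on $K$ and $\Delta$. Combining this with $Z_{(n)}^{\zeta,\eta}e^{-\eta n/v}=\exp\{-n(\overbar L_n^\zeta)^*(1/v)\}$ from \eqref{eq:Zzetaeta} and \eqref{eq:etanxiDef}, and the identity $n(\overbar L_n^\zeta)^*(1/v)=nL^*(1/v)+\sigma_v\sqrt n\,W_n(1)$ built into the definition of $W_n$, yields
\begin{equation*}
Y_v^\approx(n)\exp\{nL^*(1/v)+\sigma_v\sqrt n\,W_n(1)\}\asymp P^{\zeta,\eta}\bigl(H_n\in[n/v-K,n/v]\bigr).
\end{equation*}
Under $P^{\zeta,\eta}$ the increments $\tau_i$ are independent by the strong Markov property, with uniform exponential moments \eqref{eq:tauexpmoments} and variances bounded away from $0$ and $\infty$ by Lemma~\ref{lem:lambdaProps}, and $E^{\zeta,\eta}[H_n]=n/v$ by the choice of $\eta$. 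A Gnedenko-type LCLT for independent non-identically distributed lattice summands then gives
$P^{\zeta,\eta}(H_n\in[n/v-K,n/v])\asymp K/\sqrt{\Var_{P^{\zeta,\eta}}(H_n)}\asymp 1/\sigma_n^\zeta(v)$,
proving \eqref{eq:expAsymptotics}.

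\medskip

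For \eqref{eq:comparability}, the same tilting gives
\begin{equation*}
Y_v^<(n) = \exp\{-n(\overbar L_n^\zeta)^*(1/v)\}\, E^{\zeta,\eta}\bigl[e^{\eta(n/v-H_n)};\, H_n<n/v-K\bigr].
\end{equation*}
I will decompose the expectation into the shells $\{H_n\in[n/v-k-1,n/v-k)\}$, $k\ge K$, on each of which $e^{\eta(n/v-H_n)}\le e^{\eta K}e^{-|\eta|(k-K)}$ (recall $\eta<0$). The LCLT of the preceding step bounds each shell probability by $C/\sigma_n^\zeta(v)$ for $k\lesssim\sqrt n$, and Gaussian tail estimates (or just the trivial bound $1$) suffice for larger $k$, where the geometric factor $e^{-|\eta|(k-K)}$ already crushes the contribution. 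Since $|\eta|$ is bounded below uniformly on $\Delta$, summing the geometric series in $k$ yields
$E^{\zeta,\eta}[\,\cdot\,]\asymp 1/\sigma_n^\zeta(v)$,
of the same order as $Y_v^\approx(n)\exp\{n(\overbar L_n^\zeta)^*(1/v)\}$ established above. This proves \eqref{eq:comparability}.

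\medskip

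The three invariance principles in \eqref{eq:3seq} then follow quickly. Applying \eqref{eq:expAsymptotics} at time $nt$ and using the scaling identity $\sigma_v\sqrt{nt}\,W_{nt}(1)=\sigma_v\sqrt n\,W_n(t)$ together with $\ln\sigma_{nt}^\zeta(v)=O(\ln n)=o(\sqrt n)$ from \eqref{eq:sigmabound}, one obtains
$(\sigma_v\sqrt n)^{-1}\bigl(\ln Y_v^\approx(nt)+ntL^*(1/v)\bigr) = -W_n(t) + o(1)$
deterministically on $\mathcal H_n$, and the required convergence to standard Brownian motion transfers from Proposition~\ref{prop:IEst} (with $-B$ being again a standard Brownian motion). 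The second sequence in \eqref{eq:3seq} differs from the first by the $O(1)$ quantity $\ln(Y_v^</Y_v^\approx)$ by \eqref{eq:comparability}, and the third then follows from $Y_v=Y_v^\approx+Y_v^<$ and the matching orders.

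\medskip

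\emph{The main obstacle} lies in the quenched LCLT required for both Step~1 and the shell decomposition in Step~2: it must hold uniformly in $\zeta$, in the tilt $\eta\in\Delta$, in $v\in V$, and in the location of the target window relative to the mean $n/v$. Because the $\tau_i$ under $P^{\zeta,\eta}$ are independent but non-identically distributed (their law depends on the configuration of $\zeta$ visited between consecutive hitting times), this rules out a direct appeal to standard IID LCLTs and calls for a characteristic-function or Berry–Esseen style argument for triangular arrays, made possible by the uniform exponential moments \eqref{eq:tauexpmoments} and the uniform lower bound on the variances of the $\tau_i$ provided by Lemma~\ref{lem:lambdaProps}.
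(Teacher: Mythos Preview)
Your proposal is correct and follows essentially the same route as the paper: the same tilting at $\overbar\eta_n^\zeta(v)$, the same reduction of $Y_v^\approx(n)$ to a probability estimate for $H_n$ near its mean under $P^{\zeta,\eta}$, and the same geometric shell decomposition for $Y_v^<$. The paper makes the local limit step precise via a Berry--Esseen bound for intervals (Theorem~13.3 of \cite{BhRa-76}) applied to the normalized sum $(\overbar\eta_n^\zeta/\sigma_n^\zeta)\sum_{i=1}^n\widehat\tau_i$; note that the $\tau_i$ are continuous rather than lattice, so this interval-wise Berry--Esseen is exactly the right substitute for your ``Gnedenko-type LCLT for lattice summands.''
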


\begin{proof}
  Throughout the proof we assume that $n$ is large enough so that
  $\mathcal H_n$ occurs. To simplify the notation, we also omit the
  dependence of $\overbar\eta^\zeta_n$ and $\sigma^\zeta_n$ on the
  parameter $v$.

  Let
  $\widehat \tau_i := \tau_i - E^{\zeta, \overbar \eta_{n}^\zeta} [\tau_i]$.
  Using the definition of the tilted measure $P^{\zeta ,\eta }$ (see
    \eqref{eq:tiltedVarTime} and below) with \eqref{eq:Zzetaeta},  and
  the fact that
  $\sum_{i=1}^n E^{\zeta, \overbar \eta_{n}^\zeta} [\tau_i]
  =E^{\zeta, \overbar \eta_{n}^\zeta} [H_n]=n/v$,
  we can rewrite $Y^\approx_v(n)$ as
  \begin{equation}
    \label{eq:Ysimdecomp}
    \begin{split}
      Y^\approx_v(n)
      & = E^{\zeta, \overbar \eta_{n}^\zeta}
      \bigg[\exp \Big\{ - \overbar \eta_{n}^\zeta
          \sum_{i=1}^{n}\widehat \tau_i\Big\}
        ;\sum_{i=1}^{n} \tau_i \in \Big[\frac nv-K,\frac nv \Big]
        \bigg]
      e^ { - n \big( v^{-1}\overbar \eta_{n}^\zeta
          - \overbar{L}_{n}^\zeta (\overbar \eta_{n}^\zeta)\big) }
      \\
      & = E^{\zeta, \overbar \eta_{n}^\zeta}
      \bigg[
        \exp \Big\{ - \sigma_{n}^\zeta
          \frac{\overbar \eta_{n}^\zeta}{\sigma_{n}^\zeta}
          \sum_{i=1}^{n}  \widehat \tau_i \Big\} ;
        \frac{\overbar \eta_{n}^\zeta}{\sigma_{n}^\zeta}
        \sum_{i=1}^{n} \widehat \tau_i \in
        \Big[0, -\frac{K \overbar \eta_{n}^\zeta }{\sigma_{n}^\zeta} \Big]
        \bigg]
      e^{ - n (\overbar{L}_{n}^\zeta)^* (1/v) }.
    \end{split}
  \end{equation}
  Writing $\mu_{n}^\zeta$ for the distribution of
  $\frac{\overbar{\eta}_{n}^\zeta}{\sigma_{n }^\zeta}
  \sum_{i=1}^{n} \widehat \tau_i$
  under $P^{\zeta, \overbar \eta_{n}^\zeta}$ (depending implicitly on $v$),
  we obtain
  \begin{align}
    \label{eq:ProdBEupperBd}
    Y_v^\approx(n)
    &= e^{ - n (\overbar{L}_{n }^\zeta)^* (1/v) }
    \int_{ 0}^{-K\overbar \eta_{n}^\zeta /\sigma_{n}^\zeta}
    e^{-\sigma_{n}^\zeta x}\, \d  \mu_{n}^\zeta( x),
  \end{align}
  and, in a similar vein,
  \begin{align} \label{eq:ll}
    Y_v^<(n)
    &= e^{ - n (\overbar{L}_{n }^\zeta)^* (1/v) }
     \int_{-K\overbar \eta_{n}^\zeta /\sigma_{n}^\zeta}^\infty
    e^{-\sigma_{n}^\zeta x}\, \d \mu_{n}^\zeta(x).
  \end{align}
  The first factor in \eqref{eq:ProdBEupperBd}, \eqref{eq:ll} can be
  controlled by Proposition~\ref{prop:IEst} and
  Corollary~\ref{cl:explicitW}. The following lemma gives estimates for
  the second factors.

  \begin{lemma}
    \label{lem:IIEst}
    Let $V$ and $K$ be as in Proposition~\ref{prop:Donsker}. Then there
    exists $C\in (1,\infty)$ such that on $\mathcal H_n$, for all $v\in
    V$,
    \begin{equation}
      \label{eq:theta1First}
      \sigma_{n}^\zeta
      \int_{ 0}^{-K\overbar \eta_{n}^\zeta /\sigma_{n}^\zeta}
      e^{-\sigma_{n}^\zeta x}\, \d \mu_{n}^\zeta(x)
      \in [C^{-1},C],
    \end{equation}
    \begin{equation}
      \label{eq:theta1Second}
      \sigma_{n}^\zeta
      \int_{-K\overbar \eta_{n}^\zeta /\sigma_{n}^\zeta}^\infty
      e^{-\sigma_{n}^\zeta x}\, \d \mu_{n}^\zeta(x)
      \in [C^{-1},C].
    \end{equation}
  \end{lemma}
  In order not to hinder the flow of reading, we finish the proof of
  Proposition~\ref{prop:Donsker} first. Using Lemma~\ref{lem:IIEst},
  \eqref{eq:ProdBEupperBd}, and recalling the definition
  \eqref{eq:defWnt} of $W_n$ directly yields \eqref{eq:expAsymptotics}.
  From \eqref{eq:ll}, \eqref{eq:ProdBEupperBd}, and Lemma~\ref{lem:IIEst}
  we deduce \eqref{eq:comparability}. Finally, replacing $n$ by $nt$ in
  \eqref{eq:expAsymptotics}, observing that $\sqrt t W_{nt}(1)=W_n(t)$,
  and using \eqref{eq:comparability}, the fact that $\mathcal H_n$ occurs
  $\mathbb P$-a.s.~for $n$ large, in combination with and
  Proposition~\ref{prop:IEst}, yields the convergence of the three
  sequences in \eqref{eq:3seq} to standard Brownian motion.
\end{proof}

We now show Lemma~\ref{lem:IIEst} which was used in the last proof.

\begin{proof} [Proof of Lemma \ref{lem:IIEst}]
  We start with proving \eqref{eq:theta1First}. Throughout the proof we
  assume that $\mathcal H_n$ occurs. Observe that the  $\widehat \tau_i$,
  $1 \le i \le n$, are independent under
  $P^{\zeta, \overbar \eta_{n}^\zeta}$ and have small exponential
  moments uniformly in $n$ (cf.~\eqref{eq:tauexpmoments}).
  Moreover, recalling \eqref{eq:sigmaDef}, the variance of the
  distribution $\mu^\zeta_n$ is one by definition. A local central limit
  theorem for such independent normalized sequences, Theorem~13.3 (or
    formula~(13.43)) of \cite{BhRa-76}, thus yields
  \begin{equation}
    \label{eq:BEstein}
    \sup_{A} \vert \mu^\zeta_n(A) - \Phi(A) \vert \le
    C n^{-1/2},
  \end{equation}
  where the supremum runs over all intervals in $\mathbb R$, and $\Phi$
  denotes the standard Gaussian measure. Applying \eqref{eq:BEstein} to
  $A=[0,-K\overbar \eta^\zeta_n/\sigma_n^\zeta ]$ and bearing in mind
  \eqref{eq:sigmabound}, this implies that for all $K$ large enough,
  uniformly in $v\in V$,
  \begin{equation*}
    c^{-1}n^{-1/2}
    <\mu ^\zeta_n([0,-K\overbar \eta_n^\zeta/\sigma_n^\zeta])
    <cn^{-1/2}.
  \end{equation*}
  Since the function $e^{-\sigma_n^\zeta x }$ is uniformly bounded from
  above and below in this interval, \eqref{eq:theta1First} follows by
  another application of \eqref{eq:sigmabound}.

  In order to show \eqref{eq:theta1Second}, we observe that uniformly in
  $v\in V$,
  \begin{equation*}
    \sigma_{n}^\zeta
    \int_{-K\overbar \eta_{n}^\zeta /\sigma_{n}^\zeta}^\infty
    e^{-\sigma_{n}^\zeta x}\, \d \mu_{n}^\zeta(x)
    \ge
    \sigma_{n}^\zeta
    \int_{-K\overbar \eta_{n}^\zeta /\sigma_{n}^\zeta}
    ^{-2K\overbar \eta_{n}^\zeta /\sigma_{n}^\zeta}
    e^{-\sigma_{n}^\zeta x}\, \d \mu_{n}^\zeta(x) \ge C^{-1}
  \end{equation*}
  by the same arguments as in the proof of \eqref{eq:theta1First}. On the
  other hand, using \eqref{eq:sigmabound} and \eqref{eq:BEstein} again,
  writing $I_j= [-jK\overbar \eta_{n}^\zeta /\sigma_{n}^\zeta,
    -(j+1)K\overbar \eta_{n}^\zeta /\sigma_{n}^\zeta]$,
  \begin{equation}
    \begin{split}
      \label{eq:Ytailest}
      \sigma_{n}^\zeta
      \int_{-K\overbar \eta_{n}^\zeta /\sigma_{n}^\zeta}^\infty
      e^{-\sigma_{n}^\zeta x}\, \d \mu_{n}^\zeta(x)
      &\le
      \sigma_{n}^\zeta
      \sum_{j=1}^\infty
      \mu_n^\zeta (I_j)
      e^{-jK |\overbar\eta^\zeta_n|}
      \\&\le
      c \sigma_{n}^\zeta
      \sum_{j=1}^\infty n^{-1/2}
      e^{-jK |\overbar\eta^\zeta_n|}
      \le C,
    \end{split}
  \end{equation}
  uniformly in $v\in V$. This completes the proof of the lemma.
\end{proof}

\begin{remark}
  The arguments of the last proof can be used to show that for arbitrary
  $a\in [0,n/v]$, $n\in \mathbb N$, $v\in V$ on $\mathcal H_n$,
  \begin{equation}
    \label{eq:atail}
    \frac 1{Y_{v}^\approx(n)}
    \,E_{0} \Big[e^{ \int_0^{H_{n}} \zeta(X_s) \, \d s };
      H_{n}\le \frac nv -a\Big]
    \le C e^{-c a} .
  \end{equation}
  Indeed, the expectation on the left-hand side of \eqref{eq:atail} can
  be written as in \eqref{eq:ll} with $K$ replaced by $a$. Hence, with
  help of  \eqref{eq:theta1First}, the left-hand side of \eqref{eq:atail}
  is bounded by the left-hand side of \eqref{eq:Ytailest} with $K$
  replaced by $a$. Recalling the last-but-one expression in
  \eqref{eq:Ytailest}, inequality \eqref{eq:atail} easily follows.
\end{remark}

\begin{remark}
  The proof of Proposition~\ref{prop:Donsker} is the only occasion where
  the random tilting by $\overbar \eta^\zeta_n(v)$ is really necessary.
  The reason for this is the application of \eqref{eq:BEstein}, the local
  central limit theorem in spirit, which is useful only for events of
  sufficiently large probability. Deterministic tilting by
  $\overbar \eta (v)$, which would simplify the remaining parts of the
  paper, unfortunately requires dealing with events of much smaller
  probability.
\end{remark}

\subsection{The walk lingers in the bulk} 
\label{ssec:comparisonlemma}

We now show that the invariance principles of
Proposition~\ref{prop:Donsker} are useful in order to analyze the
Feynman-Kac representation \eqref{eq:FKforN} of
$\ttE^\xi_{u_0} [N^\ge (t,vt)]$. We explore the fact that, under the
considered distributions, conditioning on $X_{n/v}=n$ (as in the
  Feynman-Kac representation) implies that with high probability $H_n$ is
close to $n/v$, that is the `walk lingers in the bulk'.

\begin{lemma}
  \label{lem:fixedVsVariableTime}
  Let $K>0$ and $V$ be as in Proposition~\ref{prop:Donsker}. Then
  there exists a constant $c>0$ such that for all $n\in\N$ and $v\in V$,
  on $\mathcal H_n$,
  \begin{equation} \label{eq:fixedVarTime}
    \begin{split}
      c Y^\approx_v(n)
      &\le E_{0} \Big[ \exp \Big \{ \int_0^{n/v} \zeta(X_s) \, \d s \Big\}
        ;X_{n/v}=n \Big]
      \\&\le E_{0} \Big[ \exp \Big \{ \int_0^{n/v} \zeta(X_s) \, \d s \Big\}
        ;X_{n/v}\ge n \Big]
      \le c^{-1} Y_v^\approx(n).
    \end{split}
  \end{equation}
  In particular,
  \begin{equation}
    \label{eq:NYcomp}
    c e^{\es\, n/v} Y^\approx_v(n)
    \le  \ttE_0^\xi \Big[N\Big(\frac nv,n\Big)\Big]
    \le \ttE_0^\xi \Big[ N^{\ge}\Big(\frac nv,n\Big)\Big]
    \le c^{-1} e^{\es\, n/v} Y^\approx_v(n).
  \end{equation}
\end{lemma}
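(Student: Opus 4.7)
The middle inequality of \eqref{eq:fixedVarTime} is immediate from $\{X_{n/v}=n\}\subset\{X_{n/v}\ge n\}$ together with positivity of the integrand, so the content lies in the two outer inequalities. I plan to obtain both by decomposing at the hitting time $H_n$ via the strong Markov property, using that $\zeta\le 0$ by \eqref{eq:zetaBd} for the upper bound, and that on the event defining $Y_v^\approx$ the overshoot $n/v-H_n$ lies in the bounded window $[0,K]$ for the lower bound. Once \eqref{eq:fixedVarTime} is established, \eqref{eq:NYcomp} will follow by writing $\xi=\zeta+\es$ in the Feynman-Kac representations from Proposition~\ref{prop:FK} (applying \eqref{eq:FKforN} and its analogue for $N(n/v,n)$ with $X_t=n$), which factors $e^{\es n/v}$ uniformly out of all three terms.

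For the \emph{upper bound}, I would observe that on $\{X_{n/v}\ge n\}$ the walk has reached $n$ by time $n/v$, so $H_n\le n/v$. Since $\zeta\le 0$, dropping the nonpositive tail integral on $[H_n,n/v]$ and relaxing the event $\{X_{n/v}\ge n\}$ yields
\begin{equation*}
  E_0\!\left[e^{\int_0^{n/v}\zeta(X_s)\,\d s};X_{n/v}\ge n\right]
  \le E_0\!\left[e^{\int_0^{H_n}\zeta(X_s)\,\d s};H_n\le n/v\right]
  = Y_v^\approx(n)+Y_v^<(n).
\end{equation*}
The comparability \eqref{eq:comparability} supplied by Proposition~\ref{prop:Donsker} then bounds the right-hand side by a constant multiple of $Y_v^\approx(n)$, giving the claim.

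For the \emph{lower bound}, I would restrict the expectation on the left to the event $\{H_n\in[n/v-K,n/v],\,X_{n/v}=n\}$ and condition on $\mathcal F_{H_n}$. The strong Markov property at $H_n$ factors the conditional expectation as $E_n\!\left[\exp\{\int_0^s\zeta(X_r)\,\d r\};X_s=n\right]$ with $s:=n/v-H_n\in[0,K]$. Using $\essinf\zeta>-\infty$ from \eqref{eq:zetaBd}, the exponential is bounded below by $e^{K\essinf\zeta}$, and $P_n(X_s=n)\ge P(\text{no jumps in }[0,s])\ge e^{-K}$ uniformly in $s\in[0,K]$. Hence the inner quantity exceeds a positive constant $c=c(K)$ depending only on $K$, yielding $E_0\!\left[e^{\int_0^{n/v}\zeta(X_s)\,\d s};X_{n/v}=n\right]\ge c\,Y_v^\approx(n)$ by pulling this constant out of the outer expectation and recognizing the definition of $Y_v^\approx(n)$.

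No step presents a genuine obstacle: the only nontrivial input is the comparability of $Y_v^\approx(n)$ and $Y_v^<(n)$, which is already furnished by \eqref{eq:comparability} of Proposition~\ref{prop:Donsker}. The essential feature making the argument work is the boundedness of the overshoot $K$ in the definition of $Y_v^\approx$, which is precisely the quantitative form of the ``walk lingers in the bulk'' intuition.
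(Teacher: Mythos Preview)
Your proposal is correct and follows essentially the same argument as the paper: strong Markov at $H_n$, with the lower bound obtained by restricting to $H_n\in[n/v-K,n/v]$ and uniformly bounding the return factor $E_n[e^{\int_0^s\zeta(X_r)\,\d r};X_s=n]$ over $s\in[0,K]$, and the upper bound obtained by using $\zeta\le 0$ to drop the tail integral, yielding $Y_v(n)$ and then invoking \eqref{eq:comparability}. The paper's proof is slightly less explicit about the constant in the lower bound (it just cites boundedness of $\zeta$ from below), but the structure is identical.
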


\begin{proof}
  The second claim of the lemma follows directly from the first one; it
  suffices to recall $\xi(x)=\zeta(x)+\es$ and
  \eqref{eq:FKforN}.

  To prove the first claim we define
  \begin{equation*}
    p_n^\zeta(s) :=   E_n
    \Big[ \exp \Big\{ \int_0^s \zeta(X_r) \, \d r \Big\};
      X_s = n \Big], \qquad n\in \mathbb Z, s\ge 0,
  \end{equation*}
  and set $t=n/v$, to simplify notation. Using the strong Markov
  property,
  \begin{equation*}
    \begin{split}
      E_{0} \Big[ &\exp \Big \{ \int_0^{t} \zeta(X_s) \, \d s \Big\};
        X_{t}=n \Big]
      \\&= E_{0} \Big[ \exp \Big \{ \int_0^{H_n} \zeta(X_s) \, \d s \Big\}
         p_n^\zeta(t-H_n); H_n\le t \Big]
      \\&\ge
      E_{0} \Big[ \exp \Big \{ \int_0^{H_n} \zeta(X_s) \, \d s \Big\}
        ;  H_n\in[ t-K,t] \Big] \inf_{s\le K} p_n^\zeta(s).
    \end{split}
  \end{equation*}
  Since the $\zeta(x)$'s are bounded from below by assumption
  \eqref{eq:xiAssumptions}, the infimum on the right-hand side can be
  bounded from below by a deterministic constant $c = c(K)>0$, implying
  the first inequality in \eqref{eq:fixedVarTime}.

  The second inequality of \eqref{eq:fixedVarTime} is obvious. For
  the third one, observe that
  $\{X_t \ge n\}\subset\{H_n\le t\}$. Therefore, decomposing the integral
  according to the value of $H_n$ and using the fact that $\zeta \le 0$,
  we obtain
  \begin{align*}
    E_{0}\Big[&\exp\Big\{ \int_0^{t} \zeta(X_s) \, \d s \Big\} ; X_t \ge n\Big]
    \\&=
    E_{0}\Big[\exp\Big\{ \int_0^{H_n} \zeta(X_s) \, \d s \Big\}\exp\Big\{
        \int_{H_n}^t \zeta(X_s) \, \d s \Big\}  ; X_t \ge n\Big]\\
    &\le
   E_{0} \Big[\exp\Big\{ \int_0^{H_n} \zeta(X_s) \, \d s \Big\} ; H_n\le
     t\Big] = Y_v(n).
  \end{align*}
  By Proposition~\ref{prop:Donsker}, $Y_v(n)$ and $Y^\approx_v(n)$ are
  comparable on $\mathcal H_n$, which proves the third inequality.
\end{proof}

\subsection{Initial condition stability} 
\label{ssec:initialcondition}

The next lemma shows that initial conditions $u_0$ satisfying
assumption \eqref{eq:inCond} are comparable to the `one-particle'
initial condition $u_0=\ind_{\{0\}}$.

\begin{lemma}
  \label{lem:inCond}
  Let $V$ be as in \eqref{eq:Vinterval}. There exists a finite constant
  $C$ such that for all $u_0$ as in \eqref{eq:inCond} and for all
  $n\in \mathbb N$, $t\ge 0$ such that $n/t\in V$, on $\mathcal H_n$,
  \begin{equation}
    \label{eq:sameOrder}
    1 \le \frac{\ttE^\xi_{u_0} \big[  N^{\ge}(t,n) \big] }
    {\ttE^\xi_{0} \big[  N(t,n) \big] }
    \le  C.
  \end{equation}
\end{lemma}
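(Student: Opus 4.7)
The lower bound is immediate from $u_0\ge\ind_{\{0\}}$ and $N^\ge(t,n)\ge N(t,n)$ together with the monotonicity of the expectation in the initial configuration. All the work goes into the upper bound. Using the other half of \eqref{eq:inCond} and linearity, I would write $\ttE^\xi_{u_0}[N^\ge(t,n)]\le C\sum_{i=0}^\infty \ttE^\xi_{-i}[N^\ge(t,n)]$ and bound each summand via the Feynman-Kac formula \eqref{eq:FKforN}. Exactly as in the derivation of the third inequality of Lemma~\ref{lem:fixedVsVariableTime} (split at $H_n$; use $\zeta\le 0$ and $\{X_t\ge n\}\subset\{H_n\le t\}$),
\begin{equation*}
\ttE^\xi_{-i}[N^\ge(t,n)]\le e^{\es t}\,E_{-i}\Big[\exp\Big\{\int_0^{H_n}\zeta(X_s)\,\d s\Big\};H_n\le t\Big].
\end{equation*}

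Since a walk from $-i\le 0$ must hit $0$ before reaching $n$, applying the strong Markov property at $H_0$ and using that $s\mapsto E_0[\exp\{\int_0^{H_n}\zeta\};H_n\le t-s]$ is non-increasing in $s$ bounds the above by $Y_v(n)\cdot E_{-i}[\exp\{\int_0^{H_0}\zeta\}]$ with $v=n/t$. Iterating the strong Markov at $H_{-i+1},\ldots,H_0$ and recalling \eqref{eq:empL}, the second factor equals $\exp\{\sum_{j=-i+1}^0 L_j^\zeta(0)\}$. Summing in $i$,
\begin{equation*}
\ttE^\xi_{u_0}[N^\ge(t,n)]\le C\,e^{\es t}\,Y_v(n)\,S(\xi),\quad S(\xi):=\sum_{i=0}^\infty\exp\Big\{\sum_{j=-i+1}^{0}L_j^\zeta(0)\Big\}.
\end{equation*}
Combined with the lower bound $\ttE^\xi_0[N(t,n)]\ge c\,e^{\es t}\,Y_v^\approx(n)$ from \eqref{eq:NYcomp} and the comparability $Y_v(n)\le Y_v^\approx(n)+Y_v^<(n)\le C'Y_v^\approx(n)$ from \eqref{eq:comparability} (valid on $\mathcal H_n$), this yields the claim as soon as $S(\xi)<\infty$ $\P$-a.s.

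The one non-routine point is the finiteness of $S(\xi)$, which plays the role of the random constant $c(\xi)$ in the paper's convention. By \eqref{eq:xiAssumptions} we have $\P(\zeta(0)<0)>0$, so the $\P$-a.s.~non-positive random variable $L_1^\zeta(0)=\ln E_0[\exp\{\int_0^{H_1}\zeta\}]$ is strictly negative with positive probability, whence $L(0)=\E[L_1^\zeta(0)]<0$. Applying Birkhoff's ergodic theorem (cf.~\eqref{eq:Birkhoff}) to the stationary sequence $(L_j^\zeta(0))_{j\in\Z}$ in the reverse direction gives $i^{-1}\sum_{j=-i+1}^{0}L_j^\zeta(0)\to L(0)<0$ $\P$-a.s., so the summands of $S(\xi)$ are eventually dominated by $e^{iL(0)/2}$ and $S(\xi)<\infty$ $\P$-a.s. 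Since $S(\xi)$ depends on neither $n$, $t$, $v$, nor $u_0$, the resulting bound is uniform in these parameters, as required.
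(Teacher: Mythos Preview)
Your argument is logically sound, but it proves a strictly weaker statement than the lemma claims: you end up with a $\xi$-dependent bound $C\,S(\xi)$, whereas the lemma asserts (and the paper proves) a \emph{deterministic} constant~$C$. The paper's convention is explicit that random constants are written $c(\xi)$, so ``a finite constant $C$'' here really means non-random. The loss happens at the step where you bound $E_0\big[e^{\int_0^{H_n}\zeta};\,H_n\le t-a\big]$ by $Y_v(n)$, discarding the dependence on the delay $a=H_0$; after that, the only source of decay in $i$ is the environment-dependent factor $E_{-i}\big[e^{\int_0^{H_0}\zeta}\big]=\exp\{\sum_{j=-i+1}^{0}L_j^\zeta(0)\}$, and its summability is a genuinely random (ergodic-theorem) statement.

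The paper avoids this by keeping the $a$-dependence: on $\mathcal H_n$, the tail estimate \eqref{eq:atail} gives
\[
E_0\Big[e^{\int_0^{H_n}\zeta};\,H_n\le t-a\Big]\le C\,e^{-ca}\,Y^\approx_v(n)
\]
uniformly in $a\in[0,t]$. Inserting this after the strong Markov property at $H_0$, the $\zeta$-factor in the $H_0$-expectation can be dropped (it is $\le 1$), and what remains is
\[
\sum_{x\le 0}\int_0^t P_x(H_0\in\d a)\,e^{-ca}
=\sum_{x\ge 0}\int_0^t P_0(H_x\in\d a)\,e^{-ca}
\le \int_0^t e^{-ca}\,\d a\le c^{-1},
\]
using only that at each instant at most one site $x\ge 0$ is being visited for the first time, so $\sum_{x\ge 0}P_0(H_x\in\d a)\le\d a$. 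This is a pure simple-random-walk bound, independent of $\xi$, and yields the deterministic~$C$. Your random $S(\xi)$ would in fact be harmless for the downstream applications (it disappears after taking logarithms and dividing by $\sqrt n$), but it does not establish the lemma as stated.
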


\begin{proof}
  The first inequality in \eqref{eq:sameOrder} is obvious, so we proceed
  to the second one. Moreover, since
  $\ttE^\xi_{u_0} \big[  N^{\ge}(t,n) \big]$ is an increasing function of
  $u_0(x)$ for every $x\in - \mathbb N_0$, we can assume that
  $u_0= c \ind_{-\N_0}$. Using the Feynman-Kac representation
  \eqref{eq:FKforN}, and replacing $\xi $ by $\zeta $, we see that
  \begin{equation}
    \label{eq:incondaa}
    \frac{\ttE^\xi_{c \ind_{-\N_0}} \big[  N^{\ge}(t,n) \big] }
    {\ttE^\xi_{0} \big[  N(t,n) \big] }
    = \frac{ c \sum_{x \le 0}
      E_x \Big[\exp \Big\{ \int_0^t \zeta(X_s) \, \d s \Big\};
        X_t \ge n \Big]}
    {E_{0} \Big[\exp \Big\{ \int_0^t \zeta(X_s) \, \d s \Big\};
        X_t = n \Big]}.
  \end{equation}
  Applying the strong Markov property on the numerator of the right-hand
  side, we obtain
  \begin{equation*}
    \begin{split}
      \sum_{x \le 0}
      E_x & \Big[e^{ \int_0^t \zeta(X_s) \, \d s };
        X_t \ge n \Big]
      \le
      \sum_{x \le 0}
      E_x \Big[e^{ \int_0^{H_n} \zeta(X_s) \, \d s };
        H_n \le t \Big]
      \\ &=
      \sum_{x \le 0}
      \int_{0}^t
      E_x \Big[e^{ \int_0^{H_{0}} \zeta(X_s) \, \d s };
        H_{0}\in \d a\Big]
      E_{0} \Big[e^{ \int_0^{H_{n}} \zeta(X_s) \, \d s };
        H_{n}\le t-a\Big].
    \end{split}
  \end{equation*}
  By \eqref{eq:atail}, on $\mathcal H_n$, the second factor
  on the right-hand side can be bounded from above by
  $ C e^{-c a}  Y_{n/t}^\approx(n)$ for all $n$ with $n/t\in V$ and
  $a\in [0,t]$. This implies that the right-hand side of the last display
  is bounded from above by
  \begin{equation*}
    CY^\approx_{n/t}(n)\sum_{x\le 0} \int_0^t P_x(H_{0}\in \d a) e^{-ca}
    \le C Y^\approx_{n/t}(n),
  \end{equation*}
  where for the last inequality we used that due to the stationarity of
  simple random walk we have
  $\sum_{x\le 0}P_x(H_{0}\in \d a) = \sum_{x\ge 0} P_0(H_x\in \d a)$,
  and the latter
  is the probability that an arbitrary point $x\ge 0$ is visited
  for the first time at time $\d a$, so it is bounded by $\d a$. By
  Lemma~\ref{lem:fixedVsVariableTime}, on $\mathcal H_n$,
  $Y^\approx_{n/t}(n)$ is comparable to the denominator of the
  right-hand side in \eqref{eq:incondaa}, which completes the proof.
\end{proof}

\subsection{Proof of Theorem~\ref{thm:PAMFCLT}
  (functional CLT for the PAM)} 
\label{ssec:PAMFCLT}
We have all ingredients to show our first main result, the invariance
principle for the PAM, Theorem~\ref{thm:PAMFCLT}.

\begin{proof}[Proof of Theorem~\ref{thm:PAMFCLT}]
  Recall that we have to show that the sequence of processes
  $\big(\ln u (nt,\floor{vnt}) - nt \lambda (v)\big)/(\sigma_v\sqrt{v n})$,
  with $u(t,x)= \ttE^\xi_{u_0}[N(t,x)]$ as in~\eqref{eq:uxiN}, satisfies
  the functional central limit theorem under $\P$ as $n\to\infty$.

  By Lemma~\ref{lem:inCond} we can assume without loss of generality that
  $u_0=\ind_{\{0\}}$. Moreover, by Lemma~\ref{lem:fixedVsVariableTime},
  $\mathbb P$-a.s.~for all large $t$,
  \begin{equation}
    \label{eq:PAMbd}
    c Y^\approx_v(vt) e^{t \es}
    \le u(t,\floor{vt})
    \le c^{-1} Y^\approx_v(vt) e^{t \es}.
  \end{equation}
  Replacing $t$ by $vt$ in Proposition~\ref{prop:Donsker}, we see that
  \begin{equation*}
    t \mapsto
    \frac 1{\sigma_v \sqrt {nv}} \big(\ln Y^\approx_v(tvn)+tvn L^*(1/v)\big)
  \end{equation*}
  converges as $n\to\infty$ to  standard Brownian motion. Combining this
  with \eqref{eq:PAMbd} easily implies the theorem; incidentally, it also
  shows that the Lyapunov exponent $\lambda (v)$ defined in
  \eqref{eq:lyapunov} satisfies $\lambda (v)= \es - vL^*(1/v)$ for $v>v_c$,
  as claimed in \eqref{eq:lambdaesL} of Proposition~\ref{prop:lyapExp}. This
  completes the proof of Theorem~\ref{thm:PAMFCLT}.
\end{proof}

\begin{remark}
  \label{rem:getoeq}
  Theorem~\ref{thm:PAMFCLT} remains valid when the function $u (t,x)$ is
  replaced by $\ttE_{u_0}^\xi [N^{\ge}(t,x)]$ with $u_0$ as in
  \eqref{eq:inCond}. This is a consequence of Lemma~\ref{lem:inCond} again.
\end{remark}

\begin{remark}
  \label{rem:explicitN}
  It will be useful to have a more explicit formula for
  $\ln \ttE_0^\xi [N^{\ge} (n/v_0,n)]$. Combining \eqref{eq:NYcomp}
  with Corollary~\ref{cl:explicitW}, Proposition~\ref{prop:Donsker} and
  \eqref{eq:lambdaesL} yields the existence of a constant $C<\infty$ and a
  $\mathbb P$-a.s.~finite random variable $\mathcal N_3$ such that
  $\P$-a.s.~for all $n\ge \mathcal N_3$,
  \begin{equation*}
    \Big | \ln \ttE_0^\xi [N^{\ge} (n/v_0,n)] -
     \sum_{i=1}^n L_i^\zeta (\overbar \eta (v_0)) + n L(\overbar\eta
       (v_0)) \Big | \le C \ln n.
  \end{equation*}
\end{remark}

\section{Breakpoint behavior}
\label{sec:breakpoint}

The goal of this section is to prove the functional central limit theorem
for the breakpoint, Theorem~\ref{thm:bpFCLT}. This is done in
Section~\ref{ssec:bpFCLT} after some additional preparations.

\subsection{Perturbation estimates} 
\label{ssec:perturbationestimates}

The results of Section~\ref{sec:firstmoment} provide a reasonably precise
description of the behavior of expectations of $N^\ge(t, vt)$. We are now
interested in how sensitive the expectation of $N^\ge(\cdot, \cdot)$ is
to perturbations in the space and time coordinate. The first lemma deals
with space perturbations:

\begin{lemma}
  \label{lem:upperTail}
  (a) Let $\widetilde \varepsilon (t)$ be a positive function with
  $\lim_{t\to\infty} \widetilde \varepsilon (t)t^\delta =0$ for some
  $\delta >0$. Then for each $\varepsilon > 0$ there exists a constant
  $C_0 =C_0(\varepsilon) < \infty$  such that $\mathbb P$-a.s.,
  \begin{equation}
    \label{eq:breakPtSlope}
    \limsup_{t\to\infty}
    \sup\Big\{ \Big|
    \frac 1 h
    \ln \frac{ \ttE_{u_0}^\xi [N^\ge(t, vt + h)]}
    {\ttE_{u_0}^\xi [N^\ge(t, vt)]}
    -  L(\overbar \eta(v)) \Big |:
    (h,v)\in \mathcal E_t
    \Big\} \le \varepsilon,
  \end{equation}
  where
  $\mathcal E_t=\{(h,v): C_0 \ln t \le  |h| \le t \widetilde \varepsilon (t),
    v\in V, v+\frac ht \in V\}$.

  (b) There exist constants $C,c\in (0,\infty)$ and a $\P$-a.s.~finite
  random variable~$\mathcal T_1$ such that $\P$-a.s.~for
  all $t\ge \mathcal T_1$, uniformly for $0\le h \le t^{1/3}$
  and $v,v+h/t\in V$,
  \begin{equation*}
    c e^{-C h} \ttE_{u_0}^\xi \big[N^\ge(t, vt)\big]
    \le \ttE_{u_0}^\xi \big[N^\ge(t, vt + h)\big]
    \le  C e^{-c h} \ttE_{u_0}^\xi \big[N^\ge(t, vt)\big].
  \end{equation*}
\end{lemma}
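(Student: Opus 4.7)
\textbf{Plan for Lemma~\ref{lem:upperTail}.} The plan is to base both parts on a uniform-in-$v$ extension of Remark~\ref{rem:explicitN}: combining Lemma~\ref{lem:fixedVsVariableTime}, Proposition~\ref{prop:Donsker}, Corollary~\ref{cl:explicitW}, and the identity $\lambda(v)=\es-vL^*(1/v)$ (established in the proof of Theorem~\ref{thm:PAMFCLT}), I will derive the asymptotic representation
\begin{equation*}
\ln \ttE^\xi_{u_0}[N^\ge(t,n)] = \es t - t\overbar\eta(v) + \sum_{i=1}^n L_i^\zeta(\overbar\eta(v)) + O(\ln t), \qquad v=n/t,
\end{equation*}
valid $\P$-a.s.~on $\mathcal H_n$ uniformly for $v\in V$. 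The initial condition will be absorbed via Lemma~\ref{lem:inCond}, which allows us to assume $u_0=\ind_{\{0\}}$ throughout. Both parts then reduce to analyzing how this expression changes under the perturbation $n\mapsto n+h$, $v\mapsto v'=v+h/t$.

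\textbf{Part~(a).} I will take the difference of the above representation at $n+h$ and $n$, and Taylor-expand $\overbar\eta$ around $v$ and $L_i^\zeta$ around $\overbar\eta(v)$. Using Claim~\ref{cl:Lprimeconc} to write $\sum_{i=1}^n(L_i^\zeta)'(\overbar\eta(v)) = n/v + O(\sqrt{n\ln n})$ together with the identity $L'(\overbar\eta(v))=1/v$, the two linear-in-$h$ contributions $-t[\overbar\eta(v')-\overbar\eta(v)]\sim-h\overbar\eta'(v)$ and $(\overbar\eta(v')-\overbar\eta(v))\sum_{i=1}^n(L_i^\zeta)'(\overbar\eta(v))\sim h\overbar\eta'(v)$ will cancel, leaving
\begin{equation*}
\ln\frac{\ttE^\xi_{u_0}[N^\ge(t,n+h)]}{\ttE^\xi_{u_0}[N^\ge(t,n)]} = \sum_{i=n+1}^{n+h} L_i^\zeta(\overbar\eta(v)) + O(\ln t) + O(h^2/t) + O(h\sqrt{\ln t/t}).
\end{equation*}
Dividing by $h$, a Hoeffding-type bound (as in Claim~\ref{cl:Lprimeconc}) for the exponentially mixing stationary sequence $(L_i^\zeta(\overbar\eta(v)))_i$ will yield $h^{-1}\sum_{i=n+1}^{n+h} L_i^\zeta(\overbar\eta(v)) = L(\overbar\eta(v)) + O(\sqrt{\ln t/h})$ uniformly in $n\le Ct$ (with uniformity in $v\in V$ handled by discretization and Lipschitz continuity of $v\mapsto L_i^\zeta(\overbar\eta(v))$). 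Imposing $h\ge C_0\ln t$ (to absorb $O(\ln t/h)$) together with $h\le t\tilde\varepsilon(t)$ (to absorb $O(h/t)$ and $O(\sqrt{\ln t/t})$) then completes the argument for $C_0$ sufficiently large.

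\textbf{Part~(b).} The upper bound will be obtained by iterating a single-step estimate: Lemma~\ref{lem:fixedVsVariableTime} gives $\ttE^\xi_{u_0}[N(t,n)]\ge c^2 \ttE^\xi_{u_0}[N^\ge(t,n)]$ on $\mathcal H_n$, hence $\ttE^\xi_{u_0}[N^\ge(t,n+1)]\le(1-c^2)\ttE^\xi_{u_0}[N^\ge(t,n)]$, and $h$-fold iteration (valid for $t$ large since Proposition~\ref{prop:etaExist} guarantees $\mathcal H_{n+k}$ simultaneously for all $k\in[0,h]$) yields the required $Ce^{-ch}$. For the lower bound, I will reduce via Lemma~\ref{lem:fixedVsVariableTime} to the single-step claim $Y^\approx_{v+1/t}(n+1)\ge c_1 Y^\approx_v(n)$, and iterate. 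Applying the strong Markov property at $H_n$ on the event $\{H_n\in[t-K,t-1]\}\cap\{\tau_{n+1}\le 1\}$, which is contained in the defining event of $Y^\approx_{v+1/t}(n+1)$, gives
\begin{equation*}
Y^\approx_{v+1/t}(n+1)\ge c_0\, E_0\Big[\exp\Big\{\int_0^{H_n}\zeta(X_s)\,\d s\Big\};\,H_n\in[t-K,t-1]\Big],
\end{equation*}
where $c_0:=e^{\zeta_{\min}}/(2e)>0$ lower-bounds $E_n[\exp\{\int_0^{\tau_{n+1}}\zeta\,\d s\};\tau_{n+1}\le 1]$ uniformly in $\zeta$. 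The right-hand side equals $Y^\approx_v(n) - E_0[\exp\{\int_0^{H_n}\zeta\,\d s\};H_n\in[t-1,t]]$, and a local-CLT argument analogous to the proof of Lemma~\ref{lem:IIEst} shows that, as $n\to\infty$,
\begin{equation*}
\frac{E_0[\exp\{\int_0^{H_n}\zeta\,\d s\};H_n\in[t-1,t]]}{Y^\approx_v(n)} \longrightarrow \frac{1-e^{\overbar\eta(v)}}{1-e^{K\overbar\eta(v)}},
\end{equation*}
which for the $K$ fixed in Proposition~\ref{prop:Donsker} (large enough) is bounded away from $1$ uniformly in $v\in V$, since $\overbar\eta$ is bounded away from $0$ on $V$. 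This yields $Y^\approx_{v+1/t}(n+1)\ge c_1 Y^\approx_v(n)$, and iteration over $k\in[0,h-1]$ completes the lower bound.

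\textbf{Main obstacle.} The hardest step will be the single-step lower bound in part~(b). On the scale $n\mapsto n+1$, the $O(\ln t)$ error in the asymptotic representation dwarfs the $O(1)$ change one aims to control, so the part~(a) strategy of dividing by a large $h$ to absorb $O(\ln t)$ is not available. The Feynman-Kac/strong-Markov decomposition at $H_n$, together with the local mass estimate for $H_n$ under the tilted distribution (which directly certifies that a fixed positive fraction of the $Y^\approx_v(n)$-mass survives a unit spatial shift), is what ultimately bypasses this difficulty.
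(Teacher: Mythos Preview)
Your approach to~(a) is correct but differs from the paper's. You expand around the \emph{deterministic} optimizer $\overbar\eta(v)$, carrying an $O(\ln t)$ error from Corollary~\ref{cl:explicitW} and obtaining the first-order cancellation via the concentration estimate $\sum_{i\le n}(L_i^\zeta)'(\overbar\eta(v))=t+O(\sqrt{t\ln t})$. The paper works instead with the \emph{quenched} optimizer $\overbar\eta_n^\zeta(v)$: up to a bounded ratio of tilted-measure factors, the logarithm of the fraction in~\eqref{eq:breakPtSlope} equals $S_{vt}^{\zeta,v}(\overbar\eta_{vt}^\zeta(v))-S_{v't}^{\zeta,v'}(\overbar\eta_{v't}^\zeta(v'))$, and in the Taylor expansion at $\overbar\eta_{vt}^\zeta(v)$ the first-order term vanishes \emph{by optimality of the Legendre transform}. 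The quenched route carries only an $O(1)$ error in the log-ratio, which is immaterial for~(a) but decisive for~(b).

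Indeed, your ``main obstacle'' in~(b) is self-inflicted. With the paper's decomposition, (b)~is immediate: the contribution $\frac{1}{h}\sum_{i=vt+1}^{v't}L_i^\zeta(\overbar\eta_{v't}^\zeta(v'))$ lies between two fixed negative constants since each $L_i^\zeta(\eta)$ does for $\eta\in\Delta$ (Lemma~\ref{lem:lambdaProps}), while the remaining term is $O(h/t)\to 0$ for $h\le t^{1/3}$. Your iterative single-step argument does work, but the lower-bound step has a small gap: the asserted convergence of $E_0\big[e^{\int_0^{H_n}\zeta\,\d s};\,H_n\in[t-1,t]\big]/Y_v^\approx(n)$ to the explicit limit $(1-e^{\overbar\eta(v)})/(1-e^{K\overbar\eta(v)})$ would require a density-type local CLT, whereas the interval Berry--Esseen bound~\eqref{eq:BEstein} used in the paper has error $O(n^{-1/2})$, the same order as the main term. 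What you actually need---and what does follow from~\eqref{eq:BEstein}, provided $K$ in Proposition~\ref{prop:Donsker} is taken large enough---is only the weaker bound $E_0\big[e^{\int_0^{H_n}\zeta\,\d s};\,H_n\in[t-K,t-1]\big]\ge c\,Y_v^\approx(n)$.
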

\begin{proof}
  (a) We set $v':=v+\frac ht$.
  Without loss of generality we can assume $t$ to be large enough so
  that the events $\mathcal H_{vt}$ and $\mathcal H_{v't}$ occur and thus
  $\overbar \eta^\zeta_{vt}(v)$  and $\overbar \eta^\zeta_{v't}(v')$
  exist and satisfy corresponding versions of \eqref{eq:baretazeta}.
  By Lemmas~\ref{lem:fixedVsVariableTime} and~\ref{lem:inCond}, the fraction
  in \eqref{eq:breakPtSlope} can be approximated, up to a multiplicative
  constant that is irrelevant in the limit, by
  \begin{equation*}
    \frac{Y_{v'}(v't)}{Y_v(vt)}
    = \frac{E_{0} \Big[
        \exp \Big\{ \int_0^{H_{v't}} \zeta (X_s) \, \d s \Big\}
        ;{H_{v't} \le t} \Big]}
    {E_{0} \Big[
        \exp \Big\{ \int_0^{H_{vt}} \zeta (X_s) \, \d s \Big\}
        ;{H_{vt} \le t} \Big]}.
  \end{equation*}
  Using the notation from \eqref{eq:SnDef}, this can be rewritten in the
  same vein as in \eqref{eq:Ysimdecomp} as
  \begin{equation}
    \label{eq:BigFrac}
      \frac
      { E^{\zeta, \overbar \eta_{v't}^\zeta(v')}
        \Big[e^{ - \overbar \eta_{v't}^\zeta(v')
            \sum_{i=1}^{v't}  \widetilde \tau_i };
          \sum_{i=1}^{v't} \widetilde \tau_i \in (-\infty,0] \Big]
        \cdot
        e^{ -  S_{v't}^{\zeta, v'}
          (\overbar \eta_{v't}^\zeta (v') ) }}
      { E^{\zeta, \overbar \eta_{vt}^\zeta(v)}
        \Big[e^{ - \overbar \eta_{vt}^\zeta (v)
            \sum_{i=1}^{vt}  \widehat \tau_i };
          \sum_{i=1}^{vt} \widehat \tau_i \in (-\infty,0] \Big]
        \cdot
        e^{ -  S_{vt}^{\zeta, v}  (\overbar \eta_{vt}^\zeta (v) )
          }},
  \end{equation}
  where, similarly as before
  $\widehat \tau_i := \tau_i - E^{\zeta, \overbar \eta_{vt}^\zeta(v)} [\tau_i]$
  and
  $\widetilde \tau_i :=
  \tau_i - E^{\zeta, \overbar \eta_{v't}^\zeta(v')} [\tau_i]$.
  By the same methods as
  in~\eqref{eq:ProdBEupperBd}--\eqref{eq:theta1Second}, the expectations
  in the numerator and denominator of \eqref{eq:BigFrac} are both of
  order $t^{-1/2}$. Their ratio is thus bounded from above and below by
  positive finite constants and can  be neglected in the limit taken in
  \eqref{eq:breakPtSlope}.

  The remaining terms in \eqref{eq:BigFrac} contribute to the minuend of
  \eqref{eq:breakPtSlope} as
  \begin{equation}
    \label{eq:Ses}
    \begin{split}
      \frac 1h
      \big(S_{vt}^{\zeta, v}  (\overbar \eta_{vt}^\zeta (v))
        - S_{vt}^{\zeta, v} (\overbar \eta_{v't}^\zeta (v') )\big)
      +\frac 1h
      \big(S_{vt}^{\zeta, v}  (\overbar \eta_{v't}^\zeta (v'))
        - S_{v't}^{\zeta, v'} (\overbar \eta_{v't}^\zeta (v') )\big).
    \end{split}
  \end{equation}
  In order to show that the first summand on the right-hand side of
  \eqref{eq:Ses} is negligible uniformly as $t \to \infty$, we write
  \begin{align}
    \label{eq:expandS}
    \begin{split}
      S_{vt}^{\zeta, v} (\overbar \eta_{v't}^\zeta (v') )
      &=
      S_{vt}^{\zeta, v} (\overbar \eta_{vt}^\zeta (v) )
      + (S_{vt}^{\zeta, v})' (\overbar \eta_{vt}^\zeta (v) )
      \, \big( \overbar \eta_{v't}^\zeta (v')
        - \overbar \eta_{vt}^\zeta (v) \big)
      \\&+ (S_{vt}^{\zeta, v})'' (\widetilde \eta )
      \, \big( \overbar \eta_{v't}^\zeta (v') - \overbar \eta_{vt}^\zeta
        (v) \big)^2,
    \end{split}
  \end{align}
  for some $\widetilde \eta \in \Delta$ with
  $\big \vert \widetilde \eta - \overbar \eta_{vt}^\zeta(v) \big \vert
  \le \big | \overbar \eta_{v't}^\zeta(v')
  - \overbar \eta_{vt}^\zeta(v) \big |$.
  As observed below \eqref{eq:Texpand}, one has
  $(S_{vt}^{\zeta, v})' (\overbar \eta_{vt}^\zeta (v) )=0$, so the
  second term vanishes. For the third one, note that by Lemma
  \ref{lem:etandep}, $\P$-a.s.~for all $t$ large enough,
  \begin{equation}
    \label{eq:etaCloseness}
    \vert \overbar \eta_{v't}^\zeta  (v')
    - \overbar \eta_{vt}^\zeta  (v') \vert
    \le \frac{Ch}{t}.
  \end{equation}
  Moreover, by the characterizing property \eqref{eq:qqqqa} of
  $\overbar \eta_{vt}^\zeta  (v )$, Lemma~\ref{lem:lambdaProps} and the
  implicit function theorem, we see that
  $v \mapsto \overbar \eta_{vt}^\zeta  (v )$ is differentiable on the
  interior of $V$, with uniformly bounded derivative. Therefore, on
  $\mathcal H_{vt}$, uniformly for $v \in V$ and $h$ as in
  \eqref{eq:breakPtSlope},
  \begin{equation}
    \label{eq:etaDiffBd}
    \vert  \overbar \eta_{vt}^\zeta  (v' )
    -\overbar \eta_{vt}^\zeta  (v )  \vert
    \le \frac{Ch}{t}.
  \end{equation}
  Recalling \eqref{eq:Ssecondbound}, we see that, $\mathbb P$-a.s.,
  $(S_{vt }^{\zeta, v} )'' (\widetilde \eta)  \le Ct$ uniformly in
  $v \in V$ and $t$ large. Combined with
  \eqref{eq:expandS} to \eqref{eq:etaDiffBd} we thus deduce that $\P$-a.s.,
  the first term on the right-hand side of \eqref{eq:Ses} satisfies
  \begin{equation}
    \label{eq:SDiffBd}
    \Big \vert
    \frac 1h
    \big(S_{vt}^{\zeta, v}  (\overbar \eta_{vt}^\zeta (v))
      - S_{vt}^{\zeta, v} (\overbar \eta_{v't}^\zeta (v') )\big)
    \Big \vert
    \le \frac{Ch}t
  \end{equation}
  which is negligible in the limit considered in \eqref{eq:breakPtSlope}.

  Plugging in the definitions, the second summand on the right-hand side
  of \eqref{eq:Ses} satisfies
  \begin{align}
    \label{eq:hLLN}
    \begin{split}
      \frac 1h
      \big(S_{vt}^{\zeta, v}  (\overbar \eta_{v't}^\zeta (v'))
        - S_{v't}^{\zeta, v'} (\overbar \eta_{v't}^\zeta (v') )\big)
      &=\frac 1h \sum_{i=vt+1}^{v't} L_i^\zeta
      \big(\overbar \eta_{v't}^\zeta (v')\big)
    \end{split}
  \end{align}
  (where the sum should be interpreted as $-\sum_{i=v't+1}^{vt}$ if $v'<v$).
  The right-hand side of \eqref{eq:hLLN} can be approximated with the
  help of the following claim.
  \begin{claim} \label{cl:perSecPart}
    For each $\varepsilon >0$ and each $q \in \N$
    there exists a constant $C=C(q, \varepsilon)<\infty$ such that
    for all $t$ large enough,
    \begin{equation*}
      \P \Big( \sup_{\substack{v \in V\\
            C(q, \varepsilon) \ln t \le  |h| \le t \widetilde \varepsilon
            (t)}} \Big|
        \frac1h \sum_{i=vt +1}^{v't}
        L_i^\zeta(\overbar \eta^\zeta_{v't}(v'))
        - L(\overbar \eta (v)) \Big| > \varepsilon, \mathcal
        H_{v't},\mathcal H_{vt} \Big)
      \le Ct^{-q}
    \end{equation*}
    with $\widetilde \varepsilon(t)$ and $v'$ as in
    Lemma~\ref{lem:upperTail}.
  \end{claim}
  We postpone the proof of this claim after the proof of
  Lemma~\ref{lem:upperTail}. Inequality \eqref{eq:SDiffBd} and
  Claim~\ref{cl:perSecPart} together imply that the left-hand side of
  \eqref{eq:Ses} $\mathbb P$-a.s.~satisfies
  \begin{equation*}
    \limsup_{t\to\infty}\sup_{(t,v)}\Big\{
      \Big|\frac 1 h
      \big(S_{vt}^{\zeta, v}  (\overbar \eta_{vt}^\zeta (v))
        - S_{v't}^{\zeta, v'} (\overbar \eta_{v't}^\zeta (v')
          )\big)-L(\overbar \eta (v))\Big|
      \Big\}
    \le \varepsilon,
  \end{equation*}
  where the supremum is taken over all $(t,v)$ satisfying
  $C(2,\varepsilon ) \ln t\le |h|\le t\widetilde \varepsilon (t)$ and
  $v \in V$. This is what is necessary to prove
  Lemma~\ref{lem:upperTail}(a).

  (b) Using the same arguments as in the proof of (a), it is sufficient
  show that the exponential factors in \eqref{eq:BigFrac} are bounded
  from above and below by exponential functions, that is the right-hand
  side of \eqref{eq:Ses} is bounded away from $0$ and $\infty$. However,
  for the second summand on the right-hand side this easily follows from
  \eqref{eq:hLLN}, because
  $c^{-1}<L_i^\zeta (\overbar \eta^\zeta_{v't}(v'))<c<0$ uniformly in
  $i\ge 0$,  $v \in V$, and $\xi$ satisfying \eqref{eq:xiAssumptions}.
  The first summand can be neglected for $t$ sufficiently large due to
  \eqref{eq:SDiffBd}.
\end{proof}

\begin{proof}[Proof of Claim \ref{cl:perSecPart}]
  We rewrite
  \begin{align}
    \label{eq:sumhsplit}
    \begin{split}
      & \sum_{i=vt +1}^{v't}
      L_i^\zeta(\overbar \eta^\zeta_{v't}(v'))
      - L(\overbar \eta (v))\\
      &=\sum_{i=vt +1}^{v't}
      \big(  L_i^\zeta(\overbar \eta^\zeta_{v't}(v'))
        -  L_i^\zeta(\overbar \eta(v)) \big)
      + \sum_{i=vt +1}^{v't}
      \big( L_i^\zeta(\overbar \eta(v))
        - L(\overbar \eta (v)) \big).
    \end{split}
  \end{align}
  Observing that the family of functions
  $\big(\eta\mapsto
    L_i^{\zeta}(\eta )\big)_{i
    \in \Z, -\es \le \zeta (j)\le 0 \, \forall j\in \mathbb Z}$
  is equicontinuous on $\Delta $, Proposition~\ref{prop:etaExist},
  \eqref{eq:etaCloseness} and \eqref{eq:etaDiffBd} yield that
  \begin{equation}
    \P \Big( \sup_{\substack{v \in V\\
          \ln t \le  |h| \le t \widetilde \varepsilon
          (t)}}  \Big |  \frac1h \sum_{i=vt +1}^{v't}
      \big(  L_i^\zeta(\overbar \eta^\zeta_{v't}(v'))
        -  L_i^\zeta(\overbar \eta(v)) \big)  \Big| \ge \frac{\varepsilon}{2},
      \mathcal H_{v't},\mathcal H_{vt}
      \Big)
    \le Ct^{-q}.
  \end{equation}
  Regarding the second summand on the right-hand side of
  \eqref{eq:sumhsplit}, it suffices to observe that for $C(q, \varepsilon)$
  large enough,
  \begin{equation}
    \label{eq:unifLDP}
    \P \Big( \sup_{\substack{x \in \Delta \\
          C(q, \varepsilon) \ln t \le  |h| \le t  \widetilde \varepsilon(t)}}
      \Big|  \frac1h \sum_{i=vt +1}^{v't}
      L_i^\zeta(x) - L(x) \Big| \ge \varepsilon / 2 \Big) \le Ct^{-q},
  \end{equation}
  which follows from the Hoeffding type bound (Lemma~\ref{lem:hoef})
  using the same steps as in the proof of Claim~\ref{cl:Lprimeconc}.
  Combining \eqref{eq:sumhsplit}--\eqref{eq:unifLDP}  with \eqref{eq:Hn}
  finishes the proof of the claim.
\end{proof}

We now deal with time perturbations, where it is possible and
useful to obtain more precise estimates.

\begin{lemma}
  \label{lem:timepert}
  (a) Let $\varepsilon (t)$ be a function such that
  $\lim_{t\to\infty}\varepsilon (t)=0$. Then there exists a constant
  $C\in (0,\infty)$ and a $\mathbb P$-a.s.~finite random variable
  $\mathcal T_2$ such that $\mathbb P$-a.s.~for all $t\ge \mathcal T_2$,
  \begin{equation}
    \label{eq:timepert}
    \sup_{(h,v)\in \mathcal E_t}\bigg|
    \ln \frac{\ttE_{u_0}^\xi [N^\ge(t+h, v t)]}
    {\ttE_{u_0}^\xi [N^\ge(t, vt)]}
    - h \big(\es - \overbar \eta(v)\big)  \bigg |
    \le C+ C |h| \bigg(\sqrt {\frac {\ln t}{t}} + \frac {|h|}t\bigg),
  \end{equation}
  where
  $\mathcal E_t=\{(h,v):  |h| \le t \varepsilon (t),
    v\in V,vt/(t+h)\in V\}$.

  (b) In particular, there exist constants $C,c\in (0,\infty)$
   such that for $\mathcal T_2$ as in $(a)$ the following holds true:
  $\P$-a.s.~for all
  $t\ge \mathcal T_2$, uniformly in
  $0\le h\le t \varepsilon (t)$  and
  $v,vt/(t+h) \in V$,
  \begin{equation*}
    ce^{c h}  \ttE_{u_0}^\xi \big[N^\ge(t, vt)\big]
    \le \ttE_{u_0}^\xi \big[N^\ge(t+h, vt )\big]
    \le Ce^{C h} \ttE_{u_0}^\xi \big[N^\ge(t, vt)\big].
  \end{equation*}
\end{lemma}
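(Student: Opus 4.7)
The plan is to reduce the time perturbation to a single difference of empirical Legendre transforms $(\overbar L_{vt}^\zeta)^*$ at two close arguments, where the differentiability of the Legendre transform (envelope theorem) combined with the concentration estimates for $\overbar \eta_n^\zeta$ from Proposition~\ref{prop:etaExist} does all the work. First, by Lemma~\ref{lem:inCond} both numerator and denominator in \eqref{eq:timepert} are comparable, up to multiplicative constants, to the corresponding expressions with $u_0 = \ind_{\{0\}}$, so the $u_0$-dependence contributes only to the additive $O(1)$ constant and I may assume $u_0 = \ind_{\{0\}}$. Setting $v':= vt/(t+h)$ so that $v'(t+h) = vt$, both expectations then concern particles reaching the same position $vt$, and Lemma~\ref{lem:fixedVsVariableTime} yields, on the event $\mathcal H_{vt}$ (which $\P$-a.s.~occurs for $t$ large),
\begin{equation*}
  \ln \frac{\ttE_0^\xi[N^\ge(t+h, vt)]}{\ttE_0^\xi[N^\ge(t, vt)]}
  = \es\, h + \ln \frac{Y^\approx_{v'}(vt)}{Y^\approx_v(vt)} + O(1).
\end{equation*}

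The crux of the argument is the following cancellation. Proposition~\ref{prop:Donsker} together with the definition \eqref{eq:defWnt} of $W_n$ gives, for $w \in \{v,v'\}$ on $\mathcal H_{vt}$,
\begin{equation*}
  \ln Y^\approx_w(vt) = -\ln \sigma_{vt}^\zeta(w) - vt\,(\overbar L_{vt}^\zeta)^*(1/w) + O(1),
\end{equation*}
because the two terms $vtL^*(1/w)$ and $\sigma_w\sqrt{vt}\,W^w_{vt}(1) = vt\bigl((\overbar L_{vt}^\zeta)^*(1/w) - L^*(1/w)\bigr)$ combine to $vt\,(\overbar L_{vt}^\zeta)^*(1/w)$. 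Since $\sigma_{vt}^\zeta(w) \asymp \sqrt{t}$ by \eqref{eq:sigmabound}, the $\ln \sigma$ terms contribute only $O(1)$, so subtracting gives the key identity
\begin{equation*}
  \ln \frac{Y^\approx_{v'}(vt)}{Y^\approx_v(vt)}
  = vt \bigl[(\overbar L_{vt}^\zeta)^*(1/v) - (\overbar L_{vt}^\zeta)^*(1/v')\bigr] + O(1);
\end{equation*}
in particular, both the deterministic $L^*$ contributions and the random $\sqrt{t\ln t}$-scale fluctuations of $W_n(1)$ have cancelled exactly. The envelope theorem applied to \eqref{eq:etanxiDef} then gives $\partial_p (\overbar L_{vt}^\zeta)^*(p) = \overbar \eta_{vt}^\zeta(1/p)$ on a neighborhood of $1/V$, and since $1/v - 1/v' = -h/(vt)$, the mean value theorem yields
\begin{equation*}
  vt \bigl[(\overbar L_{vt}^\zeta)^*(1/v) - (\overbar L_{vt}^\zeta)^*(1/v')\bigr]
  = -h\, \overbar \eta_{vt}^\zeta(v^*)
\end{equation*}
for some $v^*$ between $v$ and $v'$. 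Proposition~\ref{prop:etaExist} supplies $|\overbar \eta_{vt}^\zeta(v^*) - \overbar \eta(v^*)| = O(\sqrt{\ln t/t})$ uniformly in $v^* \in V$ on events of probability $\ge 1 - Ct^{-q}$, while Lemma~\ref{lem:baretaprops} gives $|\overbar \eta(v^*) - \overbar \eta(v)| = O(|v^* - v|) = O(h/t)$. Combining everything proves (a); the almost sure uniformity over all $t \ge \mathcal T_2$ and all $(h,v)\in \mathcal E_t$ follows from Borel-Cantelli with large $q$ applied along a discretization of $t$ and $v$, using Lemma~\ref{lem:etandep} and the smoothness of $v \mapsto \overbar \eta^\zeta_n(v)$ to interpolate between discretization points.

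Part (b) follows immediately from (a) by exponentiating: for $0 \le h \le t\varepsilon(t)$ with $\varepsilon(t) \to 0$, the error $|h|\sqrt{\ln t/t} + h^2/t = h\bigl(\sqrt{\ln t/t} + h/t\bigr) = o(h)$ as $t \to \infty$, and since $\es - \overbar \eta(v)$ is bounded away from $0$ and $\infty$ uniformly for $v \in V$, the linear term dominates and gives the claimed two-sided exponential bounds. I expect the main obstacle to be the organization of almost sure uniformity in $(h,v)$ (in particular, handling the two levels of discretization together with the dependence of $v^*$ on $h$), but this is routine once the polynomial-in-$t$ concentration rates of Proposition~\ref{prop:etaExist} are in hand; the genuinely conceptual point is the cancellation of the deterministic Legendre transforms, which reduces the whole question to a first-order computation via the envelope theorem and avoids the stubborn $O(\ln t)$ error that would be incurred by a direct use of the explicit representation in Corollary~\ref{cl:explicitW}.
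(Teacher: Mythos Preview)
Your proof is correct and follows essentially the same approach as the paper: both reduce via Lemma~\ref{lem:fixedVsVariableTime} and Proposition~\ref{prop:Donsker} to the difference $vt\bigl[(\overbar L_{vt}^\zeta)^*(1/v) - (\overbar L_{vt}^\zeta)^*(1/v')\bigr]$ (equivalently $S_{vt}^{\zeta,v}(\overbar\eta_{vt}^\zeta(v)) - S_{vt}^{\zeta,v'}(\overbar\eta_{vt}^\zeta(v'))$ by \eqref{eq:barLS}) and control it using the concentration of $\overbar\eta_{vt}^\zeta$ from Proposition~\ref{prop:etaExist}. Your envelope-theorem/mean-value step is a slightly cleaner packaging of what the paper does by inserting the intermediate point $S_{vt}^{\zeta,v}(\overbar\eta_{vt}^\zeta(v'))$ and Taylor-expanding in $\eta$ around the maximizer $\overbar\eta_{vt}^\zeta(v)$, but the content is identical.
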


\begin{remark}
  \label{rem:hnegative}
  By interchanging the roles of $vt$ and $vt+h$ as well as of $t$ and
  $t+h$ in the claims~(b) of Lemmas~\ref{lem:upperTail}
  and~\ref{lem:timepert}, respectively, it follows that they hold also
  for $h\in [-t^\frac13,0]$ and $h\in [-t\varepsilon(t),0]$,
  respectively, with minimal modifications: in Lemma~\ref{lem:timepert},
  the prefactors $c e^{ch}$ and $C e^{Ch}$ should be replaced by
  $c e^{Ch}$ and $C e^{ch}$, respectively; a similar replacement applies
  also in  Lemma~\ref{lem:upperTail}.
\end{remark}
\begin{proof}[Proof of Lemma~\ref{lem:timepert}]
  (a) Let  $v':=vt/(t+h)$. Through the proof we assume assume $t$ to be
  large enough such that $\mathcal H_{vt}$ and $\mathcal H_{v't}$ hold
  true. Using Proposition~\ref{prop:Donsker} and the same arguments as in
  the proof of Lemma~\ref{lem:upperTail}, the fraction in
  \eqref{eq:timepert} satisfies, for some $c\in (1,\infty)$,
  \begin{equation*}
    c^{-1}
    e^{h \es} \cdot \frac{Y^\approx_{v'}(vt)}{Y^\approx_{v}(vt)}
    \le
    \frac{\ttE_{u_0}^\xi [N^\ge(t+h, v t)]}
    {\ttE_{u_0}^\xi [N^\ge(t, vt)]}
    \le c
    e^{h \es} \cdot \frac{Y^\approx_{v'}(vt)}{Y^\approx_{v}(vt)}.
  \end{equation*}
  In addition, similarly to \eqref{eq:Ysimdecomp},
  \begin{equation*}
    \frac{Y^\approx_{v'}(vt)}{Y^\approx_{v}(vt)}=
      e^{h \es} \cdot \frac
      { E^{\zeta, \overbar \eta_{vt}^\zeta(v')}
        \Big[e^{ - \overbar \eta_{vt}^\zeta(v')
            \sum_{i=1}^{vt}  \widetilde \tau_i };
          \sum_{i=1}^{vt} \widetilde \tau_i \in [-K,0] \Big]
        \cdot
        e^{ -  S_{vt}^{\zeta, v'}
          (\overbar \eta_{vt}^\zeta (v') ) }}
      {E^{\zeta, \overbar \eta_{vt}^\zeta(v)}
        \Big[e^{ - \overbar \eta_{vt}^\zeta (v)
            \sum_{i=1}^{vt}  \widehat \tau_i };
          \sum_{i=1}^{vt} \widehat \tau_i \in [-K,0] \Big]
        \cdot
        e^{ -  S_{vt}^{\zeta, v}  (\overbar \eta_{vt}^\zeta
            (v) )
          }},
  \end{equation*}
  where again
  $\widehat \tau_i := \tau_i - E^{\zeta, \overbar \eta_{vt}^\zeta(v)} [\tau_i]$
  and
  $\widetilde \tau_i := \tau_i - E^{\zeta, \overbar \eta_{vt}^\zeta(v')} [\tau_i]$.
  As in the proof of Lemma \ref{lem:upperTail}, the ratio of the
  expectations in the numerator and denominator is asymptotically bounded
  from above and below. It follows that the expression in the supremum of
  \eqref{eq:timepert} is bounded from above by
  \begin{equation}
    \label{eq:time3Sum}
    \begin{split}
      &C + \big | h\overbar \eta (v) +
      S_{vt}^{\zeta, v}  (\overbar \eta_{vt}^\zeta (v))
        - S_{vt}^{\zeta, v'} (\overbar \eta_{vt}^\zeta (v') )\big|
        \\&\le C+ \big|
      h\overbar \eta (v) +
      \big(S_{vt}^{\zeta, v}  (\overbar \eta_{vt}^\zeta (v'))
        - S_{vt}^{\zeta, v'} (\overbar \eta_{vt}^\zeta (v') )\big)\big|
      \\&\quad+
      \big|\big(S_{vt}^{\zeta, v}  (\overbar \eta_{vt}^\zeta (v))
        - S_{vt}^{\zeta, v} (\overbar \eta_{vt}^\zeta (v') )\big)\big|.
    \end{split}
  \end{equation}
  Plugging in the definitions \eqref{eq:SnDef} and \eqref{eq:Vdef}, the
  second summand on the right-hand side satisfies
  \begin{equation*}
    \begin{split}
      \big|h \overbar \eta (v)+\big(S_{vt}^{\zeta, v}  (\overbar \eta_{vt}^\zeta (v'))
        - S_{vt}^{\zeta, v'} (\overbar \eta_{vt}^\zeta (v') )\big)\big|
      &= \Big|h \overbar \eta (v)+{vt \overbar \eta_{vt}^\zeta (v')}
      \Big( \frac 1v - \frac 1{v'}\Big)\Big|
      \\&=|h|\,|\overbar \eta (v) -\overbar \eta_{vt}^\zeta (v')|.
    \end{split}
  \end{equation*}
  Using \eqref{eq:etaDiffBd} and \eqref{eq:etaEmpEta} of
  Proposition~\ref{prop:etaExist} in combination with Borel-Cantelli
  lemma,  this can then be bounded by the right-hand side of
  \eqref{eq:timepert}. The last summand on the right-hand side of
  \eqref{eq:time3Sum} can be shown to be smaller than $C h^2/t$ using the
  same steps as in \eqref{eq:expandS}--\eqref{eq:SDiffBd} of the proof of
  Lemma~\ref{lem:upperTail}, completing the proof of (a). Claim (b)
  directly follows from (a).
\end{proof}

\subsection{Proof of Theorem~\ref{thm:bpFCLT}
  (functional CLT for the breakpoint)} 
\label{ssec:bpFCLT}

We now have all the ingredients to show our second main result, the
invariance principle for the breakpoint, Theorem~\ref{thm:bpFCLT}.

\begin{proof}[Proof of Theorem~\ref{thm:bpFCLT}]
  We must show that the sequence of processes
  $\frac 1{\overbar\sigma_v\sqrt n} \big(\overbar m_v(nt)-vnt\big)$
  converges to standard Brownian motion,
  where
  \begin{equation*}
    \overbar m_v(t) =
    \sup
    \Big\{ n \in \N  :  \ttE^\xi_{u_0} \big[  N^\ge(t,n) \big]
      \ge \frac 12 e^{t\lambda(v)}  \Big\}
  \end{equation*}
  was defined in \eqref{eq:mvt}.

  We assume that $u_0=\ind_{\{0\}}$ first. Let
  $u^\ge(t,x):= \ttE_0^\xi [N^{\ge}(t,x)]$, $t\ge 0$, $x\in \mathbb Z$,
  and extend it to $x\in \mathbb R$ by linear interpolation. Furthermore,
  set
  \begin{equation}
    \label{eq:Wv}
    U_v(t): = t \lambda (v) - \ln u^\ge(t,vt)-\ln 2.
  \end{equation}
  Recalling the definition of $\sigma_v^2$ from \eqref{eq:limitSigma}, by
  Remark~\ref{rem:getoeq},
  \begin{equation}
    \label{eq:WvFCLT}
    \bigg(t\mapsto \frac {U_v(nt)} { \sqrt { \sigma_v^2 v n}} \bigg)_{n\in \N}
    \ \text{converges as $n\to\infty$ to Brownian motion.}
  \end{equation}
  Obviously, $u^\ge(t,x)$ is decreasing in $x$ with
  $\lim_{t\to\infty} \frac 1t \ln u^\ge(t,0)=\lambda (0)>\lambda (v)$ and
  $\lim_{x\to\infty} u^\ge(t,x)= 0$,
  see Proposition \ref{prop:lyapExp}. Let $r=r(t)$ be the largest solution
  of the equation
  \begin{equation}
    \label{eq:rt}
    u^\ge(t,vt+r) = \frac 12 e^{t \lambda (v)},
  \end{equation}
  which exists $\mathbb P$-a.s.~for $t$ large enough by the previous
  considerations. Moreover, by the definition of $\overbar m_v(t)$,
  \begin{equation}
    \label{eq:raproxmv}
    r(t)-1 < \overbar m_v(t)-vt \le r(t).
  \end{equation}
  Combining equations \eqref{eq:Wv} and \eqref{eq:rt}, we see that $r(t)$
  is the largest solution to
  \begin{equation*}
    \ln \frac{u^\ge(t,vt+r(t))}{u^\ge(t,vt)} = U_v(t).
  \end{equation*}
  Let $\widetilde \varepsilon(t)$ be an arbitrary positive function with
  $\widetilde  \varepsilon(t) t^\frac14 \to 0$ and
  $\widetilde \varepsilon(t) t^\frac12 \to \infty$ as $t \to \infty$.  By
  the space perturbation Lemma~\ref{lem:upperTail}, using also the
  monotonicity of $u^\ge(t,\cdot)$ and the fact that
  $L(\overbar \eta (v))<0$, we obtain that for every
  \begin{equation} \label{eq:deltaChoice}
    \delta \in (0,|L(\overbar \eta (v))|),
  \end{equation}
  $\P$-a.s.~for all $t$ large enough,
  \begin{equation*}
    \overbar \varphi_t (r(t)) L(\overbar \eta (v)) -\delta |r(t)|
    \le \ln \frac{u^\ge(t,vt+r(t))}{u^\ge(t,vt)}
    \le \underline \varphi_t (r(t)) L(\overbar \eta (v)) + \delta |r(t)|;
  \end{equation*}
  here, for $C_0=C_0(\delta )$, the functions $\overbar \varphi_t $ and
  $\underline \varphi_t $ are given by
  \begin{equation*}
    \begin{split}
      \underline \varphi_t(r)
      &= \sup \big \{s  : s\le r \text{ and }C_0 \ln t \le |s| \le
        t\widetilde  \varepsilon (t)\big \},
      \\
      \overbar \varphi_t(r)
      &= \inf \big \{s  : s\ge r \text{ and }C_0 \ln t \le |s| \le
        t\widetilde  \varepsilon (t) \big \},
    \end{split}
  \end{equation*}
  and satisfy $\underline \varphi_t(r) = \overbar \varphi_t (r) = r$ for
  $C_0\ln t \le |r| \le t\varepsilon (t)$ and
  $\underline \varphi_t\le \overbar \varphi_t$. This implies that whenever
  \begin{equation} \label{eq:Uin}
    |U_v(t)|\in
    \big[C_0  (|L(\overbar \eta (v))|+\delta)\ln t,
      t \widetilde  \varepsilon (t)(|L(\overbar \eta (v))|-\delta) \big],
  \end{equation}
  then, due to \eqref{eq:deltaChoice},
  \begin{equation*}
    r(t) \in \Big[
      \frac{U_v(t)}{L(\overbar \eta (v))\pm \delta },
      \frac{U_v(t)}{L(\overbar \eta (v))\mp \delta }
      \Big],
  \end{equation*}
  where the upper signs correspond to $U_v(t)>0$ and the lower signs to
  $U_v(t)<0$. In particular, since $U_v$ satisfies the invariance principle
  \eqref{eq:WvFCLT}, property \eqref{eq:Uin} is satisfied with probability
  tending to 1 as $t\to\infty$. Since $\delta $ is arbitrary, it thus
  follows that in $\P$-distribution
  \begin{equation*}
    \lim_{n\to\infty} \frac  1{\sqrt n} r(n\cdot) = \lim_{n\to\infty}
    \frac1{\sqrt n} \cdot \frac{U_v(n\cdot)}{L(\overbar \eta (v))}
  \end{equation*}
  as processes defined on $[0,\infty)$, which together with
  \eqref{eq:raproxmv} and \eqref{eq:WvFCLT} implies the claim of the
  theorem for
  \begin{equation}
    \label{eq:barsigmav}
    \overbar \sigma_v = \frac{ \sqrt{\sigma_v^2 v}}{ |L(\overbar \eta (v))|}.
  \end{equation}

  The case of general $u_0$ satisfying \eqref{eq:inCond} then follows from
  Lemmas~\ref{lem:inCond} and~\ref{lem:upperTail}.
  This completes the proof.
\end{proof}

\subsection{Invariance principle for the breakpoint inverse} 

We will later on need the following invariance principle for a generalized
inverse of the breakpoint defined by $T_0=0$ and, for $n\ge  1$,
\begin{equation}
  \label{eq:Tndef}
  T_n:=\inf\Big\{t\ge 0: \ttE^\xi _{u_0}[N^{\ge}(t,n)]\ge \frac 12\Big\}=
  \inf\{t\ge 0: \overbar m(t)\ge n\}.
\end{equation}
Observe, that by definition
\begin{equation}
  \label{eq:Tineqt}
  T_{\overbar m(t)}\le t.
\end{equation}

\begin{theorem}
  \label{thm:bpinverse}
  There exists a $\P$-a.s.~finite
  random variable $\mathcal C =\mathcal C(\xi )$ and a constant
  $C_1<\infty$ such that $\mathbb P$-a.s.~for
  all $n\ge 1$,
  \begin{equation}
    \label{eq:Tnseries}
    \bigg | T_n -  \bigg(\frac n {v_0}
      + \frac 1 {v_0 L(\overbar \eta (v_0))}
    \sum_{i=1}^n \big(L_i^\zeta (\overbar \eta (v_0))-L(\overbar \eta
        (v_0))\big)\bigg)\bigg| \le \mathcal C + C_1 \ln  n.
  \end{equation}
  In particular,
  \begin{equation} \label{eq:LLNTn}
    \lim_{n\to\infty}\frac{T_n}{n} = \frac{1}{v_0}, \quad \P\text{-a.s.},
  \end{equation}
  and, a fortiori, the sequence
  \begin{equation*}
    t\mapsto \frac{v_0L(\overbar \eta (v_0))}{ \sqrt{\sigma_{v_0} n}}
    \Big(T_{nt}-\frac {nt}{v_0}\Big), \qquad n\ge 0,
  \end{equation*}
  converges  as $n\to\infty$ in $\mathbb P$-distribution to
  standard Brownian motion.
\end{theorem}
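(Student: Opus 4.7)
The plan is to characterize $T_n$ by the identity $\mathtt{E}^\xi_{u_0}[N^\ge(T_n,n)]=1/2$ (which holds since $t\mapsto\mathtt{E}^\xi_{u_0}[N^\ge(t,n)]$ is smooth in $t$, being a tail sum of solutions to the PAM \eqref{eq:PAM}), and then to compare the function $t \mapsto \ln\mathtt{E}^\xi_{u_0}[N^\ge(t,n)]$ with its known value at the reference time $t=n/v_0$. Setting $h_n := T_n - n/v_0$ and taking logarithms, one has
\begin{equation*}
  -\ln 2 = \ln \mathtt{E}^\xi_{u_0}[N^\ge(n/v_0,n)]
  + \big(\ln \mathtt{E}^\xi_{u_0}[N^\ge(n/v_0 + h_n, n)]
       - \ln\mathtt{E}^\xi_{u_0}[N^\ge(n/v_0,n)]\big).
\end{equation*}
The first term on the right is controlled by Remark~\ref{rem:explicitN} (after absorbing the constant from Lemma~\ref{lem:inCond} in the $O(\ln n)$ error): it equals $\sum_{i=1}^n L_i^\zeta(\overbar \eta(v_0)) - nL(\overbar \eta(v_0)) + O(\ln n)$. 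The second parenthesis will be estimated by the time-perturbation Lemma~\ref{lem:timepert}(a). The algebraic clean-up relies on the identity $\es - \overbar\eta(v_0) = -v_0 L(\overbar\eta(v_0))$, which follows from $\lambda(v_0)=0$ together with the formula $\lambda(v) = \es - vL^*(1/v)$ (cf.~\eqref{eq:lambdaesL}).

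The first obstacle is that Lemma~\ref{lem:timepert}(a) is only informative when $|h_n|$ is not too large, so an a~priori control of $h_n$ is needed. To obtain it, I would apply the Hoeffding-type bound for mixing sequences (Lemma~\ref{lem:hoef}) to the stationary, bounded, exponentially mixing sequence $(L_i^\zeta(\overbar\eta(v_0))-L(\overbar\eta(v_0)))_{i \in \Z}$ to conclude that $\P$-a.s.
\begin{equation*}
  \Big|\sum_{i=1}^n \big(L_i^\zeta(\overbar\eta(v_0))-L(\overbar\eta(v_0))\big)\Big|
  = O(\sqrt{n \ln n}).
\end{equation*}
Combined with Remark~\ref{rem:explicitN} this yields $|\ln \mathtt{E}^\xi_{u_0}[N^\ge(n/v_0,n)]| = O(\sqrt{n\ln n})$ $\P$-a.s. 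Since Lemma~\ref{lem:timepert}(b) shows that $h \mapsto \ln\mathtt{E}^\xi_{u_0}[N^\ge(n/v_0+h,n)]$ changes at a rate bounded above and below by positive constants (provided $h$ is $o(n)$), solving $\ln\mathtt{E}^\xi_{u_0}[N^\ge(T_n,n)]=-\ln 2$ forces $|h_n| = O(\sqrt{n\ln n})$ $\P$-a.s.

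Now I would plug this a~priori bound back into Lemma~\ref{lem:timepert}(a) at $t=n/v_0$ and $v=v_0$ (so that $vt=n$). Its error bound $C + C|h_n|(\sqrt{\ln t/t} + |h_n|/t)$ reduces to $O(\ln n)$ when $|h_n|=O(\sqrt{n\ln n})$, yielding
\begin{equation*}
  \ln\mathtt{E}^\xi_{u_0}[N^\ge(T_n,n)] - \ln\mathtt{E}^\xi_{u_0}[N^\ge(n/v_0,n)]
  = h_n\big(\es - \overbar\eta(v_0)\big) + O(\ln n).
\end{equation*}
Setting the left-hand side to $-\ln 2 - \ln\mathtt{E}^\xi_{u_0}[N^\ge(n/v_0,n)]$, substituting Remark~\ref{rem:explicitN}, and dividing by $\es - \overbar\eta(v_0) = -v_0 L(\overbar\eta(v_0))$ produces exactly \eqref{eq:Tnseries}, with the random variable $\mathcal C$ absorbing both the finite fluctuations for small $n$ (before the a.s.~statements kick in) and the constants from Lemma~\ref{lem:inCond}.

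Finally, \eqref{eq:LLNTn} is an immediate consequence of dividing \eqref{eq:Tnseries} by $n$ and applying Birkhoff's ergodic theorem to the partial sums on the right. The functional CLT follows by applying the FCLT for associated sequences (precisely as in the proof of Lemma~\ref{lem:secCLT}, exploiting that $L_i^\zeta(\overbar\eta(v_0))$ is a coordinatewise monotone function of the i.i.d.~$\xi(j)$'s and hence positively associated) to $\sum_{i=1}^{nt}(L_i^\zeta(\overbar\eta(v_0))-L(\overbar\eta(v_0)))/\sqrt n$, whose limit has quadratic variation $\sigma_{v_0}^2 t$ by \eqref{eq:limitSigma}; the error $O(\ln n)$ in \eqref{eq:Tnseries} is negligible after rescaling by $\sqrt n$. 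The main technical point throughout is the bootstrap from a rough a~priori bound on $h_n$ to the sharp $O(\ln n)$ bound, since Lemma~\ref{lem:timepert}(a) contains cross terms of order $|h_n|^2/n$ and $|h_n|\sqrt{\ln n/n}$ that must both be tamed to $O(\ln n)$.
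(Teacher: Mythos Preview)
Your proof is correct and follows essentially the same route as the paper. The only organizational difference is that the paper sets $h_n:=\frac{1}{v_0 L(\overbar\eta(v_0))}\sum_{i=1}^n(L_i^\zeta(\overbar\eta(v_0))-L(\overbar\eta(v_0)))$ as the \emph{target} expression, obtains the a~priori bound $|h_n|\le C\sqrt{n\ln\ln n}$ directly for this sum, and then sandwiches $T_n$ by evaluating $\ln\mathtt{E}^\xi_{u_0}[N^\ge(n/v_0+h_n+\alpha\ln n,\,n)]$ for $\alpha$ large positive and negative; you instead set $h_n:=T_n-n/v_0$ as the \emph{unknown}, first bound it a~priori via Lemma~\ref{lem:timepert}(b), and then solve for it using Lemma~\ref{lem:timepert}(a). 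Both arguments rest on exactly the same ingredients (Remark~\ref{rem:explicitN}, Lemma~\ref{lem:timepert}, a concentration estimate for $\sum(L_i^\zeta-L)$, and the identity $\es-\overbar\eta(v_0)=-v_0 L(\overbar\eta(v_0))$), and your bootstrap and the paper's sandwich are equivalent implementations of the same idea.
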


\begin{proof}
  To show \eqref{eq:Tnseries}, we set
  \begin{equation*}
    h_n =  \frac 1 {v_0 L(\overbar \eta (v_0))}
    \sum_{i=1}^n \big(L_i^\zeta (\overbar \eta (v_0))-L(\overbar \eta
        (v_0))\big).
  \end{equation*}
  Observe that $\mathbb P$-a.s.~for all $n$ large enough
  \begin{equation}
    \label{eq:hlil}
    |h_n|\le C \sqrt {n \ln \ln n}.
  \end{equation}
  Indeed, the random variables
  $L_i^\zeta (\overbar\eta(v_0))-L(\overbar \eta(v_0))$ are centered and
  mixing as in \eqref{eq:LcondBd}. We can thus apply Azuma's inequality
  for mixing sequences, Lemma~\ref{lem:hoef}, which can be turned into a
  maximal inequality using \cite[Theorem~1]{KeMa-11} to deduce that for
  all $a\ge 0$
  \begin{equation*}
    \mathbb P(\max_{k\le n}|h_k| \ge a) \le C e^{-c a^2/n}.
  \end{equation*}
  The usual steps of the proof of the upper bound in the classical law of the
  iterated logarithm then provide us with \eqref{eq:hlil}.

  We now fix $\alpha \in \mathbb R$ and estimate
  $\ln \ttE_{u_0}^\xi[N^{\ge}(n/v_0+h+\alpha \ln n,n)]$. To this end we
  use the time perturbation
  Lemma~\ref{lem:timepert} which can be applied due to \eqref{eq:hlil}.
  Combining this with Remarks~\ref{rem:getoeq} and \ref{rem:explicitN} in order to rewrite
  $\ln \ttE_{u_0}^\xi [N^{\ge}(n/v_0,n)]$, we obtain that
  \begin{equation}
    \begin{split}
      \label{eq:zcvzcz}
      &\ln \ttE_{u_0}^\xi[N^{\ge}(n/v_0+h_n+\alpha \ln n,n)]
      \\&=
      \sum_{i=1}^n \big(L_i^\zeta (\overbar \eta (v_0)) -  L(\overbar\eta
          (v_0))\big) + (h_n+\alpha \ln n)\big(\es - \overbar \eta (v_0)\big)
      + \varepsilon(\alpha ,n)
      \\&= \alpha (\es - \overbar \eta (v_0)) \ln n  +  \varepsilon(\alpha, n),
    \end{split}
  \end{equation}
  where the last equality follows from
  $\es - \overbar \eta (v_0)+v_0L(\overbar \eta (v_0))=0$,
  cf.~\eqref{eq:lambdaesL}. Furthermore, the error term $\varepsilon(\alpha, n)$ satisfies
  \begin{equation*}
    |\varepsilon(\alpha, n)|\le C + C(|h_n| + (|\alpha| \vee 1)\ln n)\bigg(\sqrt {\frac{\ln n}{n}} +
      \frac{|h_n|+ (|\alpha| \vee 1)\ln n}n\bigg) + C \ln n,
  \end{equation*}
  with $C$ depending on neither $\alpha$ nor $n$.
  Choosing $\alpha $ sufficiently large positive (respectively negative) the
  right-hand side of \eqref{eq:zcvzcz} converges to $+\infty$
  (respectively $-\infty$). Recalling the definition \eqref{eq:Tndef} of $T_n$,
  claim \eqref{eq:Tnseries} follows for all $n$ sufficiently large.
  Adjusting $\mathcal C$ then deals with the remaining $n$'s.

  The law of large numbers \eqref{eq:LLNTn} directly follows from
  \eqref{eq:Tnseries} in
  combination with the ergodic theorem and the definitions
  from~\eqref{eq:empLav} and~\eqref{eq:Ldef}. The invariance principle is
  then again a consequence of this formula and Remark~\ref{rem:getoeq}.
\end{proof}

Theorem~\ref{thm:bpinverse} can be used to deduce a strong law of numbers
for the breakpoint which does not follow easily from the previous
argumentation.
\begin{corollary}
  \label{cor:breakLLN}
  Under \eqref{eq:xiAssumptions}, \eqref{eq:inCond} and  \eqref{eq:vAssumptions},
  \begin{equation*}
    \lim_{t\to\infty}\frac {\overbar m(t)}{t} = v_0, \qquad \P\text{-a.s.}
  \end{equation*}
\end{corollary}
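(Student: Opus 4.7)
The plan is to sandwich $\overbar m(t)/t$ by $v_0\pm\varepsilon$ for arbitrary $\varepsilon>0$, using the quenched Lyapunov asymptotics for $\ttE^\xi_{u_0}[N^\ge(t,\lfloor vt\rfloor)]$. The starting point is that combining \eqref{eq:lyapunov} with Lemma~\ref{lem:inCond} (whose ratio estimate is valid on $\mathcal H_n$, which holds $\P$-a.s.\ for all $n$ large), one obtains for every $v\in V$
\begin{equation*}
\lim_{t\to\infty}\frac1t\ln\ttE^\xi_{u_0}\bigl[N^\ge(t,\lfloor vt\rfloor)\bigr] = \lambda(v) \qquad \P\text{-a.s.}
\end{equation*}

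For the \emph{upper bound}, I would fix $v\in V$ with $v>v_0$, possible since $v_0$ is interior to $V$. By the shape of $\lambda$---concave on $\mathbb R$, strictly concave on $(v_c,\infty)$, and vanishing at $v_0$, cf.~Proposition~\ref{prop:lyapExp} and Figure~\ref{fig:lambda}---one has $\lambda(v)<0$, so $\ttE^\xi_{u_0}[N^\ge(t,\lfloor vt\rfloor)]\to 0$ exponentially fast. This expectation therefore falls below $1/2$ for every $t\ge t_0(\xi)$, and the definition of $\overbar m(t)$ forces $\overbar m(t)<\lfloor vt\rfloor\le vt$; letting $v\searrow v_0$ gives $\limsup_{t\to\infty}\overbar m(t)/t\le v_0$ $\P$-a.s. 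The \emph{lower bound} is symmetric: pick $v\in V\cap(v_c,v_0)$, which is non-empty by \eqref{eq:vAssumptions}. Now $\lambda(v)>0$, so the expectation diverges exponentially and eventually exceeds $1/2$, forcing $\overbar m(t)\ge\lfloor vt\rfloor$; letting $v\nearrow v_0$ yields $\liminf_{t\to\infty}\overbar m(t)/t\ge v_0$ $\P$-a.s.

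The main obstacle, and the reason the corollary is nontrivial despite the preceding invariance principles, is the lack of any manifest monotonicity of $\overbar m(t)$ in $t$; this prevents a symmetric inversion of the a.s.\ LLN $T_n/n\to 1/v_0$ from Theorem~\ref{thm:bpinverse} to obtain both bounds (such an inversion would require the implication $T_n\le t\Rightarrow \overbar m(t)\ge n$, which fails without monotonicity). The quenched a.s.\ PAM Lyapunov asymptotics, obtained by combining \eqref{eq:lyapunov} with Lemma~\ref{lem:inCond}, are therefore essential, and the distributional convergences of Theorems~\ref{thm:bpFCLT} and \ref{thm:bpinverse} are by themselves insufficient. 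That said, the upper bound admits an alternative derivation from Theorem~\ref{thm:bpinverse}: \eqref{eq:Tineqt} yields $\overbar m(t)/t \le \overbar m(t)/T_{\overbar m(t)} \to v_0$ $\P$-a.s., using that $\overbar m(t)\to\infty$ is provided as a byproduct of the lower bound above.
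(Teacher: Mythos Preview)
Your proof is correct. The lower bound coincides with the paper's: both invoke the Lyapunov asymptotics \eqref{eq:lyapunov} (together with Lemma~\ref{lem:inCond} to pass from $N$ to $N^\ge$ and to general $u_0$) at a speed $v<v_0$ to force $\ttE^\xi_{u_0}[N^\ge(t,\lfloor vt\rfloor)]\to\infty$.

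The upper bound, however, follows a genuinely different route. You argue symmetrically via the Lyapunov exponent: for $v\in V$ with $v>v_0$ one has $\lambda(v)<0$, so the expectation decays exponentially and $\overbar m(t)<vt$ eventually. The paper instead exploits the breakpoint-inverse machinery of Theorem~\ref{thm:bpinverse}: from \eqref{eq:Tineqt} one has $t\ge T_{\overbar m(t)}$, and the strong LLN \eqref{eq:LLNTn} for $T_n$ gives $\liminf t/\overbar m(t)\ge \liminf T_{\overbar m(t)}/\overbar m(t)=1/v_0$. You correctly identify this alternative in your closing paragraph. Your approach is more self-contained, needing only the basic Lyapunov asymptotics and Lemma~\ref{lem:inCond}; the paper's approach leverages the heavier Theorem~\ref{thm:bpinverse} that was just proved, but gains nothing here beyond economy of citation. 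Your diagnosis of why a symmetric $T_n$-inversion cannot supply the \emph{lower} bound (lack of monotonicity of $\overbar m$) is accurate and explains why the paper, too, reverts to \eqref{eq:lyapunov} for that direction.
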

\begin{proof}
  Consider first the case $u_0=\delta_0$. Then for $\varepsilon >0$, by
  \eqref{eq:lyapunov},  $\liminf \overbar m(t)/t\ge (1-\varepsilon )v_0$,
  $\mathbb P$-a.s.
  On the other hand, since $\overbar m(t)$ diverges
  $\P$-a.s.~and $t\ge T_{\overbar m(t)}$ due to \eqref{eq:Tineqt},
  \begin{equation*}
    \liminf \frac t {\overbar m(t)}
    \ge \liminf \frac{T_{\overbar m(t)}}{\overbar m(t)}=\frac 1 {v_0},
    \qquad\P\text{-a.s.},
  \end{equation*}
  by \eqref{eq:LLNTn},
  completing the proof for $u_0=\delta_0$. General $u_0$ satisfying \eqref{eq:inCond} can then
  be handled using Lemmas~\ref{lem:inCond} and~\ref{lem:upperTail}.
\end{proof}

\section{The breakpoint approximates the maximum}
\label{sec:maximumproofs}

In this section we prove the main results about the position of the
rightmost particle $M(t)$ and its median $m(t)$. We will see that those
are well approximated by the breakpoint, and thus satisfy the same
invariance principles.

It is elementary to obtain upper tail estimates for $M(t)$ and an upper
bound on $m(t)$: the definition of $\overbar m(t)$ and the Markov inequality
imply directly that
\begin{equation}
  \label{eq:barmm}
  \overbar m(t)\ge m(t).
\end{equation}
In addition, by
Lemma~\ref{lem:upperTail}(b), $\mathbb P$-a.s.~for $t$ large enough,
\begin{equation}
  \label{eq:Mtupperbound}
  \ttP^\xi_{u_0}(M(t)\ge \overbar m(t)+h )
  \le \ttE^\xi_{u_0}[N(t,\overbar m(t)+h)]
  \le C e^{-c h },
  \quad h\in ( 0,t^{1/3}).
\end{equation}
Note that these estimates are rather coarse. One expects
\eqref{eq:Mtupperbound} to hold with $m(t)$ instead of $\overbar m(t)$
and $\overbar m(t)-m(t)\asymp \ln t$. These bounds, however, are more than
sufficient to show the stated functional limit theorems.

As usual in the branching random walk literature, the lower bounds are
more difficult, and  are obtained via second moment estimates on the
so-called leading particles. Since $m(t)$ and $M(t)$
are stochastically increasing in the initial condition, we will assume,
without loss of generality, that ${u_0=\ind_{\{0\}}}$ throughout this section.

\subsection{Leading particles} 
\label{ssec:leadingparticles}

We consider a special class of particles $Y\in N(t)$ with
trajectories satisfying
\begin{equation}
  \label{eq:leading}
  \begin{split}
    &Y_t\ge \overbar m(t), Y_{T_{\overbar m(t)}}\ge \overbar m(t),
    \\&\text{and} \quad
    H^Y_k \ge T_k - \alpha \psi^\xi (k) \text{ for all }1\le k<\overbar m(t),
  \end{split}
\end{equation}
where $H^Y_k=\inf\{s\ge 0:Y_s=k\}$, $\alpha >2$ is a fixed constant, $T_k$
is the breakpoint inverse introduced in \eqref{eq:Tndef} of
Theorem~\ref{thm:bpinverse}, and $\psi^\xi $ is defined by
\begin{equation}
  \label{eq:psi}
  \psi^\xi (k)= \mathcal C(\xi ) + C_1 (1 \vee \ln k),
\end{equation}
where $\mathcal C(\xi )$ and $C_1$ are as in \eqref{eq:Tnseries}.
Analogously to the literature on homogeneous branching random walk, we
will call such particles \emph{leading at time $t$}. We further set
\begin{equation*}
  N^{\mathcal L}_t = \big|\{Y\in N(t): Y \text{ is leading at time $t$}\}\big|.
\end{equation*}
The probability of finding a leading particle at time $t$ is bounded from
below in the following proposition.

\begin{proposition}
  \label{prop:leadingparticles}
  There exists a constant $\gamma >0$ such that $\mathbb P$-a.s.~for all
  $t$ large enough
  \begin{equation*}
    \ttP^\xi_0 ( N^{\mathcal L}_t\ge 1) \ge  t^{-\gamma }.
  \end{equation*}
\end{proposition}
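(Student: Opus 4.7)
My plan is to apply the Paley--Zygmund inequality
\begin{equation*}
  \ttP^\xi_0(N^{\mathcal L}_t \ge 1)
  \ge \frac{(\ttE^\xi_0[N^{\mathcal L}_t])^2}{\ttE^\xi_0[(N^{\mathcal L}_t)^2]},
\end{equation*}
so that it suffices to lower bound the first moment and upper bound the second moment, each by a polynomial in $t$. For the first moment, I would start from the Feynman--Kac formula~\eqref{eq:firMomFormula}, restrict to the smaller event $\{X_t=\overbar m(t),\ H_{\overbar m(t)}\in[t-K,t]\}$ for some fixed large $K$, and tilt the random walk by $\overbar\eta_n^\zeta(v_0)$ with $n=\overbar m(t)$ as in Section~\ref{sec:tiltedmeasures}. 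The combined exponential prefactor then collapses to $e^{t\lambda(v_0)}=1$ by~\eqref{eq:lambdaesL} and~\eqref{eq:vzero}, up to a local-CLT factor of order $t^{-1/2}$ of the kind provided by Lemma~\ref{lem:IIEst}. The problem is thereby reduced to a lower bound, under the tilted measure, on the probability of the ballot-type event
\begin{equation*}
  \mathcal B_t := \bigl\{H_k \ge T_k - \alpha\psi^\xi(k) \text{ for all } 1 \le k < \overbar m(t)\bigr\}.
\end{equation*}

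Under the tilted measure, $H_k$ is a sum of independent, strictly positive random variables whose centered version satisfies a quenched invariance principle; by Theorem~\ref{thm:bpinverse}, $T_k-k/v_0$ admits an analogous decomposition in terms of functionals of $\zeta$. Hence $\mathcal B_t$ is essentially a ballot event requiring one centered random walk to lie above a second, quenched random walk on $\{1,\dots,\overbar m(t)\}$, with an $O(\ln k)$ slack coming from $\alpha\psi^\xi(k)$ that absorbs precisely the logarithmic error in~\eqref{eq:Tnseries}; the choice $\alpha>2$ in~\eqref{eq:leading} leaves enough room for such an estimate to succeed. I expect a polynomial lower bound $\ge c\,t^{-\gamma_1}$ on the tilted probability of $\mathcal B_t$, which combined with the previous factors would yield $\ttE^\xi_0[N^{\mathcal L}_t] \ge c\,t^{-\gamma_1-1/2}$. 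Proving this quenched ballot estimate is the main obstacle of the proof and is naturally the content of the announced Section~\ref{ssec:leadingparticlesfirst}; as noted in Section~\ref{sec:strategy}, the result of Mallein and Miłoś~\cite{MaMi-15} cannot be used as a black box because the two walks involved are not independent.

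For the second moment I would apply the two-particle Feynman--Kac formula~\eqref{eq:secMomFormula} and split the integral over the last-common-ancestor time $s\in[0,t]$ into the three regimes ``$s$ small'', ``$s$ in the bulk'', and ``$s$ close to $t$''. In each regime the integrand is estimated by the same tilting and local-CLT mechanism as for the first moment, applied to a single-path expectation up to time $s$ that is constrained to remain leading up to level $\overbar m(s)$, times the square of an analogous expectation from $s$ to $t$ on each of the two descendants, each required to be leading on its own segment. Combining the Feynman--Kac exponentials with the two leading constraints produces again $e^{t\lambda(v_0)}=1$ up to polynomial prefactors, the factor $\xi(X_s)$ in~\eqref{eq:secMomFormula} is bounded by $\es$ thanks to~\eqref{eq:xiAssumptions}, and the ballot constraint on the common ancestor together with the space-perturbation estimate of Lemma~\ref{lem:upperTail} prevents the integrand from blowing up near the endpoints of the $s$-integral. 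A careful bookkeeping of these contributions then produces $\ttE^\xi_0[(N^{\mathcal L}_t)^2]\le C\,t^{\gamma_2}$ for some $\gamma_2<\infty$, and Paley--Zygmund delivers the claim with $\gamma=2\gamma_1+1+\gamma_2$.
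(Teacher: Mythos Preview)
Your overall strategy---Paley--Zygmund plus a first-moment lower bound via a ballot estimate and a second-moment upper bound via \eqref{eq:secMomFormula}---is exactly what the paper does. Two points are worth flagging where your sketch diverges from the paper's execution.

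\textbf{First moment.} The paper does not work at time $t$ but at $\bar t := T_{\overbar m(t)} \le t$: it first observes that any particle with $Y_{\bar t}\ge \overbar m(t)$ has probability at least $1/2$ of also satisfying $Y_t\ge \overbar m(t)$, and then normalizes by the \emph{ratio} over $\ttE_0^\xi[N^\ge(\bar t,\overbar m(t))]\ge 1/2$, so no ``prefactor collapse'' argument is needed. Your restriction to $\{H_{\overbar m(t)}\in[t-K,t]\}$ with fixed $K$ does not by itself guarantee the condition $Y_{\bar t}\ge \overbar m(t)$ in \eqref{eq:leading}, since $t-\bar t$ can be of order $\ln t$. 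The paper also tilts by the \emph{deterministic} $\overbar\eta(v_0)$ rather than the random $\overbar\eta_n^\zeta(v_0)$, which makes the subsequent ballot computation cleaner. The ballot estimate itself (Lemma~\ref{lem:BMoverBM}) is obtained by a dyadic block construction and a two-copies-of-$\beta$ argument at a uniform random gluing time, not by any black-box appeal to \cite{MaMi-15}.

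\textbf{Second moment.} The paper does not split by $s$-regimes. After dropping the leading constraint on the two descendant paths entirely (keeping only $X_{t-s}\ge\overbar m(t)$) and writing the integrand as $\ttE^\xi_0[N(s,k)]\,\ttE^\xi_k[N^\ge(t-s,\overbar m(t))]^2$, the key device is the inequality \eqref{eq:NgeleN},
\[
  \ttE^\xi_k[N^\ge(t-s,\overbar m(t))] \;\le\; C\big/\ttE^\xi_0[N(s,k)],
\]
which follows from the Markov property and $\ttE^\xi_0[N^\ge(t,\overbar m(t))]\le C$. This bounds the integrand by $C/\ttE^\xi_0[N(s,k)]$, and the remaining work is to show $\ttE^\xi_0[N(s,k)]\ge t^{-\gamma}$ uniformly over the relevant $(s,k)$. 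For this the paper decomposes according to the velocity $k/s$ into four regions (A)--(D), using the barrier $k\le\varphi^\xi(s)$ together with the perturbation Lemmas~\ref{lem:upperTail} and~\ref{lem:timepert}; no ballot estimate enters the second-moment bound at all. Your proposed $s$-regime split and ``ballot constraint on the ancestor plus space perturbation'' could plausibly be made to work, but it is not how the paper organizes the argument, and you would almost certainly be led to \eqref{eq:NgeleN} anyway once you tried to control the sum over $k$.
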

The proof of this proposition will be based on the classical Paley-Zygmund
inequality
\begin{equation}
  \label{eq:PZ}
  \ttP_0^\xi (N^{\mathcal L}_t \ge 1)
  \ge \frac{\ttE_0^\xi [N^{\mathcal L}_t]^2}
  {\ttE_0^\xi[(N^{\mathcal L}_t)^2]}.
\end{equation}
Estimates for the expectations on the right-hand side are provided in the
following two subsections. Since we do not strive to find the optimal
constant $\gamma$ in this paper, we use $\gamma$ to denote a generic
large constant whose value can change during the computations.

\subsubsection{First moment for the leading particles}
\label{ssec:leadingparticlesfirst}

\begin{lemma}
  \label{lem:lowerleading}
  There exists a constant $\gamma >0$ such that $\mathbb P$-a.s.~for all
  $t$ large enough
  \begin{equation*}
    \ttE_0^\xi [N^{\mathcal L}_t] \ge t^{-\gamma }.
  \end{equation*}
\end{lemma}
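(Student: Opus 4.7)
The plan is to apply the Feynman--Kac formula (Proposition~\ref{prop:FK}), change to a tilted random-walk measure to kill the exponential weight, and reduce the problem to a ballot-type estimate. Setting $n:=\overbar m(t)$ and $T:=T_n\le t$ (by~\eqref{eq:Tineqt}), Proposition~\ref{prop:FK} yields
\begin{equation*}
  \ttE_0^\xi[N^{\mathcal L}_t]
  =E_0\Big[e^{\int_0^t\xi(X_s)\,\d s};\, X_T\ge n,\, X_t\ge n,\, H_k\ge T_k-\alpha\psi^\xi(k)\ \forall\,1\le k<n\Big].
\end{equation*}
I would further restrict the expectation to the typical sub-event $\{H_n\in[T-1,T]\}\cap\{X_s\ge n\ \forall s\in[H_n,t]\}$. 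By Theorem~\ref{thm:bpinverse} the gap $t-T=t-T_n$ is $\P$-a.s.~of order $O(1)$, hence so is $t-H_n$; splitting the integral as $\int_0^t=\int_0^{H_n}+\int_{H_n}^t$, the second piece contributes only a bounded multiplicative factor.

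Applying the tilted change of measure \eqref{eq:tiltedVarTime}--\eqref{eq:Zzetaeta} at level $\eta=\overbar\eta_n^\zeta(v_0)$ (so that $E^{\zeta,\eta}[H_n]=n/v_0\approx T$ by~\eqref{eq:baretazeta}) gives
\begin{equation*}
  \ttE_0^\xi[N^{\mathcal L}_t]
  \ge c\,\exp\bigl\{\es\,t-\overbar\eta_n^\zeta(v_0)\,T+n\,\overbar L_n^\zeta(\overbar\eta_n^\zeta(v_0))\bigr\}\,
  P^{\zeta,\overbar\eta_n^\zeta(v_0)}(\mathcal A'),
\end{equation*}
where $\mathcal A'$ retains only the hitting-time barrier constraints and the endpoint condition $H_n\in[T-1,T]$. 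Using Corollary~\ref{cl:explicitW}, Remark~\ref{rem:explicitN}, Theorem~\ref{thm:bpinverse} together with $\lambda(v_0)=0$, the exponential prefactor is at least $n^{-\gamma_0}$ for some finite $\gamma_0$, $\P$-a.s.

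It thus remains to show $P^{\zeta,\overbar\eta_n^\zeta(v_0)}(\mathcal A')\ge n^{-\gamma_1}$. Under the tilted measure $H_k=\sum_{i=1}^k\tau_i$ is a sum of independent random variables with means $\mu_i:=(L_i^\zeta)'(\overbar\eta_n^\zeta(v_0))$, uniformly bounded variances and small exponential moments (cf.~\eqref{eq:tauexpmoments}). Writing $U_k:=H_k-\sum_{i\le k}\mu_i$ and $A_k:=T_k-\sum_{i\le k}\mu_i$, the event $\mathcal A'$ translates into
\begin{equation*}
  \{U_k\ge A_k-\alpha\psi^\xi(k)\ \forall\,1\le k<n\}\cap\{U_n\in I\}
\end{equation*}
for an $O(1)$-long interval $I$. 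A KMT-type Gaussian coupling of $(U_k)_{k\le n}$ would reduce this to a Brownian ballot estimate with the quenched barrier $A_k-\alpha\psi^\xi(k)$.

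The hard part is that $A_k$ is not bounded: Theorem~\ref{thm:bpinverse} together with Proposition~\ref{prop:etaExist} shows that $A_k$ is itself of random-walk type in $\zeta$ with amplitude of order $\sqrt{k\ln\ln k}$ and therefore cannot be absorbed by the logarithmic margin $\alpha\psi^\xi(k)$. The ballot problem thus compares two random-walk type processes rather than a random walk to a deterministic curve, in the spirit of the quenched two-walk setting of~\cite{MaMi-15}. Their result cannot be applied verbatim because of the dependence between $U_\cdot$ and $A_\cdot$ through $\overbar\eta_n^\zeta(v_0)$; a tailored ballot estimate, combining the Gaussian approximation with classical Brownian-motion barrier bounds, has to be proved to yield the polynomial lower bound $n^{-\gamma}$.
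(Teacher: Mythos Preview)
Your overall architecture is right and essentially matches the paper: Feynman--Kac, pass to a tilted random-walk measure, and reduce to a ballot-type probability for $H_k$ against the barrier $T_k-\alpha\psi^\xi(k)$. However, there are two simplifications in the paper's argument that remove real work from your outline, one incorrect justification, and the ballot estimate itself is where the content lies.

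\textbf{Normalization and choice of tilt.} The paper does not try to control the exponential prefactor directly. Instead it divides by the unconstrained expectation: using $\ttE_0^\xi[N^\ge(\bar t,\overbar m(t))]\ge\tfrac12$ for $\bar t=T_{\overbar m(t)}$, one gets
\[
  \ttE_0^\xi[N^{\mathcal L}_t]\ge \frac14\,
  \frac{E_0\big[e^{\int_0^{\bar t}\zeta};\,X_{\bar t}\ge\overbar m(t),\ H_k\ge T_k-\alpha\psi^\xi(k)\ \forall k<\overbar m(t)\big]}
       {E_0\big[e^{\int_0^{\bar t}\zeta};\,X_{\bar t}\ge\overbar m(t)\big]},
\]
and after restricting to $H_n\in[T_n-K,T_n]$ and tilting, the prefactors cancel exactly. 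This avoids the $\sqrt n$-scale cancellation you need between $\es t$, $-\eta T$, $n\overbar L_n^\zeta(\eta)$ and the series from Theorem~\ref{thm:bpinverse}. Second, the paper tilts by the \emph{deterministic} $\eta=\overbar\eta(v_0)$, not the random $\overbar\eta_n^\zeta(v_0)$. With the deterministic tilt the increments $\tau_i-E^{\zeta,\eta}[\tau_i]$ have a law that does not depend on $n$, and the barrier $R_k:=T_k-E^{\zeta,\eta}[H_k]$ admits the stationary approximation $R'_k=\sum_{i\le k}\rho_i$ of Lemma~\ref{lem:RtoBM}. Your random tilt introduces an $n$-dependence in both the walk and the barrier that you then blame for the inapplicability of \cite{MaMi-15}; much of that is self-inflicted.

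\textbf{The $O(1)$ gap.} The claim $t-T_n=O(1)$ is correct but does \emph{not} follow from Theorem~\ref{thm:bpinverse}; that theorem only gives $T_n-n/v_0$ at the $\sqrt n$ scale. The $O(1)$ bound comes from the perturbation Lemmas~\ref{lem:upperTail}(b) and~\ref{lem:timepert}(b) applied at $(t,\overbar m(t))$. The paper in fact bypasses this entirely: any particle with $Y_{\bar t}\ge\overbar m(t)$ satisfies $Y_t\ge\overbar m(t)$ with probability $\ge\tfrac12$ by symmetry of simple random walk, regardless of the size of $t-\bar t$.

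\textbf{The ballot estimate.} You correctly identify this as the hard step and correctly note that even with the right setup one cannot quote \cite{MaMi-15} because the barrier $R_k$ and the walk $\widehat H_k$ are coupled through $\zeta$. The paper's solution is a concrete construction: run two independent copies $\overbar\beta^1,\overbar\beta^2$ of $\widehat H-R'$ (one forward from $0$, one backward from $n$), show via a dyadic block argument that each stays nonnegative and reaches height $n^{1/4}$ with probability $\ge n^{-\gamma'}$ (Lemma~\ref{lem:BMoverBM}(a)), then glue them at a uniformly random time $\Sigma_n$ and use the free first increment $\beta^1_1$ to hit the target window $I_n$. The block argument in Lemma~\ref{lem:BMoverBM}(a) controls the dangerous excursions of the $\zeta$-dependent barrier via the stopping times $s_j$ and a small-exponential-moment bound for $-\ln Z_i$; this is where the dependence is actually handled, and it is the missing ingredient in your sketch.
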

\begin{proof}
  Let $\bar t= T_{\overbar m(t)}$. Then by \eqref{eq:Tineqt},
  $\bar t \le t$. Hence, every particle
  satisfying $Y_{\bar t}\ge \overbar m(t)$ has probability
  at least 1/2 to satisfy also $Y_t\ge \overbar m(t)$. Further, by the
  definitions of $\bar t$ and $\overbar m(t)$, we have
  $\ttE_0^\xi [N^\ge(\bar t,\overbar m(t))]\ge 1/2$. Therefore,
  \begin{equation*}
    \ttE_0^\xi[N^{\mathcal L}_t]
    \ge
    \frac{\ttE_0^\xi \big[\big|\big\{Y\in N(\bar t)
          :Y_{\bar t}\ge \overbar m(t),
          H_k\ge T_k - \alpha \psi^\xi (k)\,  \forall k<\overbar
          m(t)\big\}\big|\big]}
    {4 \ttE_0^\xi [N^\ge(\bar t,\overbar m(t))]}.
  \end{equation*}
  Using the Feynman-Kac representation (Proposition~\ref{prop:FK}) this
  implies that
  \begin{equation}
    \label{eq:ENLaa}
    \ttE_0^\xi[N^{\mathcal L}_t]
    \ge
    \frac
    {E_0\Big[e^{\int_0^{\bar t} \zeta (X_s) \,\d s}
        ;X_{\bar t} \ge \overbar m(t),
        H_k\ge T_k - \alpha \psi^\xi (k) \, \forall k<\overbar m(t)\Big]}
    {4E_0\Big[e^{\int_0^{\bar t} \zeta (X_s) \,\d s}
        ;X_{\bar t} \ge \overbar m(t)\Big]}.
  \end{equation}
  Following the same steps as in the proof of the lower bound in
  Lemma~\ref{lem:fixedVsVariableTime}, the numerator in \eqref{eq:ENLaa}
  satisfies
  \begin{equation*}
    \begin{split}
      &E_0\Big[e^{\int_0^{\bar t} \zeta (X_s) \,\d s}
        ;X_{\bar t} \ge \overbar m(t),
        H_k\ge T_k - \alpha \psi^\xi (k) \, \forall k<\overbar m(t)\Big]
      \\&\ge
      E_0\bigg[e^{\int_0^{H_{\overbar m(t)}} \zeta (X_s) \,\d s}
        \times E_{\overbar m(t)}\Big[e^{\int_{0}^{r}\zeta (X_s) \,\d s};X_{r}\ge
          \overbar m(t)\Big]{\Big|_{r=\bar t-H_{\overbar m(t)}}}
        ;\\&\qquad\qquad H_{\overbar m(t)}\in[\bar t-K,\bar t],
        H_k\ge T_k - \alpha \psi^\xi(k) \, \forall k<\overbar m(t)\bigg]
      \\&\ge c
      E_0\Big[e^{\int_0^{H_{\overbar m(t)}} \zeta (X_s) \,\d s}
        ;H_{\overbar m(t)}\in [\bar t-K,\bar t],
          H_k\ge T_k-\alpha \psi^\xi (k) \, \forall k<\overbar m(t)\Big],
    \end{split}
  \end{equation*}
  where in the last step we used
  $\essinf_{\zeta ,r\le K, \, x \in \Z}
  E_x[e^{\int_0^r\zeta (X_s)\, \d s};{X_r\ge x}]\ge c>0$,
  due to \eqref{eq:xiAssumptions}. On the other hand, by
  Lemma~\ref{lem:fixedVsVariableTime}, the denominator of
  \eqref{eq:ENLaa} is bounded from above by
  $ C E_0\big[e^{\int_0^{H_{\overbar m(t)}} \zeta (X_s) \,\d s} ;
    H_{\overbar m(t)}\in [\bar t-K,\bar t]\big]$.
  Replacing now $\overbar m(t)$ by $n$, and thus $\bar t$ by $T_n$, and
  using the law of large numbers \eqref{eq:LLNTn} for $T_n$,
  we observe from the previous reasoning that in order to show the lemma, it
  is sufficient to prove that $\mathbb P$-a.s., for all $n$ large enough,
  \begin{equation}
    \begin{split}
      \label{eq:ENLbb}
      \frac {E_0\Big[e^{\int_0^{H_n} \zeta (X_s) \,\d s}
          ;H_n\in [T_n-K,T_n],
          H_k\ge T_k-\alpha \psi^\xi(k) \, \forall k<n\Big]}
      {E_0\Big[e^{\int_0^{H_n} \zeta (X_s) \,\d s}
          ;H_n\in [T_n-K,T_n] \Big]} \ge n^{-\gamma }.
    \end{split}
  \end{equation}
  To prove \eqref{eq:ENLbb}, we set
  $\eta = \overbar \eta (v_0)$ below and rewrite its left-hand side as
  \begin{equation*}
    \begin{split}
      &\frac
      {E^{\zeta ,\eta }
        \big[e^{-\eta H_n};
          H_n\in [T_n-K,T_n],
          H_k\ge T_k-\alpha \psi^\xi (k) \,\forall k<n\big]}
      { E^{\zeta ,\eta}
        \big[e^{-\eta H_n}; H_n\in [T_n-K,T_n] \big]}
      \\&\qquad\ge c \cdot
      \frac
      {  P^{\zeta ,\eta}
        \big(H_n\in [T_n-K,T_n],
          H_k\ge T_k-\alpha \psi^\xi (k) \,\forall k<n\big)}
      { P^{\zeta ,\eta}\big(H_n\in [T_n-K,T_n] \big)}
      \\&\qquad\ge c \cdot P^{\zeta ,\eta}
      \big(H_n\in [T_n-K,T_n],
        H_k\ge T_k -\alpha \psi^\xi(k) \,\forall k<n\big).
    \end{split}
  \end{equation*}
  Setting $\widehat H_n := H_n - E^{\zeta ,\eta }[ H_n]$ and
  $R_n := T_n - E^{\zeta ,\eta }[H_n]$ in the last formula, we thus see that
  \eqref{eq:ENLbb} is equivalent to
  \begin{equation}
    \label{eq:PHoverR}
    P^{\zeta ,\eta }
    \Big(\widehat H_n \in [R_n-K, R_n],
      \widehat H_k \ge R_k-\alpha \psi^\xi(k)\,\forall k<n\Big) \ge
    n^{-\gamma }
  \end{equation}
  $\mathbb P$-a.s.~for all $n$ large enough. Note that $R_n$ depends
  on the random environment $\xi$ only, so it is not random under
  $P^{\zeta ,\eta }$.

  The next two claims will show that, after rescaling, the processes $R_n$
  and $\widehat H_n$ behave like Brownian motions. To approximate  $R_n$,
  whose increments are not stationary, we introduce an auxiliary
  process with stationary increments
  \begin{equation*}
    R'_n := \sum_{i=1}^n \rho_i, \qquad n\ge 1,
  \end{equation*}
  where
  \begin{equation}
    \label{eq:rhoi}
    \rho_i :=
    \frac 1 {v_0 L(\eta)}
    \big(L_i^\zeta (\eta)-L(\eta)\big)
    - (E^{\zeta ,\eta }[ \tau_i]-\mathbb
      E[E^{\zeta ,\eta }[
      \tau_i]]), \qquad i\ge 1.
  \end{equation}

  \begin{lemma}
    \label{lem:RtoBM}
    The random variables $R'_n$ are adapted to the filtration
    $\mathcal F_n=\sigma (\xi(i):i\le n)$, and $R'_n$ approximates $R_n$
    in the sense that $\P$-a.s.,
    \begin{equation}
      \label{eq:RRapprox}
      |R_n-R'_n|\le \psi^\xi(n), \qquad \text{for all $n\ge 0$},
    \end{equation}
    with $\psi^\xi$ as in \eqref{eq:psi}. Moreover, the sequence of
    increments $(\rho_n)$ is bounded, stationary and there exist some
    constants $c,C \in (0,\infty)$ such that
    \begin{equation}
      \label{eq:Rprimecorrelations}
      \big|\mathbb E[\rho_{n+m} \, |\, \mathcal F_n]\big|\le C e^{-cm}.
    \end{equation}
    Finally, there is $\sigma_1^2 \in (0,\infty)$ such that both
    processes, $[0,\infty) \ni t \mapsto n^{-1/2} R_{n t }$ and
    $[0,\infty) \ni t \mapsto  n^{-1/2}R'_{nt}$, converge as $n\to\infty$
    in $\mathbb P$-distribution to a Brownian motion with variance
    $\sigma_1^2$.
  \end{lemma}
  \begin{proof}
    The adaptedness of  $(R'_n)$  to $(\mathcal F_n)$, as well as
    the stationarity and the boundedness of $(\rho_n)$ follow directly
    from their definitions, recalling the  assumption
    \eqref{eq:xiAssumptions}.  The estimate \eqref{eq:RRapprox} follows
    from \eqref{eq:Tnseries} of Theorem~\ref{thm:bpinverse} after
    a straightforward computation.
    Furthermore, Lemma~\ref{lem:covBd} yields
    \begin{equation*}
      \big| \mathbb E [L_{n+m}^\zeta (\eta )-L(\eta ) \,|\, \mathcal F_n] \big|
      \le C e^{-c m},
    \end{equation*}
    and  analogically, bearing in mind that
    $(L_{i}^\zeta)' (\eta ) = E^{\zeta, \eta}[ \tau_i]$,
    \begin{equation*}
      \big|  \E \big[E^{\zeta ,\eta } [\tau_{n+m}]-\mathbb
        E[E^{\zeta ,\eta }
          [\tau_{n+m}]]  \,\big|\, \mathcal F_n \big] \big| \le C e^{-c m},
    \end{equation*}
    proving \eqref{eq:Rprimecorrelations}.

    Finally, observing that the increments of $R'_n$ are centered, the functional
    central limit theorem for $n^{-1/2} R'_{n \cdot }$ follows directly
    from a functional central limit theorem for stationary mixing
    sequences, see e.g.~Theorem~11 and Corollary~12 of \cite{MePeUt-06},
    the assumptions of which can be checked easily from
    \eqref{eq:Rprimecorrelations}. The functional central limit theorem
    for $n^{-1/2}R_{n\cdot}$ then follows from~\eqref{eq:RRapprox}.
  \end{proof}

  \begin{claim}
    \label{cl:HtoBM}
    There is $\sigma_2^2\in (0,\infty)$ such that $\mathbb P$-a.s., under
    $P^{\zeta ,\eta }$, $n^{-1/2} \widehat H_{n\cdot}$ converges to a
    Brownian motion with variance $\sigma_2^2$.
  \end{claim}
  \begin{proof}
    Since the $\tau_i$'s are independent under $P^{\zeta ,\eta }$,
    $\widehat H_n$ is a sum of independent and centered random variables,
    which have uniformly exponential tails. Moreover, the sequence of the
    variances of the increments is stationary under $\mathbb P$. The
    claim then follows easily by a functional version of the Lindeberg-Feller
    central limit theorem (see e.g.~\cite[Theorem 9.3.1]{GiSk-69}).
  \end{proof}

  \begin{remark}
    \label{rem:MM}
    In view of the last claim and Lemma \ref{lem:RtoBM}, the probability
    in \eqref{eq:PHoverR} can approximatively be viewed as the
    probability that one Brownian motion stays above another, quenched,
    Brownian motion. This problem was recently studied in \cite{MaMi-15}
    for the case of two independent Brownian motions, where it was shown
    that this probability behaves like $n^{-\gamma }$ with $\gamma $
    depending on the variances of the Brownian motions. More importantly,
    it was proved there that $\gamma >1/2$ whenever the variance of the
    quenched Brownian motion is positive. That implies that the price for
    a particle to be leading should be larger than in the homogeneous
    case, resulting thus in a larger backlog of $m(t)$ behind
    $ \overbar  m(t)$.

    In this paper, the situation is more intricate due to the
    dependencies of the random variables involved. Hence, we do not
    strive for the optimal~$\gamma $. Nevertheless, our proof partially
    builds on certain ideas appearing in \cite{MaMi-15}.
  \end{remark}

  We proceed by showing \eqref{eq:PHoverR}. In view of \eqref{eq:RRapprox},
  \begin{equation}
    \begin{split}
      \label{eq:newbeta}
      &P^{\zeta ,\eta }
      \Big(\widehat H_n \in [R_n-K, R_n],
        \widehat H_k \ge R_k-\alpha \psi^\xi (k)\,\forall 1 \le k<n\Big)
      \\&\ge
      P^{\zeta ,\eta }
      \Big(\widehat H_n-R'_n \in I_n,
        \widehat H_k-R'_k \ge -(\alpha -1)\psi^\xi(k)
        \,\forall 1 \le k<n\Big),
    \end{split}
  \end{equation}
  where $I_n=[R_n-R'_n-K,R_n-R'_n]$. Note that since $\alpha >2,$ we have
  that $R_n-R_n'-K\ge -(\alpha-1)\psi^\xi(n)$ for $n$ large enough.

  On the right-hand side of \eqref{eq:newbeta}, we require the process
  $\widehat H_n-R'_n$ to stay above the barrier between times $0$ and $n$
  and to be (almost) fixed at times $0$ and $n$. It turns out useful to
  split the problem into two parts: distancing the barrier at $0$, and
  distancing the barrier at $n$. Thus, we will consider two independent
  copies $X^1$ and $X^2$ of $X$ under the same measure $P^{\zeta ,\eta }$,
  and write $\widehat H^i_k$, $i=1,2$, for the associated hitting times.
  We further consider a random variable $\Sigma_n$ independent of $X^1$,
  $X^2$, which under $P^{\zeta ,\eta }$ is uniformly distributed on
  $\{1,\dots, n-1\}$. We introduce
  \begin{equation}
    \label{eq:beta}
    \beta^i_k = \widehat H^i_k-R'_k, \qquad k\ge 0, i=1,2,
  \end{equation}
  as a convenient abbreviation---mind, however, that $\beta^i_k$ has a
  part, $R'_k$, that depends only on the random environment $\xi $, and
  another part, $\widehat H^i_k$, that depends on both, $\xi $ and the random
  walk $X^i$.
  Furthermore, define
  \begin{equation}
    \label{eq:betaDist}
    \beta_k =
    \begin{cases}
      \beta^1_k, & \text{for }1 \le  k\le \Sigma_n,\\
      \beta^1_{\Sigma_n}+(\beta^2_k-\beta^2_{\Sigma_n}),\qquad
      &\text{for }\Sigma_n<k\le n.
    \end{cases}
  \end{equation}
  The process $\beta $ has the increments of $\beta^1$ before $\Sigma_n$
  and the increments of $\beta^2$ after $\Sigma_n$. Since,  under
  $P^{\zeta ,\eta }$, the processes $\widehat H^1$ and $\widehat H^2$ are
  independent and have independent increments, it follows that the
  process $\beta$ has, under $P^{\zeta ,\eta }$, the same distribution as
  $\widehat H_\cdot- R'_\cdot$. Hence, \eqref{eq:PHoverR} will follow if
  we show that, $\P$-a.s.~for all $n$ large enough,
  \begin{equation}
    \label{eq:Pbeta}
    P^{\zeta ,\eta }\big(\beta_k\ge -(\alpha -1)\psi^\xi(k) \, \forall 1 \le  k<n,
      \beta_n\in I_n\big) \ge n^{-\gamma }.
  \end{equation}
  Finally, we write $\overbar \beta^1_k = \beta^1_{k+1}-\beta^1_1$, and
  $\overbar \beta^2_k = \beta^2_{n-k}-\beta^2_n$, $k=0,\dots,n$, for
  $\beta^1$ shifted by one, and `$\beta^2$ running backwards from $n$',
  respectively. Due to the independence of the increments of $\beta^1$ under
  $P^{\zeta ,\eta }$, $\beta^1_1$ is independent of $\overbar \beta^1$.
  We then decompose $\beta_n$ as
  \begin{equation} \label{eq:betanEq}
    \beta_n = \beta^1_{\Sigma_n}+ (\beta^2_n-\beta^2_{\Sigma_n})
    =  \beta^{1}_1+\overbar \beta^1_{\Sigma_n-1} -
    \overbar \beta^2_{n-\Sigma_n}.
  \end{equation}

  The following lemma, the proof of which is postponed to the end of this
  subsection, provides a control on the processes $\beta^1 $ and
  $\bar \beta $.
  \begin{lemma}
    \label{lem:BMoverBM}
    (a) There is $\gamma '>0$ such that
    $\mathbb P$-a.s.~for all $n$ large enough
    \begin{equation*}
      \begin{split}
        &P^{\zeta ,\eta }\big(\overbar \beta^1_k\ge 0 \,\forall 1 \le k\le n,
          \overbar\beta^1_n\ge n^{1/4}\big) \ge n^{-\gamma '},\\
        &P^{\zeta ,\eta }\big(\overbar \beta^2_k\ge 0 \,\forall 1 \le k\le n,
          \overbar\beta^2_n\ge n^{1/4}\big) \ge n^{-\gamma '}.
      \end{split}
    \end{equation*}

    (b) There is $C_2>0$ such that $\mathbb P$-a.s.~for $n$ large enough,
    \begin{equation*}
      P^{\zeta ,\eta }\Big(\max_{1\le k\le n} \max_{i=1,2}
        |\overbar\beta^i_k-\overbar\beta^i_{k-1}|
        \le C_2\ln n\Big)\ge 1-n^{-3\gamma '}.
    \end{equation*}

    (c) Let $\delta\in(0,1)$.
    There is $c>0$ such that $\mathbb P$-a.s., for all $x>0$,
    \begin{equation*}
      P^{\zeta ,\eta }\big(\beta_1 \in [x,x+\delta ]\big)
      \ge  c \delta  e^{-x/c}.
    \end{equation*}
  \end{lemma}

  We now complete the proof of \eqref{eq:Pbeta}. With
  \eqref{eq:betaDist} and \eqref{eq:betanEq} we get
  \begin{equation*}
    \begin{split}
      &\{\beta_k\ge -(\alpha -1)\psi^\xi(k)  \, \forall 1 \le k<n, \beta_n\in I_n\}
      \\&\supset \Big (
      \{\overbar\beta^1_k\ge 0 \, \forall 1\le k \le n, \,
        \overbar\beta_n^1 \ge n^\frac14\}
      \cap\{\max_{1\le k\le n} |\overbar\beta^1_k-\overbar\beta^1_{k-1}| \le
        C_2\ln n\}
      \\&\quad\cap\{\overbar \beta^2_k\ge 0 \, \forall 0\le k \le n, \,
        \overbar \beta^2_n \ge n^\frac14 \}
      \cap\{\max_{1\le k\le n} |\overbar\beta^2_k-\overbar\beta^2_{k-1}| \le
        C_2\ln n\}
      \\&\quad\cap\{\beta^1_1 \in
        (I_n- \overbar\beta^1_{\Sigma_n-1}
        +\overbar \beta^2_{n-\Sigma_n})\cap [0,\infty)\}\Big).
      \end{split}
  \end{equation*}
  Indeed, the first and third event on the right-hand side ensure that
  the trajectories of $\overbar \beta^1$ and $\overbar \beta^2$ cross as
  on Figure~\ref{fig:ballot}, and stay above the barrier, which together
  with $\beta^1_1\ge 0$ of the fifth event ensures that $\beta_k$ stays
  above the barrier as required. The second and
  the fourth event then ensure that at the time of crossing they are
  `sufficiently close' (which is not necessary for the inclusion to hold,
    but will be useful later). The fifth event in addition ensures that
  $\beta_n\in I_n$, cf.~\eqref{eq:betanEq}.
  \begin{figure}
    \includegraphics[width=12cm]{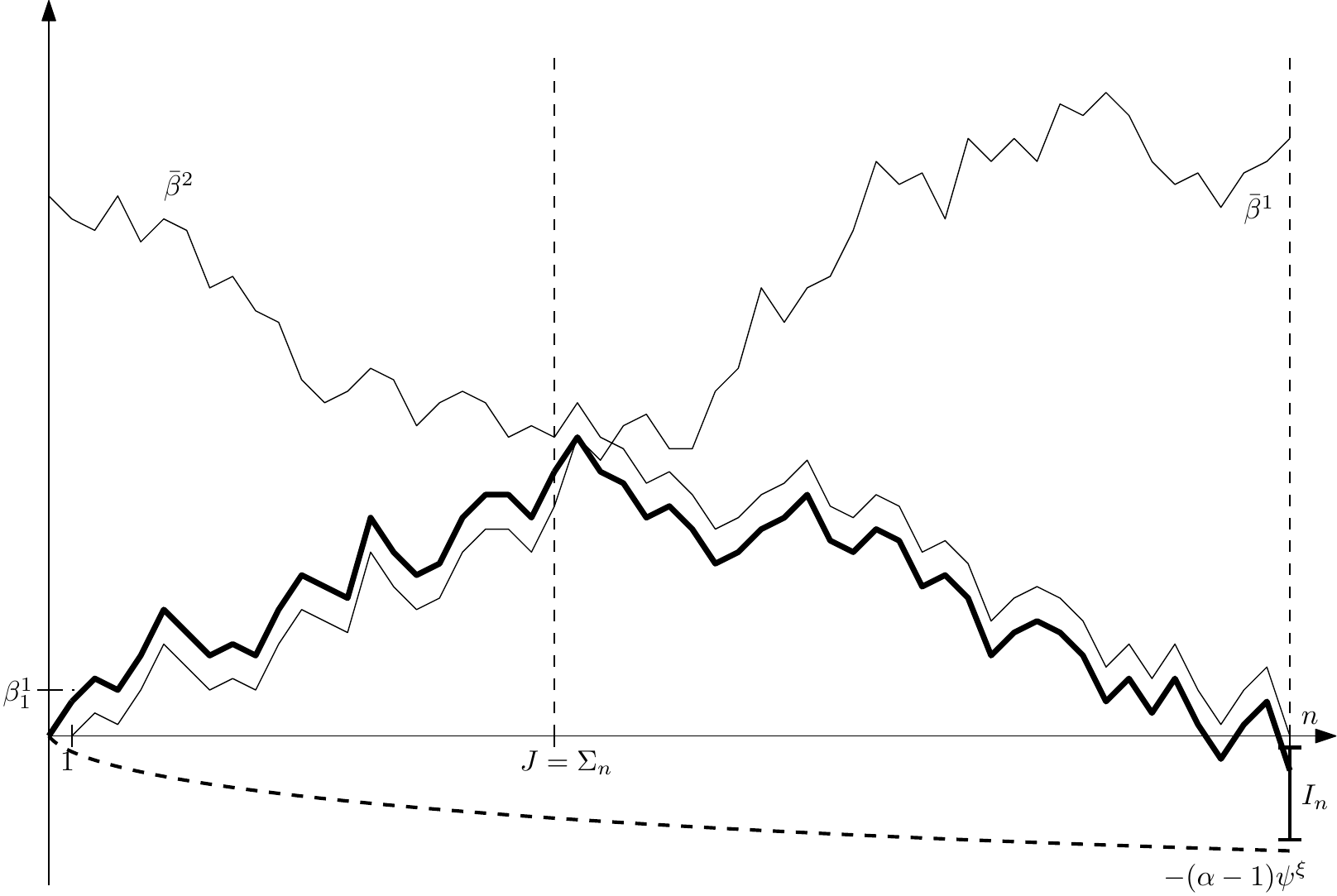}
    \caption{
      \label{fig:ballot}
      Construction of $\beta$ (thick line) from $\overbar \beta^1$,
      $\overbar\beta^2$ (thin lines) and $\beta^1_1$. For clarity of
      presentation, $\overbar \beta^1$ is drawn starting from 1, and
      $\overbar\beta^2$ is drawn running backwards from $n$.}
  \end{figure}

  By Lemma~\ref{lem:BMoverBM}(a,b) and the independence of
  $\overbar \beta^1$, $\overbar \beta^2$ under $P^{\zeta,\eta}$, the
  probability of the intersection of the first four events on the
  right-hand side of the last display is at least $n^{-2\gamma '}$. In
  addition, if these four events occur, then  there is
  $J \in \{1, \ldots,  n\}$ such that
  $I_n- \overbar\beta^1_{J-1} +\overbar \beta^2_{n-J}\subset [0, 2C_2\ln n]$.
  Moreover, $\P(\Sigma_n = J)= 1/(n-1)$. Hence, using
  Lemma~\ref{lem:BMoverBM}(c), conditionally on the occurrence of the
  first four events and $\Sigma_n=J$, we can bound the probability of the
  fifth event on the right-hand side from below by
  $c' n^{-1} e^{-C_2\ln n/c}\ge n^{-\gamma ''}$. Combining these
  estimates proves that $\mathbb P$-a.s.~for $n$ large, \eqref{eq:Pbeta},
  and thus also \eqref{eq:PHoverR}, is larger than $n^{-\gamma }$ with
  $\gamma > 1+2\gamma ' + \gamma ''$. This completes the proof of
  \eqref{eq:ENLbb} and thus of Lemma~\ref{lem:lowerleading}.
\end{proof}

We proceed by proving Lemma~\ref{lem:BMoverBM} which we used in the last
proof.
\begin{proof}[Proof of Lemma~\ref{lem:BMoverBM}]
  Throughout the proof we use the fact that the three processes $\beta $,
  $\beta^1$ and $\beta^2$ have, under $P^{\zeta ,\eta }$, the same
  distribution as $(\widehat H_k-R'_k)_{k\ge 0}$. Since the statements of
  the lemma depend only on the respective distribution, we can and will
  therefore assume that these three processes are equal to
  $(\widehat H_k-R'_k)_{k\ge 0}$. In particular, their increments satisfy
  \begin{equation}
    \label{eq:betainc}
    \beta_k-\beta_{k-1}= \widehat H_k - R'_k - (\widehat H_{k-1}-R'_{k-1}) =
    \tau_k +\Big(- \frac{L^\zeta_k(\eta )}{v_0 L(\eta )}\Big),
  \end{equation}
  where the last equality follows from definitions of $\widehat H$, $R'$
  and $\rho$. It is also useful to observe that, due to
  \eqref{eq:xiAssumptions}, the second summand on the
  right-hand side of \eqref{eq:betainc} satisfies
  \begin{equation}
    \label{eq:betacor}
   -\frac{1}{C} \ge - \frac{L^\zeta_k(\eta )}{v_0 L(\eta )} > -C,
    \qquad \text{for all $k\in \mathbb N$, $\P$-a.s.}
  \end{equation}
  for some constant $C \in (0,\infty)$,
  and that the $\tau_i$ are unbounded non-negative random variables with
  uniform exponential tail (cf.~\eqref{eq:tauexpmoments}), i.e., there
  exists $c>0$ such that for all $k\ge 0$, $\P$-a.s.,
  \begin{equation}
    \label{eq:betatail}
    P^{\zeta ,\eta }(\tau_k\ge u)\le e^{-cu} \qquad
    \text{for all  $u\ge 0$}.
  \end{equation}
  In particular, in combination with \eqref{eq:betacor} we infer that
  there is a small constant $c>0$ such that $\mathbb P$-a.s.,
  \begin{equation}
    \label{eq:betatwoside}
    P^{\zeta ,\eta }\big(\beta_k-\beta_{k-1}>c\big)>c \quad \text{and}\quad
    P^{\zeta ,\eta }\big(\beta_k-\beta_{k-1}<-c\big)>c.
  \end{equation}

  The claim (b) of the lemma then readily follows from
  \eqref{eq:betainc}--\eqref{eq:betatail}, using a union bound and the
  fact that the increments of $\overbar \beta^i$ correspond directly to
  increments of $\beta^i$, $i=1,2$.

  To prove (c), we write
  $\beta^1_1 = \tau_1-L^\zeta _1(\eta )/(v_0 L(\eta ))$, by
  \eqref{eq:betainc}. Recalling \eqref{eq:betacor}, it is sufficient to
  show that there exists $c >0$ such that, $\P$-a.s.,~we have
  $P^{\zeta ,\eta }(\tau_1\in [x+y,x+y+\delta] )\ge c \delta e^{-x/c}$,
  uniformly over $y\in [0,C]$. To see this, recall that under
  $P^{\zeta ,\eta }$, $X$ is a Markov chain whose jump rate from $0$ is
  bounded uniformly in $\zeta $, again by \eqref{eq:xiAssumptions}. If
  the waiting time of $X$ at $0$ is in the required interval, and the
  first jump of $X$ is to the right, then the required event is realized,
  proving (c).

  Claim (a) is the most difficult. We first prove it for
  $\overbar \beta^1$, and explain the modification required to show it
  for $\overbar \beta^2$ at the end of the proof. To simplify
  notation, we consider $\beta $ instead of $\overbar\beta^1$. This is
  possible since $\overbar \beta^1$ has the same distribution as $\beta $
  in the environment shifted by one.

  In the proof we often split the random environment $\xi $ into two
  parts $\underline \xi (j) = (\xi(k))_{k\le j}$ and
  $\overbar \xi (j)=(\xi(k))_{k>j}$.  Set $t_0=t_{-1}=0$ and $t_i=2^i$,
  for $i\ge 1$. Fix $a\in(0,\infty)$ and for $i\ge 1$ define random
  variables $Z_i$ by
  \begin{equation*}
    \begin{split}
      Z_i &:= \essinf_{\underline \xi (t_{i-2})}
      \inf_{x\ge a t_{i-1}^{1/2}}
      P^{\zeta ,\eta }\big(\beta_{t_i}\ge a t_i^{1/2},
        \beta_k\ge t_i^{1/4} \,\forall k\in \{t_{i-1},\ldots, t_i\} \, \big| \,
        \beta_{t_{i-1}} = x\big)
      \\&=
      \essinf_{\underline \xi (t_{i-2})}
      P^{\zeta ,\eta }\big(\beta_{t_i}\ge a t_i^{1/2},
        \beta_k\ge t_i^{1/4} \,\forall k\in \{t_{i-1},\ldots, t_i\} \, \big| \,
        \beta_{t_{i-1}} = at_{i-1}^{1/2}\big).
    \end{split}
  \end{equation*}
  Here,  $\essinf_{\underline \xi (t_{i-2})}$  means taking the essential
  infimum with respect to $\underline \xi (t_{i-2})$ and leaving the
  remaining $\xi$ random. The second equality then follows from the obvious
  monotonicity of the considered event in the starting position. Observe
  that the random variable $Z_i$ is $\sigma (\xi(k),t_{i-2}<k\le t_i)$
  measurable, that is the sequence $(Z_i)$ is 1-dependent.

  Setting $i(n)=\lceil \log_2 n \rceil$, using the Markov property, for
  $n$ large enough,
  \begin{equation*}
    P^{\zeta ,\eta }(\beta_k\ge 0 \,\forall k\le n, \beta_n\ge n^{1/4})\ge
    \prod_{i=1}^{i(n)} Z_i = \exp\Big\{\sum_{i=1}^{i(n)}\ln Z_i\Big\}.
  \end{equation*}
  If we show that $\P$-a.s.,
  \begin{equation}
    \label{eq:ZLLN}
    \limsup_{k\to\infty} \frac 1 k \sum_{i=1}^k  (-\ln Z_i) \le c
    <\infty,
  \end{equation}
  then the first half of claim (a) will follow with $\gamma' > c/\ln 2$.

  First, we claim that, $\mathbb P$-a.s., $-\ln Z_i<\infty$.  Indeed,
  recalling \eqref{eq:betatwoside}, it is easy to see that the
  probability in the definition of $Z_i$ is always positive. If we show
  that the $(-\ln Z_i)$'s have uniformly small exponential moments, then
  \eqref{eq:ZLLN} will follow by standard arguments, using also the
  1-dependence of the sequence $Z_i$.

  To finish the proof, it is therefore sufficient to show that there
  exists some small $\theta >0$ such that for all $i$ large enough
  \begin{equation}
    \label{eq:Zexpmom}
    \mathbb E[\exp\{-\theta \ln Z_i\}]\le c < \infty.
  \end{equation}
  Throughout the proof of this inequality, $i$ is considered fixed and we
  often omit it from the notation. To gain more independence, again, we
  introduce $\overbar \rho_k^{(j)}$, $j<k$, by
  \begin{equation}
    \label{eq:barrho}
    \overbar \rho_k^{(j)} := \esssup_{\underline \xi(j)} \rho_k.
  \end{equation}
  Note that $\rho_k$ is a $\sigma (\xi(n),n\le k)$-measurable random
  variable and thus $\overbar \rho_k^{(j)}$ is
  $\sigma (\xi(n),j<n\le k)$-measurable.
  We further write
  $\overbar R_n^{(j)} = \sum_{k=1}^n \overbar \rho_k^{(j)}$ and note that
  the increments of $\overbar R^{(j)}$ provide upper bounds for the
  increments of~$R'$.

  Let $M_R$
  be the essential supremum of the absolute value of the
  increments $\rho_k$ of $R'$, which is finite by Lemma~\ref{lem:RtoBM}.
  Set
  \begin{equation}
    \label{eq:LtempDef}
    L:=a t_i^{1/2},
  \end{equation}
  $r_0:=t_{i-1}$, and define
  \begin{equation*}
    s_0 := \inf \Big\{k\ge r_0: R'_k-R'_{r_0}\ge \frac L8  \Big\}
    \wedge t_i.
  \end{equation*}
  Further, recursively for $j\ge 1$, we define
  \begin{equation}
    \label{eq:sj}
    \begin{split}
      r_{j+1} & :=s_j +  \Big \lceil \frac L {8M_R} \Big \rceil,\\
      s_{j+1} &:= \inf \Big\{k\ge r_{j+1}: \overbar R_k^{(s_j)}
        -\overbar R_{r_{j+1}}^{(s_j)}\ge \frac L8  \Big\}
      \wedge \big(r_{j+1} + (t_i-t_{i-1})\big).
    \end{split}
  \end{equation}
  Heuristically, $s_j$ is the first time when $\overbar R$ (and thus
    possibly also $R'$) ``increases considerably after time $r_j$'';
  due to the definition of $\beta_k$ in \eqref{eq:beta}, such a behavior
  of $R'$ is potentially dangerous for the event in $Z_i$, in that it
  might lead to $Z_i$ being very small. By definition, $s_{j+1}$ depends
  only on $\xi(l)$ with $l>s_j$, so the increments $s_j-r_j$ are
  independent under $\mathbb P$ and bounded by $t_i-t_{i-1}$.

  For $j\ge 0$ consider the events
  \begin{align*}
    \mathcal G_j&=\Big\{\beta_{s_j}\ge 2 L,
      \inf_{r_j\le l \le s_j} \widehat H_l- \widehat H_{r_j}\ge
      -\frac L8 \Big\},\\
    \mathcal G'_j&=\Big\{\inf_{s_j\le l \le r_{j+1}}
      \widehat H_l- \widehat H_{s_j}\ge -\frac L8 \Big\},
  \end{align*}
  and define
  \begin{equation*}
    J=\inf\{j:s_j-r_j\ge t_{i}-t_{i-1}\}.
  \end{equation*}
  Finally, set
  $\mathcal G=\bigcap_{j=0}^J \mathcal G_j \cap
  \bigcap_{j=0}^{J-1}\mathcal G'_j$.

  We claim that this construction ensures that
  \begin{equation}
    \label{eq:pathconst}
    Z_i \ge P^{\zeta ,\eta }\big(\mathcal G \, | \, \beta_{t_{i-1}}=a
      t_{i-1}^{1/2}\big).
  \end{equation}
  To see this, observe that in each of the time intervals $[r_j,s_j]$ and
  $[s_j,r_{j+1}]$, the process $\overbar R$ (and thus also $R'$) moves
  upwards by at most $L/8+M_R$
   by definition of these intervals. On the other
  hand, on $\mathcal G$ the process $\widehat H$ moves downwards by at
  most  $L/8$
   in any of these intervals. Since in the probability defining
  $Z_i$ we condition on $\beta_{r_0}=L/\sqrt 2>L/2$ and, on $\mathcal G$,
  $\beta_{s_j}\ge 2L$, this ensures that $\beta_k\ge c L^{1/2}\ge t_{i}^{1/4}$ for
  $k\in [r_0,s_0]$  and $\beta_k \ge L $ for $k\in[s_0,s_J]$. Moreover,
  on $ \mathcal G$, $s_J\ge t_i$, proving \eqref{eq:pathconst}.

  Using the independence of the increments of $\widehat H$ under the
  measure $P^{\zeta ,\eta }$, the monotonicity of
  $x\mapsto P^{\zeta ,\eta }(\mathcal G_j \, |\, \beta_{r_j}=x)$, and the
  fact that $J$ is $\sigma (\xi (x):x\in \mathbb Z)$-measurable, we get
  \begin{equation}
    \label{eq:aa}
    \begin{split}
      P^{\zeta ,\eta }&\big(\mathcal G \, |\,
        \beta_{t_{i-1}}=a t_{i-1}^{1/2}\big)
      \\&\ge
      P^{\zeta ,\eta }\big(\mathcal G_0 \, |\,  \beta_{r_0}=L/\sqrt 2\big)
      \prod_{j=1}^J P^{\zeta ,\eta }(\mathcal G_j \, |\, \beta_{r_j}=2L)
      \prod_{j=0}^{J-1} P^{\zeta ,\eta }(\mathcal G'_j).
    \end{split}
  \end{equation}
  It is not difficult to show, using the independence and the uniform
  exponential tail of the increments of
  $\widehat H$ as well as the fact that they are centered, that if $i$ is large enough,
  $P^{\zeta ,\eta }(\mathcal G'_j)\ge \frac 12$ for all $j$.
  On the other hand, for $j\ge 1$,
  \begin{equation}
    \begin{split}
      P^{\zeta ,\eta }&(\mathcal  G_j\, |\, \beta_{r_j}=2L)
      \\&\ge
      P^{\zeta ,\eta }\Big(\widehat H_{s_j}-\widehat H_{r_j}\ge \frac{5L}2,
        \inf_{r_j\le l \le s_j} \widehat H_l-\widehat H_{r_j}\ge - \frac
        L8\Big).
    \end{split}
  \end{equation}
  Observing that the increments of $\widehat H$ are independent under
  $P^{\zeta,\eta}$ and the considered events are both increasing in those
  increments, we can use the Harris-FKG inequality to bound this from
  below by
  \begin{equation}
    \label{eq:yyy}
    \begin{split}
       & P^{\zeta ,\eta }\Big(\widehat H_{s_j}-\widehat H_{r_j}\ge \frac{5L}2 \Big)
        \times P^{\zeta ,\eta } \Big(\inf_{r_j\le l \le s_j} \widehat H_l-\widehat H_{r_j}\ge - \frac
        L8\Big)
      \\&\ge
      c \exp\Big\{-\frac {L^2}{c(s_j-r_j)}\Big\},
    \end{split}
  \end{equation}
  with a sufficiently small constant $c>0$.  To obtain the last
  inequality, we used Azuma's inequality (together with the fact that
    $\widehat H$ is a martingale under $P^{\zeta,\eta}$ and the variances of
    its increments are uniformly bounded, by \eqref{eq:xiAssumptions}),
  and as well as Gaussian scaling to
  infer that the second factor is bounded from below by a constant (since
    $s_j-t_j \le c L^2$). By changing the constant
  $c$, the same lower bound holds for the first term on the right-hand
  side of \eqref{eq:aa} as well.

  Coming back to \eqref{eq:Zexpmom}, using
  \eqref{eq:pathconst}--\eqref{eq:yyy}, recalling that the increments of
  $R'$ are exponentially mixing (cf.~\eqref{eq:Rprimecorrelations}) and
  that the intervals $[r_j,s_j]$ are separated by spaces of length
  $\frac{L}{8M_R}$, we obtain
  \begin{equation}
    \begin{split}
      \label{eq:qqc}
      \mathbb E&[\exp\{-\theta \ln Z_i\}]
      \le C \,
      \mathbb E\Big[\exp\Big\{
          -\theta J\ln \frac c2
          + \theta \sum_{j=0}^J\frac{L^2}{c(s_j-r_j)}\Big\}\Big]
      \\&\le C \sum_{k=1}^\infty
      \Big(\frac 2 c \Big)^{\theta k}
      \mathbb E\Big[
        e^{ \frac{\theta L^2}{c(s_k-r_k)}}
        \ind_{s_k-r_k=t_{i}-t_{i-1}}
        \prod_{j=0}^{k-1}
        e^{
          \frac{\theta L^2}{c(s_j-r_j)}}
        \ind_{s_j-r_j<t_{i}-t_{i-1}}\Big]
      \\&= C\sum_{k=1}^\infty
      \Big(\frac 2 c \Big)^{\theta k}
      e^{
        \frac{2 \theta L^2}{ct_i}}
      \prod_{j=0}^{k-1}
      \mathbb E\Big[
        e^{
          \frac{\theta L^2}{c(s_j-r_j)}}
        \ind_{s_j-r_j<t_{i}/2}\Big],
      \end{split}
  \end{equation}
  where in the equality we used the independence of the $s_j-r_j$'s
  under $\mathbb P$. To upper bound the last expectation, we rewrite it as
  \begin{equation}
    \begin{split}
      \label{eq:qqb}
      &\int_{0}^\infty
      \mathbb P \Big(
        e^{
          \frac{\theta L^2}{c(s_j-r_j)}}
        \ind_{s_j-r_j<t_{i}/2}> a \Big) \, \d a
      \\&\le
      e^{\frac{2\theta L^2}{ct_i}}
      \mathbb P(s_j-r_j< t_i/2)
      + \int_{e^{\frac{2\theta L^2}{ct_i}}}^\infty
      \mathbb P\Big(e^{ \frac{\theta L^2}{c(s_j-r_j)}}>a\Big)
      \,\d a.
    \end{split}
  \end{equation}
  Substituting $a=\exp\big\{\frac{\theta L^2}{cy}\big\}$, the second summand can
  be written as
  \begin{equation}
    \label{eq:qqa}
    \int_{0}^{t_i/2}\mathbb P(s_j-r_j<y)\frac{\theta L^2}
    {cy^2}e^{\frac{\theta L^2}{cy}}\,\d y.
  \end{equation}
  Recalling the definition \eqref{eq:sj} of $s_j$,  for $i$ sufficiently
  large we have for $0 \le y \le t_{i}/2 = \frac{L^2}{2a^2}$ that
  \begin{align*}
      \mathbb P(s_j-r_j<y)
      &= \mathbb P\Big(\max_{0\le m\le y}
        \sum_{k=1}^{m}\overbar \rho_{r_j+k}^{(s_{j-1})}\ge \frac L8\Big)\\
      &\le \mathbb P\Big(\max_{1\le m\le y}
        \sum_{k=1}^{m}\rho_{r_j+k}\ge \frac L9\Big);
  \end{align*}
  here, to obtain the inequality one takes advantage of the estimates
  \eqref{eq:LesssBd} and \eqref{eq:L'esssBd}, which yield that uniformly
  in $0 \le j \le k,$
  \begin{equation*}
    0 \le \overbar \rho_k^{(j)}  - \rho_k \le C_\Delta e^{- (k-j)/C_\Delta},
  \end{equation*}
  and thus, using  $r_j - s_{j-1} \ge cL$, that
  \begin{equation*}
    0 \le  \overbar \rho_{r_j+k}^{(s_{j-1})} - \rho_{r_j+k} \le Ce^{- cL},
    \quad \forall k \in \Big\{0, \ldots, \frac{L^2}{2a^2} \Big\}.
  \end{equation*}
  Inequality \eqref{eq:Rprimecorrelations} can then be used
  to verify the assumption of Azuma's inequality for mixing sequences of
  Lemma \ref{lem:hoef} for the sequence $\rho_k$, and thus
  \begin{equation*}
    \mathbb P\Big(
      \sum_{k=1}^{m}\rho_{r_j+k}\ge \frac L9\Big)\le C e^{-cL^2/m},
  \end{equation*}
  for some constants $C$ and $c$ and all admissible $m$. This inequality
  extends to a maximal inequality, as follows from
  \cite[Theorem~1]{KeMa-11},
  \begin{equation*}
    \mathbb P\Big( \max_{0\le m\le y}
      \sum_{k=1}^{m}\rho_{r_j+k}\ge \frac L9\Big)\le C e^{-cL^2/y}.
  \end{equation*}
  Inserting these back into \eqref{eq:qqa} implies that the second summand
  in \eqref{eq:qqb} is smaller than
  \begin{equation*}
    \int_{0}^{t_i/2}\frac{C\theta L^2}{y^2}
    e^{\frac{(\theta - c) L^2}{c'y}}\,\d y
    =\int_{0}^{\frac{1}{2a^2}}
    \frac{C\theta}{z^2} e^{\frac{(\theta -c)}{c'z}} \, \d z,
  \end{equation*}
  which can be made arbitrarily small by choosing $\theta $ small. In
  addition, the first summand in \eqref{eq:qqb} is strictly smaller than
  $1$ by the functional central limit theorem from Lemma~\ref{lem:RtoBM},
  hence the right-hand side of \eqref{eq:qqb} is strictly smaller than
  one for all $\theta >0$ sufficiently small. Therefore, for $\theta $
  small enough the sum in \eqref{eq:qqc} converges, which implies
  \eqref{eq:Zexpmom} and completes the proof of the first claim in
  Lemma~\ref{lem:BMoverBM}(a).

  The proof of the second claim is very similar, so we only explain the
  modifications which need to be introduced due to the fact that
  $\overbar \beta^2$ is `running backwards', and thus its dependence on the
  environment $\zeta $ is different.
  The first modification
  involves the definition of $Z_i$ where the $\essinf$ should be taken
  over $\underline \xi (n-t_{i+1})$. This makes $Z_i$ measurable with
  respect to $\sigma (\xi (k):n-t_{i+1}<k\le n-t_{i-1})$, and thus
  $(Z_i)$ still is a 1-dependent sequence.
  Furthermore, the definitions of $s_j$, $r_j$ should be replaced by
  $r_0= t_{i-1}$, and
  \begin{equation*}
    \begin{split}
      s_0 &:= \inf \Big\{k\ge r_0:
        \overbar R_{n-k}^{(n-k-\ell)}-\overbar R_{n-r_0}^{(n-k-\ell)}
        \ge \frac L8  \Big\}
    \wedge t_i,\\
      r_{j+1} & :=s_j +\ell  ,\\
      s_{j+1} &:= \inf \Big\{k\ge r_{j+1}:
        \overbar R_{n-k}^{(n-k-\ell)} -\overbar R_{n-r_{j+1}}^{(n-k-\ell)}
        \ge \frac L8  \Big\}
      \wedge \big(r_{j+1} + (t_i-t_{i-1})\big),
    \end{split}
  \end{equation*}
  with $\ell :=  \lceil  L/ {8M_R} \rceil$, which again makes the
  increments $(s_j-r_j)$ independent under $\mathbb P$.

  With these modifications, the second claim in (a) can be shown almost
  exactly as the first one, which completes the proof of the lemma.
\end{proof}

\subsubsection{Second moment for the leading particles}

We now estimate the second moment of the number of leading particles
needed for the application of~\eqref{eq:PZ}. The proof is relatively
short because we do not try to get the optimal power $\gamma $ below.

\begin{lemma}
  \label{lem:upperleading}
  There exists a constant $\gamma <\infty$ such that $\mathbb P$-a.s.~for
  all $t$ large enough,
  \begin{equation*}
    \ttE_0^\xi [ (N^{\mathcal L}_t)^2] \le t^\gamma.
  \end{equation*}
\end{lemma}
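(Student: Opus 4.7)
My plan is first to drop the ballot constraint and bound the second moment of the simpler quantity $N^\ge(t,\overbar m(t))$, since $N^{\mathcal L}_t\le N^\ge(t,\overbar m(t))$. This relaxation loses sharpness but is sufficient because the subsequent application through Paley--Zygmund in Proposition~\ref{prop:leadingparticles} requires only a polynomial bound. Thus it suffices to show $\ttE_0^\xi[N^\ge(t,\overbar m(t))^2]\le t^\gamma$. To this end, the standard many-to-two identity (a direct analogue of~\eqref{eq:secMomFormula} with only the endpoint constraint $X_t\ge\overbar m(t)$, obtained by the same derivation) yields
\begin{equation*}
  \ttE_0^\xi[N^\ge(t,\overbar m(t))^2]
  = \ttE_0^\xi[N^\ge(t,\overbar m(t))]
  + 2\int_0^t E_0\Big[e^{\int_0^s\xi(X_r)\,\d r}\,\xi(X_s)\,\phi(s,X_s)^2\Big]\,\d s,
\end{equation*}
with $\phi(s,x):=E_x[e^{\int_0^{t-s}\xi(X_r)\,\d r};\, X_{t-s}\ge\overbar m(t)]$. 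The first-moment term is of order $1$ by the definition of $\overbar m(t)$ together with Lemma~\ref{lem:upperTail}(b); using $\xi\le\es$ and Fubini, the integral term is bounded by a constant times $\int_0^t\sum_{x\in\Z}u(s,x)\phi(s,x)^2\,\d s$, where $u(s,x)=\ttE_0^\xi[N(s,x)]$.

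For $s\in[1,t-1]$, I would use Lemma~\ref{lem:upperTail}(b) to obtain $u(s,x)\le Ce^{-c|x-\floor{v_0 s}|}u(s,\floor{v_0 s})$ for $|x-\floor{v_0 s}|\le s^{1/3}$, and the analogous bound $\phi(s,x)\le Ce^{-c|x-x_\star(s)|}\phi(s,x_\star(s))$ with $x_\star(s):=\overbar m(t)-\floor{v_0(t-s)}$; outside these windows the crude dominations $u(s,x)\le e^{\es s}P_0(X_s=x)$ and $\phi(s,x)\le e^{\es(t-s)}P_x(X_{t-s}\ge\overbar m(t))$ combined with Gaussian-type simple random walk tail estimates handle the remaining contributions. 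The anchor values $u(s,\floor{v_0 s})$ and $\phi(s,x_\star(s))$ are both at most $t^{O(1)}$ by Remark~\ref{rem:explicitN} and the iterated-logarithm estimate~\eqref{eq:hlil}, since $\lambda(v_0)=0$. Summing the geometric series in $x$ and integrating over $s$ (with the boundary regions $s\in[0,1]\cup[t-1,t]$ treated separately via the crude bounds $\phi\le e^{\es}$ and $u\le e^{\es}$) then yields the required polynomial bound.

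The main obstacle I anticipate is controlling the random prefactors from Remark~\ref{rem:explicitN} and the law-of-iterated-logarithm fluctuations of the partial sums $\sum_i L_i^\zeta(\overbar\eta(v_0))$ uniformly in $s\in[0,t]$; a union bound over dyadic scales combined with the quantitative concentration estimates of Section~\ref{sec:firstmoment} should suffice. A secondary technical nuisance is that the perturbation estimates degenerate when $s$ or $t-s$ is too small, but this only affects an $O(1)$ range and is absorbed by the crude Feynman--Kac bounds mentioned above. Since $\gamma$ is not optimized and any finite value suffices for Proposition~\ref{prop:leadingparticles}, the polynomial loss in these estimates is harmless.
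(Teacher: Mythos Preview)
Your approach has a genuine gap that cannot be repaired: dropping the ballot constraint and bounding $\ttE_0^\xi[N^\ge(t,\overbar m(t))^2]$ instead does \emph{not} give a polynomial bound. The paper states this explicitly in Section~\ref{sec:strategy}: ``the second moment of $N^{\ge}(t,vt)$ explodes too quickly to yield any useful estimates''. The concrete error in your sketch is the claimed inequality $\phi(s,x)\le Ce^{-c|x-x_\star(s)|}\phi(s,x_\star(s))$. The function $x\mapsto\phi(s,x)=\ttE_x^\xi[N^\ge(t-s,\overbar m(t))]$ is \emph{increasing} in $x$, so no decay can hold for $x>x_\star(s)$; on the contrary, Lemma~\ref{lem:upperTail}(b) (applied in the other direction) gives $\phi(s,x)\ge ce^{c(x-x_\star(s))}\phi(s,x_\star(s))$ there. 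Consequently $u(s,x)\phi(s,x)^2$ grows with $x$ beyond $x_\star(s)$: a short Laplace-type computation (write $w=x/s$ and optimise $\lambda(w)+2\lambda\big((\overbar m(t)-x)/(t-s)\big)$) shows that the maximiser lies strictly to the right of $v_0 s$ and that $\sum_x u(s,x)\phi(s,x)^2\ge e^{ct}$ for $s$ in the bulk. Integrating over $s$ gives an exponential, not polynomial, second moment.

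The barrier is therefore not a technicality but the whole point. The paper's proof keeps the upper constraint $X_r\le\varphi^\xi(r)$ in the many-to-two formula, which forces $k\le\varphi^\xi(s)$ in the resulting sum \eqref{eq:sma}. This is exactly what prevents the exponential blow-up: via the Markov identity $\ttE_0^\xi[N(s,k)]\,\ttE_k^\xi[N^\ge(t-s,\overbar m(t))]\le\ttE_0^\xi[N^\ge(t,\overbar m(t))]\le C$ one gets $\ttE_0^\xi[N(s,k)]\,\phi(s,k)^2\le C/\ttE_0^\xi[N(s,k)]$, and then the barrier bound $k\le\varphi^\xi(s)$ together with the definition of $\varphi^\xi$ via the $T_k$'s ensures $\ttE_0^\xi[N(s,k)]\ge t^{-\gamma}$ (this is the content of case~(A) in the paper's argument). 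Your proposal discards precisely the ingredient that makes this work.
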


\begin{proof}
  Recall the definition \eqref{eq:leading} of leading particles. Since we
  look for an upper bound, we can ignore the condition
  $Y_{\overbar m(t)}\ge \overbar m(t)$ there.  We define
  a random function $\varphi^\xi: \mathbb R^+\to \mathbb R^+$ by
  \begin{equation}
    \label{eq:varphiDef}
    \varphi^\xi(s):=k \qquad \text{for all }s\in [T_{k} - \alpha  \psi^\xi(k),
      T_{k+1}-\alpha \psi^\xi(k+1)), \quad k \in \N_0,
  \end{equation}
  where $\psi^\xi$ as in \eqref{eq:psi} and
  $T_0-\alpha \psi^\xi(0):=-\infty$, by convention. By
  \eqref{eq:secMomFormula} of Proposition~\ref{prop:FK} we then have
  \begin{equation}
    \begin{split}
      \label{eq:ENsqbb}
      &\ttE_0^\xi [ (N^{\mathcal L}_t)^2]
      \\&\le
      \ttE_0^\xi [  N^{\mathcal L}_t]
      + 2 \int_0^t  E_0\Big[
        \exp\Big\{\int_0^s \xi (X_r) \,\d r\Big\}
        \xi (X_s) \ind_{X_r\le \varphi^\xi(r)\forall r\in [0,s]}
        \\&\times
        \Big(E_{X_s}\Big[
            \exp\Big\{\int_0^{t-s} \xi (X_r) \,\d r\Big\}
            \ind_{X_r\le \varphi^\xi(s+r) \forall r\in [0,t-s],X_{t-s}\ge \overbar
              m(t)}\Big]\Big)^2
        \Big] \, \d s.
    \end{split}
  \end{equation}

  In the
  upper bound of \eqref{eq:ENsqbb} we will repeatedly use the
  perturbation Lemmas~\ref{lem:upperTail} and \ref{lem:timepert} in a
  neighborhood  $(t,\overbar m(t))$. This is always justified $\mathbb P$-a.s.~for
  $t$ large enough, observing also that $\overbar m(t)/t\to v_0\in V$,
  $\mathbb P$-a.s., by Corollary~\ref{cor:breakLLN}. In particular, by
  Lemma~\ref{lem:upperTail}(b) and the definition of $\overbar m(t)$,
  $\P$-a.s.~for $t$ large enough,
  \begin{equation}
    \label{eq:aroundbreak}
    \ttE^\xi_0\big[N^\ge (t,\overbar m(t))\big]
    \le C \ttE^\xi_0\big[N^\ge (t,\overbar m(t)+1)\big]
    \le C/2.
  \end{equation}
  The first summand on the right-hand side of \eqref{eq:ENsqbb} then satisfies
  \begin{equation*}
    \ttE_0^\xi [  N^{\mathcal L}_t]
    \le \ttE_0^\xi[N^{\ge}(t,\overbar m(t))]
    \le C.
  \end{equation*}
  Since $\xi (X_s)\le \es$, the second summand on the right-hand side of
  \eqref{eq:ENsqbb} is bounded from above by
  \begin{equation}
    \begin{split}
      \label{eq:sma}
      & 2\es \int_0^t \sum_{k=-\infty}^{\varphi^\xi(s)}
      E_0\bigg[
        e^{\int_0^s \xi (X_r) \, \d r}
        \ind_{X_s=k}
        \Big(E_{k}\Big[
            e^{\int_0^{t-s} \xi (X_r) \,\d r};
            X_{t-s}\ge \overbar m(t)\Big]\Big)^2
        \bigg] \, \d s
      \\&=
      2\es \int_0^t \sum_{k=-\infty}^{\varphi^\xi(s)}
      \ttE^\xi_0[N(s,k)]
      \ttE^\xi_k[N^{\ge}(t-s,\overbar m(t))]^2
      \, \d s.
    \end{split}
  \end{equation}
  To find an upper bound for the integral on the
  right-hand side of \eqref{eq:sma}, we remark that,
  by the first moment formula \eqref{eq:firMomFormula} of
  Proposition~\ref{prop:FK}, the Markov property, and
  \eqref{eq:aroundbreak}, $\P$-a.s. for $t$ large enough,
  \begin{equation*}
    \begin{split}
      \ttE_0^\xi [N(s,k)] \ttE_k^\xi [N^\ge(t-s,\overbar m(t))]
      &= \ttE^\xi_0\big[|\{Y\in N(t), Y_t\ge \overbar m(t),
          Y_s=k\}|\big]
      \\& \le \ttE^\xi_0\big[N^\ge (t,\overbar m(t))\big]\le C.
    \end{split}
  \end{equation*}
  Hence, $\P$-a.s.~for $t$ large enough, uniformly in
  $s\in [0,t]$, $k\le \varphi^\xi (s)$,
  \begin{equation}
    \label{eq:NgeleN}
    \ttE_k^\xi [N^\ge(t-s,\overbar m(t))]\le C/\ttE_0^\xi [N(s,k)].
  \end{equation}
  In order to take advantage of
  \eqref{eq:NgeleN}, we treat separately four ranges of
  parameters $s\in [0,t]$ and $k\le \varphi^\xi(s)$ in \eqref{eq:sma}.

  (A) We start with considering the range
  \begin{equation}
    \label{eq:rangeA}
    \mathcal N_1(\xi) \le k \le \varphi^\xi(s), s \ge \mathcal S(\xi ),
    \text{ such that } k/s\in V,
  \end{equation}
  where $\mathcal N_1(\xi )$ and $\varphi^\xi $ are defined
  in~\eqref{eq:Vinterval}, \eqref{eq:Hn}, and \eqref{eq:varphiDef},
  respectively, and $\mathcal S(\xi )$ is a $\sigma (\xi) $-measurable
  random variable which is a.s.~finite and which will be specified
  below.  In this case, by Lemma~\ref{lem:inCond},
  \begin{equation*}
    \ttE^\xi_0[ N (s,k)] \ge
    c \, \ttE^\xi_0[ N^\ge (s,k)]
    \ge c \, \ttE^\xi_0[ N^\ge (s,\varphi^\xi(s))],
  \end{equation*}
  where the last inequality follows from $k\le \varphi^\xi(s)$. Let
  $l=l(s)$  be such that
  $s \in [T_l - \alpha \psi^\xi(l),T_{l+1} - \alpha \psi^\xi(l+1))$; note
  that $\varphi^\xi(s) =l$. Let $\mathcal S(\xi )$ be a $\P$-a.s.~finite
  random variable such $s\ge S(\xi)$ implies that $l/T_l \in V$,
  $l / (T_{l}-\alpha \psi^\xi(l)) \in V$ and
  $s\ge T_l-\alpha \psi^\xi (l)\ge \mathcal T_1\vee \mathcal T_2$, where
  $\mathcal T_1$ and $\mathcal T_2$ are as in
  Lemmas~\ref{lem:upperTail}(b) and~\ref{lem:timepert}(b).
  The existence of such $\mathcal S$ is implied by the law of large
  numbers \eqref{eq:LLNTn} for $T_n$. Using then repeatedly
  Lemma~\ref{lem:timepert}(b) and
  Remark~\ref{rem:hnegative}, the
  right-hand side of the previous display can be bounded from below by
  \begin{equation*}
    c\,
    \ttE^\xi_0[ N^\ge (T_{l}-\alpha \psi^\xi(l),l)]
    \le c\,
    \ttE^\xi_0[ N^\ge (T_{l},l)] e^{-c \psi^\xi (l)}
    \ge \mathcal C'(\xi ) t^{-\gamma},
  \end{equation*}
  for some $\gamma \in (0,\infty)$ and a positive random variable
  $\mathcal C'(\xi )$, where in the last inequality we used
  $\ttE^\xi_0[ N^\ge (T_l,l)] = 1/2$, and
  $\psi^\xi (l)=\mathcal C(\xi ) + C_1(1\vee \ln l)$,
   the need for which
  emanates from the randomness of $\psi^\xi$. Thus, combining the last
  two displays with \eqref{eq:NgeleN} we infer that  $\mathbb P$-a.s~for $t$ large enough,
  uniformly for $k$, $s$ as in \eqref{eq:rangeA}
  \begin{equation}
    \label{eq:domaina}
    \ttE^\xi_0[N(s,k)]
    \ttE^\xi_k[N^{\ge}(t-s,\overbar m(t))]^2
    \le  C \, \ttE^\xi_0[ N (s,k)]^{-1}
    \le \mathcal C''(\xi ) t^\gamma.
  \end{equation}

  (B) We now consider the ranges
  \begin{equation}
    \label{eq:rangeB}
    s\in [0,t] \text{ and $k$ such that }
    |k/s|\le \bar v/2,
  \end{equation}
  where $\bar v>0$ is the asymptotic speed of the maximal particle in the
  homogeneous branching random walk with branching rate $\ei$
  (cf.~\eqref{eq:xiAssumptions}). We assume without loss of generality
  that $V$ is fixed so that it contains $\bar v/2$ in its interior. Since
  $\xi (x)\ge \ei$, by a straightforward comparison argument and
  properties of the homogeneous branching random walk, we infer the
  existence of some constant $c>0$ such that $\ttE^\xi_0[N(s,k)]\ge c$
  for all $s$ and $k$ as in~\eqref{eq:rangeB}. Therefore, by
  \eqref{eq:NgeleN}, $\P$-a.s.~for $t$ large enough, uniformly for $s$,
  $k$ as in \eqref{eq:rangeB},
  \begin{equation}
    \label{eq:domainb}
    \ttE^\xi_0[N(s,k)]
    \ttE^\xi_k[N^{\ge}(t-s,\overbar m(t))]^2
    \le   c \, \ttE^\xi_0[ N (s,k)]^{-1} \le C.
  \end{equation}

  (C) Now let
  \begin{equation*}
    s\in [0,t] \text{ and }k\le 0.
  \end{equation*}
  By the Feynman-Kac formula, using also $\essinf \xi \ge 0$,
  \begin{align*}
    2 \ttE^\xi_k[N^{\ge}(t,\overbar m(t))]
    &\ge 2 E_k\Big[ \exp \Big\{\int_0^{t-s} \xi(X_r) \, \d r \Big\}
      \ind_{X_{t-s} \ge \overbar m(t)} \ind_{X_{t} \ge \overbar m(t)} \Big]\\
    &\ge 2 \ttE^\xi_k[N^{\ge}(t-s,\overbar m(t))] P_0(X_s \ge 0)
    \ge  \ttE^\xi_k[N^{\ge}(t-s,\overbar m(t))].
  \end{align*}
  Therefore,
  \begin{align*}
    \begin{split}
      \ttE^\xi_0&[N(s,k)]
      \ttE^\xi_k[N^{\ge}(t-s,\overbar m(t))]^2
      \\& \le  2 \ttE^\xi_0[N(s,k)]
      \ttE^\xi_k[N^{\ge}(t-s,\overbar m(t))]
      \ttE^\xi_{k} [N^{\ge}(t,\overbar m(t))]
    \end{split}
  \end{align*}
  For $k\le 0$, by the monotonicity in the initial condition, taking
  advantage of Lemma~\ref{lem:inCond}, $\P$-a.s.~for $t$~large enough,
  \begin{equation*}
    \ttE^\xi_{k} [N^{\ge}(t,\overbar m(t))]
    \le \ttE^\xi_{\ind_{-\N_0}} [N^{\ge}(t,\overbar m(t))]
    \le C \ttE^\xi_{0} [N^{\ge}(t,\overbar m(t))] \le C.
  \end{equation*}
  Combining the last two inequalities, applying also Markov property and
  \eqref{eq:aroundbreak},
  $\mathbb P$-a.s. for $t$ large enough, uniformly in $s\in [0,t]$,
  \begin{equation}
    \label{eq:domainc}
    \begin{split}
      \sum_{k=-\infty}^{0}&
      \ttE^\xi_0[N(s,k)]
      \ttE^\xi_k[N^{\ge}(t-s,\overbar m(t))]^2
      \\&\le C \sum_{k=\infty}^0
      \ttE^\xi_0[N(s,k)]
      \ttE^\xi_k[N^{\ge}(t-s,\overbar m(t))]
      \\&\le C \ttE^\xi_0[N^{\ge}(t,\overbar m(t))]
      \le C.
    \end{split}
  \end{equation}

  (D) The remaining part of the range of parameters relevant in
  \eqref{eq:sma}, which is not controlled by (A)--(C), is a subset of
  \begin{equation*}
    \mathcal B^\xi =\{(s,k)\in [0,\infty)\times \mathbb N:
      \bar v s/ 2\le k \le \varphi^\xi (s), s+k\le \tilde {\mathcal C}(\xi)\}
  \end{equation*}
  for some finite random variable $\tilde{\mathcal C}(\xi)$ depending on
  $\mathcal N_1$ and $\mathcal S$. Observe that $\mathcal B^\xi$ is a
  bounded set for $\P$-a.e.~$\xi$.

  We start with observing that there is a constant $L>0$ such that
  \begin{equation}
    \label{eq:mtgrowth}
    \text{for $t$ large enough, $\P$-a.s.,} \quad
    \overbar m(t+1)-\overbar m(t)\le L.
  \end{equation}
  Indeed, by the perturbation Lemmas~\ref{lem:upperTail}(b)
  and~\ref{lem:timepert}(b), and \eqref{eq:aroundbreak},
  \begin{equation*}
    \begin{split}
      \ttE^\xi_0[N^\ge(t+1,\overbar m(t)+L)]
      &\le C e^{C} \ttE^\xi_0[N^\ge(t,\overbar m(t)+L)]
      \\&\le C' e^{-cL} \ttE^\xi_0[N^\ge(t,\overbar m(t))]
      \le C'' e^{-cL}.
    \end{split}
  \end{equation*}
  Choosing $L$ to make the right-hand side smaller than $1$ then yields
  \eqref{eq:mtgrowth}.

  The definition of $\overbar m(t)$, inequality \eqref{eq:mtgrowth},
  and Lemma~\ref{lem:upperTail}(b) imply together that,  $\P$-a.s.~for $t$
  large enough,
  \begin{equation*}
    \begin{split}
      1/2&\ge \ttE^\xi_0[N^{\ge}(t+1,\overbar m(t+1)+1)]
      \\&\ge c e^{-c (\overbar m(t+1)+1-\overbar m(t)}
      \ttE^\xi_0[N^{\ge}(t+1,\overbar m(t)]
      \\&\ge c e^{-c L}
      \ttE^\xi_0[N^{\ge}(t+1,\overbar m(t)].
      \end{split}
  \end{equation*}
  Hence with $C = e^{cL}/(2c)$,
  $\P$-a.s.~for $t$ large enough, using also the Markov property,
  \begin{equation*}
    \begin{split}
      C&\ge \ttE^\xi_0[N^{\ge}(t+1,\overbar m(t))]
      \\&\ge  \ttE^\xi_0[N(s+1,k)]
      \ttE^\xi_k \big[N^\ge \big(t-s, \overbar m(t)\big)\big].
    \end{split}
  \end{equation*}
  By the boundedness of $\mathcal B^\xi$ and \eqref{eq:xiAssumptions}, there
  is a random variable $\mathcal C(\xi )\in (0,\infty)$ such that, for all
  $(s,k)\in \mathcal B^\xi$,
  \begin{equation*}
    \mathcal C(\xi)\ge \ttE^\xi_0[N(s+1,k)]\ge P_0[X_{s+1}=k]\ge \mathcal C(\xi)^{-1}.
  \end{equation*}
  Combining the last two inequalities then yields
  \begin{equation*}
    \ttE^\xi_k \big[N^\ge \big(t-s, \overbar m(t)\big)\big]\le \mathcal C '(\xi )
  \end{equation*}
  for all $(s,k)\in \mathcal B^\xi$, $\P$-a.s.~for $t$ large enough.
  Hence, following the same arguments as before, using \eqref{eq:aroundbreak},
  we obtain
  \begin{equation}
    \label{eq:domaind}
    \begin{split}
      \ttE^\xi_0[N(s,k)]
      \ttE^\xi_k[N^{\ge}&(t-s,\overbar m(t))]^2
      \\&\le \mathcal C'(\xi )\ttE^\xi_0[N(s,k)]
      \ttE^\xi_k[N^{\ge}(t-s,\overbar m(t))]
      \\&\le  \mathcal C'(\xi)
      \ttE^\xi_0[N^{\ge}(t,\overbar m(t))]
      \le C \mathcal C'(\xi),
    \end{split}
  \end{equation}
  uniformly for $(s,k)\in \mathcal B^\xi$, $\P$-a.s.~for $t$ large enough.

  Using inequalities \eqref{eq:domaina}, \eqref{eq:domainb},
  \eqref{eq:domainc}, and \eqref{eq:domaind} in their respective domains
  in the summation and integration in \eqref{eq:sma} (recalling that $V$
    contains $\bar v/2$ in its interior), we can, $\P$-a.s.~for $t$ large
  enough, bound the second summand on the right-hand side of
  \eqref{eq:ENsqbb} from above by
  \begin{align*}
    \mathcal C''(\xi) t^{\gamma + 2} + Ct^2 + C t + C\mathcal C'(\xi),
  \end{align*}
  where the summands correspond to cases (A)--(D) above. This completes
  the proof of the lemma.
\end{proof}

Combining Lemmas~\ref{lem:lowerleading} and~\ref{lem:upperleading} with
\eqref{eq:PZ} completes the proof of
Proposition~\ref{prop:leadingparticles}.

\subsection{Proof of Theorem \ref{thm:logCorrection} and Proposition~\ref{prop:Mclosetom}} 
\label{ssec:maxproofs}

By inserting the estimates from Lemmas~\ref{lem:lowerleading}
and~\ref{lem:upperleading} into the Paley-Zygmund
inequality~\eqref{eq:PZ} we obtain
$\ttP^\xi_0(N^{\mathcal L}_t\ge 1)\ge t^{-\gamma }$ for all large $t$,
$\mathbb P$-a.s. To complete the proof of the lower bound in
Theorem~\ref{thm:logCorrection} we need to amplify this estimate, using a
technique adapted from the homogeneous branching random walk literature
(see e.g.~\cite{McD-95}). The first step is the following lemma
guaranteeing  that with very high probability the
number of particles in the origin grows exponentially in time.

\begin{lemma}
  \label{lem:expgrowthzero}
  There exists $C_3 > 1$ and $t_0<\infty$ such that such that for all
  $t \ge t_0$, and $\mathbb P$-a.e.~$\xi$,
  \begin{equation*}
    \ttP_0^\xi (  N(t,0)  \le C_3^t)\le C_3^{-t}.
  \end{equation*}
\end{lemma}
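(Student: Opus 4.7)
The plan is to lower-bound $N(t,0)$ by an embedded supercritical Galton--Watson process and to deduce exponential concentration via a many-seed bootstrap; the main obstacle is that a single-seed embedded GW has positive extinction probability, so it is essential to create exponentially many starting seeds via a Yule-process comparison.

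First, by the Feynman--Kac formula \eqref{eq:firMomFormula} and the uniform bound $\xi\ge\ei>0$,
\[
  \ttE_0^\xi[N(T,0)] \;\ge\; e^{\ei T}P_0(X_T=0) \;\ge\; c\,T^{-1/2}e^{\ei T}, \qquad \P\text{-a.e.\ }\xi,
\]
so the mean population at the origin grows exponentially in $T$, uniformly in $\xi$ satisfying \eqref{eq:xiAssumptions}.

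Next I would set up a Galton--Watson embedding by restricting to a finite-type subsystem. Fix $R$ large enough that the principal Dirichlet eigenvalue of $-\Delta_{\mathrm d}/2$ on $[-R,R]$ is smaller than $\ei/2$, and define
\[
  \tilde N_R(T) \;:=\; |\{Y\in N(T): Y_T=0,\;Y_r\in[-R,R]\;\forall\,r\in[0,T]\}| \;\le\; N(T,0).
\]
Sampling at times $0,T,2T,\dots$ and retaining only those particles whose entire ancestry lies in $[-R,R]$ and passes through $0$ at every sampling time gives a Galton--Watson process $(\nu_n)$, conditional on $\xi$, with $\nu_n\le N(nT,0)$ and whose offspring is distributed as $\tilde N_R(T)$ under $\ttP_0^\xi$. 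The associated continuous-time multi-type branching process has only $2R+1$ types and is supercritical with Malthusian parameter $\alpha_R\ge\ei/2$; spectral theory for the finite Dirichlet operator $\Delta_{\mathrm d}+\xi$ on $[-R,R]$ then yields $\ttE_0^\xi[\tilde N_R(T)^2]\le C(R)M^2$ with $M:=\ttE_0^\xi[\tilde N_R(T)]\ge c(R)e^{\alpha_R T}$, so the offspring law has bounded coefficient of variation and $M$ can be made arbitrarily large by choice of $T$.

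Finally, the concentration step exploits many-seed independence. Since $N(t)$ stochastically dominates a Yule process of rate $\ei$ (each particle splits at rate at least $\ei$), one has $N(T_0)\ge e^{\ei T_0/2}$ with probability at least $1-e^{-\ei T_0/2}$; combined with a Feynman--Kac count of particles in the box, at least $S:=e^{\ei T_0/3}$ of these lie in $[-R,R]$ with high probability. Conditional on the environment and on the tree up to time $T_0$, the subtrees rooted at these $S$ seeds are independent, so the probability that the embedded GWs from all seeds go extinct is at most $q^S$ with $q<1$, doubly exponentially small; on the survival event, iterating the conditional Chebyshev inequality $\ttP^\xi(\nu_{n+1}\le M\nu_n/2\,|\,\nu_n)\le 4C(R)/\nu_n$ (valid thanks to the bounded coefficient of variation from the previous step) over the aggregated GW gives $\sum_{\text{seeds}}\nu_n \ge S(M/2)^n$ with failure probability exponentially small in $n$. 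Choosing $T_0=c\log t$ and $nT\le t$ appropriately and interpolating for $t$ between sampling times via the time-perturbation Lemma~\ref{lem:timepert} then yields $\ttP_0^\xi(N(t,0)\le C_3^t)\le C_3^{-t}$ for some $C_3>1$ and all $t\ge t_0$.
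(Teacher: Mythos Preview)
Your overall architecture (embedded supercritical Galton--Watson plus a many-seed bootstrap) is reasonable and close in spirit to the paper's argument, but the concentration step as you wrote it does not deliver the claimed exponential rate. The iterated conditional Chebyshev inequality $\ttP^\xi(\nu_{n+1}\le M\nu_n/2\mid\nu_n)\le 4C(R)/\nu_n$ yields, on the good event $\{\nu_k\ge S(M/2)^k\ \forall k<n\}$, a total failure probability bounded by $\sum_{k\ge 0}4C(R)/(S(M/2)^k)\le C'/S$. With your choice $T_0=c\log t$ and $S=e^{\ei T_0/3}=t^{c\ei/3}$, this is only polynomially small in $t$, not $C_3^{-t}$; your assertion ``exponentially small in $n$'' is not what the Chebyshev iteration actually gives. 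The fix is either (i) to take $T_0$ proportional to $t$ so that $S$ itself is exponential in $t$ (then even the crude $C'/S$ bound suffices), or (ii) to exploit the independence of the $S$ seed subtrees via a Chernoff-type bound rather than Chebyshev on the aggregate. A second, minor issue: Lemma~\ref{lem:timepert} concerns $\ttE^\xi_{u_0}[N^\ge(t,vt)]$ for $v\in V\subset(v_c,\infty)$ and cannot be invoked for $N(t,0)$; the interpolation between sampling times is, however, elementary (each particle at $0$ stays at $0$ over a bounded time with uniformly positive probability).

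For comparison, the paper's proof avoids these difficulties by taking $T_0$ of order $t$ from the outset. It reduces to the homogeneous rate-$\ei$ process by monotonicity and splits $[0,t]$ into thirds. In the first third it counts \emph{direct} offspring of the initial particle that lie in $[-\varepsilon t,\varepsilon t]$ at time $t/3$, obtaining $\Theta(t)$ seeds with probability $1-e^{-ct}$ (Poisson large deviations). Each seed then independently has a fixed positive probability of producing $c^t$ descendants at its own site by time $2t/3$ via an embedded Galton--Watson (sampling at a fixed mesh $r$ and killing particles not at the seed's site); since there are $\Theta(t)$ independent seeds, at least one succeeds with probability $1-e^{-c't}$. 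In the final third, choosing $\varepsilon$ small enough makes $P_{\varepsilon t}(X_{t/3}=0)$ only mildly sub-exponential, and a binomial large-deviation bound brings exponentially many particles back to $0$. This is more elementary than your route: no spectral second-moment control and no delicate seed-counting in a fixed box are needed.
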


\begin{proof}
  Recall from \eqref{eq:xiAssumptions} that the essential infimum $\ei$
  of the $\xi$ is strictly positive. By the monotonicity of $N(t,0)$ in
  $\xi$ which can be ensured by a straightforward coupling, it suffices
  to show the claim for the homogeneous branching random walk with
  branching rate $\ei$. We write $\ttP^{\ei}_0$ for the law of this
  process starting in $0$.

  For $t \ge 0$ and $\varepsilon >0$,
  let $D_\varepsilon (t/3)$ be the set of
  \emph{direct} offsprings of the initial particle
  until time $t/3$ which are at sites $[-\varepsilon t,\varepsilon t]$
  at time $t/3$. Then, for any
  $\varepsilon > 0$  there exists $\delta > 0$ such that
  \begin{equation}
    \label{eq:ampaa}
    \ttP^\ei_0\big( \vert D_\varepsilon (t/3)
      \vert \le \delta t/3\big)
    \le e^{-\delta t/3}.
  \end{equation}
  Indeed, the probability that the initial particle%
  \footnote{For
      simplicity of redaction, in a slight abuse of notation we
      reformulate the original branching mechanism which replaces a
      particle by two new particles, by the equivalent branching
      mechanism where instead particles are not replaced and give birth
      to one more particle.}
  leaves $[-\varepsilon t/2, \varepsilon t/2]$ before $t/3$ is smaller
  than $e^{-c(\varepsilon )t}$. If it stays in this interval, it produces
  more than $t\, \ei /4$ direct offsprings with probability larger than
  $1-e^{-ct}$, by large deviations for the Poisson distribution with
  parameter $t\, \ei /3$, and every of these offsprings stays in
  $[-\varepsilon t,\varepsilon t]$ with probability at least
  $1-e^{-c(\varepsilon )t}$ again.

  For a particle $Y\in D_\varepsilon (t/3)\subset N(t/3)$, we denote by
  $A_Y(2t/3)$ the set of \emph{all} offsprings it produced between times
  $t/3$ and $2t/3$ and which are at the site $Y_{t/3}$ at time $2t/3$.
  We claim that
  there exists $c> 1$ such that
  \begin{equation}
    \label{eq:ampbb}
    \ttP^\ei_0 \Big( \big| A_Y(2t/3)\big|
      \ge c^t  \Big)
    >0
  \end{equation}
  Indeed, under $\ttP^\ei$, it is well-known (e.g., it follows from the
    Feynman-Kac formula) that the expected number of particles in $0$
  grows exponentially. Hence, we can fix $r>0$ such that
  $\ttE^\ei[N(r,0)]=:\mu >1$, and consider an auxiliary process evolving
  as follows
  \begin{itemize}
    \item
    start with one particle at an arbitrary site $x\in \mathbb Z$ at time $0$,

    \item
    particles evolve independently as a continuous time simple random walk
    and split into two at rate $\ei$,

    \item
    and at each time $r n$, $n\in \mathbb N$,
     all the particles not at $x$ are killed.
  \end{itemize}
  Let $Z_n$ be the number of particles at $x$ at time $rn$ in this
  auxiliary process. It is easy to see that $Z_n$ is a supercritical
  Galton-Watson process and thus it survives with a positive probability,
  $\ttP^\ei_0(Z_n>0 \, \forall n \ge 0)\ge p>0$, and on the event of
  survival it grows exponentially,
  $\ttP^\ei_0(Z_n\ge c^n\, |\, Z_n >0 \, \forall n \ge 0)\ge 1/2$ for
  some $c>1$. Hence, for every $Y\in D_\varepsilon (t/3)$,
  $\ttP^\ei_0(|A_Y(2t/3)|\ge c^{t})\ge p/2$ at all times such that
  $2t/3=rn$ for some $n\in \mathbb N$. A straightforward extension to all
  times then yields \eqref{eq:ampbb}.

  Combining \eqref{eq:ampaa} and \eqref{eq:ampbb} implies that
  \begin{equation*}
    \ttP^\ei_0 \big (N(2t/3,[-\varepsilon t,\varepsilon t])\ge c^t \big)\ge
    1-e^{-c't}.
  \end{equation*}
  Moreover, the constant $c$ does not depend on $\varepsilon $.
  As a consequence, choosing $\varepsilon >0$
  small enough such that
  \begin{equation*}
    P_{\varepsilon t}(X_{t/3} = 0) \ge \Big(\frac{1+c}2\Big)^{-t/3},
  \end{equation*}
  and using an easy large deviation argument we obtain that
  \begin{equation*}
    \ttP^\ei_0\Big(N(t,0)<\frac{c^t}2
      \Big(\frac{1+c}2\Big)^{-t/3}\ \Big|\
      N(2t/3,[-\varepsilon t,\varepsilon t])\ge
      c^t\Big)\le e^{-c''t}.
  \end{equation*}
  This completes the proof of the lemma.
\end{proof}

We now obtain a lower bound on $M(t)$.

\begin{proposition}
  \label{prop:MaxPartLag}
  For any $q \in \N$ there exists a constant $C^{(q)} < \infty$ such that
  for $\P$-a.a. $\xi$, for all $t$ large enough
  \begin{equation*}
    \ttP_0^\xi(M(t) \ge \overbar m(t) - C^{(q)}\ln t) \ge 1 - 2 t^{-q}.
  \end{equation*}
\end{proposition}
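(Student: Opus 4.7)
The plan is to amplify the polynomially small lower bound $\ttP_0^\xi(N^{\mathcal L}_t \ge 1) \ge t^{-\gamma}$ from Proposition~\ref{prop:leadingparticles} by exploiting the exponential multiplication of particles at the origin guaranteed by Lemma~\ref{lem:expgrowthzero}. Concretely, I would set $s = s(t) := K \ln t$ for a constant $K = K(q)$ to be chosen below, and split the BRWRE into two phases. In the first phase $[0,s]$ the sole goal is to amass at least $C_3^s$ particles at the origin; in the second phase $[s,t]$ each such particle will independently initiate a BRWRE to which Proposition~\ref{prop:leadingparticles} can be applied.

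For the first phase, Lemma~\ref{lem:expgrowthzero} yields, for $t$ large enough,
$$
\ttP_0^\xi\bigl(N(s,0) < C_3^s\bigr) \le C_3^{-s} = t^{-K \ln C_3},
$$
which is $\le t^{-q}$ once $K \ge q/\ln C_3$. For the second phase, I would condition on $\xi$ and on the configuration at time $s$ and use the branching Markov property: on the event $\mathcal A_s := \{N(s,0) \ge C_3^s\}$, the subtrees rooted at the particles sitting at $0$ at time $s$ are, quenchedly, i.i.d., each distributed like the BRWRE in environment $\xi$ issued from $\ind_{\{0\}}$ and run for duration $t-s \ge t/2$. Applying Proposition~\ref{prop:leadingparticles} to each of them, every subtree produces a leading particle (with respect to the horizon $t-s$) with probability at least $(t-s)^{-\gamma}$, and such a particle sits, by the definition \eqref{eq:leading}, at position $\ge \overbar m(t-s)$ at time $t-s$. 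Independence then yields
$$
\ttP_0^\xi\bigl(M(t) < \overbar m(t-s) \,\big|\, \mathcal A_s\bigr)
\le \bigl(1 - (t-s)^{-\gamma}\bigr)^{C_3^s}
\le \exp\bigl(- c\, t^{K \ln C_3 - \gamma}\bigr),
$$
which is $\le t^{-q}$ for $t$ large provided $K > \gamma/\ln C_3$.

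Choosing $K$ so as to meet both conditions and combining the two estimates yields $\ttP_0^\xi(M(t) \ge \overbar m(t-s)) \ge 1 - 2 t^{-q}$ for all $t$ large, $\P$-a.s. To pass from $\overbar m(t-s)$ to $\overbar m(t)$, I would iterate the one-step Lipschitz bound \eqref{eq:mtgrowth} to obtain $\overbar m(t) - \overbar m(t-s) \le L s = L K \ln t$, $\P$-a.s.~for $t$ large. Setting $C^{(q)} := L K$ then gives the claim.

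The step requiring the most care is the independence argument in the second phase. The subtrees share the same spatial environment $\xi$, so the independence is only \emph{quenched}; this is, however, sufficient since Proposition~\ref{prop:leadingparticles} is itself a quenched statement valid $\P$-a.s., and once $\xi$ is fixed the standard branching Markov property at time $s$ yields honest conditional independence of the subtrees and identifies the distribution of each one as that of a BRWRE in environment $\xi$ started from $\ind_{\{0\}}$, so that the lower bound $(t-s)^{-\gamma}$ applies to each of them simultaneously.
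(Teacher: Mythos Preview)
Your proof is correct and follows essentially the same two-phase amplification strategy as the paper: use Lemma~\ref{lem:expgrowthzero} on $[0,s]$ with $s\asymp\ln t$ to accumulate $C_3^s$ particles at the origin, then apply Proposition~\ref{prop:leadingparticles} independently to each of the resulting subtrees on $[s,t]$ via the quenched branching Markov property. The only cosmetic difference is that the paper bounds $\overbar m(t)-\overbar m(t-s)$ directly from the perturbation Lemmas~\ref{lem:upperTail} and~\ref{lem:timepert}, whereas you iterate \eqref{eq:mtgrowth}; since \eqref{eq:mtgrowth} is itself derived from those lemmas, this amounts to the same thing (just note that \eqref{eq:mtgrowth} is stated for unit time increments, so strictly speaking you should either round $s$ or observe that its proof works verbatim for any bounded increment).
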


\begin{proof}
  Without loss of generality we assume that $q>\gamma $ for $\gamma $ as
  in Proposition~\ref{prop:leadingparticles}. We fix $r=c_1 \ln t$ where
  $c_1$ is chosen so that for $C_3$ of Lemma~\ref{lem:expgrowthzero} we
  have $C_3^{-r}= t^{-q}$, and further choose $C^{(q)}$ large enough so
  that $\overbar m (t-r)\ge \overbar m(t)-C^{(q)} \ln t$.
  To see that this is possible, observe that by the perturbation
  Lemmas~\ref{lem:upperTail} and~\ref{lem:timepert} we have for some
  $c,c'\in (0,\infty)$
  \begin{equation*}
    \begin{split}
      \ttE^\xi_0[N^\ge(t-r,\overbar m(t)-C^{(q)}\ln t)]
      &\ge e^{-cr}
      \ttE^\xi_0[N^\ge(t,\overbar m(t)-C^{(q)}\ln t)]
      \\&\ge e^{-cr}e^{c'C^{(q)}\ln t}
      \ttE^\xi_0[N^\ge(t,\overbar m(t))]
      \\&\ge \frac 12 e^{-cr}e^{c'C^{(q)}\ln t},
    \end{split}
  \end{equation*}
  and fix $C^{(q)}$ so that the right-hand side is smaller than $1/2$.

  Set $x:=\overbar m(t)-C^{(q)} \ln t$ and observe that by considering
  separately the events $\{N(r,0)<C_3^r\}$, $\{N(r,0)\ge C_3^r\}$ and using
  the Markov property and the independence of the particles in the second
  case
  \begin{equation*}
    \begin{split}
    \ttP_0^\xi (M(t) \ge x) &= \ttP_0^\xi (N^{\ge}(t,x)  \ge 1)
      \\&\ge 1 -  \ttP_0^\xi ( N(r,0) \le C_3^{r})
      - \big( \ttP_0^\xi (N^\ge(t-r,x) < 1)\big)^{C_3^{r}}.
      \\&\ge 1 -  t^{-q}
      - \big( \ttP_0^\xi (N^{\mathcal L}_{t-r} < 1)\big)^{C_3^{r}}.
    \end{split}
  \end{equation*}
  Here, for the last inequality we used Lemma \ref{lem:expgrowthzero} as
  well as $x\le \overbar m(t-r)$ and so
  $N^\ge(t-r,x)\ge N^{\mathcal L}_{t-r}$.
  Proposition~\ref{prop:leadingparticles} then implies
  \begin{equation*}
      \big( \ttP_0^\xi (N^{\mathcal L}_{t-r} < 1)\big)^{C_3^{r}}
      \le   (1-t^{-\gamma })^{t^q} \le t^{-q}
  \end{equation*}
  for $t$ large enough. This completes the proof.
\end{proof}

\begin{proof}[Proof of Theorem~\ref{thm:logCorrection} and
    Proposition~\ref{prop:Mclosetom}]
  \ Proposition~\ref{prop:MaxPartLag} and Borel-Cantelli lemma
  (controlling non-integer $t$ by standard estimates) imply that
  $M(t) \ge \overbar m(t) - C^{(2)} \ln t$,
  $\mathbb P\times \ttP^\xi_0$-a.s.~for
  all $t$ large enough, and thus  $\mathbb P$-a.s.,
  $m(t)\ge \overbar m(t)-C^{(2)}\ln t$, for such $t$ as well. By the
  monotonicity and the independence of the particles, these lower bounds
  hold for an arbitrary initial condition satisfying \eqref{eq:inCond}.
  These facts combined with \eqref{eq:Mtupperbound} and
  $m(t)\le \overbar m(t)$ (cf.~\eqref{eq:barmm}), complete the proof
  of Theorem~\ref{thm:logCorrection} and Proposition~\ref{prop:Mclosetom}.
\end{proof}

\section{BRWRE and the randomized Fisher-KPP} 
\label{sec:FKPPproofs}

In this section we prove the central limit theorem for the front of the
randomized Fisher-KPP equation. We begin by establishing the connection
between the BRWRE and this Fisher-KPP equation. Its proof is a
straightforward adaptation of \cite{MK-75}, who proved the corresponding
result in the case of the homogeneous BBM (see also
  \cite{IkNaWa-68a,IkNaWa-68b,IkNaWa-69}).

\begin{proposition}
  \label{prop:KPPBRW}
  For a bounded $(\xi(x))_{x \in \Z}$  and $f: \Z \to [0,1]$
  \begin{equation*}
    w(t,x) := 1- {\tt E}^\xi_{x} \Big[ \prod_{Y \in N(t)} f(Y_t) \Big]
  \end{equation*}
  solves
  \begin{equation*}
    \frac{\partial w}{\partial t} = \Delta_{{\mathrm d}} w + \xi(x) w(1-w)
  \end{equation*}
  with initial condition $w(0, \cdot) =1- f$.
  In particular, $w(t,x)= \ttP^\xi_x(M(t)\ge 0)$ solves this equation with
  $f=\ind_{-\mathbb N}$, i.e.~$w(0,\cdot)=\ind_{\N_0}$.
\end{proposition}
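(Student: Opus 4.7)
}
The natural approach, going back to McKean~\cite{MK-75}, is to work with the complementary quantity
\[
v(t,x) := 1 - w(t,x) = \mathtt{E}^\xi_x\Big[\prod_{Y \in N(t)} f(Y_t)\Big]
\]
and derive an integral equation for it by decomposing on the first event experienced by the initial particle at $x$. Since the initial particle jumps at rate $1$ and branches at rate $\xi(x)$, the first event occurs at an $\mathrm{Exp}(1+\xi(x))$-distributed time $T$; conditionally on $T=s$, it is a jump (to $x\pm 1$ with equal probability) with probability $1/(1+\xi(x))$ and a binary branching at $x$ with probability $\xi(x)/(1+\xi(x))$. Before time $T$ no branching has occurred and the single particle sits at $x$, so the product reduces to $f(x)$ on $\{T>t\}$; after a jump to $x\pm 1$ the strong Markov property gives a factor $v(t-s,x\pm 1)$, while after a branching the two independent resulting BRWREs both start at $x$ and contribute, by independence of genealogies, a factor $v(t-s,x)^2$.

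Combining these cases and substituting $r:=t-s$ yields the renewal-type identity
\begin{equation*}
  v(t,x) = e^{-(1+\xi(x))t}\, f(x) + \int_0^t e^{-(1+\xi(x))(t-r)}\Big[\tfrac{1}{2}\big(v(r,x+1)+v(r,x-1)\big) + \xi(x)\, v(r,x)^2\Big]\,\d r.
\end{equation*}
Since $f$ is bounded in $[0,1]$, the integrand is bounded and $v$ is smooth in $t$. Differentiating in $t$, the boundary term combines with the prefactor of the integral to give $-(1+\xi(x))v(t,x)$, and the integrand at $r=t$ contributes the remaining terms. Using $\frac{1}{2}(v(t,x+1)+v(t,x-1)) = \Delta_{\mathrm d}v(t,x)+v(t,x)$, the $-v(t,x)$ and $+v(t,x)$ cancel and one obtains
\[
\frac{\partial v}{\partial t}(t,x) = \Delta_{\mathrm d}v(t,x) - \xi(x)\,v(t,x)\bigl(1-v(t,x)\bigr).
\]
Passing to $w = 1-v$ flips the sign of $\partial_t$ and of $\Delta_{\mathrm d}$ (which annihilates constants), and turns $v(1-v)$ into $w(1-w)$, producing exactly \eqref{eq:dKPP}. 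The initial condition $w(0,\cdot)=1-f$ is immediate from $v(0,x)=f(x)$.

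For the ``in particular'' statement, take $f=\ind_{-\N}$ so that $\prod_{Y\in N(t)} f(Y_t)=1$ if and only if $M(t)<0$; hence $w(t,x)=\mathtt{P}^\xi_x(M(t)\ge 0)$, and $1-f=\ind_{\N_0}$ gives the stated initial condition. The only non-routine point is justifying the first-event decomposition rigorously in the spatially inhomogeneous but bounded setting \eqref{eq:xiAssumptions}; this is however a direct application of the strong Markov property at the stopping time $T$ together with the independence of the two subtrees issued at a branching event, and presents no genuine difficulty beyond what appears in the homogeneous proofs of \cite{MK-75,IkNaWa-68a,IkNaWa-68b,IkNaWa-69}.
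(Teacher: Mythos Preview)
Your argument is correct. You and the paper both pass to $v=1-w$ and derive an integral equation, but you decompose differently: the paper splits on the \emph{first branching time} of the initial particle, obtaining
\[
v(t,x)=E_x\Big[e^{-\int_0^t\xi(X_r)\,\d r}f(X_t)\Big]
+\int_0^t E_x\Big[\xi(X_s)e^{-\int_0^s\xi(X_r)\,\d r}\,v^2(t-s,X_s)\Big]\,\d s,
\]
and then invokes the \emph{reversibility} of simple random walk to rewrite the right-hand side as a sum over $y$ of expectations started from $y$ and ending at $x$; only after this trick does differentiation in $t$ produce the discrete Laplacian acting on $\ind_x$. You instead split on the \emph{first event} (jump or branch), which freezes the particle at $x$ up to that time and makes the integral equation local in space from the outset. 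Your route is more elementary---no reversibility argument, no Feynman--Kac integral along the path---and the differentiation step is cleaner. The paper's route, on the other hand, mirrors McKean's original argument more closely and stays within the Feynman--Kac framework used throughout the rest of the article.
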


\begin{proof}
  Actually, we show that $v:= 1-w$ solves
  \begin{equation*}
    \frac{\partial v}{\partial t} = \Delta_{{\mathrm d}} v - \xi(x) v(1-v)
  \end{equation*}
  with initial condition $v(0,\cdot) = f$, which will establish the claim.

  According to whether or not the original particle has split into two
  before time $t$, the Feynman-Kac formula in combination with the Markov
  property at time $s$ supplies us with
  \begin{equation*}
      v(t,x) =
      E_x \Big [ e^{-\int_0^t \xi(X_r) \,\d r} f(X_t) \Big]+
      \int_0^t
      E_x \Big[ \xi(X_s) e^{-\int_0^s \xi(X_r) \, \d r}
        v^2(t-s,X_s) \Big] \, \d s
  \end{equation*}
  Using the reversibility of the random walk, and substituting $s$ by $t-s$,
  this can be written as
  \begin{equation}
    \label{eq:vDecomp}
    \begin{split}
      v(t,x)&=\sum_{y\in \mathbb Z}f(y)
      E_y \Big [ e^{-\int_0^t \xi(X_r) \, \d r} \ind_x(X_t) \Big]
      \\&+
      \int_0^t \sum_{y\in \mathbb Z} v^2(s,y)
      E_y \Big [ \xi(y) e^{-\int_0^{t-s} \xi(X_r) \, \d r}
        \ind_{x}(X_{t-s}) \Big ] \, \d s.
    \end{split}
  \end{equation}
  Differentiation then yields
  \begin{align*}
    \frac{\partial v}{\partial t}(t,x)
    &=
    \sum_{y\in \mathbb Z}f(y)
    E_y \Big  [ -\xi(x) e^{-\int_0^t \xi(X_r) \, \d r} \ind_x (X_t) \Big]
    \\&+ \sum_{y\in \mathbb Z} f(y)
    E_y \Big [ e^{-\int_0^t \xi(X_r) \,  \d r}  (\Delta_{{\mathrm d}} \ind_x)(X_t) \Big ]
    \\&+ \xi(x) v^2(t, x)
    \\&+\int_0^t \sum_{y\in \mathbb Z} v^2(s,y)
    E_y\Big  [ -\xi(y)\xi (x)
      e^{-\int_0^{t-s} \xi(X_r) \, \d r} \ind_x(X_{t-s})\Big] \, \d s
    \\&+ \int_0^t\sum_{y\in \mathbb Z} v^2(s,y)
    E_y \Big [ \xi(y)  e^{-\int_0^{t-s} \xi(X_r) \, \d r}
      (\Delta_{{\mathrm d}} \ind_x)(X_{t-s})\Big] \, \d s.
  \end{align*}
  Comparing this expression with the representation~\eqref{eq:vDecomp},
  the second and fifth summands together yield $\Delta_{{\mathrm d}} v$, the third is
  $\xi(x) v^2$, and the first and fourth together supply us with
  $-\xi(x) v$, which finishes the proof of the first claim. The second
  claim is a straightforward consequence of the first one.
\end{proof}

\begin{proof}[Proof of Theorem~\ref{thm:FKPPCLT}]
  Observe that the initial conditions in Theorem~\ref{thm:FKPPCLT}
  and the second claim of Proposition~\ref{prop:KPPBRW} are related by the
  reflection $x\mapsto -x$. Hence, setting $\tilde \xi (x)=\xi(-x)$,
  it is easy to see from the last proposition that the front
  $\widehat m(t)$ of the Fisher-KPP
  equation defined in \eqref{eq:dKPPfront} can be represented as
  \begin{equation*}
    \widehat m(t) =  \sup \Big \{x \in \Z  :
      \ttP_{-x}^{\tilde\xi} (M(t)\ge 0 ) \ge \frac12 \Big\}.
  \end{equation*}
  Comparing this to the definition \eqref{eq:median} of $m(t)$, we see
  that the role of $x$ and the origin is reversed, and the environment is
  reflected. This
  complication is easy to resolve. By the translation and reflection
  invariance of the environment $\xi$, for every $x\in \mathbb Z$,
  \begin{equation*}
    \mathbb P\Big(\ttP_{-x}^{\tilde \xi} (M(t)\ge 0)\ge \frac 12\Big) =
    \mathbb P\Big(\ttP_{0}^\xi (M(t)\ge x)\ge \frac 12\Big).
  \end{equation*}
  The central limit theorem for $\widehat m(t)$ then follows
  from the one for~$m(t)$.
\end{proof}

\section{Open questions} 
\label{sec:openquestions}

We collect here some open questions which naturally arise from the
investigations of this article.

\begin{enumerate}
  \item
  Can we say that $m(t)$ lags at least $\Omega(\ln t)$ behind
  $\overbar m(t)$?

  \item
  For $x \in \Z$ fixed, is the function $[0,\infty) \ni t \mapsto u(t,x)$
  increasing? It is not hard to see that generally  this is not the case
  on $[0,\infty)$; however, is it true for $t$ large enough?

  \item
  Is the family $M(t) - m(t)$, $t \ge 0$,  tight? In the case of
  homogeneous BBM, it already follows from the convergence to a traveling
  wave solution (see \cite{KoPePi-37}) that this is the case. In the case
  of spatially random branching rates this remains an open question.

\end{enumerate}

We expect our results to transfer to the continuum setting where the
space $\Z$ is replaced by $\R$ under suitable regularity and mixing
assumptions on $\xi$.

\appendix
\section{Auxiliary results}
\label{sec:auxProofs}

We prove here several auxiliary results that are used through the text.
Most of them use rather standard techniques, but we did not find any
suitable reference for them.

\subsection{Properties of logarithmic moment generating functions} 

\begin{lemma} \label{lem:lambdaProps}
  The functions $L$,  $L_i^\zeta$, and $\overbar L^\zeta_n$ defined in
  \eqref{eq:empL}--\eqref{eq:Ldef} are infinitely
  differentiable on $(-\infty, 0)$ and satisfy for $\eta \in (-\infty,0]$
  \begin{align}
    \label{eq:lambdaDer}
    L'(\eta)
    &= \E \Big[
      \frac{ E^\zeta[H_1 e^{\eta H_1}] }
      {E^\zeta[e^{\eta H_1}]} \Big]
    = \E \big [ E^{\zeta, \eta} [H_1] \big],\\
    \label{eq:lambdaQuenchedDer}
    (L_i^\zeta)'(\eta)
    &=  \frac{ E^\zeta[\tau_i e^{\eta \tau_i}] }
    {E^\zeta[e^{\eta \tau_i}]}
    =  E^{\zeta, \eta} [\tau_i],
  \end{align}
  (where the derivative in $0$ should be interpreted as the derivative from
    the left)
  and thus also
  $(\overbar L_n^\zeta)' (\eta ) = \frac 1n E^{\zeta ,\eta }[H_n]$.
  Further
  \begin{align}
    \label{eq:lambdaSecDer}
    \begin{split}
      L''(\eta)
      &= \E \big[ E^{\zeta, \eta}[H_1^2]  - E^{\zeta, \eta}[H_1]^2 \big] >0,
    \end{split}
    \\
    \label{eq:lambdaQuenchedSecDer}
    \begin{split}
      ( L_i^\zeta)''(\eta)
      &= \big(  E^{\zeta, \eta}[\tau_i^2]
        - E^{\zeta, \eta}[\tau_i]^2 \big) >0,
    \end{split}
  \end{align}
  and thus also
  $(\overbar L_n^\zeta)'' (\eta )
  = \frac 1n \big(  E^{\zeta, \eta}[H_n^2]  - E^{\zeta, \eta}[H_n]^2 \big)$.
  Moreover, for every $\Delta \subset (-\infty,0)$ compact, there is
  $ c(\Delta )\in (0,\infty)$ such that
  \begin{equation*}
    \sup_{\eta\in \Delta  }\esssup |L_i^\zeta (\eta )| \le  c(\Delta),
  \end{equation*}
  and analogous statements hold for $(L_i^\zeta)' $ and $(L_i^\zeta)''$ as
  well.
\end{lemma}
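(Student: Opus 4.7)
My plan reduces all of the stated facts to differentiability properties of the Feynman-Kac partition function
\[
Z_i^\zeta(\eta) := E_{i-1}\Big[\exp\Big\{\int_0^{H_i}(\zeta(X_s)+\eta)\,\d s\Big\}\Big],
\]
so that $L_i^\zeta = \ln Z_i^\zeta$; once the result is known for $L_i^\zeta$, the claims for $\overbar L_n^\zeta$ and $L$ will follow, respectively, from linearity and from a dominated-convergence interchange of $\E$ with the $\eta$-derivative.

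The key step will be to justify differentiating $Z_i^\zeta$ arbitrarily many times under $E_{i-1}$ at any fixed $\eta_0 \in (-\infty, 0)$. I plan to do this by choosing a small neighbourhood $[\eta_0-\delta,\eta_0+\delta]\subset(-\infty,0)$, fixing $\eta_* \in (\eta_0+\delta, 0)$, and using the elementary inequality $H_i^k e^{(\eta_0+\delta) H_i} \le C_k(\eta_0,\delta,\eta_*)\, e^{\eta_* H_i}$ together with $\zeta \le 0$ (cf.~\eqref{eq:zetaBd}) to dominate the formal $k$-th $\eta$-derivative of $\exp\{\int_0^{H_i}(\zeta+\eta)\,\d s\}$ by $C_k e^{\eta_* H_i}$. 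Since $E_{i-1}[e^{\eta_* H_i}]<\infty$ by Lemma~\ref{lem:finite}, dominated convergence yields smoothness of $Z_i^\zeta$ on $(-\infty,0)$ together with the identities $(Z_i^\zeta)^{(k)}(\eta) = E_{i-1}[H_i^k \exp\{\int_0^{H_i}(\zeta+\eta)\,\d s\}]$. Applying the quotient rule to $L_i^\zeta = \ln Z_i^\zeta$ and recognising the resulting ratios as moments under the tilted measure \eqref{eq:tiltedVarTime} will then give $(L_i^\zeta)'(\eta)=E^{\zeta,\eta}[\tau_i]$ and $(L_i^\zeta)''(\eta)=\Var_{P^{\zeta,\eta}}(\tau_i)$. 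Strict positivity of the latter is immediate because under $P^{\zeta,\eta}$ the walk can still complete the excursion from $i-1$ to $i$ in arbitrarily short or arbitrarily long time (with positive probability either way), so $\tau_i$ is non-degenerate.

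For the uniform bounds on a compact $\Delta \subset (-\infty,0)$, I will sandwich $\exp\{\int_0^{H_i}(\zeta+\eta)\,\d s\}$ between $e^{(\essinf \zeta+\min\Delta)H_i}$ and $e^{(\max\Delta)H_i}$. Both Laplace transforms of $H_i$ are finite positive constants depending only on $\Delta$ and $\essinf\zeta$, not on $i$ or the particular realisation of $\zeta$, so this immediately yields uniform two-sided bounds on $Z_i^\zeta$ and hence on $L_i^\zeta$. The same sandwich, combined with the polynomial-absorbing bound from the previous paragraph, uniformly controls the numerators $E_{i-1}[H_i^k\exp\{\int_0^{H_i}(\zeta+\eta)\,\d s\}]$ for $k=1,2$, giving the corresponding bounds on $(L_i^\zeta)'$ and $(L_i^\zeta)''$. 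Everything then transfers: $\overbar L_n^\zeta$ inherits all claims by linearity (with the variance formula $(\overbar L_n^\zeta)''(\eta)=\frac{1}{n}\Var_{P^{\zeta,\eta}}(H_n)$ following from independence of the $\tau_i$ under $P^{\zeta,\eta}$ via the strong Markov property), and $L=\E[L_1^\zeta]$ inherits smoothness and the derivative formulas by a further application of dominated convergence — now swapping $\E$ with $\eta$-derivatives — legitimised by the uniform bounds just established. The extension of all formulas to $\eta=0$ as a left limit is then routine monotone/dominated convergence on any half-closed interval $(-\varepsilon,0]$.

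The only point requiring real attention will be to keep every dominating function independent of the (random) realisation of $\zeta$; this is exactly what the boundedness hypothesis in \eqref{eq:xiAssumptions} delivers, and no further subtlety seems to arise.
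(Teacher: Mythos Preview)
Your proposal is correct and follows essentially the same approach as the paper's proof: dominated convergence to justify differentiating under the expectation, identification of the derivatives as moments under the tilted measure, non-degeneracy of $\tau_i$ for strict positivity, and a sandwich argument using $\essinf\zeta\le\zeta\le 0$ for the uniform bounds on compact $\Delta$. The paper phrases the uniform bound via monotonicity of $\zeta\mapsto L_i^\zeta(\eta)$ and $\zeta\mapsto (L_i^\zeta)'(\eta)$ rather than via an explicit sandwich of the integrand, but this is the same idea; your explicit dominating function $H_i^k e^{(\eta_0+\delta)H_i}\le C_k e^{\eta_* H_i}$ is a welcome elaboration of what the paper leaves implicit.
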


\begin{proof}
  The fact that $L $ and $\overbar L^\zeta_n$ are infinitely
  differentiable follows easily from the dominated convergence theorem
  which allows to interchange the differentiation with the expectations.
  The  equalities \eqref{eq:lambdaDer} and \eqref{eq:lambdaQuenchedDer}
  can  then be obtained by a direct computation from the definitions of
  the corresponding functions. The  equalities \eqref{eq:lambdaSecDer}
  and \eqref{eq:lambdaQuenchedSecDer}  follow from the definition
  \eqref{eq:tiltedVarTime} of $P^{\zeta, \eta}$. The strict inequalities
  in \eqref{eq:lambdaSecDer} and \eqref{eq:lambdaQuenchedSecDer} follow
  from the fact that, as $H_1$, $\tau_i$ are non-degenerate random
  variables, Jensen's inequality provides us with a strict inequality.

  To prove the last claim, we start with observing that
  $\zeta \mapsto L^\zeta_i$ and $\zeta \mapsto (L^\zeta_i)'$
  are increasing as mappings from $[\ei-\es,0]^\Z$ to $(-\infty, 0]$, and
  $\zeta(x) \in [\ei-\es,0]$ $\P$-a.s. Therefore,
  \begin{equation*}
    \begin{split}
      -\infty&<\ln E_{i-1}[e^{H_i (\ei-\es+\min \Delta) }]\le
      \inf_{\eta\in \Delta } \essinf L_i^\zeta (\eta )
      \\&\le
      \sup_{\eta\in \Delta } \esssup L_i^\zeta (\eta )  \le
      \ln E_{i-1}[e^{H_i \max \Delta }]<\infty,
    \end{split}
  \end{equation*}
  where the finiteness of the expectations on the left- and right-hand
  side follows easily since both $\ei-\es+\min \Delta$ and $\max \Delta$
  are negative.
  The proof for
  $(L_i^\zeta)'$  is similar. For  $(L_i^\zeta)''$ we use
  \eqref{eq:lambdaQuenchedSecDer} as well as the definition of
  $E^{\zeta,\eta}$ to observe that for $\eta \in \Delta$,
  \begin{align*}
    \sup_{\eta\in \Delta}\esssup |(L_i^\zeta)'' (\eta )|
    &\le   \sup_{\eta\in \Delta}\esssup  E^{\zeta, \eta} [\tau_i^2]  \\
    & \le  \frac{E_{i-1}[H_i^2 e^{H_i \max \Delta}]}
    {E_{i-1}[e^{H_i (\min \Delta +\ei - \es)}]}  < \infty,
  \end{align*}
  which finishes the proof.
\end{proof}

\begin{lemma} \label{lem:covBd}
  Let $\mathcal F_k=\sigma (\xi(i):i\le k)$ and $\Delta $ be a compact
  interval in $(-\infty,0)$. Then there
  exists a constant $C_\Delta  < \infty$ such that for all $0 \le i < j$
  and $\eta \in \Delta $, $\P$-a.s.,
  \begin{align}
    \label{eq:LcondBd}
    \big | \E[  L_j^\zeta(\eta) \, | \, \mathcal F_i] - L(\eta) \big |
    &\le C_\Delta e^{- (j-i)/C_\Delta},\\
    \label{eq:LesssBd}
    0 \le \Big( \esssup_{\zeta(k) \, : \, k\le i} L_j^\zeta(\eta) \Big) -   L_j^\zeta(\eta)
    &\le C_\Delta e^{- (j-i)/C_\Delta},
  \end{align}
  and similarly
  \begin{align}
    \label{eq:L'condBd}
    \big | \E[  (L_j^\zeta)'(\eta) \, | \, \mathcal F_i] - L'(\eta) \big |
    &\le C_\Delta e^{- (j-i)/C_\Delta},\\
    \label{eq:L'esssBd}
    0 \le \Big( \esssup_{\zeta(k) \, : \, k\le i} (L_j^\zeta)'(\eta) \Big) -   (L_j^\zeta)'(\eta)
    &\le C_\Delta e^{- (j-i)/C_\Delta}.
  \end{align}
\end{lemma}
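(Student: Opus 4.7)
The plan is to reduce all four inequalities to a single \emph{decoupling estimate}: for every fixed realization of $\zeta$ on $\{i+1,i+2,\dots\}$, both $L_j^\zeta(\eta)$ and $(L_j^\zeta)'(\eta)$, viewed as functions of $(\zeta(k))_{k\le i}$, oscillate by at most $C_\Delta e^{-(j-i)/C_\Delta}$ uniformly in $\eta\in\Delta$. Once this is in hand, \eqref{eq:LesssBd} and \eqref{eq:L'esssBd} are immediate (the lower bound $0\le\cdots$ being trivial by definition of the essential supremum), while \eqref{eq:LcondBd} and \eqref{eq:L'condBd} follow upon integrating out $(\zeta(k))_{k>i}$: by independence, the resulting conditional expectation is a function of $(\zeta(k))_{k\le i}$ alone that inherits the same oscillation bound and hence deviates from its own overall mean $L(\eta)=\mathbb{E}[L_j^\zeta(\eta)]$ by at most that amount.

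To establish the decoupling estimate for $L_j^\zeta(\eta)$, I will split the Feynman--Kac expectation according to whether the continuous-time simple random walk starting at $j-1$ visits site $i$ before hitting $j$. Setting $\mathcal A=\{H_i<H_j\}$, the strong Markov property at $H_i$ produces the decomposition
\begin{equation*}
  e^{L_j^\zeta(\eta)}
  = \underbrace{E_{j-1}\Big[e^{\int_0^{H_j}(\zeta(X_s)+\eta)\,\d s};\mathcal A^c\Big]}_{\alpha}
  + \underbrace{E_{j-1}\Big[e^{\int_0^{H_i}(\zeta(X_s)+\eta)\,\d s};\mathcal A\Big]}_{\beta}\cdot \underbrace{E_i\Big[e^{\int_0^{H_j}(\zeta(X_s)+\eta)\,\d s}\Big]}_{\phi_j(i)},
\end{equation*}
in which $\alpha$ and $\beta$ depend only on $(\zeta(k))_{k>i}$ (on $\mathcal A^c$ and on $[0,H_i)$ the walk is confined to $\{i+1,\dots,j-1\}$), while by iterated strong Markov $\phi_j(i)=\exp\bigl(\sum_{k=i+1}^{j}L_k^\zeta(\eta)\bigr)$. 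Since $\zeta\le 0$, each $L_k^\zeta(\eta)\le\ln E_0[e^{\eta\tau_1}]$, which is strictly negative on $\Delta$, so $\phi_j(i)\le e^{-c_\Delta(j-i)}$; trivially $\beta\le 1$; and $\alpha$ is uniformly bounded below (consider the sub-event where the first jump from $j-1$ goes to $j$ in time $\le 1$, which lies inside $\mathcal A^c$). Consequently, for any two environments $\zeta^{(1)},\zeta^{(2)}$ agreeing on $\{>i\}$,
\begin{equation*}
  |e^{L_j^{\zeta^{(1)}}(\eta)}-e^{L_j^{\zeta^{(2)}}(\eta)}|\le\beta\,e^{-c_\Delta(j-i)}\le e^{-c_\Delta(j-i)},
\end{equation*}
and Lipschitzness of $\ln$ on an interval bounded away from $0$ transfers this estimate to $L_j^\zeta(\eta)$ itself.

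For the derivative $(L_j^\zeta)'(\eta)=E_{j-1}^{\zeta,\eta,j}[H_j]$, with expectation taken under the Feynman--Kac tilt up to $H_j$ normalized by $e^{L_j^\zeta(\eta)}$, I use the same split. The contribution of $\mathcal A$ is bounded by Cauchy--Schwarz using (i)~the direct byproduct $P_{j-1}^{\zeta,\eta,j}(\mathcal A)=\beta\phi_j(i)/(\alpha+\beta\phi_j(i))\le Ce^{-c(j-i)}$ of the decomposition above, and (ii)~$E_{j-1}^{\zeta,\eta,j}[H_j^2]=(L_j^\zeta)''(\eta)+((L_j^\zeta)'(\eta))^2$, which is uniformly bounded by Lemma~\ref{lem:lambdaProps}. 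The contribution of $\mathcal A^c$ is a ratio whose numerator $E_{j-1}[H_j e^{\int_0^{H_j}(\zeta+\eta)\,\d s};\mathcal A^c]$ depends only on $(\zeta(k))_{k>i}$ and is uniformly bounded by $E_0[\tau_1 e^{\eta\tau_1}]$, while its denominator $e^{L_j^\zeta(\eta)}$ has oscillation $\le Ce^{-c(j-i)}$ in $(\zeta(k))_{k\le i}$ by the first step and lies in a compact subset of $(0,1]$. Combining the two contributions yields the required decay for $(L_j^\zeta)'$.

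The principal obstacle I anticipate is bookkeeping: the factor $\phi_j(i)$ itself still depends on $(\zeta(k))_{k\le i}$ through each $L_k^\zeta(\eta)$ with $k\ge i+1$, since the walks defining these $L_k^\zeta$ may venture arbitrarily far below~$i$. What rescues the argument is that only the uniform \emph{upper} bound $\phi_j(i)\le e^{-c_\Delta(j-i)}$ is needed, and no independence of $\phi_j(i)$ from $(\zeta(k))_{k\le i}$ is claimed. All constants are controlled in terms of $\Delta$ using $\essinf\zeta>-\infty$ from \eqref{eq:xiAssumptions} and the fact that $\Delta$ is a compact subset of $(-\infty,0)$.
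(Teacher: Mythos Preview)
Your proof is correct and follows essentially the same route as the paper: both decompose $e^{L_j^\zeta(\eta)}$ according to whether the walk from $j-1$ reaches site $i$ before hitting $j$, observe that the contribution from paths staying in $\{i+1,\dots,j-1\}$ (your $\alpha$, the paper's $A$) depends only on $(\zeta(k))_{k>i}$ and is bounded away from zero, and show the remainder (your $\beta\phi_j(i)$, the paper's $B$) is at most $Ce^{-c(j-i)}$. The only variation is in how that remainder is bounded: the paper splits on $\{H_j\ge\delta j\}$ versus $\{K_{\delta j}\ge j\}$ and uses Poisson large deviations, whereas you factor via the strong Markov property at $H_i$ and exploit $\phi_j(i)=\exp\bigl(\sum_{k=i+1}^{j}L_k^\zeta(\eta)\bigr)\le e^{-c_\Delta(j-i)}$ from $L_k^\zeta(\eta)\le\ln E_0[e^{\eta H_1}]<0$. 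Both arguments are equally short; your treatment of $(L_j^\zeta)'$ via Cauchy--Schwarz together with the second-moment bound from Lemma~\ref{lem:lambdaProps} makes explicit what the paper defers with ``derived in a similar manner''.
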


\begin{proof}
  We only prove the  inequalities \eqref{eq:LcondBd} and
  \eqref{eq:LesssBd}, the remaining ones being derived in a similar manner.

  By translation invariance we may assume without loss of generality
  $0 = i < j$. Write $L_j^\zeta (\eta ) = \ln (A+B)$ where
  \begin{align*}
    A &= E_{j-1} \Big [ \exp \Big\{ \int_0^{H_{j}} (\zeta(X_s) + \eta) \, \d s
        \Big\}, {\inf_{0 \le s \le H_{j}}  X_s > 0} \Big],\\
    B &=  E_{j-1} \Big [ \exp \Big\{ \int_{0}^{H_{j}} (\zeta(X_s) + \eta)
        \, \d s \Big\}, {\inf_{0 \le s \le H_{j}}  X_s \le 0} \Big].
  \end{align*}
  Let $K_t$ denote the number of jumps that the random walk $(X_n)$ has
  made up to time $t >0$, which has Poisson distribution with
  parameter $t$. Then, since $\esssup \zeta =0$, for $\delta>0 $
  sufficiently small, uniformly over $\eta \in \Delta $,
  \begin{equation*}
    B \le E_{j-1}[e^{\eta H_{j}};H_{j} \ge \delta j]
    + P_{j-1}(K_{ \delta j} \ge j )
    \le  c e^{-j/c},
  \end{equation*}
  where the last inequality follows from standard large deviations for
  the Poisson random variable. On the other hand, due to
  \eqref{eq:zetaBd},  there is $c' \in (0, 1)$ such that   $\P$-a.s.~one
  has $A \in (c', 1).$ Therefore, since $\ln (1+x)\le x$, we infer that
  $\P$-a.s.
  \begin{equation*}
    \ln (A) \le  L_j^\zeta (\eta) \le \ln (A) + \ln (1+\tfrac BA)
    \le \ln (A) + ce^{-j/c}.
  \end{equation*}
  Since $\ln (A)$ is independent of $\mathcal F_0$ by definition, taking
  the essential supremum over $\zeta(k)$, $k\le 0$, this implies the
  second inequality of \eqref{eq:LesssBd}. The first inequality of
  \eqref{eq:LesssBd} follows from the definitions. Using the independence
  of $\ln (A)$ and $\mathcal F_0$ once again, we also infer that
  \begin{align*}
    \big | \E[  L_j^\zeta(\eta) - L(\eta)\, | \, \mathcal F_0] \big |
    &\le \big | \E[  \ln(A) - \E[\ln (A)] \, | \, \mathcal F_0] \big |
    + 2ce^{-j/c}=2ce^{-j/c},
  \end{align*}
  which finishes the proof of the lemma.
\end{proof}

\subsection{Basic properties of the Lyapunov exponent} 
\label{ssec:lyapExp}

We prove here various properties of the Lyapunov exponent $\lambda $
defined in~\eqref{eq:lyapunov} that are used throughout the paper.
Some of these properties are standard, but for some
of them we did not find any reference. In particular, the proof of $v_c>0$
is presumably new and of independent interest.

\begin{proposition}
  \label{prop:lyapExp}
  Assume \eqref{eq:xiAssumptions}.

  (a) The function $\lambda: \R \to \R$ is well-defined, non-random, even,
  and concave. It satisfies $\lambda(0) = \es$, $\lambda (v)<\es$ for
  every $v\neq 0$, and $\lim_{v \to \infty} \lambda (v)/ v = -\infty$. In
  particular, there exists a unique $v_0\in (0,\infty)$ such that
  $\lambda({v_0}) = 0$.

  (b) There is $v_c\in (0,\infty)$ given by
  $v_c = (L'(0))^{-1}$ (where the derivative is taken from the left only)
  such that $\lambda $ is linear on $[0,v_c]$, and strictly concave on
  $(v_c,\infty)$.  In addition, for every $v\in [0,\infty)$,
  \begin{equation}
    \label{eq:lambdaesL}
    \lambda (v)= \es - vL^*(1/v),
  \end{equation}
  where for $v=0$ the right-hand side is defined as $\es$.
\end{proposition}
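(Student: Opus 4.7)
The plan is to prove the two parts in turn. For part (a), existence of the limit $\lambda(v)$ and its non-randomness come from the Feynman-Kac representation $\ttE^\xi_0[N(t,\lfloor tv\rfloor)] = E_0[\exp\{\int_0^t\xi(X_s)\,\d s\}; X_t = \lfloor tv\rfloor]$ combined with Kingman's subadditive ergodic theorem (super-multiplicativity is obtained by splitting at an intermediate time via the Markov property of the random walk together with translation invariance of the environment). Concavity follows from the same splitting argument applied at time $\alpha t$ with target positions $\lfloor \alpha t v_1\rfloor$ and $\lfloor(1-\alpha)tv_2\rfloor$, and evenness follows from the reflection symmetry of simple random walk together with the i.i.d.~law of $\xi$. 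For $\lambda(0)=\es$, the upper bound is immediate from $\xi\le\es$. The lower bound follows from the existence, $\P$-a.s.~via a Borel-Cantelli/i.i.d.~argument, of an interval near the origin of growing length $\ell(t)=o(\sqrt{t})$ on which $\xi\ge\es-\delta$; then the walk can reach this interval and stay inside it at subexponential cost, yielding $\lambda(0)\ge\es-\delta$ for every $\delta>0$. The inequalities $\lambda(v)<\es$ for $v\ne0$ and $\lambda(v)/v\to-\infty$ both follow from the Cramér rate function $I$ of simple random walk, which satisfies $I(v)>0$ for $v\ne0$ and $I(v)/v\to\infty$. Existence and uniqueness of $v_0$ then come from $\lambda(0)=\es>0$, continuity (implied by concavity on an open interval of finiteness) and $\lambda(v)\to-\infty$.

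For part (b), the key identity \eqref{eq:lambdaesL} is proved for $v>v_c$ along the way in the proof of Theorem~\ref{thm:PAMFCLT} (where it is explicitly noted). To extend the identity to $v\in[0,v_c]$ and identify the linear region: since $L$ is strictly convex on $(-\infty,0]$ with $\lim_{\eta\to-\infty} L'(\eta)=0$ and left derivative $L'(0^-)=1/v_c$, the Legendre transform $L^*(1/v)=\sup_{\eta\le 0}(\eta/v-L(\eta))$ is attained at the boundary $\eta=0$ whenever $1/v\ge L'(0^-)$, i.e.~$v\le v_c$; in this regime $L^*(1/v)=-L(0)$, so $\es-vL^*(1/v)=\es+vL(0)$, which is affine. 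The inequality $\lambda(v)\le\es+vL(0)$ on $[0,v_c]$ follows from the Feynman-Kac bound $\ttE_0^\xi[N(t,n)]\le e^{\es t}E_0[\exp\{\int_0^{H_n}\zeta(X_s)\,\d s\}]=e^{\es t+\sum_{i=1}^n L_i^\zeta(0)}$ (obtained by dropping the nonpositive contribution after $H_n$ and applying the strong Markov property) together with Birkhoff's theorem $\tfrac{1}{n}\sum_{i=1}^n L_i^\zeta(0)\to L(0)$. The matching lower bound comes from concavity of $\lambda$ and the already-established values $\lambda(0)=\es$ and $\lambda(v_c)=\es+v_cL(0)$ (the latter by continuity of the formula for $v>v_c$). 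Finally, strict concavity on $(v_c,\infty)$ is obtained by differentiating twice: $\tfrac{\d^2}{\d v^2}[vL^*(1/v)]=L^{*\prime\prime}(1/v)/v^3>0$ since $L^{*\prime\prime}(x)=1/L''(\eta(x))>0$ on $(0,L'(0^-))$ by Lemma~\ref{lem:lambdaProps}.

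The main obstacle I anticipate is the positivity $v_c>0$, equivalently $L'(0^-)=\mathbb E[E^\zeta[H_1]]<\infty$. Smoothness of $L$ on the open half-line $(-\infty,0)$ (Lemma~\ref{lem:lambdaProps}) does not a priori guarantee a finite left derivative at the boundary $\eta=0$. My approach would be a renewal/excursion decomposition of the walk under $P^\zeta$ at its visits to~$0$: by \eqref{eq:xiAssumptions} we have $\zeta(x)\le\ei-\es<0$ on a positive-density set of sites, and the Feynman-Kac weight $e^{\int_0^{H_1}\zeta(X_s)\,\d s}$ therefore penalizes each excursion into $\mathbb Z_{\le 0}$ by a factor that is, in expectation, strictly less than one. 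A gambler's-ruin type analysis then shows that the number of excursions to $\mathbb Z_{\le 0}$ before the walk finally jumps to $1$ is stochastically dominated by a geometric random variable, and each individual excursion has finite expected duration uniformly in the environment (comparable to a walk killed at rate $|\ei-\es|$). Averaging over the environment and using independence of blocks yields $\mathbb E[E^\zeta[H_1]]<\infty$; this step needs to be implemented carefully because the excursion durations and success probabilities are not independent under the quenched tilted measure.
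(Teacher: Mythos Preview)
Your treatment of part~(a) and the bulk of part~(b) matches the paper's proof almost verbatim: the same subadditive/Markov-property splitting for existence and concavity, the same ``island of high potential'' argument for $\lambda(0)=\es$, the same Feynman-Kac upper bound $\lambda(v)\le \es+vL(0)$ on $[0,v_c]$ combined with concavity and the endpoint values, and the same appeal to the proof of Theorem~\ref{thm:PAMFCLT} for \eqref{eq:lambdaesL} on $(v_c,\infty)$.

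The genuine gap is in your argument for $v_c>0$, i.e.\ for $L'(0^-)=\E\big[E^{\zeta,0}[H_1]\big]<\infty$. Your claim that ``each individual excursion has finite expected duration uniformly in the environment (comparable to a walk killed at rate $|\ei-\es|$)'' is false as stated: since $\esssup\zeta=0$, the potential $\zeta$ is \emph{not} bounded away from zero, and on realisations where $\zeta\equiv 0$ on $\{-N,\dots,0\}$ the excursion from $-1$ back to $0$ under the weight $e^{\int\zeta}$ behaves like an unkilled simple random walk on that interval, with expected duration of order~$N$. The geometric control on the \emph{number} of excursions is fine, but the duration bound is environment-dependent and can be arbitrarily large, so the product does not close. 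Averaging over $\P$ via Fubini gives $E_0\big[H_1\prod_x\phi(\ell_{H_1}(x))\big]$ with $\phi(\ell)=\E[e^{\zeta(0)\ell}]$; since $\phi(\ell)$ is only bounded by~$1$ for small~$\ell$, one still needs to show that most sites in the range carry substantial local time, which is not an elementary gambler's-ruin statement.

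The paper's proof of this step is genuinely different and more delicate. It bounds $\E\big[E_0[H_1\,e^{\int_0^{H_1}\zeta(X_s)\,\d s}]\big]$ by decomposing over $\{H_1\in[n,n+1)\}$ and intersecting with three high-probability events: (i)~the infimum of the walk lies in $(-n^{2/3},-n^{1/3})$, so the range contains $\{-\lfloor n^{1/3}\rfloor,\dots,0\}$; (ii)~the set of \emph{thin points} (local time at most a fixed~$M$) has cardinality at most $n^{1/6}$, invoking a result of Abe--Drewitz--Subramanian~\cite{AtDrSu-16}; (iii)~at least $pn^{1/3}/2$ sites in $\{-\lfloor n^{1/3}\rfloor,\dots,0\}$ satisfy $\zeta\le -h$. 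On the intersection, at least $cn^{1/3}$ sites with $\zeta\le -h$ carry local time exceeding~$M$, so $\int_0^{H_1}\zeta\,\d s\le -c'n^{1/3}$, which beats the factor $H_1\le n+1$ and yields a summable series. The thin-points input~(ii) is the ingredient that your excursion heuristic is missing; without it, one cannot rule out that the walk spreads its occupation time too thinly over the range for the potential to bite.
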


\begin{proof}
  (a) For $\alpha \in (0,1)$ and $v_1,v_2 \in \R$ the
  Markov property yields
  \begin{align}
    \label{eq:concaveDist}
    \begin{split}
      \ln{} &E_{0}
      \Big[ e^{ \int_0^{t} \xi(X_s) \, \d s }
        ;  X_t = \floor{(\alpha v_1 + (1-\alpha) v_2)t} \Big]
      \\ &\ge
            \ln E_{0}
      \Big[ e^{ \int_0^{(1-\alpha) t} \xi(X_s) \, \d s }
        ;X_{(1-\alpha)t} = \floor{(1-\alpha) v_2 t} \Big]\\
      &+
      \ln E_{\floor{(1-\alpha) v_2t}}
      \Big[ e^{ \int_0^{\alpha t} \xi(X_s) \, \d s }
        ; X_{\alpha t} =  \floor{(\alpha v_1 + (1-\alpha) v_2)t} \Big].
    \end{split}
  \end{align}
  Hence, choosing $ v_1 := v_2:= v$ and using Kingman's subadditive
  ergodic theorem \cite{Li-85} as well as the Feynman-Kac formula
  (Proposition \ref{prop:FK}), we obtain that for each $v \in \R$, the
  limit $\lambda(v)$ exists  and is non-random. In addition, $\lambda$ is
  an even function since  $X$ is symmetric simple random walk and the
  $(\xi(x))_{x \in \Z}$ are i.i.d.~by assumption.

  Dividing both sides of inequality \eqref{eq:concaveDist} by $t$ and
  taking the limit ${t\to\infty}$, the left-hand side converges $\P$-a.s.~to
  $\lambda (\alpha v_1 + (1-\alpha )v_2)$, and the first summand on the
  right-hand side converges to  $(1-\alpha) \lambda (v_2)$. Further, the
  second summand converges to $\alpha \lambda (v_1)$ in distribution, since
  it has the same distribution (up to possibly a small error  introduced by
    the use of the floor function, and which is irrelevant in the limit) as
  \begin{equation*}
    \frac 1t \ln E_{0} \Big[ \exp \Big\{ \int_0^{\alpha t} \xi(X_s) \,
        \d s \Big\} \ind_{X_{\alpha t} = \floor{\alpha v_1t}} \Big],
  \end{equation*}
  which converges $\P$-a.s.~to $\alpha \lambda(v_1)$. The concavity of
  $\lambda $ then follows.

  The proof of $\lambda(0) = \es$ is standard but we include it for the sake
  of completeness. By the Feynman-Kac formula and \eqref{eq:zetaDef},
  \begin{equation}
    \label{eq:lambdazero}
    \lambda (0)=\es + \lim_{t\to \infty} \frac 1t \ln
    E_0\Big[\exp\Big\{\int_0^t \zeta (X_s)\,\d
        s\Big\}; X_t=0\Big].
  \end{equation}
  Since $\zeta (x)\le 0$, the upper bound $\lambda (0)\le \es$ follows
  trivially. To show the lower bound,  fix $\varepsilon >0$ arbitrarily
  and note that by standard i.i.d.~properties of $\zeta$'s, there is
  $c(\varepsilon )>0$ such that $\mathbb P$-a.s.~for $t$ large enough,
  there is an interval $I_t\subset[-t^{1/4},  t^{1/4}]$ of length at
  least $c(\varepsilon )\ln t$ such  $\zeta(j)\ge -\varepsilon $ for all
  $j\in I_t \cap \Z$. Consider now the event
  $\mathcal A_t=\{X_0=X_t=0,X_s\in I_t\,\forall x\in[t^{1/2},t-t^{1/2}]\}$.
  By a local  central limit theorem,
  $P_0(X_{t^{1/2}}\in I_t)\ge c t^{-1/4}$. By standard spectral estimates
  for the simple random walk, for any $m\in I_t$,
  \begin{equation*}
    P_m(X_s\in I_t\,\forall s\le t-2t^{1/2})\ge e^{-ct/\ln t},
  \end{equation*}
  and, by a local central limit theorem again,
  $P_m(X_{t^{1/2}}=0)\ge c t^{-1/4}$. The Markov property thus yields
  $P_0(\mathcal A_t) \ge e^{-ct/\ln t}$. Going back to
  \eqref{eq:lambdazero}, restricting the expectation to $\mathcal A_t$,
  \begin{equation*}
    \lambda (0)\ge \es + \frac 1t \limsup_{t\to \infty } P_0(\mathcal A_t)
    e^{-2 \es  t^{1/2}} e^{-\varepsilon (t-2t^{1/2})}\ge \es -\varepsilon.
  \end{equation*}
  Since $\varepsilon >0$ was chosen arbitrarily, $\lambda(0) = \es$ follows.

  The fact
  $\lim_{v \to \infty} \lambda (v)/ |v| = -\infty$ follows from
  \eqref{eq:xiAssumptions} and large deviation properties of the
  continuous time simple random walk $X$.

  \smallskip

  (b) The strict concavity of $\lambda(v) $ on $(v_c,\infty)$ is a
  consequence of the strict convexity of $L^*(1/v)$ on this interval,
  which in turn follows from definition \eqref{eq:critVel} of $v_c$, the
  strict convexity of $L$ on $(-\infty,0)$ and standard properties of the
  Legendre transform. Also, for $v \in ( v_c, \infty)$, claim
  \eqref{eq:lambdaesL} is shown in the proof of Theorem~\ref{thm:PAMFCLT}
  in Section~\ref{ssec:PAMFCLT}. By the continuity of $\lambda$ (which
    follows from concavity and finiteness) and the monotonicity and
  lower-semicontinuity of $L^*$ (which entails its left-continuity in
    $1/v_c$), \eqref{eq:lambdaesL} also holds for $v=v_c$. We thus only
  need to show the linearity of $\lambda$ and \eqref{eq:lambdaesL} on
  $[0,v_c)$, and then $v_c\in (0,\infty)$.

  To show the linearity, observe that by the Feynman-Kac representation,
  \begin{equation*}
    \begin{split}
      u(t,vt)&=e^{t \es}E_{0} \Big[ \exp \Big\{ \int_0^{t} \zeta(X_s) \,
          \d s \Big\}; X_{t} = vt\Big]
      \\&\le e^{t \es}E_{0} \Big[ \prod_{i=1}^{vt-1}\exp \Big\{
          \int_{H_{i-1}}^{H_i}\zeta(X_s) \,
          \d s \Big\}\Big]
      = e^{t \es}e^{\sum_{i=1}^{vt} L^\zeta _i(0)}.
    \end{split}
  \end{equation*}
  Taking logarithms and letting $t\to\infty$, it follows that
  \begin{equation}
    \label{eq:linUBLyap}
    \lambda (v)\le \es + vL(0) = \es - vL^*(1/v_c), \quad v \in [0,v_c],
  \end{equation}
  where for the inequality we used \eqref{eq:Birkhoff}, and for the
  equality we took advantage of \eqref{eq:critVel} again. The concavity
  of $\lambda$, the linear upper bound \eqref{eq:linUBLyap}, and the fact
  that $\lambda$ coincides with this linear upper bound for $v = 0$
  (cf.~part (a)) and $v=v_c$ (by \eqref{eq:lambdaesL} for $v=v_c$) then
  imply the matching lower bound, proving the claimed linearity on
  $[0,v_c]$. Claim \eqref{eq:lambdaesL} for $v\in [0,v_c)$
  then follows directly, since we know that the inequality in
  \eqref{eq:linUBLyap} is an equality, and
  $L(0) = - L^*(1/v) = L^*(1/v_c)$ for $v \in [0,v_c]$, by standard
  properties of Legendre transform.

  It remains to show that $ (L'(0))^{-1}\in(0,\infty)$. We recall that by
  \eqref{eq:lambdaDer},
  $L'(0)=\mathbb E [E^{\zeta ,0}[H_1]]$. Taking
  advantage of the boundedness from below of $\zeta$ and the definition
  of $E^{\zeta,0}$, it is sufficient to show that
  \begin{equation}
    \label{eq:expUBbd}
    \E \big[ E_0 \big[H_1 e^{\int_0^{H_1} \zeta(X_s) \, \d s} \big] \big]
    \in (0,\infty).
  \end{equation}
  The lower bound follows easily, as $H_1$ is a non-trivial non-negative
  random variable. For the upper bound, for arbitrary
  fixed $h \in (0, \es -\ei)$ we introduce an
  auxiliary random environment
  \begin{align}
    \label{eq:zetastar}
    \zeta^*(x):=
    \begin{cases}
      0, \qquad & \text{if }\zeta(x) \in (-h, 0],\\
      -h,  & \text{if }\zeta(x) \le -h.
    \end{cases}
    \qquad x\in \mathbb Z.
  \end{align}
  In particular, note that $\zeta \le \zeta^*$ and
  \begin{equation} \label{eq:nonTrivProb}
    p:= \P(\zeta^*(0) = -h) = 1- \P(\zeta^*(0) = 0)  \in (0,1).
  \end{equation}
  Furthermore, defining for $n\in \N$ the events
  \begin{equation*}
    G_1(n) :=\Big\{ \inf_{s \in [0,H_1)}
      X_s \in (-n^{\frac23},-n^{\frac13}) \Big\},
  \end{equation*}
  we infer by standard large deviation estimates for simple random walk
  that there exist constants $c,C \in (0,\infty)$ such that
  \begin{equation}
    \label{eq:rangeLD}
    P_0(H_1 \in [n,n+1),  G_1(n)^c)
    \le Ce^{-cn^\frac13}, \quad \forall n \in \N_0.
  \end{equation}
  The next ingredient is \cite[Theorem 1.3]{AtDrSu-16}, which implies
  that thin points for the random walk are rare in the following sense:
  Write $\ell$ for the local time process
  \begin{equation} \label{eq:localTime}
    \ell_t(x) := \int_0^t \delta_x(X_s) \, \d s,
    \qquad x \in \Z, t \in [0,\infty),
  \end{equation}
  and, for $M \in (0,\infty)$, introduce the set of \emph{thin points} by
  \begin{equation*}
    \mathcal T_{t,M} := \big \{x \in \Z  :  \ell_t(x) \in (0,M] \big\}.
  \end{equation*}
  Then \cite[Theorem 1.3]{AtDrSu-16} entails that setting
  $G_2(t) := \big\{ | \mathcal T_{t,M} | \le t^\frac16 \big\}$,
  there exist  constants $c,C \in (0,\infty)$ such that
  \begin{equation}
    \label{eq:thinPts}
    P_0\big( G_2(H_1)^c \, | \, H_1 \in [n,n+1) \big)
    \le C e^{-c n^\frac17}, \quad \forall n \in \N_0.
  \end{equation}
  The last ingredient is a simple large deviation bound for
  i.i.d.~Bernoulli variables: recalling $p$ from  \eqref{eq:nonTrivProb}
  and setting
  \begin{equation*}
    G_3(n):= \Big\{ \big| \big\{x \in \{ -n,  \dots, 0\}
        :  \zeta^*(x) = -h \big \} \big|
      \ge \frac{p n}{2} \Big\},
  \end{equation*}
  we have due to \eqref{eq:xiAssumptions} that
  for arbitrary $\varepsilon  >0$
  \begin{equation} \label{eq:potLD}
    \P\big( G_3(n)^c \big) \le Ce^{-n(I_p(p/2)-\varepsilon )} \le C e^{-c_pn}, \quad
    \forall n \in \N,
  \end{equation}
  where $I_p$ is the usual rate function of the Bernoulli$(p)$ distribution.
  Combining \eqref{eq:rangeLD} to \eqref{eq:potLD}, we infer that
  \begin{align}
    \label{eq:H1expBd}
    \begin{split}
      &\E \Big[ E_0 \Big[H_1 e^{\int_0^{H_1} \zeta(X_s)\,  \d s} \Big] \Big]
      \\&\le \sum_{n =0}^\infty
      \E \Big[ E_0\Big[H_1 e^{\int_0^{H_1} \zeta(X_s) \,  \d s},
          H_1 \in [n,n+1) \Big],G_3(n^\frac13)\Big]
      + C(n+1)e^{-cn^\frac13}
      \\&\le \sum_{n=0}^\infty \E \bigg[
        E_0\Big[H_1 e^{\int_0^{H_1} \zeta(X_s)\,  \d s},
          H_1 \in [n,n+1), G_1(n), G_2(H_1) \Big],
        G_3(n^\frac13) \bigg]
      \\&\qquad + C(p)e^{-cn^\frac17}
      \\&\le C(p)  +  \sum_{n =0}^\infty  e^{-p h n^\frac13/2} < \infty,
  \end{split}
\end{align}
and the upper bound for \eqref{eq:expUBbd} follows.
\end{proof}

Finally, we shortly discuss the existence of random environments which
satisfy the condition \eqref{eq:vAssumptions} requiring that the speed of
the maximum particle, $v_0$ is strictly larger than the critical speed~
$v_c$. The simple proof of the following result reveals that this is the
case for a very rich family of environments, which heuristically can be
interpreted as exhibiting sufficiently strong branching.
\begin{lemma}
  \label{lem:qualLyap}
 There exist environments $\xi$ such
  that~\eqref{eq:xiAssumptions} and~$\eqref{eq:vAssumptions}$ hold true.
\end{lemma}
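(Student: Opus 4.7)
The plan is to exploit a scaling freedom in $\es$. The key observation to make is that the logarithmic moment generating functions $L_i^\zeta$ and $L$ defined in \eqref{eq:empL}--\eqref{eq:Ldef}, and hence the critical velocity $v_c=1/L'(0)$ and the value $L(0)$, depend only on the law of the centered field $\zeta=\xi-\es$ and not on the level $\es$. By Proposition~\ref{prop:lyapExp}(b), the condition $v_0>v_c$ of \eqref{eq:vAssumptions} is equivalent to $\lambda(v_c)>0$; since $L^*(1/v_c)=-L(0)$ (the supremum in the Legendre transform being attained at the boundary $\eta=0$), \eqref{eq:lambdaesL} yields $\lambda(v_c)=\es+v_cL(0)$, so the problem reduces to arranging the single inequality
\[
 \es \;>\; v_c\,|L(0)|.
\]
As the right-hand side depends only on the law of $\zeta$, it will suffice to fix any suitable $\zeta$ first and then choose $\es$ large enough.

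Concretely, I would first fix an i.i.d.\ family $(\zeta(x))_{x\in\Z}$ with any bounded non-degenerate distribution on $[-K,0]$ with $\esssup\zeta=0$ and $K:=-\essinf\zeta>0$; the simplest choice is $\zeta(x)\in\{0,-K\}$ with $\P(\zeta(x)=-K)=p$, $p\in(0,1)$. For such $\zeta$ I would then verify that $L(0)\in(-\infty,0)$ and $L'(0)\in(0,\infty)$: strict negativity of $L(0)$ follows from strict Jensen applied to $\ln E_0[\exp\int_0^{H_1}\zeta(X_s)\,\d s]$ (using $\zeta\le 0$ and $\P(\zeta<0)>0$), while its finiteness follows from the crude lower bound $L(0)\ge \ln E_0[e^{-K H_1}]>-\infty$; positivity of $L'(0)$ is trivial since $L'(\eta)=\E[E^{\zeta,\eta}[H_1]]>0$, and its finiteness is exactly the content of the estimate \eqref{eq:H1expBd} proved inside Proposition~\ref{prop:lyapExp}(b). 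Consequently $v_c=1/L'(0)\in(0,\infty)$ and $v_c|L(0)|$ is a finite constant depending only on the law of $\zeta$. It then remains to set $\es:=\max\{K,\,v_c|L(0)|\}+1$ and $\xi(x):=\zeta(x)+\es$; by construction $\xi$ is i.i.d.\ and takes values in $[\es-K,\es]\subset[1,\infty)$, so \eqref{eq:xiAssumptions} holds, and the display above gives $\lambda(v_c)=\es-v_c|L(0)|\ge 1>0$, whence the strict concavity of $\lambda$ on $(v_c,\infty)$ together with $\lambda(v_0)=0$ forces $v_0>v_c$.

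There is essentially no obstacle. The only non-trivial analytic input is the finiteness of $L'(0)$, which is already contained in the proof of Proposition~\ref{prop:lyapExp}(b); the single point worth re-checking is that the argument for \eqref{eq:H1expBd} (the $\zeta^{*}$-comparison combined with the thin-points estimate from \cite{AtDrSu-16} and the large-deviation bounds \eqref{eq:rangeLD}--\eqref{eq:potLD}) uses only the i.i.d.\ property and non-degeneracy of $\zeta$, not the positivity $\ei>0$ of the ambient environment. Everything else is a direct consequence of the scaling freedom in~$\es$.
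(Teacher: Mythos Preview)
Your proof is correct and uses essentially the same idea as the paper: both exploit the additive shift freedom in the environment, noting that $v_c$ depends only on the centered field $\zeta=\xi-\es$ while $\lambda(v_c)$ (equivalently, $v_0$) can be pushed arbitrarily high by increasing $\es$. The paper's version is a touch more direct---it observes from the Feynman-Kac formula that $\lambda^{h}(v)=\lambda(v)+h$ for $\xi^h:=\xi+h$, so $v_c$ is unchanged and $v_0\to\infty$---whereas you route the same observation through the identity $\lambda(v_c)=\es+v_cL(0)$; but the content is identical, and your concern about \eqref{eq:H1expBd} is moot once you start from some $\xi$ already satisfying \eqref{eq:xiAssumptions} and then shift.
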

\begin{proof}
  Choose an arbitrary random environment $\xi$ fulfilling
  \eqref{eq:xiAssumptions}, and consider a family of environments
  $\xi^h:=(\xi (x)+h)_{x\in \mathbb Z}$, $h\ge 0$.  Writing $\lambda^h$
  for the Lyapunov exponent associated to $\xi^h$, the Feynman-Kac
  representation (Proposition~\ref{prop:FK}) and the definition
  \eqref{eq:lyapunov} of $\lambda $ yield that $\lambda^h(v)=\lambda(v)+h$.
  Hence, by Proposition~\ref{prop:lyapExp}, the value of $v_c$ does not
  change with $h$, and, on the other hand, $v_0 \to \infty$ as
  $h\to \infty$, which entails the desired statement.
\end{proof}

\subsection{Hoeffding type inequality for mixing sequences} 
We repeatedly make use of the following concentration inequality for
mixing sequences. We state it here for reader's convenience.

\begin{lemma}[{\cite[Theorem~2.4]{Ri-13}}]
  \label{lem:hoef}
  Let $(X_i)_{i\in \mathbb Z}$ be a sequence of real valued bounded
  random variables on some $(\Omega ,\mathcal F,\mathbb P)$ and let
  $\mathcal F_i=\sigma (X_j,j\le i)$. Suppose that there are real numbers
  $m_i>0$, $i\in \{1,\dots,n\}$ such that
  \begin{equation*}
    \sup_{j\in \{i+1,\dots,n\}}
    \Big(\|X_i^2\|_\infty
      + 2 \Big\|X_i\sum_{k=i+1}^j \mathbb E[X_k \, |\, \mathcal F_i]\Big\|_\infty\Big)
    \le m_i,\qquad \text{for all $i\le n$.}
  \end{equation*}
  Then for every $a>0$,
  \begin{equation*}
    \P\big(|X_1+\dots+X_n|\ge a\big)
    \le \sqrt e \exp\Big\{-\frac{a^2}{2\sum_{i=1}^n m_i}\Big\}.
  \end{equation*}
\end{lemma}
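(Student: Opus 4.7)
The statement of Lemma~\ref{lem:hoef} is quoted verbatim from Rio's monograph \cite{Ri-13} (Theorem 2.4), so the proof is ultimately a reference. Nevertheless, let me sketch the approach I would take to reprove it from scratch; the plan is essentially Rio's, going through the standard exponential-Markov scheme adapted to the mixing setting.

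The starting point is the classical Chernoff bound: for every $\lambda>0$,
\begin{equation*}
  \mathbb{P}(S_n \ge a) \le e^{-\lambda a}\,\mathbb{E}[e^{\lambda S_n}],
  \qquad S_n := X_1+\dots+X_n,
\end{equation*}
and symmetrically for $-S_n$. The task is therefore to show that $\ln \mathbb{E}[e^{\lambda S_n}] \le \tfrac{\lambda^2}{2}\sum_{i=1}^n m_i$ for all $\lambda$ small enough, whereafter optimizing in $\lambda$ yields the Gaussian-type bound with the factor $\sqrt e$ absorbing the restriction on the admissible range of $\lambda$.

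To bound $\mathbb{E}[e^{\lambda S_n}]$ I would iterate over $i=1,\dots,n$, conditioning successively on the filtration $\mathcal{F}_i$. The key step is the ``one-step'' inequality
\begin{equation*}
  \mathbb{E}\big[e^{\lambda X_i}\,\mathbb{E}[e^{\lambda (X_{i+1}+\dots+X_n)} \mid \mathcal{F}_i]\big]
  \le e^{\lambda^2 m_i /2}\,\mathbb{E}\big[\mathbb{E}[e^{\lambda (X_{i+1}+\dots+X_n)} \mid \mathcal{F}_i]\big],
\end{equation*}
after which the outer expectation reduces to the problem for $n-i$ summands. Expanding $e^{\lambda X_i}$ in a Taylor series and using boundedness of $X_i$ produces, at order two, two contributions: the ``diagonal'' piece $\lambda^2 X_i^2$ and a ``cross'' piece $2\lambda^2 X_i \sum_{k>i}\mathbb{E}[X_k\mid\mathcal F_i]$ coming from differentiating the inner exponential. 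The uniform sup-norm hypothesis in the lemma is precisely the bound for the sum of these two contributions by $m_i$, so the Taylor remainder can be controlled by the same constant after restricting $\lambda$ to a small enough range---this is where the factor $\sqrt e$ enters, to account for the $\lambda$'s outside the natural range.

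The main technical obstacle is handling the Taylor remainder cleanly; Rio's trick is to use the inequality $e^x \le 1+x+\tfrac{x^2}{2}\phi(x)$ with $\phi$ explicit, combined with the ``coupling covariance'' interpretation of $X_i \sum_{k>i}\mathbb{E}[X_k\mid\mathcal F_i]$, so that higher-order moments do not enter and only the sup-norm hypothesis on $m_i$ is used. Iterating the one-step inequality then gives the required product bound, and optimization in $\lambda$ completes the argument. I would simply cite \cite[Theorem~2.4]{Ri-13} rather than reproducing these estimates in full, as the paper does.
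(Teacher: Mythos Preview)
Your proposal is correct and matches the paper's treatment: the paper does not give a proof of this lemma at all, it merely states the result for the reader's convenience with the citation to \cite[Theorem~2.4]{Ri-13}. You correctly identify this and go further by sketching Rio's exponential-Markov argument, which is accurate in outline and more than the paper itself provides.
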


\bibliographystyle{imsart-number}
\bibliography{mbrwre}

\def\polhk#1{\setbox0=\hbox{#1}{\ooalign{\hidewidth
  \lower1.5ex\hbox{`}\hidewidth\crcr\unhbox0}}}
  \def\arxiv#1{\href{http://arxiv.org/abs/#1}{arXiv:#1}}
\begin{thebibliography}{63}

\bibitem{AdRe-09}
\begin{barticle}[author]
\bauthor{\bsnm{Addari{o-B}erry},~\bfnm{Louigi}\binits{L.}} \AND
  \bauthor{\bsnm{Reed},~\bfnm{Bruce}\binits{B.}}
(\byear{2009}).
\btitle{Minima in branching random walks}.
\bjournal{Ann. Probab.}
\bvolume{37}
\bpages{1044--1079}.
\bdoi{10.1214/08-AOP428}
\bmrnumber{2537549 (2011b:60338)}
\end{barticle}
\endbibitem

\bibitem{Ai-13}
\begin{barticle}[author]
\bauthor{\bsnm{A{\"{\i}}d{\'e}kon},~\bfnm{Elie}\binits{E.}}
(\byear{2013}).
\btitle{Convergence in law of the minimum of a branching random walk}.
\bjournal{Ann. Probab.}
\bvolume{41}
\bpages{1362--1426}.
\end{barticle}
\endbibitem

\bibitem{AtDrSu-16}
\begin{barticle}[author]
\bauthor{\bsnm{Athreya},~\bfnm{Siva}\binits{S.}},
  \bauthor{\bsnm{Drewitz},~\bfnm{Alexander}\binits{A.}} \AND
  \bauthor{\bsnm{Sun},~\bfnm{Rongfeng}\binits{R.}}
(\byear{2017}).
\btitle{Subdiffusivity of a random walk among a {P}oisson system of moving
  traps on {$\Bbb Z$}}.
\bjournal{Math. Phys. Anal. Geom.}
\bvolume{20}
\bpages{Art. 1, 22}.
\bdoi{10.1007/s11040-016-9227-8}
\bmrnumber{3590740}
\end{barticle}
\endbibitem

\bibitem{Ba-00}
\begin{barticle}[author]
\bauthor{\bsnm{Bachmann},~\bfnm{Markus}\binits{M.}}
(\byear{2000}).
\btitle{Limit theorems for the minimal position in a branching random walk with
  independent logconcave displacements}.
\bjournal{Adv. in Appl. Probab.}
\bvolume{32}
\bpages{159--176}.
\bdoi{10.1239/aap/1013540028}
\bmrnumber{1765165 (2001m:60189)}
\end{barticle}
\endbibitem

\bibitem{BeBrHaHaRo-12}
\begin{barticle}[author]
\bauthor{\bsnm{Berestycki},~\bfnm{Julien}\binits{J.}},
  \bauthor{\bsnm{Brunet},~\bfnm{\'Eric}\binits{E.}},
  \bauthor{\bsnm{Harris},~\bfnm{John~W.}\binits{J.~W.}},
  \bauthor{\bsnm{Harris},~\bfnm{Simon~C.}\binits{S.~C.}} \AND
  \bauthor{\bsnm{Roberts},~\bfnm{Matthew~I.}\binits{M.~I.}}
(\byear{2015}).
\btitle{Growth rates of the population in a branching {B}rownian motion with an
  inhomogeneous breeding potential}.
\bjournal{Stochastic Process. Appl.}
\bvolume{125}
\bpages{2096--2145}.
\bdoi{10.1016/j.spa.2014.12.008}
\bmrnumber{3315624}
\end{barticle}
\endbibitem

\bibitem{BhRa-76}
\begin{bbook}[author]
\bauthor{\bsnm{Bhattacharya},~\bfnm{R.~N.}\binits{R.~N.}} \AND
  \bauthor{\bsnm{Rang{a R}ao},~\bfnm{R.}\binits{R.}}
(\byear{1976}).
\btitle{Normal approximation and asymptotic expansions}.
\bpublisher{John Wiley \& Sons, New York-London-Sydney}
\bnote{Wiley Series in Probability and Mathematical Statistics}.
\bmrnumber{0436272}
\end{bbook}
\endbibitem

\bibitem{Bi-76}
\begin{barticle}[author]
\bauthor{\bsnm{Biggins},~\bfnm{J.~D.}\binits{J.~D.}}
(\byear{1976}).
\btitle{The first- and last-birth problems for a multitype age-dependent
  branching process}.
\bjournal{Adv. in Appl. Probab.}
\bvolume{8}
\bpages{446--459}.
\bmrnumber{0420890 (54 \#\#8901)}
\end{barticle}
\endbibitem

\bibitem{Bo-16}
\begin{bbook}[author]
\bauthor{\bsnm{Bovier},~\bfnm{Anton}\binits{A.}}
(\byear{2016}).
\btitle{Gaussian Processes on Trees: From Spin Glasses to Branching Brownian
  Motion}.
\bpublisher{Cambridge University Press}, \baddress{Cambridge}.
\bdoi{10.1017/9781316675779}
\end{bbook}
\endbibitem

\bibitem{BoHa-14}
\begin{barticle}[author]
\bauthor{\bsnm{Bovier},~\bfnm{Anton}\binits{A.}} \AND
  \bauthor{\bsnm{Hartung},~\bfnm{Lisa}\binits{L.}}
(\byear{2014}).
\btitle{The extremal process of two-speed branching {B}rownian motion}.
\bjournal{Electron. J. Probab.}
\bvolume{19}
\bpages{no. 18, 28}.
\bdoi{10.1214/EJP.v19-2982}
\bmrnumber{3164771}
\end{barticle}
\endbibitem

\bibitem{BoHa-15}
\begin{barticle}[author]
\bauthor{\bsnm{Bovier},~\bfnm{Anton}\binits{A.}} \AND
  \bauthor{\bsnm{Hartung},~\bfnm{Lisa}\binits{L.}}
(\byear{2015}).
\btitle{Variable speed branching {B}rownian motion 1. {E}xtremal processes in
  the weak correlation regime}.
\bjournal{ALEA Lat. Am. J. Probab. Math. Stat.}
\bvolume{12}
\bpages{261--291}.
\bmrnumber{3351476}
\end{barticle}
\endbibitem

\bibitem{Br-83}
\begin{barticle}[author]
\bauthor{\bsnm{Bramson},~\bfnm{Maury}\binits{M.}}
(\byear{1983}).
\btitle{Convergence of solutions of the {K}olmogorov equation to travelling
  waves}.
\bjournal{Mem. Amer. Math. Soc.}
\bvolume{44}
\bpages{iv+190}.
\bdoi{10.1090/memo/0285}
\bmrnumber{705746 (84m:60098)}
\end{barticle}
\endbibitem

\bibitem{BrZe-09}
\begin{barticle}[author]
\bauthor{\bsnm{Bramson},~\bfnm{Maury}\binits{M.}} \AND
  \bauthor{\bsnm{Zeitouni},~\bfnm{Ofer}\binits{O.}}
(\byear{2009}).
\btitle{Tightness for a family of recursion equations}.
\bjournal{Ann. Probab.}
\bvolume{37}
\bpages{615--653}.
\bdoi{10.1214/08-AOP414}
\bmrnumber{2510018 (2010c:60248)}
\end{barticle}
\endbibitem

\bibitem{BrZe-10}
\begin{barticle}[author]
\bauthor{\bsnm{{Bramson}},~\bfnm{Maury}\binits{M.}} \AND
  \bauthor{\bsnm{{Zeitouni}},~\bfnm{Ofer}\binits{O.}}
(\byear{2012}).
\btitle{{Tightness of the recentered maximum of the two-dimensional discrete
  Gaussian free field.}}
\bjournal{{Commun. Pure Appl. Math.}}
\bvolume{65}
\bpages{1--20}.
\bdoi{10.1002/cpa.20390}
\end{barticle}
\endbibitem

\bibitem{Br-78}
\begin{barticle}[author]
\bauthor{\bsnm{Bramson},~\bfnm{Maury~D.}\binits{M.~D.}}
(\byear{1978}).
\btitle{Maximal displacement of branching {B}rownian motion}.
\bjournal{Comm. Pure Appl. Math.}
\bvolume{31}
\bpages{531--581}.
\bmrnumber{0494541 (58 \#\#13382)}
\end{barticle}
\endbibitem

\bibitem{ChDr-06}
\begin{barticle}[author]
\bauthor{\bsnm{Chauvin},~\bfnm{Brigitte}\binits{B.}} \AND
  \bauthor{\bsnm{Drmota},~\bfnm{Michael}\binits{M.}}
(\byear{2006}).
\btitle{The random multisection problem, travelling waves and the distribution
  of the height of {$m$}-ary search trees}.
\bjournal{Algorithmica}
\bvolume{46}
\bpages{299--327}.
\bdoi{10.1007/s00453-006-0107-7}
\bmrnumber{2291958 (2008f:60090)}
\end{barticle}
\endbibitem

\bibitem{CoPo-07}
\begin{barticle}[author]
\bauthor{\bsnm{Comets},~\bfnm{Francis}\binits{F.}} \AND
  \bauthor{\bsnm{Popov},~\bfnm{Serguei}\binits{S.}}
(\byear{2007}).
\btitle{On multidimensional branching random walks in random environment}.
\bjournal{Ann. Probab.}
\bvolume{35}
\bpages{68--114}.
\bdoi{10.1214/009117906000000926}
\bmrnumber{2303944 (2007k:60336)}
\end{barticle}
\endbibitem

\bibitem{CoPo-07a}
\begin{barticle}[author]
\bauthor{\bsnm{Comets},~\bfnm{Francis}\binits{F.}} \AND
  \bauthor{\bsnm{Popov},~\bfnm{Serguei}\binits{S.}}
(\byear{2007}).
\btitle{Shape and local growth for multidimensional branching random walks in
  random environment}.
\bjournal{ALEA Lat. Am. J. Probab. Math. Stat.}
\bvolume{3}
\bpages{273--299}.
\bmrnumber{2365644 (2009h:60166)}
\end{barticle}
\endbibitem

\bibitem{DeZe-98}
\begin{bbook}[author]
\bauthor{\bsnm{Dembo},~\bfnm{Amir}\binits{A.}} \AND
  \bauthor{\bsnm{Zeitouni},~\bfnm{Ofer}\binits{O.}}
(\byear{1998}).
\btitle{Large deviations techniques and applications},
\bedition{second} ed.
\bseries{Applications of Mathematics (New York)}
\bvolume{38}.
\bpublisher{Springer-Verlag}, \baddress{New York}.
\bmrnumber{MR1619036 (99d:60030)}
\end{bbook}
\endbibitem

\bibitem{Dr-08}
\begin{barticle}[author]
\bauthor{\bsnm{Drewitz},~\bfnm{Alexander}\binits{A.}}
(\byear{2008}).
\btitle{Lyapunov exponents for the one-dimensional parabolic {A}nderson model
  with drift}.
\bjournal{Electron. J. Probab.}
\bvolume{13}
\bpages{no. 76, 2283--2336}.
\bdoi{10.1214/EJP.v13-586}
\bmrnumber{2469612 (2010a:60227)}
\end{barticle}
\endbibitem

\bibitem{Dr-03}
\begin{barticle}[author]
\bauthor{\bsnm{Drmota},~\bfnm{Michael}\binits{M.}}
(\byear{2003}).
\btitle{An analytic approach to the height of binary search trees. {II}}.
\bjournal{J. ACM}
\bvolume{50}
\bpages{333--374 (electronic)}.
\bdoi{10.1145/765568.765572}
\bmrnumber{2146358 (2005m:68050)}
\end{barticle}
\endbibitem

\bibitem{FaZe-12}
\begin{barticle}[author]
\bauthor{\bsnm{Fang},~\bfnm{Ming}\binits{M.}} \AND
  \bauthor{\bsnm{Zeitouni},~\bfnm{Ofer}\binits{O.}}
(\byear{2012}).
\btitle{Branching random walks in time inhomogeneous environments}.
\bjournal{Electron. J. Probab.}
\bvolume{17}
\bpages{no. 67, 18}.
\bdoi{10.1214/EJP.v17-2253}
\bmrnumber{2968674}
\end{barticle}
\endbibitem

\bibitem{FaZe-12b}
\begin{barticle}[author]
\bauthor{\bsnm{Fang},~\bfnm{Ming}\binits{M.}} \AND
  \bauthor{\bsnm{Zeitouni},~\bfnm{Ofer}\binits{O.}}
(\byear{2012}).
\btitle{Slowdown for time inhomogeneous branching {B}rownian motion}.
\bjournal{J. Stat. Phys.}
\bvolume{149}
\bpages{1--9}.
\bdoi{10.1007/s10955-012-0581-z}
\bmrnumber{2981635}
\end{barticle}
\endbibitem

\bibitem{Fr-85}
\begin{bbook}[author]
\bauthor{\bsnm{Freidlin},~\bfnm{Mark}\binits{M.}}
(\byear{1985}).
\btitle{Functional integration and partial differential equations}.
\bseries{Annals of Mathematics Studies}
\bvolume{109}.
\bpublisher{Princeton University Press}, \baddress{Princeton, NJ}.
\bmrnumber{MR833742 (87g:60066)}
\end{bbook}
\endbibitem

\bibitem{GaMuPoVa-10}
\begin{barticle}[author]
\bauthor{\bsnm{Gantert},~\bfnm{Nina}\binits{N.}},
  \bauthor{\bsnm{M{\"u}ller},~\bfnm{Sebastian}\binits{S.}},
  \bauthor{\bsnm{Popov},~\bfnm{Serguei}\binits{S.}} \AND
  \bauthor{\bsnm{Vachkovskaia},~\bfnm{Marina}\binits{M.}}
(\byear{2010}).
\btitle{Survival of branching random walks in random environment}.
\bjournal{J. Theoret. Probab.}
\bvolume{23}
\bpages{1002--1014}.
\bdoi{10.1007/s10959-009-0227-5}
\bmrnumber{2735734 (2011j:60294)}
\end{barticle}
\endbibitem

\bibitem{GaFr-79}
\begin{barticle}[author]
\bauthor{\bsnm{G\"artner},~\bfnm{J.}\binits{J.}} \AND
  \bauthor{\bsnm{Freidlin},~\bfnm{M.~I.}\binits{M.~I.}}
(\byear{1979}).
\btitle{The propagation of concentration waves in periodic and random media}.
\bjournal{Dokl. Akad. Nauk SSSR}
\bvolume{249}
\bpages{521--525}.
\bmrnumber{553200 (81d:80005)}
\end{barticle}
\endbibitem

\bibitem{GaKo-05}
\begin{bincollection}[author]
\bauthor{\bsnm{G{\"a}rtner},~\bfnm{J{\"u}rgen}\binits{J.}} \AND
  \bauthor{\bsnm{K{\"o}nig},~\bfnm{Wolfgang}\binits{W.}}
(\byear{2005}).
\btitle{The parabolic {A}nderson model}.
In \bbooktitle{Interacting stochastic systems}
\bpages{153--179}.
\bpublisher{Springer}, \baddress{Berlin}.
\bmrnumber{MR2118574 (2005k:82042)}
\end{bincollection}
\endbibitem

\bibitem{GaMo-90}
\begin{barticle}[author]
\bauthor{\bsnm{G{\"a}rtner},~\bfnm{J{\"u}rgen}\binits{J.}} \AND
  \bauthor{\bsnm{Molchanov},~\bfnm{Stanislav~A.}\binits{S.~A.}}
(\byear{1990}).
\btitle{Parabolic problems for the {A}nderson model. {I}. {I}ntermittency and
  related topics}.
\bjournal{Comm. Math. Phys.}
\bvolume{132}
\bpages{613--655}.
\bmrnumber{MR1069840 (92a:82115)}
\end{barticle}
\endbibitem

\bibitem{GiSk-69}
\begin{bbook}[author]
\bauthor{\bsnm{Gikhman},~\bfnm{I.~I.}\binits{I.~I.}} \AND
  \bauthor{\bsnm{Skorokhod},~\bfnm{A.~V.}\binits{A.~V.}}
(\byear{1969}).
\btitle{Introduction to the theory of random processes}.
\bseries{Translated from the Russian by Scripta Technica, Inc}.
\bpublisher{W. B. Saunders Co., Philadelphia, Pa.-London-Toronto, Ont.}
\bmrnumber{0247660}
\end{bbook}
\endbibitem

\bibitem{GrHo-92}
\begin{barticle}[author]
\bauthor{\bsnm{Greven},~\bfnm{Andreas}\binits{A.}} \AND
  \bauthor{\bparticle{den} \bsnm{Hollander},~\bfnm{Frank}\binits{F.}}
(\byear{1992}).
\btitle{Branching random walk in random environment: phase transitions for
  local and global growth rates}.
\bjournal{Probab. Theory Related Fields}
\bvolume{91}
\bpages{195--249}.
\bmrnumber{MR1147615 (93c:60160)}
\end{barticle}
\endbibitem

\bibitem{GuKoSe-13}
\begin{barticle}[author]
\bauthor{\bsnm{G{\"u}n},~\bfnm{Onur}\binits{O.}},
  \bauthor{\bsnm{K{\"o}nig},~\bfnm{Wolfgang}\binits{W.}} \AND
  \bauthor{\bsnm{Sekulovi{\'c}},~\bfnm{Ozren}\binits{O.}}
(\byear{2013}).
\btitle{Moment asymptotics for branching random walks in random environment}.
\bjournal{Electron. J. Probab.}
\bvolume{18}
\bpages{no. 63, 18}.
\bmrnumber{3078022}
\end{barticle}
\endbibitem

\bibitem{HaNoRoRy-12}
\begin{barticle}[author]
\bauthor{\bsnm{Hamel},~\bfnm{Fran{\c{c}}ois}\binits{F.}},
  \bauthor{\bsnm{Nolen},~\bfnm{James}\binits{J.}},
  \bauthor{\bsnm{Roquejoffre},~\bfnm{Jean-Michel}\binits{J.-M.}} \AND
  \bauthor{\bsnm{Ryzhik},~\bfnm{Lenya}\binits{L.}}
(\byear{2016}).
\btitle{The logarithmic delay of {KPP} fronts in a periodic medium}.
\bjournal{J. Eur. Math. Soc. (JEMS)}
\bvolume{18}
\bpages{465--505}.
\bdoi{10.4171/JEMS/595}
\bmrnumber{3463416}
\end{barticle}
\endbibitem

\bibitem{Ha-74}
\begin{barticle}[author]
\bauthor{\bsnm{Hammersley},~\bfnm{J.~M.}\binits{J.~M.}}
(\byear{1974}).
\btitle{Postulates for subadditive processes}.
\bjournal{Ann. Probab.}
\bvolume{2}
\bpages{652--680}.
\bmrnumber{0370721 (51 \#\#6947)}
\end{barticle}
\endbibitem

\bibitem{HaRo-17}
\begin{barticle}[author]
\bauthor{\bsnm{Harris},~\bfnm{Simon~C.}\binits{S.~C.}} \AND
  \bauthor{\bsnm{Roberts},~\bfnm{Matthew~I.}\binits{M.~I.}}
(\byear{2017}).
\btitle{The many-to-few lemma and multiple spines}.
\bjournal{Ann. Inst. Henri Poincar\'e Probab. Stat.}
\bvolume{53}
\bpages{226--242}.
\bdoi{10.1214/15-AIHP714}
\bmrnumber{3606740}
\end{barticle}
\endbibitem

\bibitem{HeNaYo-11}
\begin{barticle}[author]
\bauthor{\bsnm{Heil},~\bfnm{Hadrian}\binits{H.}},
  \bauthor{\bsnm{Nakashima},~\bfnm{Makoto}\binits{M.}} \AND
  \bauthor{\bsnm{Yoshida},~\bfnm{Nobuo}\binits{N.}}
(\byear{2011}).
\btitle{Branching random walks in random environment are diffusive in the
  regular growth phase}.
\bjournal{Electron. J. Probab.}
\bvolume{16}
\bpages{no. 48, 1316--1340}.
\bdoi{10.1214/EJP.v16-922}
\bmrnumber{2827461 (2012h:60258)}
\end{barticle}
\endbibitem

\bibitem{HuSh-09}
\begin{barticle}[author]
\bauthor{\bsnm{Hu},~\bfnm{Yueyun}\binits{Y.}} \AND
  \bauthor{\bsnm{Shi},~\bfnm{Zhan}\binits{Z.}}
(\byear{2009}).
\btitle{Minimal position and critical martingale convergence in branching
  random walks, and directed polymers on disordered trees}.
\bjournal{Ann. Probab.}
\bvolume{37}
\bpages{742--789}.
\bdoi{10.1214/08-AOP419}
\bmrnumber{2510023 (2010b:60235)}
\end{barticle}
\endbibitem

\bibitem{HuYo-09}
\begin{barticle}[author]
\bauthor{\bsnm{Hu},~\bfnm{Yueyun}\binits{Y.}} \AND
  \bauthor{\bsnm{Yoshida},~\bfnm{Nobuo}\binits{N.}}
(\byear{2009}).
\btitle{Localization for branching random walks in random environment}.
\bjournal{Stochastic Process. Appl.}
\bvolume{119}
\bpages{1632--1651}.
\bdoi{10.1016/j.spa.2008.08.005}
\bmrnumber{2513122 (2010e:60217)}
\end{barticle}
\endbibitem

\bibitem{HuLi-14}
\begin{bmisc}[author]
\bauthor{\bsnm{Huang},~\bfnm{Chunmao}\binits{C.}} \AND
  \bauthor{\bsnm{Liu},~\bfnm{Quansheng}\binits{Q.}}
(\byear{2014}).
\btitle{Branching random walk with a random environment in time}.
\bhowpublished{\arxiv{1407.7623}}.
\end{bmisc}
\endbibitem

\bibitem{IkNaWa-68a}
\begin{barticle}[author]
\bauthor{\bsnm{Ikeda},~\bfnm{Nobuyuki}\binits{N.}},
  \bauthor{\bsnm{Nagasawa},~\bfnm{Masao}\binits{M.}} \AND
  \bauthor{\bsnm{Watanabe},~\bfnm{Shinzo}\binits{S.}}
(\byear{1968}).
\btitle{Branching {M}arkov processes. {I}}.
\bjournal{J. Math. Kyoto Univ.}
\bvolume{8}
\bpages{233--278}.
\bdoi{10.1215/kjm/1250524137}
\bmrnumber{0232439}
\end{barticle}
\endbibitem

\bibitem{IkNaWa-68b}
\begin{barticle}[author]
\bauthor{\bsnm{Ikeda},~\bfnm{Nobuyuki}\binits{N.}},
  \bauthor{\bsnm{Nagasawa},~\bfnm{Masao}\binits{M.}} \AND
  \bauthor{\bsnm{Watanabe},~\bfnm{Shinzo}\binits{S.}}
(\byear{1968}).
\btitle{Branching {M}arkov processes. {II}}.
\bjournal{J. Math. Kyoto Univ.}
\bvolume{8}
\bpages{365--410}.
\bdoi{10.1215/kjm/1250524059}
\bmrnumber{0238401}
\end{barticle}
\endbibitem

\bibitem{IkNaWa-69}
\begin{barticle}[author]
\bauthor{\bsnm{Ikeda},~\bfnm{Nobuyuki}\binits{N.}},
  \bauthor{\bsnm{Nagasawa},~\bfnm{Masao}\binits{M.}} \AND
  \bauthor{\bsnm{Watanabe},~\bfnm{Shinzo}\binits{S.}}
(\byear{1969}).
\btitle{Branching {M}arkov processes. {III}}.
\bjournal{J. Math. Kyoto Univ.}
\bvolume{9}
\bpages{95--160}.
\bmrnumber{0246376 (39 \#\#7680)}
\end{barticle}
\endbibitem

\bibitem{KeMa-11}
\begin{barticle}[author]
\bauthor{\bsnm{Kevei},~\bfnm{P\'eter}\binits{P.}} \AND
  \bauthor{\bsnm{Mason},~\bfnm{David~M.}\binits{D.~M.}}
(\byear{2011}).
\btitle{A note on a maximal {B}ernstein inequality}.
\bjournal{Bernoulli}
\bvolume{17}
\bpages{1054--1062}.
\bdoi{10.3150/10-BEJ304}
\bmrnumber{2817617}
\end{barticle}
\endbibitem

\bibitem{Ki-75}
\begin{barticle}[author]
\bauthor{\bsnm{Kingman},~\bfnm{J.~F.~C.}\binits{J.~F.~C.}}
(\byear{1975}).
\btitle{The first birth problem for an age-dependent branching process}.
\bjournal{Ann. Probab.}
\bvolume{3}
\bpages{790--801}.
\bmrnumber{0400438 (53 \#\#4271)}
\end{barticle}
\endbibitem

\bibitem{KoPePi-37}
\begin{bmisc}[author]
\bauthor{\bsnm{{Kolmogorov}},~\bfnm{A.}\binits{A.}},
  \bauthor{\bsnm{{Pretrovskii}},~\bfnm{I.}\binits{I.}} \AND
  \bauthor{\bsnm{{Piskunov}},~\bfnm{N.}\binits{N.}}
(\byear{1937}).
\btitle{{\'Etude de l'\'equation de la diffusion avec croissance de la quantite
  de mati\`ere et son application \`a un probl\`eme biologique}}.
\bhowpublished{{Bull. Univ. \'Etat Moscou, S\'er. Int., Sect. A: Math. et
  M\'ecan. 1, Fasc. 6, 1-25 (1937)}}.
\end{bmisc}
\endbibitem

\bibitem{Ko-16}
\begin{bbook}[author]
\bauthor{\bsnm{K{\"o}nig},~\bfnm{Wolfgang}\binits{W.}}
(\byear{2016}).
\btitle{The parabolic {A}nderson model}.
\bseries{Pathways in Mathematics}.
\bpublisher{Birkh\"auser}
\bnote{Random walk in random potential}.
\bdoi{10.1007/978-3-319-33596-4}
\bmrnumber{3526112}
\end{bbook}
\endbibitem

\bibitem{LaSe-88}
\begin{barticle}[author]
\bauthor{\bsnm{Lalley},~\bfnm{S.}\binits{S.}} \AND
  \bauthor{\bsnm{Sellke},~\bfnm{T.}\binits{T.}}
(\byear{1988}).
\btitle{Traveling waves in inhomogeneous branching {B}rownian motions. {I}}.
\bjournal{Ann. Probab.}
\bvolume{16}
\bpages{1051--1062}.
\bmrnumber{942755 (89g:60259)}
\end{barticle}
\endbibitem

\bibitem{LaSe-89}
\begin{barticle}[author]
\bauthor{\bsnm{Lalley},~\bfnm{S.}\binits{S.}} \AND
  \bauthor{\bsnm{Sellke},~\bfnm{T.}\binits{T.}}
(\byear{1989}).
\btitle{Travelling waves in inhomogeneous branching {B}rownian motions. {II}}.
\bjournal{Ann. Probab.}
\bvolume{17}
\bpages{116--127}.
\bmrnumber{972775 (90b:60114)}
\end{barticle}
\endbibitem

\bibitem{Li-85}
\begin{barticle}[author]
\bauthor{\bsnm{Liggett},~\bfnm{Thomas~M.}\binits{T.~M.}}
(\byear{1985}).
\btitle{An improved subadditive ergodic theorem}.
\bjournal{Ann. Probab.}
\bvolume{13}
\bpages{1279--1285}.
\bmrnumber{806224 (87e:60060)}
\end{barticle}
\endbibitem

\bibitem{MaPo-00}
\begin{barticle}[author]
\bauthor{\bsnm{Machado},~\bfnm{F.~P.}\binits{F.~P.}} \AND
  \bauthor{\bsnm{Popov},~\bfnm{S.~Yu.}\binits{S.~Y.}}
(\byear{2000}).
\btitle{One-dimensional branching random walks in a {M}arkovian random
  environment}.
\bjournal{J. Appl. Probab.}
\bvolume{37}
\bpages{1157--1163}.
\bmrnumber{1808881 (2002f:60141)}
\end{barticle}
\endbibitem

\bibitem{MaZe-13}
\begin{barticle}[author]
\bauthor{\bsnm{Maillard},~\bfnm{Pascal}\binits{P.}} \AND
  \bauthor{\bsnm{Zeitouni},~\bfnm{Ofer}\binits{O.}}
(\byear{2016}).
\btitle{Slowdown in branching {B}rownian motion with inhomogeneous variance}.
\bjournal{Ann. Inst. Henri Poincar\'e Probab. Stat.}
\bvolume{52}
\bpages{1144--1160}.
\bdoi{10.1214/15-AIHP675}
\bmrnumber{3531703}
\end{barticle}
\endbibitem

\bibitem{Ma-13}
\begin{barticle}[author]
\bauthor{\bsnm{Mallein},~\bfnm{Bastien}\binits{B.}}
(\byear{2015}).
\btitle{Maximal displacement of a branching random walk in time-inhomogeneous
  environment}.
\bjournal{Stochastic Process. Appl.}
\bvolume{125}
\bpages{3958--4019}.
\bdoi{10.1016/j.spa.2015.05.011}
\bmrnumber{3373310}
\end{barticle}
\endbibitem

\bibitem{MaMi-15a}
\begin{bmisc}[author]
\bauthor{\bsnm{Mallein},~\bfnm{Bastien}\binits{B.}} \AND
  \bauthor{\bsnm{Mi\l{}o\'s},~\bfnm{Piotr}\binits{P.}}
(\byear{2015}).
\btitle{Maximal displacement of a supercritical branching random walk in a
  time-inhomogeneous random environment}.
\end{bmisc}
\endbibitem

\bibitem{MaMi-15}
\begin{barticle}[author]
\bauthor{\bsnm{Mallein},~\bfnm{Bastien}\binits{B.}} \AND
  \bauthor{\bsnm{Mi\l{}o\'s},~\bfnm{Piotr}\binits{P.}}
(\byear{2018}).
\btitle{Brownian motion and random walk above quenched random wall}.
\bjournal{Ann. Inst. Henri Poincar\'{e} Probab. Stat.}
\bvolume{54}
\bpages{1877--1916}.
\bdoi{10.1214/17-AIHP859}
\bmrnumber{3865661}
\end{barticle}
\endbibitem

\bibitem{McD-95}
\begin{barticle}[author]
\bauthor{\bsnm{McDiarmid},~\bfnm{Colin}\binits{C.}}
(\byear{1995}).
\btitle{Minimal positions in a branching random walk}.
\bjournal{Ann. Appl. Probab.}
\bvolume{5}
\bpages{128--139}.
\bmrnumber{1325045 (96b:60223)}
\end{barticle}
\endbibitem

\bibitem{MK-75}
\begin{barticle}[author]
\bauthor{\bsnm{McKean},~\bfnm{H.~P.}\binits{H.~P.}}
(\byear{1975}).
\btitle{Application of {B}rownian motion to the equation of
  {K}olmogorov-{P}etrovskii-{P}iskunov}.
\bjournal{Comm. Pure Appl. Math.}
\bvolume{28}
\bpages{323--331}.
\bmrnumber{0400428 (53 \#\#4262)}
\end{barticle}
\endbibitem

\bibitem{MePeUt-06}
\begin{barticle}[author]
\bauthor{\bsnm{Merlev\`ede},~\bfnm{Florence}\binits{F.}},
  \bauthor{\bsnm{Peligrad},~\bfnm{Magda}\binits{M.}} \AND
  \bauthor{\bsnm{Utev},~\bfnm{Sergey}\binits{S.}}
(\byear{2006}).
\btitle{Recent advances in invariance principles for stationary sequences}.
\bjournal{Probab. Surv.}
\bvolume{3}
\bpages{1--36}.
\bdoi{10.1214/154957806100000202}
\bmrnumber{2206313}
\end{barticle}
\endbibitem

\bibitem{Na-11}
\begin{barticle}[author]
\bauthor{\bsnm{Nakashima},~\bfnm{Makoto}\binits{M.}}
(\byear{2011}).
\btitle{Almost sure central limit theorem for branching random walks in random
  environment}.
\bjournal{Ann. Appl. Probab.}
\bvolume{21}
\bpages{351--373}.
\bdoi{10.1214/10-AAP699}
\bmrnumber{2759206 (2011m:60307)}
\end{barticle}
\endbibitem

\bibitem{NeWr-81}
\begin{barticle}[author]
\bauthor{\bsnm{Newman},~\bfnm{C.~M.}\binits{C.~M.}} \AND
  \bauthor{\bsnm{Wright},~\bfnm{A.~L.}\binits{A.~L.}}
(\byear{1981}).
\btitle{An invariance principle for certain dependent sequences}.
\bjournal{Ann. Probab.}
\bvolume{9}
\bpages{671--675}.
\bmrnumber{624694 (82m:60034)}
\end{barticle}
\endbibitem

\bibitem{No-11}
\begin{barticle}[author]
\bauthor{\bsnm{Nolen},~\bfnm{James}\binits{J.}}
(\byear{2011}).
\btitle{A central limit theorem for pulled fronts in a random medium}.
\bjournal{Netw. Heterog. Media}
\bvolume{6}
\bpages{167--194}.
\bdoi{10.3934/nhm.2011.6.167}
\bmrnumber{2806072}
\end{barticle}
\endbibitem

\bibitem{NoRoRy-13}
\begin{barticle}[author]
\bauthor{\bsnm{Nolen},~\bfnm{James}\binits{J.}},
  \bauthor{\bsnm{Roquejoffre},~\bfnm{Jean-Michel}\binits{J.-M.}} \AND
  \bauthor{\bsnm{Ryzhik},~\bfnm{Lenya}\binits{L.}}
(\byear{2015}).
\btitle{Power-like delay in time inhomogeneous {F}isher-{KPP} equations}.
\bjournal{Comm. Partial Differential Equations}
\bvolume{40}
\bpages{475--505}.
\bdoi{10.1080/03605302.2014.972744}
\bmrnumber{3285242}
\end{barticle}
\endbibitem

\bibitem{OrRo-16}
\begin{barticle}[author]
\bauthor{\bsnm{Ortgiese},~\bfnm{Marcel}\binits{M.}} \AND
  \bauthor{\bsnm{Roberts},~\bfnm{Matthew~I.}\binits{M.~I.}}
(\byear{2016}).
\btitle{Intermittency for branching random walk in {P}areto environment}.
\bjournal{Ann. Probab.}
\bvolume{44}
\bpages{2198--2263}.
\bdoi{10.1214/15-AOP1021}
\bmrnumber{3502604}
\end{barticle}
\endbibitem

\bibitem{Re-03}
\begin{barticle}[author]
\bauthor{\bsnm{Reed},~\bfnm{Bruce}\binits{B.}}
(\byear{2003}).
\btitle{The height of a random binary search tree}.
\bjournal{J. ACM}
\bvolume{50}
\bpages{306--332 (electronic)}.
\bdoi{10.1145/765568.765571}
\bmrnumber{2146357 (2005m:68049)}
\end{barticle}
\endbibitem

\bibitem{Ri-13}
\begin{bunpublished}[author]
\bauthor{\bsnm{Rio},~\bfnm{Emmanuel}\binits{E.}}
(\byear{2013}).
\btitle{Inequalities and limit theorems for weakly dependent sequences}.
\bnote{Lecture}.
\end{bunpublished}
\endbibitem

\bibitem{Sh-15}
\begin{bbook}[author]
\bauthor{\bsnm{Shi},~\bfnm{Zhan}\binits{Z.}}
(\byear{2015}).
\btitle{Branching random walks}.
\bseries{Lecture Notes in Mathematics}
\bvolume{2151}.
\bpublisher{Springer, Cham}
\bnote{Lecture notes from the 42nd Probability Summer School held in Saint
  Flour, 2012, {\'E}cole d'{\'E}t{\'e} de Probabilit{\'e}s de Saint-Flour.
  [Saint-Flour Probability Summer School]}.
\bdoi{10.1007/978-3-319-25372-5}
\bmrnumber{3444654}
\end{bbook}
\endbibitem

\end{thebibliography}

\end{document}